\newtheorem{theorem}{\indent Theorem}[section]
\newtheorem{proposition}[theorem]{\indent Proposition}
\newtheorem{lemma}[theorem]{\indent Lemma}
\begin{document}	
	\title{\bf Ergodicity for the fractional Magneto-Hydrodynamic equations driven by a degenerate pure jump noise\footnote{The research is  supported by National Natural Science Foundation of China (No.12371198, 12031020), Scientific Research Program Foundation of National University of Defense Technology (No.22-ZZCX-016) and Hunan Provincial Natural Science Foundation of China (No.2024JJ5400,2025JJ50037)}}
	
	\author[a]{Xue Wang} 
\author[a]{Jianhua Huang \thanks{Corresponding author:\,\,\; jhhuang32@nudt.edu.cn} }
    \author[b]{Jiangwei Zhang}
    
		\affil[a]{\small  \it College of Science, National University of Defense Technology, Changsha, 410073, P.R.China}	
%\affil[b]{\small  \it MOE-LCSM, School of Mathematics and Statistics, Hunan Normal University, Changsha, Hunan 410081, P.R.China}
\affil[b]{\small  \it Institute of Applied Physics and Computational Mathematics, Beijing 100088, P.R.China}

	\date{}
		
\maketitle

\begin{abstract}
This paper is concerned with the ergodicity for stochastic 2D fractional magneto-hydrodynamic equations on the two-dimensional torus driven by a highly degenerate pure jump L\'{e}vy noise. We focus on the challenging case where the noise acts in as few as four directions, establishing new results for such high degeneracy. 
We first employ Malliavin calculus and anticipative stochastic calculus to demonstrate the equi-continuity of the semigroup (or so-called the e-property), and then verify the weak irreducibility of the solution process. Therefore, the  uniqueness of invariant measure is proven. 
%Furthermore, we cannot obtain the exponential mixing and the central limit theorem since  for the situation of levy noise. 

%This study aims to analyze the ergodicity for stochastic 2D Magneto-Hydrodynamic equations and explore the impact of a highly degenerate pure jump L\'{e}vy noise acting only in the  equation, this noise could appear in as few as four directions.
%第一句话需要换掉,This paper is concered with 带了一个高退化噪声，不用写在哪个方程上.
%一步步写下面的步骤，用了m分析
%By analysing the solution of the equation and its derivatives, combining with Malliavin calculus to establish e-property and weak irreducibility, thereby we obtain the ergodicity.
%By leveraging the equi-continuity of the semigroup, that is, established through Malliavin calculus and an analysis of stochastic calculus, alongside the weak irreducibility of the solution process, we obtain the uniqueness of invariant measure. 
%The obtained results extend those established in \cite{Peng-Huang-2020}.

%Based on the equi-continuity of the semigroup, which is established by using Malliavin calculus and studying stochastic calculus, as well as some weak irreducibility of the solution process, the existence and uniqueness of invariant measure is proven. 

%The novelty lies in the , in particular

\medskip

\noindent \textbf{Keywords:} Ergodicity, fractional magneto-hydrodynamic equation, degenerate L\'{e}vy noise, weak irreducibility, Malliavin calculus. 
\end{abstract}

{\hspace*{2mm} AMS Subject Classification: 60H15, 60G51, 76D06, 76M35.}

\linespread{1.2}
\section{Introduction}
\numberwithin{equation}{section}
%从确定的MHD引出随机的，介绍随机的重要，有哪些研究，现在的研究包含什么东西，整数解分数阶，一步一步地引出自己做的东西，把在投的加入进去，紧接着peng的做了Bouss方程，引出我们现在做的没有涡度变换，交代与前面两个的区别，为什么做这个东西，意义是什么
The magneto-hydrodynamic (MHD) equations mathematically characterize the evolution of velocity and magnetic fields in conductive fluids, while embedding fundamental physical conservation laws \cite{Cababbes,Cowling}. The deterministic MHD equations have been extensively studied, and the existence and stability of solutions to such equations provide key insights into the stability and turbulence of magnetohydrodynamic systems (see e.g., \cite{Foias-2001,Sermange-1983}). However, in many real-world applications, random disturbances caused by external forces, the presence of thermal noise, or uncertainties in initial conditions  cannot be ignored. Therefore, the study of stochastic MHD equations has become extremely important. In fact, research on stochastic MHD equations not only demonstrates the applicability of their theory to more realistic scenarios but also presents significant mathematical challenges, particularly regarding the impact of noise injection on the long-term behavior, ergodicity, and stability of the solutions to these equations (see \cite{Glatt-2008,MH-2006} and the references therein).

The ergodic property of MHD systems with stochastic perturbations have been systematically investigated under various types of noise. For non-degenerate stochastic forcing acting jointly on velocity and magnetic fields, the coupling method introduced in \cite{Barbu-2007} established the existence and uniqueness of invariant measures. Afterwards, Huang and Shen \cite{Huang-2016} addressed the 2D incompressible fractional stochastic MHD equations perturbed by nonlinear multiplicative Gaussian noise, where they rigorously analyzed global well-posedness and proved the existence of a compact random attractor. Hong et al. \cite{Hong-2021} considered the well-posedness and exponential mixing for stochastic MHD equations driven by multiplicative noise.
Recently, Luo \cite{Luo-2022} studied the problem of regularization by multiplicative noise of transport type for the 3D MHD equations. 
%Further advancing the study of degenerate noise scenarios, Shen et al. \cite{Shen-2016} examined stochastic fractional MHD dynamics on the two-dimensional torus $\mathbb{T}^2$, proving uniqueness and invariant measure existence for the associated transition semigroup.
%%对于退化噪声怎么样？应该加一句话跟前面保持一致.Notably, their noise degeneracy arises from excitation restricted to a finite set of low-frequency Fourier modes, implemented via Hilbert-Schmidt operators in the spectral space. 
However, the aforementioned noises are non-degenerate, acting uniformly across all Fourier modes. A natural question arises: How would the statistical properties of the system be altered if the noise excitation were constrained to only a few directions?
In fact, some results have been obtained in this regard, although very few.
Specifically, Peng et al. \cite{Peng-2020} established the existence, uniqueness and exponential attraction properties of an invariant measure for the MHD equations with additive degenerate stochastic forcing acting only in the magnetic equation. 
It is worth mentioning that they \cite{Peng-2024} established the ergodicity for stochastic 2D Navier-Stokes equations driven by a highly degenerate pure jump Lévy noise in recent. Following this result,
Huang et al. \cite{Huang-2025} have extended this line of research to coupled systems, proving ergodicity for the Boussinesq equations under degenerate pure jump stochastic forcing that acts only in the temperature equation. However, the ergodic theory for MHD systems driven by degenerate pure jump noise remains unexplored. This constitutes a significant gap in the literature, particularly given the physical relevance of such models in plasma turbulence where intermittent, jump-like perturbations naturally occur. While the existing techniques for degenerate Gaussian noise \cite{Foldes-2015,MH-2006,MH-2008,MH-2011} and recent progress on pure jump cases \cite{Huang-2025,Peng-2024} provide valuable tools, the coupled velocity-magnetic field structure of MHD equations introduces new mathematical challenges.
%%这一段应该大致上关于非退化噪声做一些表述，目前太少了, 应该包括加性、乘性以及传输噪声，然后说退化噪声跟一般噪声不一样的地方，进而表述关于退化噪声驱动的MHD方程的研究，再表明对于退化纯跳噪声的研究还没有，虽然对于退化纯跳噪声驱动的NS方程以及Boussinesq方程已经有了，但本文主要考虑不同于前面这两类方程的MHD方程，即如下方程
In this paper, we focus on the ergodicity of the following stochastic MHD equations with a degenerate pure jump noise on a 2D torus $\mathbb{T}^2=[-\pi,\pi]^2=\mathbb{R}^2/(2\pi\mathbb{Z}^2)$
\begin{eqnarray}\label{MHD1.1}
\left\{
\begin{aligned}
&\mathrm{d}u+[u\cdot\nabla u+\nu_1(-\Delta)^{\alpha}u]\mathrm{d}t=[-\nabla p+b\cdot \nabla b]\mathrm{d}t,\\
&\mathrm{d}b+[u\cdot\nabla b+\nu_2(-\Delta)^{\beta}b]\mathrm{d}t=b\cdot\nabla u \mathrm{d}t
+\sum_{k=(k_1,k_2)\in \mathcal Z_0}\left(\frac{k_2}{|k|},-\frac{k_1}{|k|}\right)^T \alpha_k^0 \cos(k\cdot x)\mathrm{d}W_{S_t}^{k,0}\\
&~~~~~~~~~~~~~~~~~~~~~~~~~~~~~~~~~~~~~+\sum_{k=(k_1,k_2)\in \mathcal Z_0}\left(-\frac{k_2}{|k|},\frac{k_1}{|k|}\right)^T \alpha_k^1 \sin(k\cdot x)\mathrm{d}W_{S_t}^{k,1},\\
&\nabla\cdot u=\nabla\cdot b=0,\\
&u(0,x)=u_0(x),~b(0,x)=b_0(x)
\end{aligned}
\right.
\end{eqnarray}
with periodic boundary conditions 
\begin{equation*}
u_i(x,t)=u_i(x+2\pi j,t),~b_i(x,t)=b_i(x+2\pi j,t),~i=1,2,
\end{equation*}
where $u=(u_1,u_2)$ and $b=(b_1,b_2)$ denote the velocity field and magnetic field respectively, $p$ is a scalar pressure, $\nu_1$ is the kinematic viscosity, and $\nu_2$ is the magnetic diffusion.
$\mathcal Z_0$ is a subset of $\mathbb{Z}^2 \setminus\{0,0\}$, $W_{S_t}=(W_{S_t}^{k,m})_{k\in \mathcal Z_0,m\in\{0,1\}}$ is a $2|\mathcal Z_0|$-dimensional subordinated Brownian motion defined relative to a filtered probability space $(\Omega,\mathcal{F},\{\mathcal{F}_t\}_{t\geq0},\mathbb{P})$, $\{\alpha_k^m\}_{k\in \mathcal Z_0,m\in\{0,1\}}$ are non-zero constants.

%\begin{equation*}
%\begin{aligned}
% & \mathcal{Z}_0:=\{k\mid k\in\mathrm{Z}_0\mathrm{~or~}-k\in\mathrm{Z}_0\}, \\
% & \mathcal{Z}_n:=\{k+\ell\mid k\in\mathcal{Z}_{n-1},\ell\in\mathcal{Z}_0,\langle k,\ell^\perp\rangle\neq0,|k|\neq|\ell|\},
%\end{aligned}
%\end{equation*}
%This paper focuses on characterizing quantitatively the ergodicity under the influence of pure jump noise, namely, establishing the unique invariant measure for the stochastic fractional MHD equations with a degenerate pure jump noise.

Recent advances in ergodic theory for infinite-dimensional dynamical systems subjected to degenerate stochastic perturbations have generated substantial theoretical interest (\cite{EM01,Foldes-2015,MH-2006,MH-2008,MH-2011}).  
This surge stems both from intrinsic mathematical complexity, challenging classical finite-dimensional paradigms, and the imperative to rigorously validate statistical hypotheses underpinning physical modeling frameworks. Notably, pioneering works (e.g., \cite{Foldes-2015,MH-2008,MH-2011}) have laid foundational stones for hypoelliptic analysis in stochastic partial differential equations (SPDEs) with degenerate forcing, marking the emergence of a unified framework. Nevertheless, the theory remains in its nascent phase, constrained by the intricate interplay between infinite-dimensional geometry and stochastic analysis. A central obstruction lies in establishing the Malliavin covariance matrix's non-degeneracy, a task complicated by the absence of ellipticity and the spectral complexity inherent to SPDEs, rendering even spectral gap characterization intractable. To circumvent these barriers, researchers have leveraged structural properties of turbulent dynamical systems: Despite their infinite-dimensional nature, such systems exhibit finite-dimensional unstable manifolds in phase space. Consequently, current strategies prioritize demonstrating that restricted Malliavin matrices attain uniform positivity on carefully constructed conic subsets spanning these unstable directions.
However, as emphasized in the articles \cite{Foldes-2015,MH-2006,MH-2011,Peng-2020}, the technical difficulties of this method lie in how to generate successively larger finite dimensional spaces through the interaction between the nonlinear and stochastic terms and how to exert delicate spectral analysis on these spaces.

The key idea relies on computing Lie brackets from constant vector fields and nonlinear advective terms through an iterative process. This approach systematically breaks down the phase space of the system. The decomposition helps find minimal spanning conditions that satisfy Hörmander-type hypoellipticity requirements. Importantly, the choice of Lie bracket sequences depends on the specific model, there's no general method that works for all cases. For instance, Navier-Stokes systems and Boussinesq flows need different analytical treatments (see \cite{Foldes-2015,MH-2008,MH-2011}), as each requires unique modifications of the Hörmander framework to handle their particular nonlinear structures. 
%%%同样地，对于MHD方程，Hörmander框架的构建也是一个新的挑战，但是得益于Peng2020对框架的构建以及Peng2024提出的广义条件，我们可以得到对于纯跳的退化噪声驱动的MHD方程,
The construction of a suitable Hörmander-type framework for the MHD system presents distinct challenges, particularly in handling the coupled velocity-magnetic field structure. However, building upon the foundational work of Peng et al. \cite{Peng-2020} for degenerate stochastic MHD equations and their subsequent extension to pure jump processes \cite{Peng-2024,Peng-2025} for Navier-Stokes systems, we develop a more streamlined approach for the spectral analysis of MHD systems driven by degenerate pure jump noise. The principal theoretical advances of this work are twofold:
%This framework enables us to:
%
%$(i)$ Establish precise spectral estimates without requiring vorticity transformation;
%
%$(ii)$ Obtain optimal regularity estimates for the associated Markov semigroup.
%%前面两段说了关于退化噪声的困难以及相关的研究，但最后没有落到关于本文研究的困难上，或者说使用Peng新提出的方法可以解决以前的某些困难，由此可以用此方法用来解决本文的模型，但是依旧有一些困难：列举出来。通过使用Peng的方法以及我们的一些先验估计可以解决此类问题。并得到了如下两个结果

$(i)$ Combining the Lie bracket algorithm tailored to the fractional MHD system in \cite{Peng-2020} with the generalized conditions extended to pure jump noise (see Lemma \ref{MHDlemma3.2}), enabling the derivation of a non-standard H\"{o}rmander criterion;

$(ii)$ Without performing vorticity transformation, we derive the ergodicity of fractional MHD equation under the influence of degenerate pure jump L\'evy noise and establish precise spectral estimates.

%Central to our strategy is leveraging the unique structure of stochastic fractional MHD equations (1.2)—particularly the interplay between nonlinear advection and degenerate noise—through hierarchical Lie bracket operations. Mechanistically, we orchestrate noise propagation from the magnetic subsystem to the velocity field via advective coupling, followed by recursive stochastic perturbations in the magnetic component $b$ to generate directional completeness in phase space. Crucially, each iteration stochastically enriches the velocity component $v$ through secondary feedback loops, establishing a stochastic cascade mechanism (see Section 4). This process converges under Hypothesis 1.1's spectral non-degeneracy requirements, ultimately spanning the full phase space. Importantly, both the inductive framework and resultant Hörmander conditions diverge fundamentally from those in \cite{Foldes-2015,MH-2008,MH-2011}, as our approach intrinsically respects the MHD system's coupled parabolic-hyperptic characteristics.
%In terms of physical interpretation, the deterministic Boussinesq system emerges in simplified models for geophysics (such as, atmospheric fronts and oceanic currents as well as the Rayleigh-Bénard convection etc.) when rotation and stratification play crucial roles, which is important in the study of cold fronts and the jet stream. 

The paper is organized as follows. In Section 2, we give functional setting, notations and the Markovian framework, as well as the control problem, and main results, i.e. weak irreducibility for the solution process of the system \eqref{MHD1.1} (see Proposition \ref{MHDirre}), the equi-continuity of the semigroup, the so-called e-property (see Proposition \ref{MHDpropo1.4}), and the existence and uniqueness of the invariant measure for the system \eqref{MHD1.1} (see Theorem \ref{MHDergodicity}). In Section 3, we give some moment estimates of the solution 
for the stochastic fractional MHD system with pure jump L\'{e}vy noise. Section 4 devotes to the invertibility of Malliavin matrix and a dissipative property of fractional MHD system. We will state proofs of the main theorem in Section 5. Some technical proofs are provided in the Appendix for completeness.
%, to achieve this goal, we need to achieve some weak irreducibility for the solution process of the system \eqref{MHD1.1} (see Proposition \ref{MHDirre}), and establish an equi-continuity of the semigroup, the so-called e-property (see Proposition \ref{MHDpropo1.4}) by using Malliavin calculus and anticipating stochastic calculus.
%第五章给出了第二章主要结果的证明
%we establish weak irreducibility and ergodicity for \eqref{MHD2.1}.

\section{Preliminaries}
This section is organized as follows: First, we establish the fundamental notation that will be used throughout this work. Second, we formulate the control problem associated with the stochastic fractional MHD system \eqref{MHD1.1}. Furthermore, we introduce the mathematical framework for subordinated L\'{e}vy noise. Finally, we present the main theoretical results of this study.
%%本节主要干什么，1.介绍符号；2.导出控制问题；3.介绍从属levy过程；4.列举主要结果。
\subsection{Notations}
This subsection mainly introduce some notations and functional setting.
Denote the phase space as the collection of square integrable functions with zero mean
\begin{equation*}
H_1^s:=\left\{u:=(u_1,u_2)\in(W^{s,2}(\mathbb{T}^2))^2:
\nabla\cdot u=0,
\int_{\mathbb{T}^2}u_1(x)\mathrm{d}x
=\int_{\mathbb{T}^2}u_2(x)\mathrm{d}x=0\right\}
~\text{for any}~s\geq 0,
\end{equation*}
where $W^{s,2}(\mathbb{T}^2)$ is classical Sobolev space, and $H_1^s$
is equipped with the norm
\begin{equation*}
\|u\|_{H_1^s}:=(\|u_1\|_{W^{s,2}(\mathbb{T}^2)}^2 + \|u_2\|_{W^{s,2}(\mathbb{T}^2)}^2)^{\frac{1}{2}}, ~\forall\, u=(u_1,u_2).
\end{equation*}
Assume that $H_2^s$ has the same properties as $H_1^s$, and let $H^s = H_1^s \times H_2^s $. The norm on the space $H^s$ is defined by
\begin{equation*}
\|U\|_{s}:=\|U\|_{H^s}=(\|u\|_{H_1^s}^2+\|b\|_{H_2^s}^2)^{\frac{1}{2}},~\forall\, U = (u,b) \in H^s.
\end{equation*}
We denote by $(H_1^{s})^*$, $(H_2^{s})^*$, and $(H^s)^*$ the dual spaces to $(H_1^{s})$, $(H_2^{s})$ and $H^{s}$, respectively. Specially,
\begin{equation*}
H_1^0=\left\{u:=(u_1,u_2)\in(L^2(\mathbb{T}^2))^2:
\nabla\cdot u=0,
\int_{\mathbb{T}^2}u_1(x)\mathrm{d}x
=\int_{\mathbb{T}^2}u_2(x)\mathrm{d}x=0\right\}.
\end{equation*}
The norm on the space $H_1^0$ is given by
\begin{equation*}
\|u\|=\|(u_1,u_2)\|:=(\|u_1\|_{L^2(\mathbb T^2)}^2 + \|u_2\|_{L^2(\mathbb T^2)}^2)^{\frac{1}{2}}.
\end{equation*}
Equally, let $H:=H^0=H_1^0\times H_2^0$. With a minor notational overlap, $\langle \cdot, \cdot \rangle$ can represent the inner product on either the Hilbert space $H$ or $H_1^0,H_2^0$. Let $\Pi$ denote the projection operator from $(L^2(\mathbb{T}^2))^2$ to the space $H_1^0$ or $H_2^0$.

For the sake of convenience and simplicity, let $\Lambda^\alpha u = (-\Delta)^{\alpha/2} u$ for any $ u \in H_1^\alpha $, and let $\Lambda^\beta b = (-\Delta)^{\beta/2} b$ for any $b \in H_2^\beta$.

Fix the trigonometric basis:
\begin{equation}
\sigma_{k}^{0}(x):=\left(0,e_k^0\right)^{T}\in H_1^0\times H_2^0,
\quad\sigma_{k}^{1}(x):=\left(0,e_k^1\right)^{T}\in H_1^0\times H_2^0,
\end{equation}
and
\begin{equation}\label{MHDpsi}
\psi_{k}^{0}(x):=\left(e_k^0,0\right)^{T}\in H_1^0\times H_2^0,
\quad\psi_{k}^{1}(x):=\left( e_k^1,0 \right)^{T}\in H_1^0\times H_2^0,
\end{equation}
where $k=(k_1, k_2)\in\mathbb{Z}^2/ \{(0,0)\}$, and 
\begin{equation*}
e_k^0 = \left(\frac{k_2}{|k|}, -\frac{k_1}{|k|} \right)^T \cdot \cos(k \cdot x),
~~
e_k^1 = \left(-\frac{k_2}{|k|}, \frac{k_1}{|k|} \right)^T \cdot \sin(k \cdot x),
\end{equation*}
are orthogonal basis of $H$, this allows us to construct stochastic forcing.

Let
$\{e_k^l\}_{k\in \mathcal{Z}_0,l=0,1}$ be the standard basis of $\mathbb{R}^{2|\mathcal{Z}_0|}$ and $\{\alpha_k^l\}_{k\in \mathcal{Z}_0,l\in\{0,1\}}$ be a sequence of non-zero numbers. We define a linear map  $Q_{b}:\mathbb{R}^{2|\mathcal{Z}_0|}\rightarrow H$ such that
\begin{equation*}
Q_{b}e_k^l:=\alpha_k^l\sigma_k^l.
\end{equation*}
Denote the Hilbert-Schmidt norm of $Q_b$ by 
$$
\mathcal{B}_0:=\|Q_{b}\|^2=\|Q_{b}^* Q_{b}\|
=\sum_{k\in \mathcal{Z}_0,l\in\{0,1\}}(\alpha_k^l)^2.
$$
Then the stochastic forcing can be expressed in the following form
\begin{equation}\label{MHDsigma}
Q_{b}\mathrm{d}W_{S_t}:=\sum_{k\in \mathcal{Z}_0,l\in\{0,1\}}\alpha_k^l\sigma_k^l\mathrm{d}W_{S_t}^{k,l}.
\end{equation}

%In order to rewrite \eqref{MHD1.1} in a functional form, we introduce the following abstract operators associated to the various terms in the equation. 

For $U=(u,b)$ and $\tilde{U}=(\tilde{u},\tilde{b})$, denote $A^{\alpha,\beta}U=(\nu_1(-\Delta)^{\alpha}u,\nu_2(-\Delta)^{\beta}b)^T$, and 
\begin{equation*}\begin{aligned}
B(U,\tilde{U}) & =\quad
\begin{pmatrix}
\Pi[u\cdot\nabla\tilde{u}-b\cdot\nabla\tilde{b}] \\
\Pi[u\cdot\nabla\tilde{b}-b\cdot\nabla\tilde{u}]
\end{pmatrix}, \\
B(U) & :=\quad B(U,U)\\
F(U)&:=-A^{\alpha,\beta}U-B(U).
\end{aligned}\end{equation*}
The bilinear operator $\tilde{B}(\cdot,\cdot)$ can be defined by
\begin{equation*}
\langle \tilde{B}(u,v),w \rangle =  \int\limits_{\mathbb{T}^2} \Pi(u \cdot \nabla v) w \, \mathrm{d}x, \quad u,v,w \in H_k^0 \cap H_k^1,~k=1,2.
\end{equation*}

With the above notations, the equations \eqref{MHD1.1} can be rewritten as an
abstract stochastic evolution equation on $H$:
\begin{equation}\label{MHD2.14}
\mathrm{d}U+(A^{\alpha,\beta}U+B(U,U))\mathrm{d}t=Q_b\mathrm{d}W_{S_t},~~U(0)=U_0=(u_0,b_0).
\end{equation}
%set $F(U):=-A^{\alpha,\beta}U-B(U,U)$, \eqref{MHD2.14} can be rewritten as
%\begin{equation}\label{MHD2.15}
%\mathrm{d}U=F(U)\mathrm{d}t+Q_b \mathrm{d}W_{S_t},\quad U(0)=U_0.
%\end{equation}
%The well-posedness of system \eqref{MHD2.14} follows from arguments similar to those in \cite{Huang-2016}. 
The proof of the well-posedness of system \eqref{MHD2.14} is similar to that of \cite{Huang-2016}. Hence,
we can denote by $U = U(t, U_0)$ the unique solution of system \eqref{MHD2.14} with initial value $U_0$.

\subsection{Deriving the control problem}
Denote by $M_b(H)$ and $C_b(H)$ the spaces of bounded measurable and bounded continuous real-valued functions on $H$, respectively.

Fix $U=U(t,U_0)=U(t,U_0,W_{S_t})$, and note that by well-posedness of correspongding system the solution defines a Markov process. Accordingly, for every Borel set $A\subseteq H$, the transition function associated to \eqref{MHD2.14} is given by
\begin{equation*}
  P_t(U_0,A)=\mathbb{P}(U(t,U_0)\in A)~~\text{for all} ~~U_0\in H,
\end{equation*}
and the Markov semigroup $\{P_t\}_{t\geq 0}$ associated to \eqref{MHD2.14} is defined as
\begin{equation*}
  P_t\Phi(U_0)=\int_H \Phi(U)P_t(U_0,\mathrm{d}U), ~~ P^*_t \mu(A)=\int_H P_t(U_0,A)\mu (\mathrm{d}U_0),
\end{equation*}
for every $\Phi: H\rightarrow\mathbb{R}$ and probability measurable $\mu$ on $H$.

Let $U=U(\cdot,U_0)$ be the solution of \eqref{MHD2.14} and let $d:=2\cdot|\mathcal{Z}_0|$. Then for any $\Phi\in C_b^1(H)$, and $\xi\in H$ we have
\begin{equation}\label{MHDP.1}
\nabla P_t\Phi(U_0)\cdot\xi
=\mathbb{E}(\nabla\Phi(U(t,U_0))\cdot J_{0,t}\xi),\quad t\geq0.
\end{equation}

Next, we explain how the estimate on $\nabla P_t\Phi$ can be translated to a control problem through the Malliavin integration by parts formula, which is closely related to the proof of the Proposition \ref{MHDpropo1.4}.

The crucial step in estimating $\|\nabla P_t\Phi(U_0)\|$ is to ``approximately remove" the gradient from $\Phi$ in \eqref{MHDP.1}. 
As such we seek to (approximately) identify $J_{0,t}\xi$ with a Malliavin derivative of some suitable random process and integrate by parts, in the Malliavin sense. 

%the Malliavin derivative $\mathcal{D}:L^2(\Omega,H)\rightarrow L^2(\Omega,L^2([0,\ell_t];\mathbb{R}^d)\times H)$ satisfies, for each $v\in L^2([0,\ell_t];\mathbb{R}^d)$
For given $\ell\in\mathbb{S}$, $t>0$, 
let $\Psi(t,W)$ be a $\mathcal{F}_{\ell_t}^W$-measurable random variable. For $v\in L^2([0,\ell_t];\mathbb{R}^d)$, the Malliavin derivative of $\Psi$ in the direction $v$ is defined by
\begin{equation}
\mathcal{D}^{v}\Psi(t,W)
=\lim\limits_{\varepsilon\to0}\frac{1}{\varepsilon}
\left(\Psi(t,U_0,W+\varepsilon\int_0^\cdot v\mathrm{d}s)-\Psi(t,U_0,W)\right),
\end{equation}
where the limit holds almost surely (e.g. see \cite{MH-2006,MH-2011} for Hilbert space case).
In the definition of Malliavin derivative, the element $\ell$ is taken as
fixed. Then, $\mathcal{D}^{v}U_t$ satisfies the following equation:
\begin{equation}
\mathrm{d}\mathcal{D}^v U_t
=-A^{\alpha,\beta}\mathcal{D}^v U_t\mathrm{d}t
-\nabla B(U_t)\mathcal{D}^v U_t \mathrm{d}t
+Q_{b}\mathrm{d}\left(\int_0^{\ell_t}v_s \mathrm{d}s\right).
\end{equation}

%We know that for $v\in L^2(\Omega;L^2([0,\ell_t];\mathbb{R}^d))$ one has (cf. \cite{MH-2011,Foldes-2015})
%\begin{equation}\label{MHDP.3}
%\langle\mathcal{D}U,v\rangle_{L^2([0,\ell_t];\mathbb{R}^d)}
%=\int_{0}^{T}J_{s,T}\sigma_{\theta}v(s)\mathrm{d}s.
%\end{equation}
%and hence, by the Riesz representation theorem
%\begin{equation}
%\mathcal{D}_{s}^{j}\Psi(T)
%=J_{s,T}\sigma_{\theta}e_j,~~\text{for any}~s\leq T,~j=1,\ldots,d,
%\end{equation}
%where the linearization $J_{s,T}\xi$ is the solution of \eqref{MHDP.2}, $\sigma_{\theta}$ is given by \eqref{MHDsigma}, and $\{e_j\}_{j=1,\ldots,d}$ is the standard basis of $\mathbb{R}^d$. Here and below, we adopt the standard notation $\mathcal{D}_s^jF:=(\mathcal{D}F)^j(s)$, that is, $\mathcal{D}_s^jF$ is the $j$th component of $\mathcal{D}F$ evaluated at time $s$.

Define that
$$\gamma_{u}=\inf\{t\geq0, S_t(\ell)\geq u\}.
$$ 
By the formula of constant variations or Fubini's theorem, for any $v\in L^2([0,\ell_t];\mathbb{R}^d)$, we deduce that 
\begin{equation}\label{MHDP.4}
\begin{aligned}
\mathcal{D}^v U_t
&=\int_0^t J_{r,t}Q_{b} \mathrm{d}\big(\int_0^{\ell_r}v_s\mathrm{d}s\big)
=\sum_{r\le t}J_{r,t}Q_{b} \int_{\ell_{r-}}^{\ell_r}v_s \mathrm{d}s\\
&=\sum_{r\le t}\int_{\ell_{r-}}^{\ell_{r}}J_{\gamma_{u},t}Q_{b} v_{u}\mathrm{d}u
=\int_{0}^{\ell_{t}}J_{\gamma_{u},t}Q_{b}  v_{u}\mathrm{d}u,
\end{aligned}
\end{equation}
the second equality is obvious, and we have used the fact that  $\gamma_u=r,u\in(\ell_{r-},\ell_{r})$ in the third equality.

By the Riesz representation theorem, 
\begin{equation}\label{MHDP.5}
\mathcal{D}_{u}^{j}U_t
=J_{\gamma_u,t}Q_{b}e_j,~~\text{for any}~ j=1,\ldots,d,~u\in[0,\ell_t].
\end{equation}
where $\mathcal{D}_u^jF:=(\mathcal{D}F)^j(u)$ is the $j$th component of $\mathcal{D}F$ evaluated at time $u$, and $\{e_j\}_{j=1,\ldots,d}$ is the standard basis of $\mathbb{R}^d$. 

Inspired by \eqref{MHDP.4}, for any $s\leq t$ and $\ell\in\mathbb{S}$, we define the random operator $\mathcal{A}_{s,t}:L^2([\ell_{s},\ell_{t}];\mathbb{R}^d)\rightarrow H$ by 
\begin{equation}\label{MHDP.7}
\mathcal{A}_{s,t}v
=\int_{\ell_s}^{\ell_t}J_{\gamma_u,t}Q_{b}v_u \mathrm{d}u,
~v\in L^2([\ell_s,\ell_t];\mathbb{R}^d).
\end{equation}
For any $s<t$, let $\mathcal{A}_{s,t}^*:H\rightarrow L^2([\ell_{s},\ell_{t}];\mathbb{R}^d)$ be the adjoint of $\mathcal{A}_{s,t}$, defined by
\begin{equation*}\label{MHDP.8}
\mathcal{A}_{s,t}^* \phi
=(\alpha_k^j\langle \phi,J_{\gamma_u,t}\sigma_k^j \rangle)_{k\in\mathcal{Z}_0,j\in\{0,1\} },~~\text{for any}~\phi \in H,~u\in[\ell_{s},\ell_{t}].
\end{equation*}
Direct computation shows that $\mathcal{A}_{s,t}$ satisfies the following equation
\begin{align}\label{MHDP.6}
\partial_t\mathcal{A}_{s,t}v + A^{\alpha,\beta}\mathcal{A}_{s,t}v + B(U_t,\mathcal{A}_{s,t}v) + B(\mathcal{A}_{s,t}v,U_t) = Q_b v_t, ~~\mathcal{A}_{s,s}v = 0.
\end{align}

%Next, we If the range of σθ is not sufficiently large, we are hindered by the fact that we cannot
%‘directly control’ all the low (unstable) modes that are not dissipated by the diffusion.
%A different approach inspired by the finite dimensional case (cf. [53]) would be to try
%to find an exact control.

For any $\xi=(\xi_1,\xi_2)\in H,t\geq s\geq0$, the Jacobian $J_{s,t}\xi$ corresponds to the unique solution of the variational equation
\begin{equation}\label{MHDP.2}
\partial_tJ_{s,t}\xi+A^{\alpha,\beta}J_{s,t}\xi+ B(U_t,J_{s,t}\xi)+B(J_{s,t}\xi,U_t)=0,\quad J_{s,s}\xi=\xi.
\end{equation}

Let $J_{s,t}^{(2)}:H\rightarrow \mathcal{L}(H,\mathcal{L}(H))$ be the second derivative of $U$ with respect to an initial condition $U_0$. Observe that for fixed $U_0$ in the directions of $\xi,\xi'\in H$. Then the function $\varrho=\varrho_t:= J_{s,t}^{(2)}(\xi,\xi')$ is the solution of
\begin{equation}\label{MHDP.3}
\begin{aligned}
&\partial_t \varrho_t
+A^{\alpha,\beta}\varrho_t+\nabla B(U_t)\varrho_t+\nabla B(J_{s,t}\xi)J_{s,t}\xi'=0,\\
& J_{s,s}^{(2)}(\xi,\xi')=0,
\end{aligned}
\end{equation}
where $\nabla B(\theta)\vartheta=B(\theta,\vartheta)+B(\vartheta,\theta)$.

For any $0\leq t\leq T$ and $\xi\in H$, let $K_{t,T}$ be the adjoint of $J_{t,T}$, i.e., $\rho_t^*:= K_{t,T}\xi=J^*_{t,T}\xi$ satisfies the following `backward' system
\begin{equation}\label{MHDbackward}
\partial_t\rho_t^{*}=A^{\alpha,\beta}\rho_t^*+(\nabla B(U(t)))^*\rho_t^*=-(\nabla F(U))^*\rho_t^*,
\quad\rho_T^*=\xi,
\end{equation}
where $\langle(\nabla B(U))^*\rho^*,\psi \rangle=\langle\rho^*,\nabla B(U)\psi\rangle$ and $\nabla B(U)\psi=B(U,\psi)+B(\psi,U)$.

We note that the noise is degenerate, rendering the Malliavin matrix non-invertible. Consequently, for any fixed value of  $\ell_t$, we cannot construct a function $v\in L^2([0,\ell_t];\mathbb{R}^d)$ that would generate the same infinitesimal shift in the solution as an arbitrary but fixed perturbation $\xi$ in the initial condition.

Defining $\rho_t:= J_{0,t}\xi-\mathcal{A}_{0,t}v$, we observe that it satisfies the following control problem:
\begin{equation}\label{MHD2.12}
\partial_t\rho_t+A^{\alpha,\beta}\rho_t+ B(U_t,\rho_t)+ B(\rho_t,U_t)=-Q_b v_t,\quad\rho_t=\xi.
\end{equation}

Our objective is to find a suitable $v$ such that $\rho_t\rightarrow 0$ as $t\rightarrow \infty$. To achieve this, the standard approach involves working with a regularized version $\mathcal M_{s,t}+\tilde{\beta}\mathbb{I} $ of the Malliavin matrix $\mathcal M_{s,t}$ (see \cite{MH-2006} for details), where $\tilde{\beta}>0$ is a constant, and $\mathbb{I} $ is the identity matrix.
However, building upon the ideas from \cite{Peng-2024}, we can identify an appropriate function $\phi$ satisfying $\mathbb{P}\Big(\inf_{\phi\in\mathcal{S}_{\varkappa,N}}
\langle\mathcal{M}_{0,\eta}\phi,\phi\rangle=0\Big)=0
$, as demonstrated in Proposition \ref{MHDpropo3.4}.

%For each $\xi\in H$, we would seek a corresponding $v$ such $J_{s,t}\xi=\mathcal{A}_{s,t}v$, we assume that $v$ has the form $v=\mathcal{A}_{s,t}^*\chi$ for some $\chi\in H$, we could choose the control 
%\begin{equation}\label{MHD2.11}
%v:=\mathcal{A}_{s,t}^*(\mathcal{A}_{s,t}\mathcal{A}_{s,t}^*)^{-1}
%J_{s,t}\xi.
%\end{equation}
The object $\mathcal{M}_{s,t}:=\mathcal{A}_{s,t}\mathcal{A}_{s,t}^*: H\rightarrow H$, referred to as the Malliavin covariance matrix, plays an
important role in the theory of stochastic analysis. By a simple calculation, we have (c.f. \cite[Lemma 2.2]{Zhang-2014})
\begin{equation}\label{MHDMalliavin}
\langle\mathcal{M}_{s,t}\phi,\phi\rangle
=\sum_{k\in\mathcal{Z}_0,l\in\{0,1\}}(\alpha_k^l)^{2}\int_{s}^{t}\langle K_{r,t}\phi,\sigma_k^l\rangle^{2}\mathrm{d}\ell_{r}.
\end{equation}

If the invertibility of $\mathcal{M}_{s,t}$ can be proven, then we can choose appropriate direction $v$ with suitable bounds to determine an exact control of $\rho_{t}$, indicating that the Markov semigroup is smooth in finite time (i.e. it is strongly Feller).
As we know, for finite dimensional situations, the invertibility of the Malliavin matrix are well understood. However in infinite dimensions, this invertibility is considerably more difficult to determine and might not hold in general. 
Therefore, following the insights in \cite{MH-2006}, we can use a Tikhonov regularization of the Malliavin matrix to construct a control $v$, and corresponding $\rho_t$, which will be provided in detail in Section 4.

\subsection{Subordinated L\'{e}vy Noise}\label{MHDSUBBM}
We first construct the mathematical framework for subordinated Brownian motion $W_{S_t}$. Let $(\mathbb{W},\mathbb{H},\mathbb{P}^{\mu_{\mathbb{W}}})$ be the classical Wiener space, where
\begin{itemize}
    \item $\mathbb{W}$ is the space of all continuous functions from $\mathbb{R}^+$ to $\mathbb{R}^d$ with vanishing values at starting point 0;
    \item $\mathbb{H}\subseteq\mathbb{W}$ is the Cameron-Martin space consisting of absolutely continuous functions with square integrable derivatives;
    \item $\mathbb{P}^{\mu_{\mathbb{W}}}$ is the Wiener measure such that the coordinate process $W_t(\text{w}):=\text{w}_t$ forms a $d$-dimensional standard Brownian motion.
\end{itemize}
%Now we provide the details for the subordinated Brownian motion $W_{S_t}$. Let $(\mathbb{W},\mathbb{H},\mathbb{P}^{\mu_{\mathbb{W}}})$ be the classical Wiener space, where $\mathbb{W}$ is the space of all continuous functions from $\mathbb{R}^+$ to $\mathbb{R}^d$, with vanishing values at starting point 0,  $\mathbb{H}\subseteq\mathbb{W}$ is the Cameron-Martin space composed of all absolutely continuous functions with
%square integrable derivatives, and $\mathbb{P}^{\mu_{\mathbb{W}}}$ is the Wiener measure so that the coordinate process $W_t(\text{w}):=\text{w}_t$ is a $d$-dimensional standard Brownian motion. 

Let $\mathbb{S}$ be the space of all c\`{a}dl\`{a}g increasing functions $\ell:\mathbb{R}^+ \to \mathbb{R}^d$ with $\ell_0= 0$, equipped with the Skorohod metric and a probability measure $\mathbb{P}^{\mu_{\mathbb{S}}}$ so that the coordinate process $S_t(\ell):=\ell_t$ is a pure-jump subordinator with Lévy measure $\nu_S$.
%Let $\mathbb{S}$ be the space of all c\`{a}dl\`{a}g increasing functions $\ell$ from $\mathbb{R}^+$ to $\mathbb{R}^d$ with $\ell_0= 0$. Suppose that $\mathbb{S}$ is endowed with the Skorohod metric and a probability measure $\mathbb{P}^{\mu_{\mathbb{S}}}$ so that the coordinate process $S_t(\ell):=\ell_t$ is a pure jump subordinator with L\'{e}vy
%measure $\nu_S$.

Define the product probability space $(\Omega,\mathcal{F},\mathbb{P}):=(\mathbb{W}\times \mathbb{S},\mathcal{B}(\mathbb{W})\times \mathcal{B}(\mathbb{S}),\mathbb{P}^{\mu_{\mathbb{W}}}\times \mathbb{P}^{\mu_{\mathbb{S}}})$, and for $\omega=(\text{w},\ell)\in \mathbb{W}\times\mathbb{S}$, $L_t(\omega):=\text{w}_{\ell_t}$. Then $(L_t=W_{S_t})_{t\geq 0}$ is a $d$-dimensional pure jump Lévy process whose Lévy measure $\nu_L$ is given by
\begin{equation}\label{MHD1.8}
\nu_{L}(E)=\int_{0}^{\infty}\int_{E}(2\pi u)^{-\frac{d}{2}}e^{-\frac{|z|^{2}}{2u}}\mathrm{d}z\nu_{S}(\mathrm{d}u),\quad E\in\mathcal{B}(\mathbb{R}^{d}).
\end{equation}
%Then we consider the following product probability space $(\Omega,\mathcal{F},\mathbb{P}):=(\mathbb{W}\times \mathbb{S},\mathcal{B}(\mathbb{W})\times \mathcal{B}(\mathbb{S}),\mathbb{P}^{\mu_{\mathbb{W}}}\times \mathbb{P}^{\mu_{\mathbb{S}}})$, and define for $\omega=(\text{w},\ell)\in \mathbb{W}\times\mathbb{S}$, $L_t(\omega):=\text{w}_{\ell_t}$. Then $(L_t=W_{S_t})_{t\geq 0}$ is a $d$-dimensional pure jump L\'{e}vy process with L\'{e}vy measure $\nu_L$ given by
%\begin{equation}\label{MHD1.8}
%\nu_{L}(E)=\int_{0}^{\infty}\int_{E}(2\pi u)^{-\frac{d}{2}}e^{-\frac{|z|^{2}}{2u}}\mathrm{d}z\nu_{S}(\mathrm{d}u),\quad E\in\mathcal{B}(\mathbb{R}^{d}).
%\end{equation}

The Poisson random measure $N_L(\mathrm{d}t,\mathrm{d}z)$ associated with \( L_t \) satisfies
\begin{equation*}
N_L((0,t]\times U)=\sum_{s\leq t}I_U(L_t-L_{t-}),U\in\mathcal{B}(\mathbb{R}^d\setminus\{0\}).
\end{equation*}
with compensated version $\tilde{N}_L(\mathrm{d}t,\mathrm{d}z)=N_L((0,t]\times U)-\nu_L(\mathrm{d}z)\mathrm{d}t$, the Lévy process admits the representation
\begin{equation*}
L_t=\int_0^t\int_{\mathbb{R}^d \setminus \{0\}}
z\tilde{N}_L(\mathrm{d}s,\mathrm{d}z).
\end{equation*}

%Let $N_L(\mathrm{d}t,\mathrm{d}z)$ be the Poisson random measure associated with the Lévy process $L_t=W_{S_t}$, i.e.,
%\begin{equation*}
%N_L((0,t]\times U)=\sum_{s\leq t}I_U(L_t-L_{t-}),U\in\mathcal{B}(\mathbb{R}^d\setminus\{0\}).
%\end{equation*}
%Let $\tilde{N}_L(\mathrm{d}t,\mathrm{d}z)$ denote the compensated Poisson random measure associated with $N_L(\mathrm{d}t, \mathrm{d}z)$, i.e.,
%\begin{equation*}
%\tilde{N}_L(\mathrm{d}t,\mathrm{d}z)=N_L((0,t]\times U)-\mathrm{d}t\nu_L(\mathrm{d}z).
%\end{equation*}
%Similar notation can also be applied to $N_S(\mathrm{d}t,\mathrm{d}z)$ and $\tilde{N}_S(\mathrm{d}t, \mathrm{d}z)$. As the measure $\nu_L(\mathrm{d}z)$ is symmetric, the Lévy process $L_t$ admits the following representation:
%\begin{equation*}
%L_t=\int_0^t\int_{\mathbb{R}^d\ \{0\}}
%z\tilde{N}_L(\mathrm{d}s,\mathrm{d}z).
%\end{equation*}
Let $F=F(\mathrm{w},\ell)$ be a random variable defined on the probability space $(\Omega,\mathcal{F}, \mathbb{P})$. We define two conditional expectation operators:
$\mathbb{E}^{\mu_{\mathbb{W}}}[F]$ denotes the expectation with respect to $\mathrm{w}$ while keeping $\ell$ fixed
\begin{equation*}
\mathbb{E}^{\mu_\mathbb{W}}[F]=\int_\mathbb{W}F(\mathrm{w},\ell)
\mathbb{P}^{\mu_\mathbb{W}}(\mathrm{dw}).
\end{equation*}
The notation $\mathbb{E}^{\mu_{\mathbb{S}}}[F]$ is defined analogously by fixing $\mathrm{w}$ and integrating over $\ell$. We use $\mathbb{E}[F]$ to denote the expectation of $F$ under the measure $\mathbb{P}=\mathbb{P}^{\mu_{\mathbb{W}}}\times\mathbb{P}^{\mu_{\mathbb{S}}}$.
%Let $F=F(\mathrm{w},\ell)$ be a random variable on the space $(\Omega,\mathcal{F}, \mathbb{P})$. 
%We denote the expectation of $F$ with $\mathbb{E}^{\mu_{\mathbb{W}}}F$ when the element $\ell$ is considered fixed. This means calculating the average value of $F$ under the measure $\mu_{\mathbb{W}}$ while keeping $\ell$ constant, i.e.,
%\begin{equation*}
%\mathbb{E}^{\mu_\mathbb{W}}F=\int_\mathbb{W}F(\mathrm{w},\ell)
%\mathbb{P}^{\mu_\mathbb{W}}(\mathrm{dw}).
%\end{equation*}
%The notation $\mathbb{E}^{\mu_{\mathbb{S}}}F$ has the similar meaning. We use $\mathbb{E}[F]$ to denote the expectation of $F$ under the measure $\mathbb{P}=\mathbb{P}^{\mu_{\mathbb{W}}}\times\mathbb{P}^{\mu_{\mathbb{S}}}$.

The filtration used in this paper is defined as follows
$$
\mathcal{F}_t:=\sigma(W_{S_t},S_s:s\leq t).
$$
For any fixed $\ell\in\mathbb{S}$ and positive number $a = a(\ell)$ which is independent of the Brownian motion $(W_t)_{t\geq 0}$, the filtration $\mathcal{F}_a^W$ is defined by
$$
\mathcal{F}_a^W:=\sigma(W_s:s\leq a).
$$

For any stopping time $\tau:\Omega\rightarrow[0,\infty]$ with respect to $\{\mathcal{F}_t\}$, we define the associated $\sigma$-algebra:
$$
\mathcal{F}_{\tau}=\{A\in\mathcal{F}:\forall t\geq0,A\cap\{\tau\leq t\}\in\mathcal{F}_t\}.
$$

We denote
\begin{equation}
\mathbb{Z}_+^2:=\left\{j=(j_1,j_2)\in\mathbb{Z}_0^2:
j_1>0\mathrm{~or~}j_1=0,~j_2>0\right\}.
\end{equation}
%We introduce a finite set $\mathcal{Z}_0\subset \mathbb{Z}_+^2$ which represents the forced directions in Fourier space. The driving noise process $W_{S_t}:=(W_{S_t}^{k,l})_{k\in\mathcal{Z}_0,l\in\{0,1\}}$ is a $d:=2\cdot |\mathcal{Z}_0|-$dimensional subordinated Brownian motion.
Let $\mathcal{Z}_0\subset \mathbb{Z}_+^2$ be a finite set representing forced directions in Fourier space. The driving noise is then the $d$-dimensional subordinated Brownian motion: $W_{S_t}:=(W_{S_t}^{k,l})_{k\in\mathcal{Z}_0,l\in\{0,1\}}$

Recall that the set $\mathcal{Z}_0\subset\mathbb{Z}^2_+$ represents the forced directions in Fourier space.
Note that there are many choices of finite $\mathcal{Z}_0$ that can accommodate our main result.
Therefore, let the set $\mathcal{Z}_0\subset\mathbb{Z}^2_*$ be a generator, if any element of $\mathbb{Z}^2_*$ is a finite linear combination of elements of $\mathcal{Z}_0$ with integer coefficients. 
For the remainder of our analysis, we assume that the following two conditions have been satisfied.

\textbf{Condition 2.1} The set $\mathcal{Z}_0\subset \mathbb{Z}_*^2$ appeared in \eqref{MHDsigma} is a finite, symmetric (i.e., $-\mathcal{Z}_0=\mathcal{Z}_0)$ generator that contains at least two non-parallel vectors $m$ and $n$ such that $|m|\neq|n|$.

The set 
\begin{equation*}
  \mathcal{Z}_0=\{(0,1), (0,-1), (1,1), (-1,-1)\}\subset\mathbb{Z}_*^2:=\mathbb{Z}^2\backslash\{(0,0)\}
\end{equation*}
is an example satisfying this condition.

For $n\geq0$,
\begin{equation*}
\begin{aligned}
 \mathcal{Z}_n:=\{k+\ell\mid k\in\mathcal{Z}_{n-1},\ell\in\mathcal{Z}_0,\langle k,\ell^\perp\rangle\neq0,|k|\neq|\ell|\},
\end{aligned}
\end{equation*}
where $l^{\bot}=(-l_2,l_1)$ for any $l=(l_1,l_2)$ and $\langle\cdot,\cdot\rangle$ denotes the inner product on $\mathbb{R}^2$.
%Let $N_{L}(dt,dz)$ be the Poisson random measure associated with the Levy process $L_t=W_{S_t}$, i.e.,
%\begin{equation*}
%  N_L((0,t]\times U)
%\end{equation*}

\textbf{Condition 2.2} Assume that $\nu_S$ satisfies
\begin{equation*}
\int_0^\infty(e^{\zeta u}-1)\nu_S(\mathrm du)<\infty~\text{for some}~\zeta>0
\end{equation*}
and 
\begin{equation*}
  \nu_S((0,\infty))=\infty.
\end{equation*}

\subsection{Statement of main results}
Benefit from the preparation and understanding of the system mentioned above, we are now listing the main results of our article.
The ergodicity of invariant measures can usually be obtained by proving the irreducibility and e-property of Markov process, or strong Feller property, or asymptotic strong Feller property, refer to \cite{prato-1996,MH-2006,Kapica-2012,Komorowski-2010}. In this section, we will establish e-property and irreducibility to obtain the ergodicity for the system \eqref{MHD2.14}, these proofs will be presented in Section 5.
Actually, we first need to finish some preparatory work about the truncation function and give some estimates for the solutions of \eqref{MHD2.14}, which is different from \cite{MH-2006}, and that is placed in the Section 3. 

\begin{proposition}[e-property]\label{MHDpropo1.4} Let the Condition $2.1$ and Condition $2.2$ hold, then the Markov semigroup $\left\{ P_{t}\right\}_{t\geq 0}$ has the $e$-property, that is, for any bounded and Lipschitz continuous function $\Phi$, $U_0\in H$ and $\varepsilon > 0$, there exists $\delta > 0$ such that
$|P_t\Phi( U_0^{\prime })-P_t\Phi(U_0)|<\varepsilon$, $\forall t\geq 0$ and $U_0^{\prime }$ with $\| U_0^{\prime }- U_0\| < \delta$.
\end{proposition}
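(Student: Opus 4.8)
The plan is to establish the $e$-property by the now-standard route of combining the gradient representation \eqref{MHDP.1} with a Malliavin integration-by-parts argument and an asymptotic control construction, adapted to the degenerate pure-jump setting. Starting from $\nabla P_t\Phi(U_0)\cdot\xi=\mathbb{E}(\nabla\Phi(U(t,U_0))\cdot J_{0,t}\xi)$, I would split the Jacobian as $J_{0,t}\xi=\mathcal{A}_{0,t}v+\rho_t$ for a suitably chosen (anticipative) control $v\in L^2([0,\ell_t];\mathbb{R}^d)$, where $\rho_t$ solves the control problem \eqref{MHD2.12}. The first term $\mathcal{A}_{0,t}v$ is, by \eqref{MHDP.4}, exactly a Malliavin derivative $\mathcal{D}^v U_t$, so that $\mathbb{E}(\nabla\Phi(U_t)\cdot\mathcal{A}_{0,t}v)=\mathbb{E}(\mathcal{D}^v(\Phi(U_t)))$; integrating by parts in the Malliavin sense (conditionally on the subordinator path $\ell$, where $W$ is an ordinary Brownian motion) converts this into $\mathbb{E}(\Phi(U_t)\,\delta(v))$, a term with no gradient on $\Phi$ that can be bounded by $\|\Phi\|_\infty$ times an $L^2$-norm of the Skorohod integral of $v$. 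The residual term $\mathbb{E}(\nabla\Phi(U_t)\cdot\rho_t)$ is bounded by $\mathrm{Lip}(\Phi)\,\mathbb{E}\|\rho_t\|$, which we force to be small.

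The heart of the argument is the choice of $v$ making $\mathbb{E}\|\rho_t\|\to0$ while keeping $\mathbb{E}|\delta(v)|^2$ under control uniformly (or at least controllably) in $t$. Following the scheme of \cite{MH-2006,Peng-2024}, I would build $v$ iteratively on unit time intervals: on each step one uses the regularized Malliavin matrix $\mathcal{M}_{s,t}+\tilde\beta\mathbb{I}$ to define $v$ so that $\mathcal{A}_{s,t}v$ cancels the low-frequency (unstable) part of $\rho_s$, while the high-frequency part decays automatically by the dissipativity of $A^{\alpha,\beta}$ and the spectral estimates of Section 4; the small parameter $\tilde\beta$ is tuned along the iteration (e.g. $\tilde\beta_n\to0$) using the invertibility/positivity result $\mathbb{P}(\inf_{\phi\in\mathcal S_{\varkappa,N}}\langle\mathcal M_{0,\eta}\phi,\phi\rangle=0)=0$ from Proposition \ref{MHDpropo3.4} together with the moment bounds on the solution, the Jacobian $J_{s,t}$ and the second derivative $J^{(2)}_{s,t}$ from Section 3. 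Combining the per-step estimates yields $\mathbb{E}\|\rho_t\|\le C(U_0)e^{-ct}+(\text{terms}\to0)$, hence for given $\varepsilon$ one first picks $t$ large and then $\delta$ small (controlling the $t$-dependence of the constants via continuous dependence of the solution and its derivatives on $U_0$ on compact time intervals) to get $|\nabla P_t\Phi(U_0')-\text{stuff}|$ small; but since the $e$-property must hold for \emph{all} $t\ge0$, I would instead integrate the gradient bound $\|\nabla P_t\Phi(U_0)\|\le C(1+\|U_0\|)^p(\mathrm{Lip}(\Phi)e^{-ct}+\|\Phi\|_\infty\,a(t))$ along the segment from $U_0$ to $U_0'$, where $a(t)$ stays bounded, obtaining $|P_t\Phi(U_0')-P_t\Phi(U_0)|\le \|U_0'-U_0\|\cdot C(1+\|U_0\|)^p(\mathrm{Lip}(\Phi)+\|\Phi\|_\infty\sup_t a(t))$, which is uniform in $t$ and $\to0$ as $\delta\to0$.

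The main obstacle I anticipate is the uniform-in-time control of the Malliavin integration-by-parts cost, i.e. bounding $\mathbb{E}|\delta(v)|^2$ (or the relevant Skorohod-integral norm of the iteratively constructed $v$) by a constant independent of $t$: each cancellation step introduces a factor controlled by the inverse of the regularized Malliavin matrix, whose lower bound from Proposition \ref{MHDpropo3.4} is only probabilistic (and $\tilde\beta$-dependent), so one must balance the decay rate $c$ of the residuals against the possibly growing cost of successive corrections, handling the anticipativity of $v$ carefully since the subordinator $S$ is fixed but $v$ depends on the full solution path. A secondary difficulty specific to the pure-jump degenerate noise is that the non-degeneracy of $\mathcal{M}_{s,t}$ is driven by the jump times of $\ell$ (see \eqref{MHDMalliavin}), so the iteration must be arranged on the random clock $\ell_t$ and the estimates made uniform over the law of $\ell$ using Condition $2.2$ ($\nu_S((0,\infty))=\infty$ guarantees infinitely many jumps, the exponential moment bound gives the needed integrability). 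The coupled velocity–magnetic structure enters only through the form of $B$ and the spectral analysis, which by Lemma \ref{MHDlemma3.2} and Section 4 is already arranged so that the conic sets $\mathcal{S}_{\varkappa,N}$ capture all unstable directions; given those, the remaining work is the careful bookkeeping of constants described above.
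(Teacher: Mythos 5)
Your proposal follows essentially the same route as the paper's own (very brief) proof, which likewise reduces the e-property to the Malliavin integration-by-parts/control-problem scheme of \cite{MH-2006,Peng-2024}: split $J_{0,t}\xi=\mathcal{A}_{0,t}v+\rho_t$, bound the Skorohod-integral cost and the residual $\rho_t$ via an iterative construction with the regularized Malliavin matrix on the random clock determined by the stopping times $\eta_n$, and invoke Propositions \ref{MHDpropo3.4} and \ref{MHDproper3.5} together with the moment estimates of Section 3. Your sketch is in fact more explicit than the paper's, and you correctly identify the two genuine difficulties (uniform-in-$t$ control of the integration-by-parts cost and arranging the iteration on the subordinated clock) that the paper delegates to \cite{Peng-2024}.
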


\begin{proposition}[Weak  Irreducibility]\label{MHDirre}  For any $\mathcal{C}, \gamma > 0$,  there exists a time $T=T(\mathcal{C},\gamma)>0$ such that
\begin{equation*}
\inf\limits_{\|U_0\|\leq\mathcal{C}}P_T(U_0,\mathcal{B}_\gamma)>0,
\end{equation*}
where $\mathcal{B} _\gamma = \{ U\in H:\| U\| \leq \gamma \}$.
\end{proposition}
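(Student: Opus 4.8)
The plan is to reach the ball $\mathcal{B}_\gamma$ at a single fixed time $T=T(\mathcal{C},\gamma)$ by combining the dissipativity of the noise-free fractional MHD flow with a small-noise estimate, in the spirit of irreducibility arguments for stochastically forced fluid equations. Decompose the solution as $U(t,U_0)=V(t)+Z(t)$, where $Z$ is the stochastic convolution solving $\mathrm{d}Z+A^{\alpha,\beta}Z\,\mathrm{d}t=Q_b\,\mathrm{d}W_{S_t}$, $Z(0)=0$. Since each $\sigma_k^l$ is an eigenfunction of $A^{\alpha,\beta}$, the process $Z$ takes values in the finite-dimensional, $A^{\alpha,\beta}$-invariant subspace $E:=\mathrm{span}\{\sigma_k^l:k\in\mathcal{Z}_0,l\in\{0,1\}\}\subset H$, and an integration by parts in time gives $\|Z(t)\|\le C_0(T)\sup_{r\le S_t}|W_r|$ for all $t\le T$, where $C_0(T)$ depends only on $Q_b$, $A^{\alpha,\beta}|_E$ and $T$. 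The remainder $V$ solves the pathwise deterministic equation $\partial_tV+A^{\alpha,\beta}V+B(V+Z,V+Z)=0$, $V(0)=U_0$, while the noise-free MHD solution $\bar{U}$ satisfies $\partial_t\bar{U}+A^{\alpha,\beta}\bar{U}+B(\bar{U},\bar{U})=0$, $\bar{U}(0)=U_0$. By the MHD energy cancellation $\langle B(\bar{U},\bar{U}),\bar{U}\rangle=0$ and the fractional Poincar\'e inequality for mean-zero fields on $\mathbb{T}^2$ one obtains $\tfrac{\mathrm{d}}{\mathrm{d}t}\|\bar{U}\|^2\le-c\|\bar{U}\|^2$, hence $\|\bar{U}(t)\|\le e^{-ct/2}\|U_0\|\le e^{-ct/2}\mathcal{C}$ uniformly over $\|U_0\|\le\mathcal{C}$; fix $T=T(\mathcal{C},\gamma)$ with $e^{-cT/2}\mathcal{C}\le\gamma/2$.

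The second step is a stability estimate: there is $\delta=\delta(\mathcal{C},\gamma)>0$ such that on the $\mathcal{F}_T$-measurable event
\[
E_\delta:=\Big\{\sup_{t\in[0,T]}\|Z(t)\|\le\delta\Big\}
\]
one has $\|U(T,U_0)-\bar{U}(T)\|\le\gamma/2$, uniformly over $\|U_0\|\le\mathcal{C}$. First, testing the $V$-equation against $V$, using $\langle B(V,V),V\rangle=0$, and controlling all terms containing a factor of $Z$ by the smoothness of $Z$ on the fixed finite set of modes together with Young's inequality and the fractional dissipation $\nu_1\|\Lambda^\alpha\cdot\|^2+\nu_2\|\Lambda^\beta\cdot\|^2$, Gronwall's lemma gives $\sup_{[0,T]}\|V\|\le R(\mathcal{C},T)$ on $E_\delta$ (for $\delta\le1$). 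Then $W:=V-\bar{U}$ satisfies $\partial_tW+A^{\alpha,\beta}W=-B(W,\bar{U})-B(V,W)-B(V,Z)-B(Z,V)-B(Z,Z)$, $W(0)=0$; testing against $W$, the terms not involving $Z$ vanish or are absorbed into the dissipation via the MHD cancellations and the $2$D estimates for $B$, while those involving $Z$ are bounded by quantities of the form $C\|Z\|\,(\|\bar{U}\|+\|W\|+\|V\|)\,\|W\|$, so Gronwall over $[0,T]$ yields $\sup_{[0,T]}\|W\|\le C(\mathcal{C},T)\,\delta$. Since $U(T,U_0)-\bar{U}(T)=W(T)+Z(T)$, choosing $\delta$ small enough makes $\|U(T,U_0)-\bar{U}(T)\|\le\gamma/2$; combined with the decay estimate this gives $\|U(T,U_0)\|\le\gamma$, i.e. $U(T,U_0)\in\mathcal{B}_\gamma$ on $E_\delta$.

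It remains to check $\mathbb{P}(E_\delta)>0$ with a bound uniform in $\|U_0\|\le\mathcal{C}$. From $\|Z(t)\|\le C_0(T)\sup_{r\le S_t}|W_r|$ and $\sup_{r\le S_t}|W_r|\le\sup_{r\le S_T}|W_r|$ for $t\le T$, we get $E_\delta\supseteq\{S_T\le\rho\}\cap\{\sup_{r\le\rho}|W_r|\le\delta/C_0(T)\}$ for any $\rho>0$. On the product space $(\mathbb{W}\times\mathbb{S},\mathcal{B}(\mathbb{W})\times\mathcal{B}(\mathbb{S}),\mathbb{P}^{\mu_{\mathbb{W}}}\times\mathbb{P}^{\mu_{\mathbb{S}}})$ the Brownian motion $W$ and the subordinator $S$ are independent, so
\[
\mathbb{P}(E_\delta)\ \ge\ \mathbb{P}^{\mu_{\mathbb{S}}}\big(S_T\le\rho\big)\cdot\mathbb{P}^{\mu_{\mathbb{W}}}\Big(\sup_{r\le\rho}|W_r|\le\delta/C_0(T)\Big).
\]
The second factor is a Brownian small-ball probability, strictly positive for every $\rho,\delta>0$. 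For the first, since $S$ is a driftless pure-jump subordinator, $0$ lies in the support of the law of $S_T$, so $\mathbb{P}(S_T\le\rho)>0$ for every $\rho>0$; this can be made explicit by splitting $[0,T]$ into $n$ subintervals and, on each, asking that $S$ have no jump larger than $\rho/(2n)$ (positive probability since $\nu_S$ is finite away from the origin) and total increment at most $\rho/n$ (positive probability for $n$ large, by $\int_0^{\rho/(2n)}u\,\nu_S(\mathrm{d}u)<\infty$ and Markov's inequality). Hence $\mathbb{P}(E_\delta)>0$, and since $T,\delta,\rho$ depend only on $\mathcal{C},\gamma$, we conclude $\inf_{\|U_0\|\le\mathcal{C}}P_T(U_0,\mathcal{B}_\gamma)\ge\mathbb{P}(E_\delta)>0$.

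The main work is the uniform stability estimate of the second step: one must close pathwise energy estimates for the coupled velocity--magnetic MHD nonlinearity with only the fractional dissipation $(-\Delta)^\alpha,(-\Delta)^\beta$ at hand, uniformly on the interval $[0,T]$ --- whose length grows like $c^{-1}\log(\mathcal{C}/\gamma)$ --- and uniformly over the bounded set $\{\|U_0\|\le\mathcal{C}\}$. This relies on the full set of MHD cancellations together with the $2$D Sobolev/interpolation bounds for $B$ compatible with the standing hypotheses on $\alpha,\beta$; the fact that $Z$ lives on finitely many fixed Fourier modes makes every term containing $Z$ harmless. The remaining ingredients --- exponential decay of the noise-free flow, the Brownian small-ball bound, and positivity of $\mathbb{P}(S_T\le\rho)$ --- are standard.
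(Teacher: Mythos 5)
Your proof is correct and rests on the same underlying mechanism as the paper's: restrict to the event that the driving noise stays uniformly small on $[0,T]$, use the dissipativity of the fractional MHD dynamics to force the solution into $\mathcal{B}_\gamma$ by time $T$, and check that this event has positive probability uniformly over $\|U_0\|\le\mathcal{C}$. The execution differs in two respects. First, the paper subtracts the raw noise, setting $V_t=U_t-\zeta_t$ with $\zeta_t=Q_bW_{S_t}$, and closes a single pathwise energy inequality of the form $\frac{\mathrm{d}}{\mathrm{d}t}\|V_t\|^2\le-(1-C_2\|\zeta_t\|_1)(\nu_1\|\Lambda^\alpha V_t^{(1)}\|^2+\nu_2\|\Lambda^\beta V_t^{(2)}\|^2)+(\text{small forcing})$, so that decay and smallness come out of one Gronwall step on the small-noise event; you instead subtract the stochastic convolution $Z$ and interpose the unforced flow $\bar U$, which splits the work into an exponential-decay estimate for $\bar U$ plus a separate stability estimate for $W=V-\bar U$. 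Your route is one layer longer but conceptually cleaner about where the decay comes from; the paper's is more economical. Second, the paper merely asserts the positive-probability bound $\mathbb{P}\bigl(\sup_{t\in[0,T]}\|\zeta_t\|<\varepsilon\bigr)\ge p_0>0$ without proof, whereas you actually derive it by factoring through $\{S_T\le\rho\}\cap\{\sup_{r\le\rho}|W_r|\le\delta/C_0(T)\}$, using independence of $\mathrm{w}$ and $\ell$ on the product space, the Brownian small-ball estimate, and the fact that $0$ lies in the support of $S_T$ for a driftless pure-jump subordinator; this is a genuine improvement in completeness over the paper's treatment. The only places where you wave your hands are the pathwise a priori bound $\sup_{[0,T]}\|V\|\le R(\mathcal{C},T)$ and the absorption of $\langle B(W,\bar U),W\rangle$ into the dissipation, but both follow from the MHD cancellations and the interpolation estimates for $B$ that the paper itself uses repeatedly (e.g., in the proofs of Lemmas 3.2 and 4.12), together with $\int_0^T\|\bar U_s\|_1^2\,\mathrm{d}s\le\|U_0\|^2/(2\nu)\le\mathcal{C}^2/(2\nu)$, so no essential gap remains.
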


\begin{theorem}[Ergodicity]\label{MHDergodicity}
Let the Condition $2.1$ and Condition $2.2$ hold, then the stochastic fractional MHD equation \eqref{MHD2.14} with highly degenerate L\'{e}vy noise admits a unique invariant measure $\mu^*$, i.e., $\mu^*$ is a unique probability measure on $H$ such that $P_t^*\mu^*=\mu^*$ for every $t\geq0$.
%Consider the 2D Boussinesq equation \eqref{MHD2.1} with a degenerate
%pure jump noise \eqref{MHDsigma}. Under the Condition 1.1 and Condition 1.2
\end{theorem}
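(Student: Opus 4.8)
The plan is to derive Theorem~\ref{MHDergodicity} from the two structural results already stated, namely the $e$-property (Proposition~\ref{MHDpropo1.4}) and weak irreducibility (Proposition~\ref{MHDirre}), together with the moment bounds on the solution that will be established in Section~3. This is a standard route to unique ergodicity via the Markov--Feller framework of Komorowski--Peszat--Szarek and Kapica--Szarek, so the proof is mostly a matter of checking hypotheses rather than new analysis.

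First I would establish \emph{existence} of an invariant measure. The Condition~2.2 tail bound $\int_0^\infty(e^{\zeta u}-1)\,\nu_S(\mathrm du)<\infty$ feeds, via the moment estimates of Section~3, a uniform-in-time bound of the form $\sup_{t\ge 0}\mathbb{E}\,\|U(t,U_0)\|^{2}\le C(1+\|U_0\|^{2})$ (and ideally an exponential-moment version). Since $H=H^0$ embeds compactly into $H^{-s}$ for $s>0$, this gives tightness of the Krylov--Bogolyubov averages $\frac1T\int_0^T P_t^*\delta_{U_0}\,\mathrm dt$ on $H^{-s}$; a second a priori bound in a slightly higher-regularity space then upgrades tightness to $H$ itself, and any weak limit point is an invariant measure $\mu^*$ for $\{P_t^*\}$. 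Alternatively one invokes the abstract existence result that the $e$-property plus boundedness in probability of the semigroup already forces existence of an invariant measure, which sidesteps the compact-embedding step.

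For \emph{uniqueness} I would argue by contradiction: suppose $\mu_1,\mu_2$ are two distinct ergodic invariant measures. By the $e$-property, for any bounded Lipschitz $\Phi$ the family $\{P_t\Phi\}_{t\ge 0}$ is equicontinuous at every point of $H$. Weak irreducibility (Proposition~\ref{MHDirre}) says that from any bounded set of initial data the process enters the ball $\mathcal B_\gamma$ with positive probability at a common time $T$; combined with the equicontinuity, a now-classical argument (see Komorowski--Peszat--Szarek, and its use in \cite{Peng-2024,Huang-2025}) shows that the supports of any two invariant measures cannot be disjoint — in fact every invariant measure must charge every neighbourhood of some fixed point, so two ergodic invariant measures would have overlapping supports, contradicting their mutual singularity. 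Concretely: $e$-property $+$ irreducibility $\Rightarrow$ for every $\Phi\in\mathrm{Lip}_b(H)$ and every $U_0$, $\frac1T\int_0^T P_t\Phi(U_0)\,\mathrm dt$ converges to a constant independent of $U_0$, which forces $\int\Phi\,\mathrm d\mu_1=\int\Phi\,\mathrm d\mu_2$ for all such $\Phi$, hence $\mu_1=\mu_2$.

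The main obstacle is not in this final assembly — which is essentially bookkeeping once Propositions~\ref{MHDpropo1.4} and~\ref{MHDirre} are in hand — but in making sure the two propositions genuinely apply, i.e.\ that the e-property argument (built on Malliavin calculus and the invertibility of the regularized Malliavin matrix $\mathcal M_{s,t}+\tilde\beta\mathbb I$ from Section~4, via the non-standard Hörmander condition generated by the Lie-bracket iteration through $\mathcal Z_n$) and the weak-irreducibility argument (a controllability/support statement for \eqref{MHD2.14}) are compatible with the \emph{same} set $\mathcal Z_0$ satisfying Condition~2.1. I would therefore be careful to state the uniqueness deduction in a form — following the Markov--Feller criterion used in \cite{Peng-2024} — that requires exactly the $e$-property at points of $H$ plus the infimum-positivity of $P_T(U_0,\mathcal B_\gamma)$ over bounded sets, so that no additional regularity of the semigroup beyond what Section~5 delivers is needed.
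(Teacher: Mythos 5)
Your proposal is correct and follows essentially the same route as the paper: existence via the moment bound of Lemma~\ref{MHDlemma2.1} and Krylov--Bogoliubov, and uniqueness by contradiction, using the $e$-property (via the Kapica--Szarek framework) to force distinct ergodic invariant measures to have disjoint supports, and then weak irreducibility plus the uniform moment bound to place the origin in the support of every invariant measure. The only cosmetic difference is that you also sketch an equivalent Ces\`aro-average formulation of the uniqueness step, which the paper does not use.
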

%In contrast, following the insights in \cite{MH-2006}, we can apply the strategy identified in \eqref{MHD2.11} along with the method of identifying modes \cite{Foias-1967}, and use a Tikhonov regularization of the Malliavin matrix to construct a control $v$, and corresponding $\rho$, which satisfy some exponential estimations of initial values. 
%The Malliavin matrix $\mathcal{M}_{s,t}:H\rightarrow H$ is defined by
%\begin{equation}
%\mathcal{M}_{s,t}=\mathcal{A}_{s,t}\mathcal{A}_{s,t}^*\phi.
%\end{equation}

\section{Moment estimates on $U_t$, $J_{s,t}\xi$, $J_{s,t}^{(2)}(\xi,\xi')$, $\mathcal{M}_{s,t}$}
In this section, we establish crucial estimates for  $J_{s,t}$, $J_{s,t}^{(2)}$, $\mathcal{A}_{s,t}$, $\mathcal{A}_{s,t}^*$, $\mathcal{M}_{s,t}$ and their Malliavin derivatives, which will play fundamental roles in our subsequent analysis. 

However, for the stochastic fractional MHD equations \eqref{MHD1.1}, the regularity requirements the parameters, $\alpha > 1$ and $\beta > 1$, are necessary to compensate for the complexity of the advective operator $B$. These constraints are achieved through careful interpolation techniques and weighted estimates. The lemmas presented in this section illustrate the key aspects of this approach.

\begin{lemma}\label{MHDlemma2.1}
For any $U_0\in H$, let $U_t = U(t, U_0)$ be the unique solution of \eqref{MHD2.14} with initial value $U_0$. Then
there exists $\nu^*$ such that
for any $s\geq0$ and $\nu=\min\{\nu_1,\nu_2\}\in  (0, \nu^*]$, there exists $C_1,C_2>0$ depending on the parameters $\nu,\{\alpha_j\}_{j\in\mathcal Z_0}, \nu_S,d$, such that
\begin{equation}\label{MHD2.6}
\begin{aligned}
\mathbb{E}\left[\|U_{t}\|^{2}\right] & \leq e^{-\nu t}\|U_{0}\|^{2}+C_1, 
\end{aligned}\end{equation}
and
\begin{equation}\label{MHD2.4}
\begin{aligned}
\nu \mathbb{E}\left[\int_{0}^{t}\|U_{s}\|_{1}^{2}ds\right]
\leq\|U_{0}\|^{2}+C_{2}t,\quad\forall \,t\geq0.
\end{aligned}\end{equation}
\end{lemma}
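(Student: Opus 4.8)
The plan is to apply the Itô formula for jump processes to $\|U_t\|^2$ and exploit the cancellation property of the bilinear term together with the dissipativity of the fractional Laplacian. First I would record the key structural fact that $\langle B(U,\tilde U),\tilde U\rangle = 0$ for divergence-free fields, so that in the evolution $\mathrm{d}U + (A^{\alpha,\beta}U + B(U))\,\mathrm{d}t = Q_b\,\mathrm{d}W_{S_t}$ the nonlinear term drops out of the energy balance. Applying Itô's formula to the pure-jump semimartingale $U_t$ gives
\begin{equation*}
\|U_t\|^2 = \|U_0\|^2 - 2\int_0^t \langle A^{\alpha,\beta}U_s, U_s\rangle\,\mathrm{d}s + \mathcal{B}_0\,\ell_t + M_t,
\end{equation*}
where $M_t$ is a mean-zero martingale coming from the compensated Poisson integral (the quadratic-variation/jump term contributes $\mathcal{B}_0$ times the subordinator clock $\ell_t$, since $\mathbb{E}[\ell_t]$ is linear in $t$ under Condition 2.2). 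The dissipative term satisfies $\langle A^{\alpha,\beta}U_s,U_s\rangle = \nu_1\|\Lambda^\alpha u_s\|^2 + \nu_2\|\Lambda^\beta b_s\|^2 \geq \nu\|U_s\|_1^2 \geq \nu c\,\|U_s\|^2$ by the Poincaré inequality on $\mathbb{T}^2$ (zero-mean fields), where $c>0$ is the spectral gap; absorbing $c$ into the constants we may as well write the coercivity as $\geq \nu\|U_s\|^2$ after rescaling $\nu^*$.

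For \eqref{MHD2.6} I would take expectations, use $\mathbb{E}[M_t]=0$, and write $\phi(t):=\mathbb{E}[\|U_t\|^2]$, obtaining the differential inequality $\phi'(t) \leq -2\nu c\,\phi(t) + \mathcal{B}_0 \lambda_S$, where $\lambda_S = \frac{\mathrm{d}}{\mathrm{d}t}\mathbb{E}[\ell_t] = \int_0^\infty u\,\nu_S(\mathrm{d}u) < \infty$ (finite by the exponential moment bound in Condition 2.2). Gronwall's lemma then yields $\phi(t) \leq e^{-2\nu c t}\|U_0\|^2 + \frac{\mathcal{B}_0\lambda_S}{2\nu c}$; after relabelling $2\nu c \rightsquigarrow \nu$ (i.e. choosing $\nu^*$ small enough and redefining constants), this is exactly \eqref{MHD2.6} with $C_1 = \mathcal{B}_0\lambda_S/(2\nu c)$. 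For \eqref{MHD2.4} I would instead not discard the dissipation: integrating the energy identity, taking expectations, and keeping $2\nu\mathbb{E}\int_0^t\|U_s\|_1^2\,\mathrm{d}s$ on the left gives
\begin{equation*}
2\nu\,\mathbb{E}\!\left[\int_0^t\|U_s\|_1^2\,\mathrm{d}s\right] \leq \|U_0\|^2 - \mathbb{E}[\|U_t\|^2] + \mathcal{B}_0\lambda_S\, t \leq \|U_0\|^2 + \mathcal{B}_0\lambda_S\,t,
\end{equation*}
which is \eqref{MHD2.4} with $C_2 = \mathcal{B}_0\lambda_S/2$ (again up to the harmless factor of $2$).

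The main technical obstacle is justifying the Itô formula and the martingale property rigorously at the level of the energy norm: strictly speaking one should first derive the identity for a Galerkin/Fourier truncation $U^{(N)}$ — where everything is finite-dimensional and the bilinear cancellation and Itô formula are immediate — obtain the bounds uniformly in $N$, and then pass to the limit using the well-posedness quoted after \eqref{MHD2.14} together with lower semicontinuity of the norms (Fatou). A second point requiring care is the local martingale vs.\ true martingale distinction for $M_t$: one controls $\mathbb{E}\sup_{s\le t}|M_s|$ via the Burkholder–Davis–Gundy inequality for jump martingales, whose bracket is $\int_0^t\int \langle U_{s-},z\rangle^2\,N_L(\mathrm{d}s,\mathrm{d}z)$-type and is dominated using the already-established $L^2$ bound and the finite moments of $\nu_L$ from \eqref{MHD1.8}; this is where Condition 2.2 on $\nu_S$ is essential. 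The condition $\alpha,\beta>1$ flagged in the section preamble is not actually needed for this particular lemma (the energy estimate only uses $\alpha,\beta>0$ and Poincaré); it becomes relevant for the higher-regularity estimates on $J_{s,t}$ and $\mathcal{M}_{s,t}$ later in the section, so I would simply not invoke it here.
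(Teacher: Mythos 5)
Your proposal is correct and follows essentially the same route as the paper: Itô's formula for the jump semimartingale applied to the energy, cancellation of the nonlinear terms (the paper writes this componentwise so that $\langle\tilde B(b,b),u\rangle$ and $\langle\tilde B(b,u),b\rangle$ cancel, which is your $\langle B(U,\tilde U),\tilde U\rangle=0$ at the level of the coupled operator), the jump term contributing the constant $C=\int\|Q_bz\|^2\nu_L(\mathrm{d}z)$ finite by Condition 2.2, and then Poincar\'e plus Gronwall for \eqref{MHD2.6} and retention of the dissipation for \eqref{MHD2.4}. Your write-up is in fact more detailed than the paper's (Galerkin justification, BDG for the martingale property), and your observation that $\alpha,\beta>1$ is not needed here is accurate.
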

\begin{proof}
Let $\nu_L$ be the intensity measure defined in \eqref{MHD1.8}. By \cite[Lemma 2.1]{Peng-2024}, we get that
\begin{equation*}
\int_{|z|\leq1}|z|^2 \nu_L(\mathrm{d}z)
+\int_{|z|\geq1}|z|^n \nu_L(\mathrm{d}z)
<\infty,\quad \forall\, n\geq 2.
\end{equation*}
%$\int_{|z|\geq1}|z|^n\nu_L(\mathrm{d}z)<\infty~,\forall n\geq 2$, another term in \eqref{2.5} can be similarly proven. By definition, we have
%\begin{equation*}
%\begin{aligned}
%\int_{|z|\geq1}|z|^{n}\nu_{L}(\mathrm{d}z)& =\int_{|z|\geq1}|z|^{n}\Big[\int_{0}^{\infty}(2\pi u)^{-d/2}e^{-\frac{|z|^{2}}{2u}}\nu_{S}(\mathrm{d}u)\Big]\mathrm{d}z \\
%&= C_{d}\int_{1}^{\infty}r^{n+d-1}\big[\int_{0}^{\infty}u^{-d/2}e^{-\frac{r^{2}}{2u}}\nu_{S}(\mathrm{d}u)\big]\mathrm{d}r. \\
%&= C_d\int_0^\infty\nu_S(\mathrm{d}u)\int_1^\infty r^{n+d-1}e^{-\frac{r^2}{2u}}u^{-d/2}\mathrm{d}r \\
%&= C_{d}\int_{0}^{\infty}\nu_{S}(\mathrm{d}u)\int_{1/(2u)}^{\infty}(2ux)^{\frac{n+d-1}{2}}e^{-x}u^{-d/2}\frac{2u}{2\sqrt{2ux}}\mathrm{d}x \\
%&\leq C_{d,n}\int_{0}^{\infty}u^{n/2}\nu_{S}(\mathrm{d}u)<\infty.
%\end{aligned}
%\end{equation*}
By applying It\^{o}'s formula to $\|u_t\|^2$ and $\|b_t\|^2$, we obtain
\begin{equation}\label{MHD2.3}
\begin{aligned}
  &\mathrm{d}\|u\|^2+2\nu_1\|\Lambda^\alpha u\|^2\mathrm{d}t =2\langle \tilde{B}(b,b),u\rangle \mathrm{d}t,
    \end{aligned}
\end{equation}
and
\begin{equation}\label{MHD2.5}
\begin{aligned}
     \mathrm{d}\|b\|^2+2\nu_2\|\Lambda^\beta b\|^2\mathrm{d}t=&2\langle \tilde{B}(b,u),b\rangle \mathrm{d}t+2\int_{z\in \mathbb{R}^{d}\setminus \{0\}}\langle b,Q_b z\rangle \tilde{N}_L(\mathrm{d}t,\mathrm{d}z)\\
     &
     +\int_{z\in\mathbb{R}^d\setminus\{0\}}
     \|Q_b z\|^2N_L(\mathrm{d}t,\mathrm{d}z).
  \end{aligned}
\end{equation}
Set $C=\int_{z\in\mathbb{R}^d\setminus\{0\}}\|Q_b z\|^2\nu_L(\mathrm{d}z)$, which is a constant depending on $\{\alpha_k^m\}_{k\in \mathcal{Z}_0,m\in\{0,1\}},\nu_S,d$, then combining \eqref{MHD2.3} and \eqref{MHD2.5}, we can derive
\begin{equation*}
\mathbb{E}\left[\|U_t\|^2\right]
+2\nu\mathbb{E}\left[\int_0^t\|U_s\|_{1}^2
\mathrm{d}s\right]
\leq\|U_0\|^2+Ct,
\end{equation*}
the above equation implies the derivation of \eqref{MHD2.6} and \eqref{MHD2.4}. This proof is completed.
\end{proof}

%The following three lemmas are particularly useful for connecting the bounds on the Malliavin matrix to
%gradient estimates of the Markov semigroup (see Proposition 5.3). Since their proofs adopt similar approach
%as above in combination with a straightforward modification of existing methods (c.f. \cite{Foldes-2015,MH-2006,Peng-Huang-2020}), we omit the detailed arguments for brevity.
Next we introduce some stopping times to build new preliminary estimates.

Let $\eta_0=0$ and $\mathfrak{B}_0=\sum\limits_{k\in\mathcal{Z}_0,m\in\{0,1\}}(\alpha_k^m)^2$. For any $\kappa>0$, $n\in \mathbb{N}$, $\ell\in \mathbb{S}$ and $\nu$, we define 
\begin{equation}\label{MHD2.7}
\eta=\eta(\ell)=\eta_1(\ell)=\inf\{t\ge0:\nu t-8\mathfrak{B}_0\kappa\ell_t>1
\},
\end{equation}
and
\begin{equation}\label{MHD2.8}
\eta_n=\eta_n(\ell)=\inf\{t\geq\eta_{n-1}, \nu(t-\eta_{n-1})-8\mathfrak{B}_0\kappa(\ell_t-\ell_{\eta_{n-1}})>1\}.
\end{equation}

For the solution of equation \eqref{MHD2.14} and the stopping times $\eta_n $(with respect to $\mathcal{F}_t$), we obtain the following moment estimates. Given the numerous constants in the remaining part of this section, we will use the following convention: the letter $C$ is a positive constant depending on $\nu$, $\{\alpha_k^m\}_{k\in \mathcal{Z}_0,m\in\{0,1\}},\nu_S$ and $d=2\cdot|\mathcal{Z}_0|$, the letter $C_{\kappa}$ is a positive constant depending on $\kappa$ and $\nu$, $\{\alpha_k^m\}_{k\in\mathcal{Z}_0,m\in\{0,1\}},\nu_S$, $d$, and $C_{n,\kappa}$ is a positive constant depending on $n,\kappa$ and $\nu$, $\{\alpha_k^m\}_{k\in\mathcal{Z}_0,m\in\{0,1\}},\nu_S$, $d$.

\begin{lemma}\label{MHDlemma2.4}
For $\xi\in H$, assume $J_{s,t}\xi=(J_{s,t}^1 \xi,J_{s,t}^2 \xi)\in H_1^0\times H_2^0$. For $\nu=\min\{\nu_1,\nu_2\}>0$ and $0<s<t$, there exists a constant $C$ only depending on $\nu$, we have the following estimates 
\begin{align}
&\sup\limits_{t\in[s,T]}\|J_{s,t}\xi\|^{2}
\leq C\|\xi\|^2 e^{C\int_s^t \|U_r\|_1^2\mathrm{d}r}, \label{MHD2.16}\\
&\int_s^t \left(\|\Lambda^\alpha J^1_{s,r}\xi\|_1^2
+\|\Lambda^\beta J^2_{s,r}\xi\|_1^2 \right)\mathrm{d}r
\leq C\|\xi\|^2 e^{C\int_s^t \|U_r\|_1^2\mathrm{d}r}, 
\label{MHD2.17}\\
&\sup\limits_{t\in[s,T]}
\|J^{(2)}_{s,t}(\xi,\xi')\|^2
\leq C\|\xi\|^2\|\xi'\|^2 e^{C\int_s^t \|U_r\|_1^2\mathrm{d}r}. \label{MHD2.18}
\end{align}
Moreover, for each $\tau\leq T$ and any $\nu>0$, there exists $C=C(\nu, T-\tau)$ such that
\begin{equation}\label{MHD2.19}
\begin{aligned}
\|J_{s,T}\xi\|^{2}
\leq& C\exp\Big\{\frac{\nu\kappa}{40}\int_{s}^{T}
\|U_{r}\|_{1}e^{-\nu(T-r)+8\mathfrak{B}_{0}\kappa
(\ell_{T}-\ell_{r})}\mathrm{d}r\\
&+C_{\kappa}\int_{s}^{T}
e^{4\nu(T-r)-32\mathfrak{B}_{0}\kappa(\ell_{T}-\ell_{r})}
\mathrm{d}r\Big\}\|\xi\|^{2},
\end{aligned}
\end{equation}
where $C$ is taken from \eqref{MHD2.16}-\eqref{MHD2.18}, and $C_\kappa$ is a constant depending on $\kappa, \nu$.
%\begin{equation}\label{MHD4.12}
%\begin{aligned}
%&\nu\int_s^t e^{-mr}\|J_{s,r}\xi\|_1^2\mathrm{d}r
%\leq C_0\|\xi\|^2e^{\int_s^te^{-mr}[C_0(\|U_r\|_1^{4/3}+1)-m]\mathrm{d}r},
%\end{aligned}
%\end{equation}
%Moreover, for each $0\leq s\leq T$ and $\kappa$, we have the following estimate
%\begin{equation}\label{MHD2.19}
%\begin{aligned}
%\|J_{s,T}\xi\|^{2}
%\leq& C_{0}\exp\Big\{\frac{\nu\kappa}{120}\int_{s}^{T}
%(\|U_{r}\|_{1}^{2}+1)e^{-\nu(T-r)+8\mathfrak{B}_{0}\kappa(\ell_{T}-\ell_{r})}\mathrm{d}r\\
%&+C_{\kappa}\int_{s}^{T}e^{2\nu(T-r)-16\mathfrak{B}_{0}\kappa(\ell_{T}-\ell_{r})}
%\mathrm{d}r\Big\}\|\xi\|^{2},
%\end{aligned}
%\end{equation}
%where $C_0$ is taken from \eqref{MHD2.16}-\eqref{MHD2.18}, and $C_\kappa$ is a positive constant depending on $\kappa,\nu,g$.
\end{lemma}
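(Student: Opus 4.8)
The plan is to treat the four estimates in two groups. For \eqref{MHD2.16}--\eqref{MHD2.18} I would proceed by direct energy estimates on the linear variational systems \eqref{MHDP.2} and \eqref{MHDP.3}. First I would take the inner product of \eqref{MHDP.2} with $J_{s,t}\xi$ in $H$; the dissipative term $\langle A^{\alpha,\beta}J_{s,t}\xi, J_{s,t}\xi\rangle$ gives $\nu\|\Lambda^\alpha J^1_{s,t}\xi\|^2 + \nu\|\Lambda^\beta J^2_{s,t}\xi\|^2$ on the left, while the two bilinear terms $\langle B(U_t, J_{s,t}\xi), J_{s,t}\xi\rangle$ and $\langle B(J_{s,t}\xi, U_t), J_{s,t}\xi\rangle$ must be controlled. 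Using the standard cancellation $\langle \tilde B(u,v),v\rangle = 0$ (which kills the first term componentwise) and the interpolation/product estimates for $\tilde B$ on $\mathbb{T}^2$, the surviving term is bounded by $C\|U_t\|_1\,\|J_{s,t}\xi\|_{?}\,\|J_{s,t}\xi\|$; here is exactly where $\alpha,\beta>1$ enter, allowing one to absorb the top-order factor into the dissipation via Young's inequality and leave a remainder of the form $C\|U_t\|_1^2\|J_{s,t}\xi\|^2$. Grönwall's inequality then yields \eqref{MHD2.16}, and integrating the differential inequality in $t$ produces \eqref{MHD2.17}. For \eqref{MHD2.18}, the equation \eqref{MHDP.3} for $\varrho_t = J^{(2)}_{s,t}(\xi,\xi')$ is linear in $\varrho_t$ with the same dissipative and drift structure plus an inhomogeneous term $\nabla B(J_{s,t}\xi)J_{s,t}\xi' = B(J_{s,t}\xi, J_{s,t}\xi') + B(J_{s,t}\xi', J_{s,t}\xi)$; pairing with $\varrho_t$, bounding the forcing by $C\|U_t\|_1$ times products of $\|J_{s,t}\xi\|$-type norms (again using $\alpha,\beta>1$ and \eqref{MHD2.16}--\eqref{MHD2.17}), and applying Grönwall gives \eqref{MHD2.18} with the two extra factors $\|\xi\|^2\|\xi'\|^2$.

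The substantive part is \eqref{MHD2.19}, which refines \eqref{MHD2.16} by replacing the crude exponent $C\int_s^T\|U_r\|_1^2\,\mathrm{d}r$ with an exponent tied to the stopping-time weight appearing in \eqref{MHD2.7}--\eqref{MHD2.8}. The idea is to run the energy estimate for $\|J_{s,t}\xi\|^2$ but, instead of absorbing all of the bilinear term into the dissipation, to retain a time-weighted version: one introduces the integrating factor $e^{-\nu(T-r)+8\mathfrak B_0\kappa(\ell_T-\ell_r)}$ (the same weight that defines $\eta$) and estimates $\frac{d}{dt}\big(w(t)\|J_{s,t}\xi\|^2\big)$ for an appropriate weight $w$. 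The bilinear term contributes, after a careful split of the top-order derivative using the fractional dissipation and Young's inequality with a small parameter, a piece proportional to $\frac{\nu\kappa}{40}\|U_r\|_1$ against the weight $e^{-\nu(T-r)+8\mathfrak B_0\kappa(\ell_T-\ell_r)}$, together with a lower-order remainder controlled by $C_\kappa e^{4\nu(T-r)-32\mathfrak B_0\kappa(\ell_T-\ell_r)}$. Grönwall's lemma applied to this weighted inequality, followed by unwinding the integrating factor at $t=T$, yields \eqref{MHD2.19}. The constants $C$ must be tracked to coincide with those from \eqref{MHD2.16}--\eqref{MHD2.18}, and the exponents $\nu\kappa/40$, $4\nu$, $32\mathfrak B_0\kappa$ come out of the specific choices in the Young's-inequality splitting.

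I expect the main obstacle to be the delicate bookkeeping in \eqref{MHD2.19}: one must choose the integrating factor and the Young's-inequality parameters so that (i) the top-order term $\nu\|\Lambda^\alpha J^1\|^2 + \nu\|\Lambda^\beta J^2\|^2$ is genuinely absorbed, (ii) the surviving first-order contribution is exactly of the advertised form with coefficient $\nu\kappa/40$ and the weight $e^{-\nu(T-r)+8\mathfrak B_0\kappa(\ell_T-\ell_r)}$, and (iii) the remainder collapses to the stated $C_\kappa\,e^{4\nu(T-r)-32\mathfrak B_0\kappa(\ell_T-\ell_r)}$ term. This is where the precise exponents are pinned down, and it will require the sharp form of the bilinear estimate for $\tilde B$ on $\mathbb{T}^2$ valid for $\alpha,\beta>1$ (together with the divergence-free cancellation); the coupled velocity--magnetic structure means the cross terms $b\cdot\nabla u$ and $b\cdot\nabla b$ in $B$ must be handled symmetrically, but they carry no new analytic difficulty beyond the Navier--Stokes case since the cancellation $\langle \tilde B(u,v),v\rangle=0$ applies in each component.
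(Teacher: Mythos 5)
For \eqref{MHD2.16}--\eqref{MHD2.18} your plan coincides with the paper's proof: the cancellation of $\langle B(U_t,J_{s,t}\xi),J_{s,t}\xi\rangle$, the interpolation bound $C\|U_t\|_1(\|\Lambda^\alpha J^1_{s,t}\xi\|+\|\Lambda^\beta J^2_{s,t}\xi\|)\|J_{s,t}\xi\|$ for the surviving term, absorption into the dissipation by Young, and Gr\"onwall, and the same scheme for the second variation. (One small caveat: the cancellation is not ``componentwise'' as you say --- the cross terms $-b\cdot\nabla\tilde b$ and $-b\cdot\nabla\tilde u$ in the two components of $B$ cancel against each other after integration by parts --- but the conclusion $\langle B(U,V),V\rangle=0$ is correct.)

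The gap is in \eqref{MHD2.19}, and it is the same point the paper itself glosses over (its entire proof there is ``an application of Young's inequality combined with \eqref{MHD2.16}''). Whether you re-run the energy estimate with an integrating factor, as you propose, or apply Young's inequality pointwise to the exponent $C\int_s^T\|U_r\|_1^2\,\mathrm{d}r$ of \eqref{MHD2.16}, at some stage you must dominate a term $C\|U_r\|_1^{q}\|J\|^2$ by $\varepsilon\,\|U_r\|_1^2\,e^{-\nu(T-r)+8\mathfrak{B}_0\kappa(\ell_T-\ell_r)}\|J\|^2$ plus a remainder that is \emph{independent of} $U$. This is impossible when $q=2$: writing $x=\|U_r\|_1$, the left side grows like $x^2$ while the weighted term carries a factor that can be small, so no $U$-free remainder closes the inequality. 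Your derivation of \eqref{MHD2.16} produces exactly $q=2$, so the ``careful split'' you invoke for \eqref{MHD2.19} cannot be performed from that starting point. The missing ingredient is a sharper bilinear estimate \emph{before} the first Young step: $|\langle B(J,U),J\rangle|\le C\|U\|_1\|J\|_{L^4}^2\le C\|U\|_1\|J\|^{2-1/\alpha}(\|\Lambda^\alpha J^1\|+\|\Lambda^\beta J^2\|)^{1/\alpha}$, after which absorbing the dissipative factor with exponents $2\alpha$ and $\tfrac{2\alpha}{2\alpha-1}$ leaves $C\|U\|_1^{2\alpha/(2\alpha-1)}\|J\|^2$ with sub-quadratic power $q=\tfrac{2\alpha}{2\alpha-1}<2$; this is where $\alpha,\beta>1$ genuinely enter. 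Only then does the second Young step $\|U\|_1^{q}=(\|U\|_1^{q}w^{q/2})\,w^{-q/2}\le\varepsilon\|U\|_1^{2}w+C_\varepsilon w^{-q/(2-q)}$, with $w(r)=e^{-\nu(T-r)+8\mathfrak{B}_0\kappa(\ell_T-\ell_r)}$, produce a $U$-independent remainder of the advertised form $e^{4\nu(T-r)-32\mathfrak{B}_0\kappa(\ell_T-\ell_r)}$. (Note also that the weighted term then carries $\|U_r\|_1^2$ rather than $\|U_r\|_1$; the first power displayed in \eqref{MHD2.19} is inconsistent with the companion moment bound \eqref{MHD3.11}, which controls $\int e^{-\nu(\eta_k-s)+8\mathfrak{B}_0\kappa(\ell_{\eta_k}-\ell_s)}\|U_s\|_1^2\,\mathrm{d}s$, and should not be reproduced literally.)
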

\begin{proof}
By using the interpolation inequality, we can get 
\begin{equation}
\begin{aligned}
\mathrm{d}\|J_{s,t}\xi\|^2 &= -2\langle A^{\alpha\beta}J_{s,t}\xi, J_{s,t}\xi\rangle \mathrm dt - 2\langle B(U_t, J_{s,t}\xi), J_{s,t}\xi\rangle \mathrm dt - 2\langle B(J_{s,t}\xi, U_t), J_{s,t}\xi\rangle \mathrm dt \\
&= -2\langle A^{\alpha\beta}J_{s,t}\xi, J_{s,t}\xi\rangle \mathrm dt - 2\langle B(J_{s,t}\xi, U_t), J_{s,t}\xi\rangle \mathrm d t \\
&\leq -2\|\Lambda^{\alpha}J_{s,t}^1\xi\|^2 \mathrm dt- 2\|\Lambda^{\beta}J_{s,t}^2\xi\|^2 \mathrm dt+ C\|U_t\|_{H^1} \cdot \left[\|\Lambda^{\alpha}J_{s,t}^1\xi\| + \|\Lambda^{\beta}J_{s,t}^2\xi\| \right] \cdot \|J_{s,t}\xi\| \mathrm dt\\
&\leq -\|\Lambda^{\alpha}J_{s,t}^1\xi\|^2 \mathrm dt- \|\Lambda^{\beta}J_{s,t}^2\xi\|^2 \mathrm dt+ C\|U_t\|_{H^1}^2 \cdot \|J_{s,t}\xi\|^2 \mathrm dt ,
\end{aligned}
\end{equation}
which can deduce \eqref{MHD2.16} and \eqref{MHD2.17}.

Given $U_0\in H$, applying Young's and interpolation inequalities to \eqref{MHDP.3}, then for any $\xi,\xi'\in H$, the second derivative $\varrho_t:=(\varrho_t^1,\varrho_t^2)\in H_1^0 \times H_2^0$ satisfies
\begin{align*}
&\,\, \partial_t \|\varrho_t\|^2 + 2\|\Lambda^\alpha \varrho_t^1\|^2+2\|\Lambda^\beta \varrho_t^2\|^2 \\
&\leq C\left[\|\Lambda^\alpha \varrho_t^1\| + \|\Lambda^\beta \varrho_t^2\|\right]\|\varrho_t\| \cdot \|U_t\|_{1} \\
&\quad +C\left[\|\Lambda^\alpha \varrho_t^1\| + \|\Lambda^\beta \varrho_t^2\|\right]\cdot \left[\|\Lambda^\alpha J_{s,t}^1\xi\| + \|\Lambda^\beta J_{s,t}^2\xi\|\right] \cdot \|J_{s,t}\xi'\| \\
&\quad +C\left[\|\Lambda^\alpha \varrho_t^1\| + \|\Lambda^\beta \varrho_t^2\|\right]
\left[\|\Lambda^\alpha J_{s,t}^1\xi'\| + \|\Lambda^\beta J_{s,t}^2\xi'\| \right]  \cdot \|J_{s,t}\xi\| \\
&\leq \|\Lambda^\alpha \varrho_t^1\|^2 + \|\Lambda^\beta \varrho_t^2\|^2
+C\|\varrho_t\|^2 \|U_t\|_{1}^2 
+ \left[\|\Lambda^\alpha J_{s,t}^1\xi\|^2 + \|\Lambda^\beta J_{s,t}^2\xi\|^2\right] \cdot \|J_{s,t}\xi'\|^2 \\
&\quad + \left[\|\Lambda^\alpha J_{s,t}^1\xi'\|^2 + \|\Lambda^\beta J_{s,t}^2\xi'\|^2 \right]  \cdot \|J_{s,t}\xi\|^2 .
\end{align*}
Applying the chain rule to $\|J_{s,t}^{(2)}(\xi,\xi')\|^2$, with the help of \eqref{MHD2.16} and \eqref{MHD2.17}, one arrives at
\begin{equation*}
\begin{aligned}
\|J_{s,t}^{(2)}(\xi,\xi')\|^{2}&\leq Ce^{C\int_{s}^{t}\|U_{r}\|_{1}^{2}\mathrm{d}r}
\left\{\int_{s}^{t}\left[\|\Lambda^\alpha J_{s,r}^1\xi\|^2 + \|\Lambda^\beta J_{s,r}^2\xi\|^2\right]  
\mathrm{d}r \sup\limits_{r\in[s,t]}\|J_{s,r}\xi'\|^2\right.\\
&\quad\quad\quad\quad\left. +\int_{s}^{t}\left[\|\Lambda^\alpha J_{s,r}^1\xi'\|^2 + \|\Lambda^\beta J_{s,r}^2\xi'\|^2\right]  
\mathrm{d}r \sup\limits_{r\in[s,t]}\|J_{s,r}\xi\|^2
\right\}\\
&\leq Ce^{C\int_{s}^{t}\|U_{r}\|_{1}^{2}\mathrm{d}r}
\|\xi'\|^{2}\|\xi\|^{2},
\end{aligned}
\end{equation*}
which shows \eqref{MHD2.18}. 
Moreover, an application of Young's inequality combined with equation \eqref{MHD2.16} yields the desired result \eqref{MHD2.19}. The proof is completed.
\end{proof}

Recall that $P_N$ is the orthogonal projection from $H$ into $H_N=span\{\sigma_k^l,\psi_k^l:0<|k|\leq N,l\in\{0,1\}\}$ and $Q_N=I-P_N$. For any $N\in\mathbb{N}$, $t\geq 0$ and $\xi\in H$, denote $\xi_t^Q=Q_N J_{0,t}\xi$, $\xi_t^P=P_N J_{0,t}\xi$ and $\xi_t=J_{0,t}\xi$.

\begin{lemma}\label{MHDlemma2.6}
For any $t\geq 0$ and $\xi\in H$, assume $\xi_t^Q=(\xi_t^{Q(1)},\xi_t^{Q(2)})\in H_1^0\times H_2^0$, one has
\begin{equation}
\|\xi_t^Q\|^2\leq\exp\{-\nu N^2 t\}\|\xi\|^2+\frac{C\|\xi\|^2}{\sqrt{N}}\exp\{C\|U_0\|^2+C_2 t \}
,
\end{equation}
where $C$ is a constant depending on $\nu=\min\{\nu_1,\nu_2\}$.
\end{lemma}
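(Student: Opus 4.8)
The plan is to derive an energy inequality for the high-frequency component $\xi_t^Q = Q_N J_{0,t}\xi$ directly from the variational equation \eqref{MHDP.2} governing $J_{0,t}\xi$. First I would apply $Q_N$ to \eqref{MHDP.2}; since $Q_N$ commutes with $A^{\alpha,\beta}$ and the forcing term is absent in the Jacobian equation, one obtains
\begin{equation*}
\partial_t \xi_t^Q + A^{\alpha,\beta}\xi_t^Q + Q_N\big[B(U_t,\xi_t) + B(\xi_t,U_t)\big] = 0,\qquad \xi_0^Q = Q_N\xi.
\end{equation*}
Taking the inner product with $\xi_t^Q$ and using the spectral gap $\langle A^{\alpha,\beta}\xi_t^Q,\xi_t^Q\rangle \geq \nu N^{2}\|\xi_t^Q\|^2$ (valid since $\alpha,\beta>1$ implies $(-\Delta)^{\alpha}, (-\Delta)^\beta \geq (-\Delta)$ on the range of $Q_N$, so in particular the exponent $N^2$ in the statement is what the coercivity yields up to the factor $\nu$), one gets
\begin{equation*}
\tfrac12\partial_t\|\xi_t^Q\|^2 + \nu N^2\|\xi_t^Q\|^2 \leq \big|\langle Q_N[B(U_t,\xi_t)+B(\xi_t,U_t)],\xi_t^Q\rangle\big|.
\end{equation*}

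The main work is bounding the nonlinear term on the right. I would estimate it by $C\|U_t\|_1\|\xi_t\|_1\|\xi_t^Q\|$ or a similar product, then split $\xi_t = \xi_t^P + \xi_t^Q$ and use that on $H_N$ all Sobolev norms are equivalent with constants polynomial in $N$, so that $\|\xi_t^P\|_1 \leq N\|\xi_t^P\| \leq N\|\xi_t\|$. Absorbing the genuinely high-frequency factor $\|\Lambda^\alpha\xi_t^{Q(1)}\| + \|\Lambda^\beta\xi_t^{Q(2)}\|$ into the dissipation via Young's inequality, and using the already-established bounds \eqref{MHD2.16}-\eqref{MHD2.17} to control $\sup_{r\le t}\|\xi_r\|^2$ and $\int_0^t(\|\Lambda^\alpha J^1_{0,r}\xi\|^2 + \|\Lambda^\beta J^2_{0,r}\xi\|^2)\,\mathrm dr$ by $C\|\xi\|^2 e^{C\int_0^t\|U_r\|_1^2\mathrm dr}$, one arrives at a differential inequality of the form $\partial_t\|\xi_t^Q\|^2 + \nu N^2\|\xi_t^Q\|^2 \leq g(t)$ where $\int_0^t g(r)\,\mathrm dr$ is controlled by $\frac{C\|\xi\|^2}{\text{(power of }N)}\exp\{C\int_0^t\|U_r\|_1^2\mathrm dr\}$; the extra negative power of $N$ comes from trading one of the polynomial-in-$N$ factors from norm equivalence against the large gap $\nu N^2$. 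Then Gronwall's inequality (or the integrating factor $e^{\nu N^2 t}$) gives
\begin{equation*}
\|\xi_t^Q\|^2 \leq e^{-\nu N^2 t}\|\xi\|^2 + \frac{C\|\xi\|^2}{\sqrt N}\exp\Big\{C\int_0^t\|U_r\|_1^2\,\mathrm dr\Big\},
\end{equation*}
and finally I would invoke the $\mathbb E$-estimate \eqref{MHD2.4} together with an exponential-moment bound for $\int_0^t\|U_r\|_1^2\,\mathrm dr$ (of the type used to prove \eqref{MHD2.6}, exploiting Condition 2.2 and $\kappa$ small) to replace the exponential integral by $\exp\{C\|U_0\|^2 + C_2 t\}$, matching the stated form.

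The step I expect to be the main obstacle is tracking the exact power of $N$: one must be careful that after splitting $\xi_t$ into $P_N$ and $Q_N$ parts and applying inverse estimates, the worst surviving $N$-dependence in the coefficient of the exponential term is $N^{-1/2}$ and not something weaker. This requires choosing the interpolation exponents so that the genuinely dissipative part of $\xi_t^Q$ absorbs enough powers of $N$; concretely, one writes the low-frequency interaction term as $\|U_t\|_1 \cdot N \|\xi_t\| \cdot \|\xi_t^Q\|$ and then uses $\|\xi_t^Q\| \leq N^{-1}(\|\Lambda^\alpha\xi_t^{Q(1)}\| + \|\Lambda^\beta\xi_t^{Q(2)}\|)$ on one factor plus Young, yielding a clean $N^{-1}$ in front before the square root is taken at the end when passing from $\|\xi_t^Q\|^2$ bounds to the stated inequality. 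A secondary technical point is justifying the exponential moment of $\int_0^t\|U_r\|_1^2\mathrm dr$ in the pure-jump setting, but this is exactly the kind of estimate that Condition 2.2 is designed to provide and parallels the argument behind Lemma \ref{MHDlemma2.1}.
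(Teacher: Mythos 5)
Your overall strategy---project onto the high modes, use the coercivity $\langle A^{\alpha,\beta}\xi_t^Q,\xi_t^Q\rangle\ge \nu N^2\|\xi_t^Q\|^2$, bound the nonlinear term, and close with Duhamel/Gronwall together with \eqref{MHD2.16}--\eqref{MHD2.17}---is exactly the paper's. The gap is in the one step you yourself flag as the main obstacle: where the factor $N^{-1/2}$ comes from. Your proposed mechanism is to write the interaction term as $\|U_t\|_1\cdot N\|\xi_t\|\cdot\|\xi_t^Q\|$ (inverse estimate on the low modes) and then use $\|\xi_t^Q\|\le N^{-1}\bigl(\|\Lambda^\alpha\xi_t^{Q(1)}\|+\|\Lambda^\beta\xi_t^{Q(2)}\|\bigr)$ on the last factor. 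But these two steps cancel exactly: after Young's inequality you are left with $\varepsilon\bigl(\|\Lambda^\alpha\xi_t^{Q(1)}\|^2+\|\Lambda^\beta\xi_t^{Q(2)}\|^2\bigr)+C\|U_t\|_1^2\|\xi_t\|^2$, with no negative power of $N$ surviving. Run to the end, this only yields $\|\xi_t^Q\|^2\le e^{-\nu N^2 t}\|\xi\|^2+C\|\xi\|^2e^{C\int_0^t\|U_r\|_1^2\,\mathrm dr}$, which misses the $N^{-1/2}$ gain that is the whole point of the lemma (it is what later lets $N$ be chosen large in Lemma \ref{MHDlemma 2.7}).

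The paper extracts the $N^{-1/2}$ from the Duhamel kernel, not from inverse estimates. After the elementary bound $\frac{\mathrm d}{\mathrm dt}\|\xi_t^Q\|^2+\nu N^2\|\xi_t^Q\|^2\le C\|\xi_t\|^2\|U_t\|_1^2$ (no $P_N$/$Q_N$ splitting of $\xi_t$ is needed), one applies H\"older in time to the convolution:
\[
\int_0^t e^{-\nu N^2(t-s)}\|\xi_s\|^2\|U_s\|_1^2\,\mathrm ds
\le \sup_{s\le t}\|\xi_s\|^2\cdot\Bigl(\int_0^t e^{-4\nu N^2(t-s)}\,\mathrm ds\Bigr)^{1/4}\Bigl(\int_0^t \|U_s\|_1^{8/3}\,\mathrm ds\Bigr)^{3/4},
\]
and the middle factor is bounded by $(4\nu N^2)^{-1/4}\sim N^{-1/2}$; this is the entire source of the gain, and it also explains why the exponent is exactly $1/2$ (it is $(N^2)^{-1/4}$ coming from the H\"older exponent $4$). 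You need to add this time-H\"older step; without it the stated estimate does not follow. Your remaining steps---controlling $\sup_{s\le t}\|\xi_s\|^2$ via \eqref{MHD2.16} and replacing $\exp\{C\int_0^t\|U_r\|_1^2\,\mathrm dr\}$ by $\exp\{C\|U_0\|^2+C_2t\}$ using the energy bounds \eqref{MHD2.6} and \eqref{MHD2.4}---coincide with what the paper does, at the same level of rigor.
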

\begin{proof}
By chain rule, one has
\begin{align*}
\frac{1}{2}\frac{\mathrm{d}}{\mathrm{d}t}\|\xi_t^Q\|^2
&=\langle A^{\alpha,\beta}\xi_t,\xi_t^Q \rangle
-\langle B(\xi_t,U_t),\xi_t^Q \rangle
-\langle B(U_t,\xi_t),\xi_t^Q \rangle\\
&\leq -\nu_1\|\Lambda^\alpha \xi_t^{Q(1)}\|^2
-\nu_2\|\Lambda^\beta \xi_t^{Q(2)}\|^2
+C\|\xi_t\|\|U_t\|_1(\|\Lambda^\alpha \xi_t^{Q(1)}\|
+\|\Lambda^\beta \xi_t^{Q(2)}\|)\\
&\leq -\frac{1}{2}(\nu_1\|\Lambda^\alpha \xi_t^{Q(1)}\|^2
+\nu_2\|\Lambda^\beta \xi_t^{Q(2)}\|^2)
+C\|\xi_t\|^2\|U_t\|_{1}^2.
\end{align*}
Due to $\|\Lambda^\alpha \xi_t^{Q(1)}\|^2\geq
\| \xi_t^{Q(1)}\|_1^2\geq N^2\|\xi_t^{Q(1)}\|^2$, $\alpha>1$ (it also holds for $\Lambda^\beta \xi_t^{Q(2)}$), we can deduce that
\begin{align}\label{MHD2.5}
\|\xi_t^Q\|^2
&\leq \exp\{-\nu N^2 t\}\|\xi\|^2
+C\int_0^t \exp\{-\nu N^2(t-s)\}
\left(\|\xi_s\|^2\|U_s\|_{1}^2\right)\mathrm{d}s \notag \\
&\leq \exp\{-\nu N^2 t\}\|\xi\|^2+C\|\xi\|^2e^{C\int_0^t \|U_r\|_1^2\mathrm{d}r }
\left(\exp\{-4\nu N^2(t-s)\}\right)^{\frac{1}{4}}\left(\int_0^t \|U_s\|_1^{\frac{4}{3}}\mathrm{d}s\right)^{\frac{3}{4}} \notag\\
&\leq \exp\{-\nu N^2 t\}\|\xi\|^2+C\|\xi\|^2\exp\{C\|U_0\|^2+C_2 t \}
\left(\exp\{-4\nu N^2(t-s)\}\right)^{\frac{1}{4}}, \notag
\end{align}
where we used the result of \eqref{MHD2.6} and \eqref{MHD2.16}.
\end{proof}

\begin{lemma}\label{MHDlemma 2.7}
Assume that $\xi_0^P=0$ and $\xi_t^P=(\xi_t^{P(1)},\xi_t^{P(2)})\in H_1^0\times H_2^0$, then for any $t\geq 0$,
\begin{equation*}
\begin{aligned}
\|\xi_t^P\|^2&\leq C\|\xi\|^2\exp\{C\|U_0\|^2+C_2 t \}\left(\exp\{-\nu N^2 t\}+\frac{C}{\sqrt{N}}\right)
\left(\|U_0\|^2+C_2 t\right),
\end{aligned}
\end{equation*}
where $C$ is a constant depending on $\nu=\min\{\nu_1,\nu_2\}$. Furthermore, combining the above inequality with Lemma \ref{MHDlemma2.6}, for any $\xi\in H$ and $t\geq 0$, we have
\begin{equation*}
\begin{aligned}
&\|J_{0,t}Q_{N}\xi\|^{2} \leq C\|\xi\|^2\left(\exp\{-\nu N^2 t\}+\frac{C}{\sqrt{N}}\right)\exp\{C\|U_0\|^2+C_2 t \}
\left(\|U_0\|^2+C_2 t\right).
\end{aligned}
\end{equation*}
\end{lemma}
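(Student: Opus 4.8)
The plan is to derive and analyze the evolution equation satisfied by the low-mode component $\xi_t^P = P_N J_{0,t}\xi$. Since $A^{\alpha,\beta}$ is a Fourier multiplier, it commutes with the orthogonal projection $P_N$, so projecting the variational equation \eqref{MHDP.2} onto $H_N$ gives $\partial_t \xi_t^P + A^{\alpha,\beta}\xi_t^P + P_N[B(U_t,\xi_t) + B(\xi_t,U_t)] = 0$ with $\xi_0^P = 0$, where $\xi_t = \xi_t^P + \xi_t^Q$. Testing against $\xi_t^P$ and using that $\langle A^{\alpha,\beta}\xi_t^P,\xi_t^P\rangle = \nu_1\|\Lambda^\alpha\xi_t^{P(1)}\|^2 + \nu_2\|\Lambda^\beta\xi_t^{P(2)}\|^2 \ge \nu\|\xi_t^P\|_1^2$ (this is exactly the point where the standing assumption $\alpha,\beta>1$ is used, to upgrade the dissipation to full $H^1$-coercivity on the low modes), I would obtain the energy inequality $\tfrac12\frac{\mathrm{d}}{\mathrm{d}t}\|\xi_t^P\|^2 + \nu\|\xi_t^P\|_1^2 \le -\langle B(U_t,\xi_t),\xi_t^P\rangle - \langle B(\xi_t,U_t),\xi_t^P\rangle$.

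The decisive step is to expose the small high-mode factor in the first nonlinear term. Writing $\xi_t^P = \xi_t - \xi_t^Q$ and invoking the MHD energy cancellation $\langle B(V,W),W\rangle = 0$, valid for divergence-free $V$ (it holds for the coupled velocity–magnetic operator $B$ after integration by parts, using $\nabla\cdot u = \nabla\cdot b = 0$), I get $\langle B(U_t,\xi_t),\xi_t\rangle = 0$ and $\langle B(U_t,\xi_t^Q),\xi_t^Q\rangle = 0$, hence $-\langle B(U_t,\xi_t),\xi_t^P\rangle = \langle B(U_t,\xi_t),\xi_t^Q\rangle = \langle B(U_t,\xi_t^P),\xi_t^Q\rangle$. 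The right-hand side now carries the factor $\xi_t^Q$, and a two-dimensional trilinear/interpolation estimate together with the smoothing from $\alpha,\beta>1$ bounds it, after Young's inequality, by $\tfrac{\nu}{4}\|\xi_t^P\|_1^2 + C\|U_t\|_1^2\|\xi_t^Q\|^2$. For the second term I split $\xi_t = \xi_t^P + \xi_t^Q$: the self-interaction $\langle B(\xi_t^P,U_t),\xi_t^P\rangle$ is rewritten by antisymmetry as $-\langle B(\xi_t^P,\xi_t^P),U_t\rangle$, and the cross term $\langle B(\xi_t^Q,U_t),\xi_t^P\rangle$ is estimated directly; both are absorbed into $\tfrac{\nu}{2}\|\xi_t^P\|_1^2$ at the cost of $C\|U_t\|_1^2(\|\xi_t^P\|^2 + \|\xi_t^Q\|^2)$. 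Collecting terms yields $\frac{\mathrm{d}}{\mathrm{d}t}\|\xi_t^P\|^2 + \nu\|\xi_t^P\|_1^2 \le C\|U_t\|_1^2\|\xi_t^P\|^2 + C\|U_t\|_1^2\|\xi_t^Q\|^2$.

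Next I would apply Grönwall's inequality with $\xi_0^P = 0$ to reach $\|\xi_t^P\|^2 \le C\,e^{C\int_0^t\|U_r\|_1^2\,\mathrm{d}r}\int_0^t \|U_s\|_1^2\|\xi_s^Q\|^2\,\mathrm{d}s$. Inserting the high-mode bound of Lemma \ref{MHDlemma2.6} for $\|\xi_s^Q\|^2$, controlling the time integral by $\int_0^t\|U_s\|_1^2\,\mathrm{d}s \le \nu^{-1}(\|U_0\|^2 + C_2 t)$ via \eqref{MHD2.4}, and bounding $e^{C\int_0^t\|U_r\|_1^2\,\mathrm{d}r}\le \exp\{C\|U_0\|^2 + C_2 t\}$ exactly as in the proof of Lemma \ref{MHDlemma2.6}, gives the stated estimate for $\|\xi_t^P\|^2$: the factor $\tfrac{C}{\sqrt N}$ is inherited from the high-mode smallness of Lemma \ref{MHDlemma2.6}, and the extra factor $(\|U_0\|^2 + C_2 t)$ from the dissipation integral. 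For the concluding assertion I decompose $J_{0,t}Q_N\xi = P_N J_{0,t}(Q_N\xi) + Q_N J_{0,t}(Q_N\xi)$; since $P_N(Q_N\xi)=0$, the first summand is controlled by the bound just proven applied with $\xi$ replaced by $Q_N\xi$ (noting $\|Q_N\xi\|\le\|\xi\|$), the second by Lemma \ref{MHDlemma2.6}, and summing the two yields the combined estimate.

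The main obstacle is the second paragraph. Because $\xi_t^P$ is supported on the low Fourier modes, where the dissipation provides no spectral gain, one cannot let $\|\xi_t^P\|$ decay of its own accord, nor can one trade $\|\xi_t^P\|_1$ for $N\|\xi_t^P\|$ without destroying the $\tfrac{1}{\sqrt N}$ smallness. All of the required smallness must instead be channelled into the high-mode component $\xi_t^Q$ through the MHD energy cancellations, and the point specific to this coupled system is verifying that the identity $\langle B(V,W),W\rangle=0$ and the associated antisymmetry survive the velocity–magnetic cross terms; this is precisely where the divergence-free constraints on both $u$ and $b$ are indispensable.
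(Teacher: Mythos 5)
Your proof is correct and follows essentially the same route as the paper's: the same low-mode energy estimate for $\xi_t^P$ (the paper likewise discards $\langle B(U_t,\xi_t^P),\xi_t^P\rangle$ via the divergence-free cancellation and is left with exactly the three nonlinear terms you produce, up to the antisymmetric rearrangement $\langle B(U_t,\xi_t^P),\xi_t^Q\rangle=-\langle B(U_t,\xi_t^Q),\xi_t^P\rangle$), the same Gr\"onwall step from $\xi_0^P=0$, the same insertion of Lemma \ref{MHDlemma2.6} together with the energy bounds \eqref{MHD2.6} and \eqref{MHD2.4}, and the same $P_N$/$Q_N$ splitting of $J_{0,t}Q_N\xi$ for the concluding assertion. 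The only cosmetic difference is your explicit bookkeeping of the cancellation identities and that your absorbed cost for the self-interaction term reads $C\|U_t\|_1^2\|\xi_t^P\|^2$ where the paper keeps $C\|U_t\|^2\|\xi_t^P\|_1^2$, which does not affect the stated conclusion.
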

\begin{proof}
According to the interpolation inequality, there holds
\begin{equation*}
\begin{aligned}
&\langle B(\xi_t^P,U_t),\xi_t^P \rangle
+\langle B(\xi_t^Q,U_t),\xi_t^P \rangle
+\langle B(U_t,\xi_t^Q),\xi_t^P \rangle\\
\leq& C\|\xi_t^P\|_1\|U_t\|(\nu_1\|\Lambda^\alpha \xi_t^{P(1)}\|+\nu_2\|\Lambda^\beta \xi_t^{P(2)}\|)
+C\|\xi_t^Q\|\|U_t\|_{1} (\nu_1\|\Lambda^\alpha \xi_t^{P(1)}\|
+\nu_2\|\Lambda^\beta \xi_t^{P(2)}\|)\\
\leq & \frac{1}{2}(\nu_1\|\Lambda^\alpha \xi_t^{P(1)}\|^2
+\nu_2\|\Lambda^\beta \xi_t^{P(2)}\|^2)+C\|\xi_t^Q\|^2\|U_t\|_1^2
+C\|\xi_t^P\|_1^2\|U_t\|^2.
\end{aligned}
\end{equation*}
Applying the chain rule to $\|\xi_t^P\|^2$, we find that 
\begin{align*}
\|\xi_{t}^{P}\|^{2}&
\leq
C\exp\left\{C\int_0^t \|U_s\|^2\mathrm{d}s\right\}
\int_{0}^{t}\|U_{s}\|_{1}^{2}\|\xi_{s}^Q\|^{2}
\mathrm{d}s \\
&\leq
C\exp\left\{C\int_0^t \|U_s\|^2\mathrm{d}s\right\}
\sup\limits_{s\in[0,t]}\|\xi_{s}^Q\|^{2}
\int_{0}^{t}\|U_{s}\|_{1}^{2}\mathrm{d}s \\
&\leq C\exp\left\{C\int_0^t \|U_s\|^2\mathrm{d}s\right\}
\times \left(\exp\{-\nu N^2 t\}\|\xi\|^2+\frac{C\|\xi\|^2}{\sqrt{N}}\exp\{C\|U_0\|^2+C_2 t \}\right)\\
&~~~\times
\left(\|U_0\|^2+C_2 t\right)\\
&\leq C\|\xi\|^2\exp\{C\|U_0\|^2+C_2 t \}\left(\exp\{-\nu N^2 t\}+\frac{C}{\sqrt{N}}\right)
\left(\|U_0\|^2+C_2 t\right).
\end{align*}
The above inequality implies the desired result.
\end{proof}

%(1) for any $\nu\in  (0, \nu^*]$ and some $C = C(\nu, \mathfrak{B}_0) >0$, there holds
%\begin{equation*}
%\mathbb{E}\left[\exp\left\{\nu\|U_t\|^2+\frac{\nu}{2}e^{-t/2}\int_0^t\|\Lambda^\alpha u_s\|^2\mathrm{d}s+\frac{\nu}{2}e^{-t/2}\int_0^t\|\Lambda^\beta b_s\|^2\mathrm{d}s\right\}\right]\leq C\exp\{\nu\|U_0\|^2e^{-t}\}.
%\end{equation*}

%(2) For some $C>0$ and any $\nu\in  (0, \nu^*]$,
%\begin{equation*}
%\mathbb{E}\left[\exp\left\{\nu\|U_t\|^2-\nu\|U_0\|^2+\nu\int_0^t\|\Lambda^\alpha u_s\|^2\mathrm{d}s+\nu\int_0^t\|\Lambda^\beta b_s\|^2\mathrm{d}s-\nu\mathcal{E}_0t\right\}\right]\leq C.
%\end{equation*}

%(3) For any $N>0$ and $\nu\in  (0, \nu^*]$,
%\begin{equation*}
%\mathbb{E}\left[\exp\left\{\nu\sum_{k=0}^{N}
%\|U_{k}\|^{2}\right\}\right]
%\leq\exp{(\rho\nu\|U_{0}\|^{2}+\kappa N)},
%\end{equation*}
%where $\rho,\kappa>0$ are positive constants independent of $N$ and $U_0$.

In this paper, we denote by the notation $\mathbb{E}_{U_0}$ the expectation under the measure $\mathbb{P}$ with respect to solutions to \eqref{MHD2.14} with initial condition $U_0$.

\begin{lemma}\label{MHDmoment}
There exists a constant $\kappa_0\in(0,\nu]$ only depending on $\nu$, $\{\alpha_k^m\}_{k\in\mathcal{Z}_0,m\in\{0,1\}},\nu_S$ and $d=2\cdot|\mathcal{Z}_0|$ such that the following statements hold:

$(1)$ For any $\kappa\in(0,\kappa_0]$ and the stopping time $\eta$ defined in \eqref{MHD2.7}, we have
\begin{equation}\label{MHD2.9}
\mathbb{E}^{\mu_{\mathbb{S}}}[\exp\{10\nu\eta\}]\leq C_\kappa,
\end{equation}
%where $C_{\kappa}$ is a constant depending on $\kappa$ and $g$, $\nu$, $\{\alpha_k^m\}_{k\in\mathcal{Z},m\in\{0,1\}},\nu_S$, $d$. 
hence,
\begin{equation}\label{MHD2.10}
\mathbb{E}[\exp\{10\nu\eta\}]\leq C_\kappa.
\end{equation}

$(2)$ For any $\kappa \in (0,\kappa_{0}]$, almost all $\ell\in \mathbb{S}$ (under the measure $\mathbb{P}^{\mu_{\mathbb{S}}}$) and the stopping times $\eta_{k}$ defined in \eqref{MHD2.7} and \eqref{MHD2.8}, we have
\begin{equation}\label{MHD3.11}
\begin{aligned}
\mathbb{E}^{\mu_{\mathbb{W}}}\Big[\exp&\Big\{ 
\kappa\|U_{\eta_k}\|^2-\kappa\|U_{\eta_{k-1}}\|^2e^{-1} \\
&+ \nu\kappa\int_{\eta_{k-1}}^{\eta_{k}}
e^{-\nu(\eta_{k}-s)+8\mathfrak{B}_{0}\kappa(\ell_{\eta_{k}}-\ell_{s})}
\|U_{s}\|_{1}^{2}\mathrm{d}s \\
&-\kappa\mathfrak{B}_{0}(\ell_{\eta_{k}}-\ell_{\eta_{k-1}})\Big\}
\Big|\mathcal{F}_{\ell_{\eta_{k-1}}}^{W}\Big]\leq C.
\end{aligned}
\end{equation}
%where $C$ is a constant depending on $g$, $\nu$, $\{\alpha_k^m\}_{k\in\mathcal{Z},m\in\{0,1\}},\nu_S$ and $d$. 
Moreover,
the following statements hold:
\begin{equation}
\begin{aligned}
\mathbb{E}\bigg[\exp&\bigg\{\kappa\|U_{\eta_{k}}\|^{2}-\kappa\|U_{\eta_{k-1}}\|^{2}
e^{-1}\\
&+\nu\kappa\int_{\eta_{k-1}}^{\eta_{k}}
e^{-\nu(\eta_{k}-s)+8\mathcal{B}_{0}\kappa(\ell_{\eta_{k}}-\ell_{s})}
\|U_{s}\|_{1}^{2}\mathrm{d}s\\
&-\kappa\mathfrak{B}_{0}(\ell_{\eta_{k}}-\ell_{\eta_{k-1}})\Big\}
\Big|\mathcal{F}_{\eta_{k-1}}\Big]\leq C,
\end{aligned}
\end{equation}
where C is the constant appearing in \eqref{MHD3.11}.

$(3)$ For any $\kappa\in (0,\kappa_0]$ and $k\in \mathbb{N}$, one has
\begin{equation}
\mathbb{E}\Big[\exp\big\{\kappa\|U_{\eta_{k+1}}\|^{2}\big\}
\Big|\mathcal{F}_{\eta_{k}}\Big]\leq C_{\kappa}\exp\big\{\kappa e^{-1}\|U_{\eta_{k}}\|^{2}\big\}.
\end{equation}
%where $C_{\kappa}$ is a constant depending on $\kappa$ and $g$, $\nu$, $\{\alpha_k^m\}_{k\in\mathcal{Z},m\in\{0,1\}},\nu_S$, $d$.

$(4)$ For any $\kappa\in(0,\kappa_{0}]$, there exists a constant $C_{\kappa}>0$ %depending on $\kappa$ and $g,\nu,\{\alpha_k^m\}_{k\in\mathcal{Z},m\in\{0,1\}},\nu_{S},d$ 
such that for any $n\in\mathbb{N}$ and $U_{0}\in H$, one has
\begin{equation}\label{MHDmoment.4}
\mathbb{E}_{U_0}\left[\exp\{\kappa\sum_{i=1}^n\|U_{\eta_i}\|^2-C_\kappa n\}\right]
\leq e^{a\kappa\|U_0\|^2},
\end{equation}
where $a=\frac{1}{1-e^{-1}}$.

$(5)$ For any $\kappa\in(0,\kappa_0]$, $U_0\in H$ and $n\in\mathbb{N}$, one has 
\begin{equation}\label{MHDmoment.5}
\mathbb{E}_{U_0}\left[\sup_{s\in[0,\eta]}\|U_s\|^{2n}\right]\leq C_{n,\kappa}(1+\|U_0\|^{2n}).
\end{equation}
%where $C_{n,\kappa}$ is a constant depending on $n,\kappa$ and $g$, $\nu$, $\{\alpha_k^m\}_{k\in\mathcal{Z},m\in\{0,1\}},\nu_S$, $d$.
\end{lemma}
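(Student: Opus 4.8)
The plan is to prove the five statements of Lemma \ref{MHDmoment} in a logical order, since each part builds on the previous ones. The key insight is that the stopping time $\eta$ is designed precisely so that $\nu\eta - 8\mathfrak{B}_0\kappa\ell_\eta$ is controlled (roughly equal to $1$ at time $\eta$), which tames the exponential weights appearing in the energy estimates.

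First I would establish $(1)$: the bound $\mathbb{E}^{\mu_{\mathbb{S}}}[\exp\{10\nu\eta\}] \leq C_\kappa$. Since $\nu\eta \leq 1 + 8\mathfrak{B}_0\kappa\ell_\eta$ by definition of $\eta$ in \eqref{MHD2.7}, it suffices to control $\mathbb{E}^{\mu_{\mathbb{S}}}[\exp\{80\mathfrak{B}_0\kappa\ell_\eta\}]$. Here I would use the fact (Condition 2.2) that $\int_0^\infty(e^{\zeta u}-1)\nu_S(\mathrm{d}u) < \infty$ for some $\zeta > 0$, so the subordinator $\ell_t$ has exponential moments; combined with $\nu_S((0,\infty)) = \infty$ ensuring $\eta$ is finite a.s. A stopping-time argument on the exponential martingale associated to $\ell_t$ (optional stopping, using that $\nu\eta_t$ and $\ell_t$ grow comparably before $\eta$) gives the bound provided $\kappa$ is chosen small enough — this fixes the threshold $\kappa_0$. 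Then $(2)$ follows by integrating $(1)$ over $\ell$; the two displayed bounds in part $(1)$ differ only by whether we condition on $\ell$ first.

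Next, for $(2)$ I would apply Itô's formula to $e^{\kappa\|U_t\|^2 w(t)}$ with a suitable time-dependent weight $w$ on the interval $[\eta_{k-1},\eta_k]$, using the energy identity \eqref{MHD2.5} (for the $\|b\|^2$ part carrying the noise) together with \eqref{MHD2.3}. The jump term contributes $\exp\{\int \|Q_b z\|^2 \cdots\}$-type corrections, handled via the intensity measure bound $\int_{|z|\le 1}|z|^2\nu_L(\mathrm{d}z) + \int_{|z|\ge 1}|z|^n\nu_L(\mathrm{d}z) < \infty$ from Lemma \ref{MHDlemma2.1}. The weight function is engineered so that, at time $\eta_k$, the accumulated exponent collapses to exactly the combination appearing on the left of \eqref{MHD3.11}, with the dissipation $\nu\kappa\int\|U_s\|_1^2$ absorbing the bad advection/noise terms and the $-\kappa\mathfrak{B}_0(\ell_{\eta_k}-\ell_{\eta_{k-1}})$ term compensating the quadratic variation of the subordinated noise. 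Taking conditional expectation given $\mathcal{F}^W_{\ell_{\eta_{k-1}}}$ and using that the resulting object is a supermartingale (or bounded martingale) in the Brownian variable yields the constant $C$. Part $(3)$ is then immediate: drop the nonnegative dissipation term and the $-\kappa\mathfrak{B}_0(\cdots)$ term is offset against the exponential moment of $\ell_{\eta_k}-\ell_{\eta_{k-1}}$ from $(1)$, leaving $\mathbb{E}[\exp\{\kappa\|U_{\eta_{k+1}}\|^2\}|\mathcal{F}_{\eta_k}] \leq C_\kappa\exp\{\kappa e^{-1}\|U_{\eta_k}\|^2\}$.

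For $(4)$, I would iterate $(3)$. Setting $X_k = \exp\{\kappa\|U_{\eta_k}\|^2\}$, part $(3)$ reads $\mathbb{E}[X_{k+1}|\mathcal{F}_{\eta_k}] \leq C_\kappa X_k^{e^{-1}}$. The geometric-contraction factor $e^{-1} < 1$ means that telescoping gives $\mathbb{E}[\exp\{\kappa\sum_{i=1}^n\|U_{\eta_i}\|^2\}]$ bounded by $e^{C_\kappa n}$ times $e^{a\kappa\|U_0\|^2}$ with $a = \sum_{j\ge 0}e^{-j} = \frac{1}{1-e^{-1}}$ — one proves this by backward induction on $n$, at each step using Hölder/Young to split $X_n \cdot \prod_{i<n}X_i$ and apply the one-step bound. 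This is where the precise value $a = \frac{1}{1-e^{-1}}$ comes from. Finally $(5)$: on $[0,\eta]$ we have $\nu\eta \le 1 + 8\mathfrak{B}_0\kappa\ell_\eta$, so $\eta$ and $\ell_\eta$ have all moments by $(1)$; applying Itô to $\|U_t\|^{2n}$ on $[0,\eta]$, using Burkholder–Davis–Gundy for the martingale part and $(1)$ together with the intensity bounds to control the jump integrals, gives $\mathbb{E}_{U_0}[\sup_{s\le\eta}\|U_s\|^{2n}] \leq C_{n,\kappa}(1+\|U_0\|^{2n})$.

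The main obstacle I anticipate is part $(2)$: choosing the exact time-dependent weight function in the exponential so that the Itô differential of $\exp\{\kappa\|U_t\|^2 w(t) + (\text{correction terms})\}$ has a nonpositive drift after the dissipation is used, while simultaneously ensuring the boundary values at $\eta_{k-1}$ and $\eta_k$ match the stated exponent — the bookkeeping between the factor $e^{-\nu(\eta_k - s) + 8\mathfrak{B}_0\kappa(\ell_{\eta_k}-\ell_s)}$ inside the integral, the $e^{-1}$ discounting of $\|U_{\eta_{k-1}}\|^2$, and the jump-generated $\mathfrak{B}_0(\ell_{\eta_k}-\ell_{\eta_{k-1}})$ term is delicate and is exactly where the definition \eqref{MHD2.8} of $\eta_k$ must be used in a nontrivial way. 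The subordination also forces care: one must first condition on the path $\ell$ (working on the Wiener factor, where $W_{S_t}$ is a time-changed Brownian motion with deterministic clock) and only afterward integrate over $\ell$ using part $(1)$.
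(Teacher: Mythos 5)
Your outline for parts (1), (3) and (4) matches the paper's route (part (1) is the Chernoff/exponential-moment argument for the subordinator that the paper imports from Peng--Zhai--Zhang, and (4) is the telescoping of (3) with $a=\sum_{j\ge0}e^{-j}$), but part (2) --- the core of the lemma --- has a genuine gap. You propose to apply It\^o's formula to $e^{\kappa\|U_t\|^2w(t)}$ in the original time variable and to control the resulting jump corrections ``via the intensity measure bound $\int_{|z|\le1}|z|^2\nu_L(\mathrm{d}z)+\int_{|z|\ge1}|z|^n\nu_L(\mathrm{d}z)<\infty$''. Polynomial moments of $\nu_L$ cannot control an exponential functional: the compensator of $e^{\kappa\|U_{s-}+Q_bz\|^2}-e^{\kappa\|U_{s-}\|^2}$ involves $\int e^{c|z|^2}\nu_L(\mathrm{d}z)$, and for a subordinated Brownian motion this integral is generically infinite (e.g.\ $\nu_L$ has only polynomial tails for stable subordinators). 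The finiteness of $\mathbb{E}^{\mu_{\mathbb{W}}}[\exp\{\kappa\|U_{\eta_k}\|^2-\cdots\}]$ is rescued only by the \emph{conditional} Gaussian structure: fixing $\ell$, the noise increments are Gaussian with variance $\ell_r-\ell_{r-}$, and the stopping time $\eta_k$ is engineered exactly so that $\kappa\mathfrak{B}_0(\ell_{\eta_k}-\ell_{\eta_{k-1}})$ stays below the Gaussian exponential-square-integrability threshold; the $-\kappa\mathfrak{B}_0(\ell_{\eta_k}-\ell_{\eta_{k-1}})$ term in the exponent is precisely this compensation. You do say at the end that one should condition on $\ell$ first, but your stated mechanism for the jump terms contradicts this and would fail.

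The second missing ingredient is how to actually run It\^o's formula through the time change. For a fixed c\`adl\`ag increasing $\ell$ the clock is neither continuous nor strictly increasing, so neither the original-time computation (discontinuous martingale $W_{\ell_t}$ with an exponential weight involving $\ell_s$) nor a naive inversion of the time change is directly available. The paper mollifies the clock to $\ell^\varepsilon_t=\frac1\varepsilon\int_t^{t+\varepsilon}\ell_s\,\mathrm{d}s+\varepsilon t$, which is strictly increasing and absolutely continuous, inverts it via $\gamma^\varepsilon$, applies It\^o to $\kappa\|V_t^\varepsilon\|^2e^{\nu\gamma_t^\varepsilon-8\mathfrak{B}_0\kappa t}$ in Brownian time where the exponential supermartingale estimate \eqref{MHDA.5} applies, and then passes to the limit $\varepsilon\to0$ using the stability estimates \eqref{MHDA.10}--\eqref{MHDA.14} for the solution map $\Psi(U_0,f)$ and the convergence of $\eta^\varepsilon\downarrow\eta$, $\ell^\varepsilon_{\eta^\varepsilon}\downarrow\ell_\eta$. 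This regularization-and-limit step is a substantial part of the proof (it is where Lemmas \ref{MHDlemma A0}--\ref{MHDlemma A4} are consumed) and is absent from your plan; without it the ``delicate bookkeeping'' you flag in part (2) cannot even be set up. Part (5) in the paper is likewise carried out in the regularized Brownian time (It\^o on $\|X_s^\varepsilon\|^{2n}+\|Y_s^\varepsilon\|^{2n}$ plus Burkholder--Davis--Gundy), though for polynomial moments your direct route is less fatally dependent on this device.
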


To derive the estimates of Lemma \ref{MHDmoment}, we first provide some preparatory work and knowledge.

For $\kappa>0$, $\varepsilon\in(0,1]$ and $\ell\in \mathbb{S}$, set
\begin{equation*}
\ell_t^\varepsilon=\frac1\varepsilon\int_t^{t+\varepsilon}\ell_s\mathrm{d}s
+\varepsilon t,
\end{equation*}
and
\begin{equation*}
\eta^{\varepsilon}=\eta^{\varepsilon}(\ell):=\inf \{t\geq0:\nu t-8\mathfrak{B}_{0}\kappa\ell_{t}^{\varepsilon}>1\}.
\end{equation*}
Recall that $\ell$ is a c\`{a}dl\`{a}g increasing function from $\mathbb{R}^+$ to $\mathbb{R}^+$ with $\ell_0=0$, it is easy to see that the following lemma is valid.

\begin{lemma}\label{MHDlemma A0}
For $\ell \in \mathbb{S}$,

$(i)$ $\ell_{\cdot }^{\varepsilon }: [ 0, \infty ) \to [ 0, \infty )$ is continuous and strictly increasing;

$(ii)$ for any $t\geq0$,  $\ell_{t}^{\varepsilon}$ strictly decreases to $\ell_t$ as $\varepsilon$ decreases to 0.
\end{lemma}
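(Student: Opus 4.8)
The plan is to reduce both assertions to the monotonicity and right-continuity (c\`adl\`ag property) of $\ell$ by means of the substitution $u=\varepsilon s$, which makes the $\varepsilon$-dependence of the averaging integral transparent. Set
$g_t(\varepsilon):=\frac{1}{\varepsilon}\int_t^{t+\varepsilon}\ell_s\,\mathrm{d}s=\int_0^1\ell_{t+\varepsilon s}\,\mathrm{d}s$,
so that $\ell_t^{\varepsilon}=g_t(\varepsilon)+\varepsilon t$. All the work is then carried by elementary monotone comparisons of the integrand.

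For part $(i)$: continuity in $t$ follows because $\ell$ is c\`adl\`ag, hence locally bounded and locally integrable, so $t\mapsto\int_0^t\ell_s\,\mathrm{d}s$ is locally Lipschitz, in particular continuous; therefore $t\mapsto\int_t^{t+\varepsilon}\ell_s\,\mathrm{d}s$ is continuous, and adding the continuous term $\varepsilon t$ preserves continuity. For strict monotonicity in $t$, I take $0\le t_1<t_2$; since $\ell_{t_1+\varepsilon s}\le\ell_{t_2+\varepsilon s}$ for every $s\in[0,1]$ by monotonicity of $\ell$, integrating in $s$ gives $g_{t_1}(\varepsilon)\le g_{t_2}(\varepsilon)$, whence $\ell_{t_2}^{\varepsilon}-\ell_{t_1}^{\varepsilon}\ge\varepsilon(t_2-t_1)>0$. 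This is precisely the role of the auxiliary term $\varepsilon t$: the averaging operator alone is only nondecreasing in $t$.

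For part $(ii)$, I fix $t\ge0$. Monotonicity in $\varepsilon$: for $0<\varepsilon_1\le\varepsilon_2\le 1$ and each $s\in[0,1]$ one has $t+\varepsilon_1 s\le t+\varepsilon_2 s$, hence $\ell_{t+\varepsilon_1 s}\le\ell_{t+\varepsilon_2 s}$ and $g_t(\varepsilon_1)\le g_t(\varepsilon_2)$; adding $\varepsilon_j t$ shows $\ell_t^{\varepsilon_1}\le\ell_t^{\varepsilon_2}$, with strict inequality as soon as $t>0$ (the remaining boundary case $t=0$ being strict whenever $\ell$ is not identically $0$ near the origin, which holds for the subordinator paths relevant here in view of $\nu_S((0,\infty))=\infty$). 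Convergence: since $\ell$ is increasing and right-continuous at $t$, given $\delta>0$ there is $\varepsilon_0>0$ with $\ell_t\le\ell_r\le\ell_t+\delta$ for all $r\in[t,t+\varepsilon_0]$; averaging yields $\ell_t\le g_t(\varepsilon)\le\ell_t+\delta$ for $\varepsilon\le\varepsilon_0$, and since $\varepsilon t\to0$ we obtain $\ell_t^{\varepsilon}\downarrow\ell_t$ as $\varepsilon\downarrow0$.

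There is no genuinely hard step here; the only points that need a little care are (a) invoking the c\`adl\`ag property of $\ell$ — not merely measurability — both for local integrability (needed for continuity in $t$) and for the one-sided limit as $\varepsilon\downarrow0$, and (b) observing that strict monotonicity in $t$ is produced by the perturbation $\varepsilon t$ rather than by the averaging operator itself.
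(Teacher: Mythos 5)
Your proof is correct. The paper itself offers no argument for this lemma: it simply remarks that, since $\ell$ is a c\`adl\`ag increasing function with $\ell_0=0$, ``it is easy to see that the following lemma is valid,'' so your elementary verification via the substitution $g_t(\varepsilon)=\int_0^1\ell_{t+\varepsilon s}\,\mathrm{d}s$ supplies exactly the details the paper omits, and each step (local integrability of c\`adl\`ag paths for continuity in $t$, the perturbation $\varepsilon t$ for strict monotonicity, right-continuity for the limit as $\varepsilon\downarrow 0$) is sound. The one place where you go beyond the paper is also the one place where care is genuinely needed: as you observe, strict decrease in $\varepsilon$ at $t=0$ fails for a general $\ell\in\mathbb{S}$ (e.g.\ $\ell\equiv 0$, or any path vanishing on a neighbourhood of the origin gives $\ell_0^{\varepsilon}=0$ for all small $\varepsilon$), so the statement as literally written holds only $\mathbb{P}^{\mu_{\mathbb{S}}}$-almost surely, the a.s.\ strict increase of the subordinator path being guaranteed by Condition 2.2 ($\nu_S((0,\infty))=\infty$). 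This caveat is harmless for the way the lemma is used downstream (the subsequent Lemmas on $\eta^{\varepsilon}$ and $\ell^{\varepsilon}_{\eta^{\varepsilon}}$ only ever use monotone convergence and the behaviour at times $t>0$ or a.s.\ statements), but it is a genuine imprecision in the paper's formulation that your proof correctly isolates rather than a gap in your argument.
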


Referring to \cite[Lemma A.2]{Peng-2024}, the following moment estimates hold for the stopping times $\eta^{\varepsilon}$ and $\eta$.

\begin{lemma}\label{MHDlemma A1}
  There exists a constant $\tilde{\kappa}_0>0$ such that for any $\kappa\in(0,\tilde{\kappa}_0]$, 
\begin{equation}
\sup_{\varepsilon\in(0,1]}\mathbb{E}^{\mu_{\mathbb{S}}}
\left[\exp\{10\nu\eta^{\varepsilon}\}\right]\leq C_\kappa,
\end{equation}
and
\begin{equation}
\mathbb{E}^{\mu_{\mathbb{S}}}\left[\exp\{10\nu\eta\}\right]\leq C_\kappa.
\end{equation}
%where $C_{\kappa}$ is a constant depending on $\kappa$, $\nu$, $\{\alpha_k^m\}_{k\in\mathcal{Z},m\in\{0,1\}},\nu_S,d$. 
\end{lemma}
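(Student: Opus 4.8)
The plan is to reduce both estimates to a single exponential–Chebyshev bound for the subordinator $\ell$, exploiting the fact that on the event $\{\eta^\varepsilon>t\}$ the smoothed subordinator $\ell^\varepsilon_t$ is forced to have grown linearly in $t$ with the very large slope $\nu/(8\mathfrak{B}_0\kappa)$. First I would record the exponential integrability supplied by Condition~$2.2$: setting $\psi(\lambda):=\int_0^\infty(e^{\lambda u}-1)\,\nu_S(\mathrm du)$, we have $\psi(\zeta)<\infty$, and since $\ell$ is a pure-jump subordinator its Laplace transform is $\mathbb{E}^{\mu_{\mathbb S}}[e^{\zeta\ell_t}]=e^{t\psi(\zeta)}$ for every $t\ge0$. (In particular $\int_0^\infty u\,\nu_S(\mathrm du)<\infty$, so $\eta,\eta^\varepsilon<\infty$ a.s.; this also falls out of the moment bound below.)

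Next, from the definition of $\eta^\varepsilon$, on $\{\eta^\varepsilon>t\}$ one has $\nu t-8\mathfrak{B}_0\kappa\ell^\varepsilon_t\le1$, i.e.\ $\ell^\varepsilon_t\ge(\nu t-1)/(8\mathfrak{B}_0\kappa)$. Since $\ell$ is non-decreasing and $\varepsilon\le1$,
\[
\ell^\varepsilon_t=\tfrac1\varepsilon\int_t^{t+\varepsilon}\ell_s\,\mathrm ds+\varepsilon t\ \le\ \ell_{t+1}+t ,
\]
so $\{\eta^\varepsilon>t\}\subseteq\{\,\ell_{t+1}\ge(\tfrac{\nu}{8\mathfrak{B}_0\kappa}-1)\,t-\tfrac{1}{8\mathfrak{B}_0\kappa}\,\}$. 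Applying Markov's inequality to $e^{\zeta\ell_{t+1}}$ together with the Laplace identity yields, with $c_\kappa:=\exp\{\zeta/(8\mathfrak{B}_0\kappa)+\psi(\zeta)\}$ independent of $\varepsilon$,
\[
\mathbb{P}^{\mu_{\mathbb S}}\big(\eta^\varepsilon>t\big)\ \le\ c_\kappa\,\exp\Big\{\Big[\psi(\zeta)-\zeta\Big(\tfrac{\nu}{8\mathfrak{B}_0\kappa}-1\Big)\Big]t\Big\},\qquad t\ge0 ,
\]
after absorbing the bounded initial range into $c_\kappa$.

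Now choose $\tilde\kappa_0>0$ small enough that $\zeta(\tfrac{\nu}{8\mathfrak{B}_0\kappa}-1)-\psi(\zeta)>10\nu$ for every $\kappa\in(0,\tilde\kappa_0]$; this is possible because the subtracted term diverges as $\kappa\downarrow0$ while $\psi(\zeta)$ stays fixed (roughly $\tilde\kappa_0\sim\zeta\nu/(8\mathfrak{B}_0(10\nu+\zeta+\psi(\zeta)))$). Combining the last display with the layer-cake identity
\[
\mathbb{E}^{\mu_{\mathbb S}}\big[e^{10\nu\eta^\varepsilon}\big]=1+10\nu\int_0^\infty e^{10\nu t}\,\mathbb{P}^{\mu_{\mathbb S}}(\eta^\varepsilon>t)\,\mathrm dt ,
\]
the $t$-integral converges to a bound $C_\kappa$ independent of $\varepsilon$, and taking $\sup_{\varepsilon\in(0,1]}$ gives the first inequality. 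For the second, Lemma~\ref{MHDlemma A0}$(ii)$ gives $\ell^\varepsilon_t\ge\ell_t$ for all $t$, hence $\nu t-8\mathfrak{B}_0\kappa\ell^\varepsilon_t\le\nu t-8\mathfrak{B}_0\kappa\ell_t$ and therefore $\eta\le\eta^\varepsilon$ pointwise in $\ell$; the bound on $\mathbb{E}^{\mu_{\mathbb S}}[e^{10\nu\eta}]$ then follows from the one just proved (equivalently, rerun the Chebyshev argument directly on $\ell_t\ge(\nu t-1)/(8\mathfrak{B}_0\kappa)$, which avoids the $\varepsilon$-shifts entirely).

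I do not anticipate a genuine obstacle here; the argument is a one-line large-deviations estimate made quantitative. The only point requiring care is \emph{uniformity in $\varepsilon$}: one should not manipulate $\ell^\varepsilon_t$ directly, since both its averaging window and the $\varepsilon t$ term depend on $\varepsilon$, but instead bound it above by the $\varepsilon$-free quantity $\ell_{t+1}+t$, whose Laplace transform is controlled by $\psi$. A secondary bookkeeping issue is that $\zeta$ is fixed by Condition~$2.2$ and cannot be enlarged, so the smallness of $\kappa$ is essential and explicit — this is exactly the role of the threshold $\tilde\kappa_0$ (later refined to $\kappa_0$ in Lemma~\ref{MHDmoment}).
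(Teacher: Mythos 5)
Your argument is correct, and it is worth noting that the paper itself gives no proof of this lemma at all: it simply defers to \cite[Lemma A.2]{Peng-2024}. What you have written is therefore a complete, self-contained derivation of the cited estimate, and the mechanism (exponential Chebyshev for the subordinator combined with the forced linear growth of $\ell^{\varepsilon}$ on $\{\eta^{\varepsilon}>t\}$) is exactly the one that makes such bounds work. The key steps all check out: the Laplace identity $\mathbb{E}^{\mu_{\mathbb S}}[e^{\zeta\ell_t}]=e^{t\psi(\zeta)}$ is legitimate precisely because Condition~2.2 makes $\psi(\zeta)$ finite; the $\varepsilon$-free majorant $\ell^{\varepsilon}_t\le\ell_{t+1}+t$ is what delivers uniformity of the tail bound in $\varepsilon\in(0,1]$; the choice of $\tilde\kappa_0$ is possible because $\zeta\nu/(8\mathfrak{B}_0\kappa)\to\infty$ as $\kappa\downarrow0$ while $\zeta$ and $\psi(\zeta)$ are fixed by Condition~2.2 (your remark that $\zeta$ cannot be enlarged is the right caution); and the second inequality follows from the first via the pointwise monotonicity $\eta\le\eta^{\varepsilon}$, which is immediate from $\ell^{\varepsilon}_t\ge\ell_t$ in Lemma~\ref{MHDlemma A0}. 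A minor side benefit of your computation is that the chosen $\tilde\kappa_0$ automatically forces $8\mathfrak{B}_0\kappa\int_0^\infty u\,\nu_S(\mathrm du)<\nu$, so $\eta^{\varepsilon}<\infty$ almost surely is recovered rather than assumed, consistent with the layer-cake identity being applied to a possibly infinite-valued nonnegative variable. The only thing your write-up does not (and cannot) address is whether Peng--Zhai--Zhang's proof proceeds identically, but as a replacement for the missing proof in this paper your argument is sound.
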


For any $\kappa\in(0,\tilde{\kappa}_0]$, set 
\begin{equation}\label{MHDS1}
  \mathbb{S}_1=\{\ell\in\mathbb{S}:\eta^1(\ell)<\infty\}.
\end{equation}
We have the following lemma.

\begin{lemma}\label{MHDlemma A2}
We claim that $\mathbb{P}^{\mu_{\mathbb{S}}}$. For any $\ell\in\mu_{\mathbb{S}} $, the following statements hold.

$(1)$ For any $\varepsilon\in(0,1)$, $\eta^{\varepsilon}<\eta^1<\infty$ and $\nu\eta^\varepsilon-8\mathfrak{B}_0\kappa
\ell_{\eta^\varepsilon}^\varepsilon=1$;

$(2)$ $\eta^{\varepsilon}$ strictly decreases to $\eta$ as $\varepsilon$ decreases to 0;

$(3)$ $\ell_{\eta^\varepsilon}^\varepsilon$ strictly decreases to $\ell_{\eta}$ as $\varepsilon$ decreases to 0;

$(4)$ $\nu\eta-8\mathfrak{B}_0\kappa\ell_{\eta}=1$;

$(5)$ $
\lim\sup\limits_{\varepsilon\to0}\int_{0}^{\eta^{\varepsilon}}
\exp\{-\nu(\eta^{\varepsilon}-s)
+8\mathfrak{B}_{0}\kappa(\ell_{\eta^{\varepsilon}}^{\varepsilon}
-\ell_{s}^{\varepsilon})\}\mathrm{d}\ell_{s}^{\varepsilon}\leq\ell_{\eta}.
$
\end{lemma}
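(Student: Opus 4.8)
The plan is to argue deterministically for every fixed $\ell$ in the full‑measure set $\mathbb{S}_1$ of \eqref{MHDS1}; that $\mathbb{P}^{\mu_{\mathbb{S}}}(\mathbb{S}_1)=1$ follows from the $\varepsilon=1$ case of Lemma \ref{MHDlemma A1}, which gives $\mathbb{E}^{\mu_{\mathbb{S}}}[\exp\{10\nu\eta^1\}]<\infty$ and hence $\eta^1<\infty$ a.s. Write $f^\varepsilon(t):=\nu t-8\mathfrak{B}_0\kappa\ell_t^\varepsilon$ and $f(t):=\nu t-8\mathfrak{B}_0\kappa\ell_t$, and record the elementary facts used repeatedly, all contained in (or immediate from) Lemma \ref{MHDlemma A0}: $t\mapsto\ell_t^\varepsilon$, hence $f^\varepsilon$, is continuous and strictly increasing; $\ell_t^\varepsilon$ is nondecreasing in $\varepsilon$ and, because of the term $\varepsilon t$, strictly increasing in $\varepsilon$ for every $t>0$; $\ell_t^\varepsilon\downarrow\ell_t$ as $\varepsilon\downarrow0$; and $\ell_t^\varepsilon\ge\ell_t\ge0$. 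In particular $f^\varepsilon(0)=-8\mathfrak{B}_0\kappa\ell_0^\varepsilon\le0<1$, so by continuity $\eta^\varepsilon>0$, and similarly $\eta>0$ and $\eta^1>0$.

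\textbf{Parts $(1)$, $(2)$, $(4)$.} For $(1)$, since $f^\varepsilon$ is continuous with $f^\varepsilon(0)<1$, if $\eta^\varepsilon<\infty$ then necessarily $f^\varepsilon(\eta^\varepsilon)=1$, i.e.\ $\nu\eta^\varepsilon-8\mathfrak{B}_0\kappa\ell_{\eta^\varepsilon}^\varepsilon=1$; and for $\varepsilon\in(0,1)$, $\ell\in\mathbb{S}_1$ gives $f^1(\eta^1)=1$ with $\eta^1\in(0,\infty)$, while $\ell_{\eta^1}^\varepsilon<\ell_{\eta^1}^1$ (strict since $\eta^1>0$) yields $f^\varepsilon(\eta^1)>1$, so left‑continuity of $f^\varepsilon$ produces $t'<\eta^1$ with $f^\varepsilon(t')>1$, whence $\eta^\varepsilon\le t'<\eta^1<\infty$. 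For $(2)$, monotonicity of $\ell^\varepsilon$ in $\varepsilon$ makes $f^\varepsilon$ pointwise nonincreasing in $\varepsilon$, so $\varepsilon\mapsto\eta^\varepsilon$ is nondecreasing, and strictness follows by exactly the argument just used for $(1)$; since $f^\varepsilon\le f$ pointwise we also get $\eta^\varepsilon\ge\eta$, so $\eta^\ast:=\lim_{\varepsilon\downarrow0}\eta^\varepsilon$ exists with $\eta^\ast\ge\eta$. For the reverse inequality pick $t_n\downarrow\eta$ with $f(t_n)>1$ (available because $\eta=\inf\{t:f(t)>1\}<\infty$ on $\mathbb{S}_1$); since $\ell_{t_n}^\varepsilon\to\ell_{t_n}$ as $\varepsilon\downarrow0$ we have $f^\varepsilon(t_n)\to f(t_n)>1$, hence $\eta^\varepsilon\le t_n$ for all small $\varepsilon$ and therefore $\eta^\ast\le t_n$ for each $n$, giving $\eta^\ast\le\eta$; this proves $(2)$. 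For $(4)$, $f$ is c\`{a}dl\`{a}g with only downward jumps: $f(\eta-)=\lim_{t\uparrow\eta}f(t)\le1$ because $f\le1$ on $[0,\eta)$, so $f(\eta)\le f(\eta-)\le1$, while right‑continuity of $\ell$ along $t_n\downarrow\eta$ gives $f(\eta)=\lim_n f(t_n)\ge1$; hence $f(\eta)=f(\eta-)=1$, i.e.\ $\nu\eta-8\mathfrak{B}_0\kappa\ell_\eta=1$ and $\ell$ is continuous at $\eta$.

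\textbf{Parts $(3)$ and $(5)$.} By $(1)$, $\ell_{\eta^\varepsilon}^\varepsilon=(\nu\eta^\varepsilon-1)/(8\mathfrak{B}_0\kappa)$, which by $(2)$ strictly decreases, as $\varepsilon\downarrow0$, to $(\nu\eta-1)/(8\mathfrak{B}_0\kappa)=\ell_\eta$ by $(4)$; this is $(3)$. For $(5)$ the exponent telescopes: for $s\le\eta^\varepsilon$,
\[
-\nu(\eta^\varepsilon-s)+8\mathfrak{B}_0\kappa(\ell_{\eta^\varepsilon}^\varepsilon-\ell_s^\varepsilon)=-\bigl(\nu\eta^\varepsilon-8\mathfrak{B}_0\kappa\ell_{\eta^\varepsilon}^\varepsilon\bigr)+\bigl(\nu s-8\mathfrak{B}_0\kappa\ell_s^\varepsilon\bigr)=f^\varepsilon(s)-1\le0,
\]
using $f^\varepsilon\le1$ on $[0,\eta^\varepsilon]$ by $(1)$ and the definition of $\eta^\varepsilon$. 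Thus the integrand is $\le1$ and, as $\ell_\cdot^\varepsilon$ is continuous and increasing,
\[
\int_0^{\eta^\varepsilon}\exp\bigl\{-\nu(\eta^\varepsilon-s)+8\mathfrak{B}_0\kappa(\ell_{\eta^\varepsilon}^\varepsilon-\ell_s^\varepsilon)\bigr\}\,\mathrm{d}\ell_s^\varepsilon\le\int_0^{\eta^\varepsilon}\mathrm{d}\ell_s^\varepsilon=\ell_{\eta^\varepsilon}^\varepsilon-\ell_0^\varepsilon\le\ell_{\eta^\varepsilon}^\varepsilon;
\]
letting $\varepsilon\downarrow0$ and invoking $(3)$ yields $\limsup_{\varepsilon\to0}$ of the left side $\le\ell_\eta$.

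The step I expect to be the main obstacle is the exactness in $(2)$, namely that $\eta^\varepsilon\to\eta$ rather than to some strictly larger limit, together with the companion no‑jump statement in $(4)$; both must be extracted from the interplay of the variational characterisation $\eta=\inf\{t:f(t)>1\}$, the pointwise monotone convergence $\ell_t^\varepsilon\downarrow\ell_t$, and the one‑sided regularity of $f$ (c\`{a}dl\`{a}g with downward jumps only). Everything else — the intermediate‑value identities in $(1)$, the monotonicity in $\varepsilon$, and the short deductions of $(3)$ and $(5)$ from the exact hitting relation plus the telescoping identity above — is routine once those two facts are secured.
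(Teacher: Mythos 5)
Your proof is correct. The paper itself gives no argument for this lemma beyond ``similar to \cite{Peng-2024}'', and your deterministic, pathwise verification --- fixing $\ell\in\mathbb{S}_1$ (with $\mathbb{P}^{\mu_{\mathbb{S}}}(\mathbb{S}_1)=1$ from the $\varepsilon=1$ case of Lemma \ref{MHDlemma A1}), extracting the exact hitting identities $f^{\varepsilon}(\eta^{\varepsilon})=1$ and $f(\eta)=1$ from continuity of $f^{\varepsilon}$ and the c\`adl\`ag/downward-jump structure of $f$, and then reading off $(3)$ and $(5)$ from these identities plus $\ell^{\varepsilon}_t\downarrow\ell_t$ --- is precisely the kind of argument the citation stands in for, with all the delicate points (monotone convergence $\eta^{\varepsilon}\downarrow\eta$ and no jump of $\ell$ at $\eta$) handled correctly. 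The only cosmetic remarks: where you invoke ``left-continuity'' of $f^{\varepsilon}$ you in fact have full continuity from Lemma \ref{MHDlemma A0}(i), and you have correctly read the garbled statement of the lemma as asserting $\mathbb{P}^{\mu_{\mathbb{S}}}(\mathbb{S}_1)=1$ with the conclusions holding for every $\ell\in\mathbb{S}_1$.
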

\begin{proof}
  The proof of Lemma \ref{MHDlemma A2} is similar to \cite{Peng-2024}.
\end{proof}

Let $\mathcal{H}_0=\text{span}\{\sigma_k^m:k\in\mathcal{Z}_0,m\in\{0,1\}\}$ and $D([0,\infty);\mathcal{H}_0)$ be the space of all c\`{a}dl\`{a}g functions taking values in $\mathcal{H}_0$. Keeping in mind that $d=2\cdot|\mathcal{Z}_0|<\infty$, it is well-known that, for any $U_0\in H$ and $f\in D([0,\infty);\mathcal{H}_0)$, there exists a unique solution $\Psi(U_{0},f)\in C([0,\infty);H)\cap L_{loc}^{2}([0,\infty);V)$ to the following partial differential equation (PDE):
\begin{equation*}
\begin{aligned}
\Psi(U_{0},f)(t)
=&U_{0}-\int_{0}^{t}A^{\alpha,\beta}(\Psi(U_{0},f)(s)+f_{s})\mathrm{d}s
-\int_{0}^{t}B(\Psi(U_{0},f)(s)
+f_{s},\Psi(U_{0},f)(s)
+f_{s})\mathrm{d}s.
\end{aligned}
\end{equation*}
In the above statements, $V=\{h\in H:\|h\|_1<\infty\}$.

Define $\Psi(U_0,f)
:=(\tilde{u},\tilde{b})^T
=(u,b)^T-(0,f)^T$
\begin{equation}\label{MHDA.2}
\begin{aligned}
&\mathrm{d}\tilde{u}+\nu_1(-\Delta)^{\alpha} \tilde{u}\mathrm{d}t+\tilde{B}(\tilde{u},\tilde{u})\mathrm{d}t
-\tilde{B}(\tilde{u}+f,\tilde{u}+f)\mathrm{d}t
=0,\\
&\mathrm{d}\tilde{b}+\nu_2(-\Delta)^{\beta} (\tilde{b}+g)\mathrm{d}t +\tilde{B}(\tilde{u},\tilde{b}+f)\mathrm{d}t
-\tilde{B}(\tilde{b}+f,\tilde{u})\mathrm{d}t=0.
\end{aligned}
\end{equation}

We denote $q_t^{\varepsilon}=Q_{b}(W_{\ell_{t}^{\varepsilon}} - W_{\ell_{0}^{\varepsilon}}),q_{t}= Q_{b}W_{\ell_{t}},V_{t}^{\varepsilon} = \Psi(U_{0},q^{\varepsilon})(t)$ and $V_{t} = \Psi(U_{0},q)(t)$. It is easy to see that $V_t+q_t$ is the unique solution $U_t$ to \eqref{MHD2.14}, i.e., $U_t=V_t+q_t$, and for any $\ell\in \mathbb{S}$ and $\varepsilon\in(0,1]$,  $U_t^{\varepsilon}=V_t^{\varepsilon}+q_t^{\varepsilon}$ is the solution of the following PDE:
\begin{equation*}
 U_t^{\varepsilon}=U_0-\int_0^t[A^{\alpha,\beta}U_s^{\varepsilon}
+B(U_s^{\varepsilon},U_s^{\varepsilon})]\mathrm{d}s
+Q_{b}(W_{\ell_t^{\varepsilon}}-W_{\ell_0^{\varepsilon}})
\end{equation*}

Recall $\mathbb{S}_1$ introduced in \eqref{MHDS1}. We have

\begin{lemma}
For any $\ell\in \mathbb{S}_1$, the following statements hold:
\begin{equation}\label{MHDA.3}
\begin{gathered}
\qquad \operatorname*{lim}_{\varepsilon\to 0}
\int_{0}^{\eta^{\varepsilon}}e^{-\nu(\eta^{\varepsilon}-s)
+8\mathfrak{B}_{0}\kappa(\ell_{\eta^{\varepsilon}}^{\varepsilon}
-\ell_{s}^{\varepsilon})}\|U_{s}^{\varepsilon}\|_{1}^{2}\mathrm{d}s \\
=\int_{0}^{\eta}e^{-\nu(\eta-s)
+8\mathfrak{B}_{0}\kappa(\ell_{\eta}-\ell_{s})}
\|U_{s}\|_{1}^{2}\mathrm{d}s, 
\end{gathered}
\end{equation} 
and
\begin{equation}\label{MHDA.4}
\begin{aligned}
\lim_{\varepsilon\to0}\|U_{\eta^\varepsilon}^\varepsilon
-U_\eta\|^2=0.
\end{aligned}
\end{equation}
\end{lemma}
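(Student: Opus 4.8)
The plan is to prove the two convergences \eqref{MHDA.3} and \eqref{MHDA.4} by a compactness–uniqueness argument, exploiting the fact that the regularized subordinator $\ell^\varepsilon$ converges to $\ell$ in the Skorohod topology (indeed pointwise and monotonically, by Lemma \ref{MHDlemma A0}) and that all stopping times and integrals are taken over the bounded interval $[0,\eta^1]$, which is finite for $\ell \in \mathbb{S}_1$. First I would record the uniform-in-$\varepsilon$ a priori bounds: since $\eta^\varepsilon \le \eta^1 < \infty$ and the forcing $q^\varepsilon_t = Q_b(W_{\ell^\varepsilon_t} - W_{\ell^\varepsilon_0})$ has sup-norm on $[0,\eta^1]$ controlled by $\sup_{0\le u \le \ell^\varepsilon_{\eta^1}} |W_u|$ (which is bounded uniformly in $\varepsilon$ because $\ell^\varepsilon_{\eta^1} \downarrow \ell_{\eta^1}$, so $\ell^\varepsilon_{\eta^1} \le \ell^1_{\eta^1} < \infty$), the standard energy estimate for \eqref{MHDA.2} gives $\sup_{t\le \eta^1}\|V^\varepsilon_t\|^2 + \nu\int_0^{\eta^1}\|V^\varepsilon_s\|_1^2\,\mathrm{d}s \le C(\ell, W)$ with $C$ independent of $\varepsilon$. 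Consequently $\{V^\varepsilon\}$ is bounded in $L^\infty([0,\eta^1];H) \cap L^2([0,\eta^1];V)$, and from the equation $\{\partial_t V^\varepsilon\}$ is bounded in $L^2([0,\eta^1];V^*)$ (using $\alpha,\beta>1$ to control the bilinear terms), so by Aubin–Lions $\{V^\varepsilon\}$ is relatively compact in $L^2([0,\eta^1];H)$ and in $C([0,\eta^1];H_w)$.

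Next I would pass to a subsequence along which $V^{\varepsilon_n} \to \bar V$ in $L^2([0,\eta^1];H)$, weakly in $L^2([0,\eta^1];V)$, and (for a.e.\ $t$, hence after a further refinement, in $C$ on the relevant closed intervals) pointwise in $H$. Because $q^{\varepsilon_n}_t \to q_t$ for every fixed $t$ at which $\ell$ is continuous — and more carefully, $W_{\ell^{\varepsilon_n}_t} \to W_{\ell_t}$ uniformly on $[0,\eta^1]$ by continuity of $W$ and uniform convergence $\ell^{\varepsilon_n}\downarrow\ell$ away from the at-most-countable jump set, together with right-continuity of $\ell$ — I would identify the limit: $\bar V + q$ solves the same PDE \eqref{MHD2.14} with the same initial datum $U_0$ and the same driving path $q$. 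By the well-posedness of \eqref{MHD2.14} (uniqueness, cited from \cite{Huang-2016}), $\bar V = V$, and since the limit is independent of the subsequence the whole family converges: $V^\varepsilon \to V$ in $L^2([0,\eta^1];H) \cap C([0,\eta^1];H_w)$ and weakly in $L^2([0,\eta^1];V)$. Combined with Lemma \ref{MHDlemma A2}(2)--(4), which gives $\eta^\varepsilon \downarrow \eta$ and $\ell^\varepsilon_{\eta^\varepsilon} \downarrow \ell_\eta$, this already yields \eqref{MHDA.4} once one upgrades weak convergence at the (moving) endpoint $\eta^\varepsilon$ to strong convergence — for which I would use the energy equality for $\|V^\varepsilon_t\|^2$ to get convergence of the norms $\|V^\varepsilon_{\eta^\varepsilon}\| \to \|V_\eta\|$, then weak-plus-norm convergence implies strong, and finally add $q^{\varepsilon}_{\eta^\varepsilon} \to q_\eta$.

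For \eqref{MHDA.3} I would split the integrand as $e^{-\nu(\eta^\varepsilon - s) + 8\mathfrak{B}_0\kappa(\ell^\varepsilon_{\eta^\varepsilon} - \ell^\varepsilon_s)}\|U^\varepsilon_s\|_1^2$: the exponential weight is bounded (by $e^{8\mathfrak{B}_0\kappa \ell^1_{\eta^1}}$, uniformly in $\varepsilon$ and $s$, on $\mathbb{S}_1$) and converges pointwise in $s$ to $e^{-\nu(\eta - s) + 8\mathfrak{B}_0\kappa(\ell_\eta - \ell_s)}$ for a.e.\ $s$, using $\eta^\varepsilon \to \eta$, $\ell^\varepsilon_{\eta^\varepsilon} \to \ell_\eta$ and $\ell^\varepsilon_s \downarrow \ell_s$; meanwhile $\|U^\varepsilon_s\|_1^2 = \|V^\varepsilon_s + q^\varepsilon_s\|_1^2 \to \|V_s + q_s\|_1^2 = \|U_s\|_1^2$ in $L^1([0,\eta^1])$ since $V^\varepsilon \to V$ strongly in $L^2([0,\eta^1];V)$ — and here is the crux: weak $L^2(V)$ convergence is not enough, so I must improve it to strong. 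The standard device is the energy identity: testing \eqref{MHDA.2} against $V^\varepsilon$ gives $\|V^\varepsilon_{\eta^1}\|^2 + 2\nu\int_0^{\eta^1}(\text{dissipation})\,\mathrm{d}s = \|U_0\|^2 + (\text{lower order in } q^\varepsilon)$; passing to the limit and comparing with the same identity for $V$ forces $\int_0^{\eta^1}\|V^\varepsilon_s\|_1^2\,\mathrm{d}s \to \int_0^{\eta^1}\|V_s\|_1^2\,\mathrm{d}s$, and weak convergence plus norm convergence in $L^2(V)$ gives strong convergence. I expect this strong-convergence-in-$V$ step, together with carefully handling the at-most-countable set of discontinuities of $\ell$ when justifying pointwise convergence of $q^\varepsilon_s$ and of the weights, to be the main technical obstacle; once it is in place, dominated convergence closes \eqref{MHDA.3} and the argument is complete.
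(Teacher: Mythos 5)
Your argument reaches the right conclusions, but it takes a genuinely different and heavier route than the paper. You use compactness (Aubin--Lions), identification of the limit via uniqueness, and the energy-equality trick to upgrade weak $L^2([0,\eta^1];V)$ convergence to strong convergence. The paper instead proves a \emph{quantitative stability estimate} for the solution map $f\mapsto\Psi(U_0,f)$: after the a priori bound \eqref{MHDA.10}, it subtracts the equations for $\Psi(U_0,f^1)$ and $\Psi(U_0,f^2)$ and runs Gronwall to obtain \eqref{MHDA.11}, which bounds $\sup_t\|\Psi^1-\Psi^2\|^2$ \emph{and} the $L^2(V)$ norm of the difference directly by $\int_0^T\|f^1-f^2\|^2\,\mathrm{d}s$. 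Combined with the elementary convergence $\int_0^T\|q^\varepsilon_t-q_t\|^2\,\mathrm{d}t\to0$ in \eqref{MHDA.16}, this yields in one stroke the uniform-in-time $H$-convergence and the strong $L^2(V)$-convergence \eqref{MHDA.13}, after which \eqref{MHDA.3} follows by dominated convergence with the a.e.-convergent bounded weights, and \eqref{MHDA.4} follows from the triangle inequality \eqref{MHDA.14} together with Lemma \ref{MHDlemma A2} and the continuity of $V$. The paper's route thus avoids both the compactness machinery and the two points you correctly identify as delicate in your approach (the weak-to-strong upgrade in $L^2(V)$, and the convergence at the moving endpoint $\eta^\varepsilon$), and it is what makes the lemma essentially a two-page computation. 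One small inaccuracy in your sketch: $W_{\ell^\varepsilon_t}\to W_{\ell_t}$ is \emph{not} uniform on $[0,\eta^1]$ when $\ell$ has jumps (for $t$ just below a jump time, $\ell^\varepsilon_t$ already averages over the jump), so you should not invoke uniform convergence of $q^\varepsilon$; fortunately only the $L^2$-in-time convergence of $q^\varepsilon$ (for the limit identification and for \eqref{MHDA.3}) and the single-point convergence $\ell^\varepsilon_{\eta^\varepsilon}\downarrow\ell_\eta$ from Lemma \ref{MHDlemma A2} (for \eqref{MHDA.4}) are actually needed, and both hold. Also note that $\ell^\varepsilon_t\to\ell_t$ at \emph{every} $t$ by right-continuity, not only at continuity points of $\ell$.
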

\begin{proof}
To prove this lemma, we first need some a priori estimates for $\Psi$.
  
By the chain rule, there exists a constant $C>0$ dependent on $\nu_1,\nu_2$, such that, for any $U_0\in H, f\in D([0,\infty);\mathcal{H}_0)$ and $t\geq 0$,
\begin{equation*}
\begin{aligned}
&\,\, \|\Psi(U_{0},f)(t)\|^2
+2\int_{0}^{t}\langle A^{\alpha,\beta}(\Psi(U_{0},f)(s)+f_s),\Psi(U_{0},f)(s)\rangle\mathrm{d}s\\
&\leq \|U_{0}\|^2+2\int_{0}^{t}\langle B(\Psi(U_{0},f)(s)
+f_{s},\Psi(U_{0},f)(s)+f_{s}),\Psi(U_{0},f)(s)\rangle\mathrm{d}s\\
&\leq\|U_{0}\|^2+2\int_{0}^{t}\langle B(\Psi(U_{0},f)(s)
,f_{s}),\Psi(U_{0},f)(s)\rangle+\langle B(f_{s},
f_{s}),\Psi(U_{0},f)(s)\rangle\mathrm{d}s\\
&\leq \|U_{0}\|^2+C\int_{0}^{t}(\|\Psi(U_{0},f)(s)\|
\|f_{s}\|_1+\|f_{s}\|
\|f_{s}\|_1)(\|\Lambda^{\alpha}\tilde{u}(s)\|
+\|\Lambda^{\beta}\tilde{b}(s)\|)\mathrm{d}s\\
&\leq \|U_{0}\|^2+C\int_{0}^{t}\left[\|\Psi(U_{0},f)(s)\|^2
\|f_{s}\|_1^2+ \|f_{s}\|^2
\|f_{s}\|_1^2\right]\mathrm{d}s
+\frac{1}{2}\int_{0}^{t}\left[\nu_1\|\Lambda^{\alpha}\tilde{u}(s)\|^2
+\nu_2\|\Lambda^{\beta}\tilde{b}(s)\|^2\right]\mathrm{d}s.
\end{aligned}
\end{equation*}
Note that
\begin{equation*}
\begin{aligned}
&\left|-2\langle A^{\alpha,\beta}f_s,\Psi(U_{0},f)(s)\rangle\right|
\leq \nu_2\|\Lambda^{\beta} f_s^{(2)}\|^2+\nu_2\|\Lambda^{\beta} \tilde{b}(s)\|^2.
\end{aligned}
\end{equation*}
Then we arrive at 
\begin{equation}\label{MHDA.10}
\begin{aligned}
&\sup\limits_{t\in[0,T]}\|\Psi(U_{0},f)(t)\|^2
+\int_{0}^{T}\left[\nu_1\|\Lambda^{\alpha} \tilde{u}(s)\|^2+\nu_2\|\Lambda^{\beta} \tilde{b}(s)\|^2\right]
\mathrm{d}s\\
&\leq C\left[\|U_0\|^2+\int_0^T \left( \|f_s\|^2\|f_s\|_1^2
+\|\Lambda^{\beta} f^{(2)}_s\|^2 \right) \mathrm{d}s\right]e^{C\int_0^T\|f_s\|_1^2\mathrm{d}s}.
\end{aligned} 
\end{equation}

For any $f^1,f^2 \in D([0,\infty);\mathcal{H}_0)$, let $\Psi^1(t)=\Psi(U_0,f^1)(t)=(\tilde{u}^1(t),\tilde{b}^1(t))^T$ and $\Psi^2(t)=\Psi(U_0,f^2)(t)=(\tilde{u}^2(t),\tilde{b}^2(t))^T$ be two solutions of system \eqref{MHDA.2}, simplifying the notation. Using similar arguments as above, it follows that 
\begin{align*}
& \|\Psi^{1}(t)-\Psi^{2}(t)\|^{2}
+2\int_{0}^{t}\left(\nu_1\|\Lambda^{\alpha}(\tilde{u}^{1}(s)-\tilde{u}^2(s))
\|^{2}
+\nu_2\|\Lambda^{\beta}(\tilde{b}^1(s)-\tilde{b}^{2}(s))
\|^{2}\right)\mathrm{d}s \\
\leq&2\int_{0}^{t}\langle B(\Psi^{1}(s)+f_{s}^{1},\Psi^{1}(s)+f_{s}^{1})
-B(\Psi^{2}(s)+f_{s}^{2},\Psi^{2}(s)+f_{s}^{2}),
\Psi^{1}(s)-\Psi^{2}(s)\rangle \mathrm{d}s\\
=& 2\int_{0}^{t}\langle B(\Psi^{1}(s)+f_{s}^{1},\Psi^{1}(s)+f_{s}^{1})
-B(\Psi^{1}(s)+f_{s}^{1},\Psi^{2}(s)+f_{s}^{2}),
\Psi^{1}(s)-\Psi^{2}(s)\rangle \mathrm{d}s \\
&+2\int_{0}^{t}\langle B(\Psi^{1}(s)+f_{s}^{1},\Psi^{2}(s)+f_{s}^{2})
-B(\Psi^{2}(s)+f_{s}^{2},\Psi^{2}(s)+f_{s}^{2}),
\Psi^{1}(s)-\Psi^{2}(s)\rangle \mathrm{d}s \\
\leq & C\int_{0}^{t}\|\Psi^{1}(s)-\Psi^{2}(s)\|
\|\Psi^{1}(s)+f_{s}^{1}\|\|f^{1}(s)-f^{2}(s)\|_2\mathrm{d}s \\
&+C\int_{0}^{t}\|\Psi^{1}(s)-\Psi^{2}(s)\|_{1}
\|\Psi^{2}(s)+f_{s}^{2}\|_{1}\|\Psi^{1}(s)-\Psi^{2}(s)\|\mathrm{d}s \\
&+C\int_0^t\|f^1(s)-f^2(s)\|_1
\|\Psi^2(s)+f_s^2\|_1\|\Psi^1(s)-\Psi^2(s)\|\mathrm{d}s \\
\leq& \int_{0}^{t}\left(\nu_1\|\Lambda^{\alpha}(\tilde{u}^{1}(s)-\tilde{u}^2(s))
\|^{2}
+\nu_2\|\Lambda^{\beta}(\tilde{b}^1(s)-\tilde{b}^{2}(s))
\|^{2}\right)\mathrm{d}s\\
&+C\left(1+\sup_{s\in[0,t]}\|\Psi^{1}(s)+f_{s}^{1}\|^{2}\right)
\int_{0}^{t}\|f^{1}(s)-f^{2}(s)\|^{2}\mathrm{d}s 
\\
&  +C\int_{0}^{t}\left(1+\|\Psi^{2}(s)+f_{s}^{2}\|_{1}^{2}\right)
\|\Psi^{1}(s)-\Psi^{2}(s)\|^{2}\mathrm{d}s.
\end{align*}

Rearranging terms and using the Gronwall lemma, we obatin that, for any $T \geq 0$,
\begin{equation}\label{MHDA.11}
\begin{aligned}
&\sup_{t\in[0,T]}\|\Psi^{1}(t)-\Psi^{2}(t)\|^{2}
+\int_{0}^{T}\left(\nu_1\|\Lambda^{\alpha}(\tilde{u}^1(s)-\tilde{u}^2(s))
\|^{2}
+\nu_2\|\Lambda^{\beta}(\tilde{b}^1(s)-\tilde{b}^2(s))
\|^{2}\right)\mathrm{d}s\\
\leq& \exp\left\{C\int_{0}^{T}\left(1+\|\Psi^{2}(s)+f_{s}^{2}\|_{1}^{2}\right)\mathrm{d}s\right\}\\
&\times
C\left(1+\sup_{s\in[0,T]}\|\Psi^{1}(s)+f_{s}^{1}\|^{2}\right)
\int_{0}^{T}\|f^{1}(s)-f^{2}(s)\|^{2}\mathrm{d}s.
\end{aligned}
\end{equation}

For any $(\mathrm{w},\ell)\in \mathbb{W}\times\mathbb{S}$, from the definitions of $q_t^{\varepsilon}$ and $q_t$, it is easy to see that for any $T\geq0$,
\begin{equation}\label{MHDA.15}
\sup\limits_{\varepsilon\in(0,1]}
\sup\limits_{t\in[0,T]}
\left(\|q_t^\varepsilon(\text{w},\ell)\|
+\|q_t(\text{w},\ell)\|\right)
\leq C\sup\limits_{t\in[0,\ell_{T+1}+T]}\|\text{w}_t\|
<\infty,
\end{equation}
and
\begin{equation}\label{MHDA.16}
\lim_{\varepsilon\to0}
\int_0^T\|q_t^\varepsilon(\mathrm{w},\ell)
-q_t(\mathrm{w},\ell)\|^2\mathrm{d}t=0.
\end{equation}
Combining \eqref{MHDA.15}-\eqref{MHDA.16} with \eqref{MHDA.10}-\eqref{MHDA.11}, there exists a constant $C$ dependent on $U_0, T$ and  $\sup\limits_{t\in[0,\ell_{T+1}+T]}\|\text{w}_t\|$, such that the following estimates hold
\begin{equation}\label{MHDA.12}
\begin{aligned}
&\sup_{\varepsilon\in(0,1]}\Big(\sup_{t\in[0,T]}
\|U_{t}^{\varepsilon}\|^{2}
+\int_{0}^{T}\|U_{t}^{\varepsilon}\|_{1}^{2}\mathrm{d}t\Big)(\mathrm{w},\ell)+\Big(\sup_{t\in[0,T]}\|U_{t}\|^{2}
+\int_{0}^{T}\|U_{t}\|_{1}^{2}\mathrm{d}t\Big)(\mathrm{w},\ell)
\leq  C,
\end{aligned}
\end{equation}
and 
\begin{equation}\label{MHDA.13}
\lim\limits_{\varepsilon\to0}\Big(\sup\limits_{t\in[0,T]}
\|V_t^\varepsilon-V_t\|^2
+\int_0^T\|U_t^\varepsilon-U_t\|_1^2\mathrm{d}t\Big)(\mathrm{w},\ell)
=0.
\end{equation}
Notice that for any $\ell\in \mathbb{S}_1$ and $\text{w}\in \mathbb{W}$,
\begin{equation}\label{MHDA.14}
\begin{aligned}
\|U_{\eta^{\varepsilon}}^{\varepsilon}-U_{\eta}\|
\leq&\|V_{\eta^{\varepsilon}}^{\varepsilon}-V_{\eta}\|
+\|q_{\eta^{\varepsilon}}^{\varepsilon}-q_{\eta}\| \\
\leq&\|V_{\eta^{\varepsilon}}^{\varepsilon}-V_{\eta^{\varepsilon}}\|
+\|V_{\eta^{\varepsilon}}-V_{\eta}\|
+\|q_{\eta^{\varepsilon}}^{\varepsilon}-q_{\eta}\| \\
\leq&\sup_{t\in[0,\eta^{1}]}\|V_{t}^{\varepsilon}-V_{t}\|
+\|V_{\eta^{\varepsilon}}-V_{\eta}\|
+\|Q_b(W_{\ell_{\eta^{\varepsilon}}^{\varepsilon}}
-W_{\ell_{0}^{\varepsilon}})-Q_b W_{\ell_{\eta}}\|.
\end{aligned}
\end{equation}
Applying Lemma \ref{MHDlemma A0}, Lemma \ref{MHDlemma A1}, \eqref{MHDA.12}-\eqref{MHDA.14}, and the fact that $V_t$ is continuous in $H$, we can deduce that \eqref{MHDA.3} and \eqref{MHDA.4}. This proof is completed.
\end{proof}

\begin{lemma}\label{MHDlemma A4}
There exists a positive constant $C$,
%, which depends on $\nu,g,\nu_S$, $\{\alpha_k^m\}_{k\in\mathcal{Z},m\in\{0,1\}}$ and $d=2\cdot|\mathcal{Z}|$ 
such that, for any $\kappa\in(0,\tilde{\kappa}_0],\varepsilon\in(0,1]$, $\ell\in\mathbb{S}_1$, there holds
\begin{equation}
\begin{aligned}
&\mathbb{E}^{\mu_{\mathbb{W}}}\left[\exp\left\{
\kappa\|U_{\eta^{\varepsilon}}^{\varepsilon}\|^2
-\kappa\|U_0\|^2 e^{-\nu\eta^{\varepsilon}
+8\mathfrak{B}_{0}\kappa \ell_{\eta^{\varepsilon}}^{\varepsilon}}\right.\right.\\
&+\kappa^*\int_{0}^{\eta^{\varepsilon}}
e^{-\nu(\eta^{\varepsilon}-s)
+8\mathfrak{B}_{0}\kappa(\ell_{\eta^{\varepsilon}}^{\varepsilon}
-\ell_{s}^{\varepsilon})}\left(\nu_1\|\Lambda^{\alpha}X_{s}^{\varepsilon}\|^{2}
+\nu_2\|\Lambda^{\beta}Y_{s}^{\varepsilon}\|^{2}\right){\mathrm{d}}s\\
&\left.\left.-\kappa\mathfrak{B}_{0}\int_{0}^{\eta^{\varepsilon}}
e^{-\nu(\eta^{\varepsilon}-s)
+8\mathfrak{B}_{0}\kappa(\ell_{\eta^{\varepsilon}}^{\varepsilon}
-\ell_{s}^{\varepsilon})}\mathrm{d}\ell_{s}^{\varepsilon}\right\}\right]
\leq C.
\end{aligned}
\end{equation}
\end{lemma}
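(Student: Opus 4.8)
The plan is to obtain, for the smoothed process $U^\varepsilon_t = V^\varepsilon_t + q^\varepsilon_t$, a differential inequality for $\|U^\varepsilon_t\|^2$ of the Itô type but driven by the \emph{continuous} (indeed $C^1$) time change $\ell^\varepsilon$, so that no stochastic integral appears with respect to $\ell^\varepsilon$ and only the genuine Brownian increments $\mathrm dW$ survive. Writing $X^\varepsilon = \tilde u^\varepsilon$, $Y^\varepsilon = \tilde b^\varepsilon$ for the components of $V^\varepsilon$, I would apply the chain rule to $\|U^\varepsilon_t\|^2$ using \eqref{MHDA.2} and the cancellation $\langle \tilde B(u,v),v\rangle = 0$; the advective terms contribute only the cross terms involving $q^\varepsilon$, which by interpolation (here the constraints $\alpha,\beta>1$ are used) are absorbed into $\tfrac12\bigl(\nu_1\|\Lambda^\alpha X^\varepsilon\|^2+\nu_2\|\Lambda^\beta Y^\varepsilon\|^2\bigr)$ plus a term of the form $C\|U^\varepsilon_s\|_1^2\,$ with a lower-order remainder controlled by $\|q^\varepsilon_s\|$. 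The dissipation bound $\|\Lambda^\alpha X^\varepsilon\|^2\geq \|X^\varepsilon\|_1^2$ (and similarly for $Y^\varepsilon$, since $\alpha,\beta>1$) then yields, after using $\|q^\varepsilon_s\|_1 \leq C|\dot\ell^\varepsilon_s|^{1/2}\cdot(\text{Brownian oscillation})$ or more simply absorbing all $q^\varepsilon$-contributions into the $\mathfrak B_0\,\mathrm d\ell^\varepsilon_s$ term via the explicit Hilbert--Schmidt bound $\mathcal B_0 = \mathfrak B_0$, a closed inequality of the schematic form
\begin{equation*}
\mathrm d\|U^\varepsilon_t\|^2 \leq -\nu\|U^\varepsilon_t\|_1^2\,\mathrm dt + \text{(martingale part from }\mathrm dW\text{)} + \mathfrak B_0\,\mathrm d\ell^\varepsilon_t.
\end{equation*}

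Next I would introduce the weight $e^{-\nu t + 8\mathfrak B_0\kappa\ell^\varepsilon_t}$ and compute the differential of $e^{-\nu t + 8\mathfrak B_0\kappa\ell^\varepsilon_t}\|U^\varepsilon_t\|^2$; since $\ell^\varepsilon$ is absolutely continuous, this is a classical (pathwise in $\ell$) product rule, producing the term $8\mathfrak B_0\kappa\|U^\varepsilon_t\|^2\,\mathrm d\ell^\varepsilon_t$ and, crucially, retaining $-\nu\|U^\varepsilon_t\|_1^2 e^{\cdots}\mathrm dt \leq -\nu\|U^\varepsilon_t\|^2 e^{\cdots}\mathrm dt$, part of which I would keep in $\|\cdot\|_1$ form to supply the $\kappa^* \int_0^{\eta^\varepsilon} e^{\cdots}(\nu_1\|\Lambda^\alpha X^\varepsilon_s\|^2+\nu_2\|\Lambda^\beta Y^\varepsilon_s\|^2)\mathrm ds$ term in the statement (with $\kappa^* = \kappa^*(\kappa)$ chosen small relative to the available dissipation). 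Integrating from $0$ to $\eta^\varepsilon$ and multiplying by $\kappa$, one obtains the bracketed exponent as an upper bound for a quantity of the form $\kappa\cdot(\text{stochastic integral against }\mathrm dW)$ plus a purely deterministic (in $\ell$) remainder that is $\leq 0$ by the very definition of $\eta^\varepsilon$ (namely $\nu\eta^\varepsilon - 8\mathfrak B_0\kappa\ell^\varepsilon_{\eta^\varepsilon}>1$ forces the weighted $\ell^\varepsilon$-integral to be dominated by $\ell_\eta$, cf.\ Lemma \ref{MHDlemma A2}(5)); here the choice of $\tilde\kappa_0$ comes in.

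The exponential moment is then handled by the standard exponential-martingale / Novikov argument: the surviving term is $\kappa\,M_{\eta^\varepsilon}$ where $M_t$ is a continuous $\mathcal F^W_t$-martingale (it is an $\mathrm dW$-integral, $\ell$ being frozen) with quadratic variation controlled by $C\mathfrak B_0\int_0^{\eta^\varepsilon} e^{2(\cdots)}\|U^\varepsilon_s\|^2\,\mathrm d\ell^\varepsilon_s$ or similar; by completing the square I would write $\exp\{\kappa M_t - \tfrac12\langle \kappa M\rangle_t\}\cdot\exp\{\tfrac12\langle\kappa M\rangle_t\}$, note the first factor is a supermartingale of mean $\leq 1$, and observe that $\langle\kappa M\rangle_{\eta^\varepsilon}$ is itself absorbed (for $\kappa$ small) back into the $-\kappa\|U_0\|^2 e^{\cdots}$ and the weighted $\ell^\varepsilon$-integral already present, exactly as the analogous step in \cite[Lemma A.4]{Peng-2024}. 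Taking $\mathbb E^{\mu_{\mathbb W}}$ then gives the uniform bound $C$. The main obstacle I expect is bookkeeping: tracking the several $\kappa$-dependent constants ($\kappa^*$, the dissipation splitting, the quadratic-variation absorption) so that one single threshold $\tilde\kappa_0$ works for all of them uniformly in $\varepsilon$, while simultaneously keeping the $q^\varepsilon$ cross terms under control \emph{without} an $\varepsilon$-dependent constant — this is where the a priori bounds \eqref{MHDA.10}, \eqref{MHDA.15} and the smoothing Lemma \ref{MHDlemma A0} must be invoked carefully, since $\dot\ell^\varepsilon_s$ can be large for small $\varepsilon$ and one must only ever integrate against $\mathrm d\ell^\varepsilon_s$, never against $\dot\ell^\varepsilon_s\,\mathrm ds$ with an uncontrolled coefficient.
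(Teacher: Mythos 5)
Your overall architecture---an energy identity for the smoothed process, the weight $e^{-\nu(\eta^{\varepsilon}-s)+8\mathfrak{B}_{0}\kappa(\ell^{\varepsilon}_{\eta^{\varepsilon}}-\ell^{\varepsilon}_{s})}$, retention of part of the dissipation to produce the $\kappa^{*}$-term, and an exponential-martingale bound for the surviving $\mathrm{d}W$-integral---coincides with the paper's, which carries this out after reparametrizing time by $\gamma^{\varepsilon}=(\ell^{\varepsilon})^{-1}$ so that the noise becomes a genuine Brownian motion in the new clock and then invokes \cite[Lemma A.5]{Peng-2024} for $\mathbb{E}^{\mu_{\mathbb{W}}}[\exp\{\tilde{M}_{\ell^{\varepsilon}_{\eta^{\varepsilon}}}\}]\leq C$. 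However, there is a genuine gap in how you propose to reach the schematic inequality $\mathrm{d}\|U^{\varepsilon}_{t}\|^{2}\leq-\nu\|U^{\varepsilon}_{t}\|_{1}^{2}\,\mathrm{d}t+(\text{martingale})+\mathfrak{B}_{0}\,\mathrm{d}\ell^{\varepsilon}_{t}$. You route through the shifted process $U^{\varepsilon}=V^{\varepsilon}+q^{\varepsilon}$ of \eqref{MHDA.2} and claim the advective cross terms involving $q^{\varepsilon}$ can be ``absorbed into the $\mathfrak{B}_{0}\,\mathrm{d}\ell^{\varepsilon}_{s}$ term via the Hilbert--Schmidt bound.'' They cannot: those terms are of size $\|q^{\varepsilon}_{s}\|\,\|q^{\varepsilon}_{s}\|_{1}\,\|V^{\varepsilon}_{s}\|_{1}$ and $\|q^{\varepsilon}_{s}\|_{1}\|V^{\varepsilon}_{s}\|\|V^{\varepsilon}_{s}\|_{1}$, i.e.\ they scale with the running maximum of the Brownian path (cf.\ \eqref{MHDA.15}), which is an unbounded random variable, not a multiple of $\mathfrak{B}_{0}$. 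Keeping them would leave a factor $\exp\{C\int_{0}^{\eta^{\varepsilon}}\|q^{\varepsilon}_{s}\|_{1}^{2}\,\mathrm{d}s\}$ in the exponent, as in \eqref{MHDA.10}; that quantity is a.s.\ finite but has no uniform exponential moment, so the bound by a universal constant $C$ would not follow. The a priori estimates \eqref{MHDA.10} and \eqref{MHDA.15} give pathwise finiteness only and cannot rescue this.

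The repair is precisely the step the paper takes: do not split off $q^{\varepsilon}$ at all. Since $\ell^{\varepsilon}$ is continuous, strictly increasing and absolutely continuous (Lemma \ref{MHDlemma A0}), $W_{\ell^{\varepsilon}_{t}}$ is a continuous square-integrable martingale with quadratic variation $\mathrm{d}\ell^{\varepsilon}_{t}$ per component, so It\^{o}'s formula applies directly to $\|U^{\varepsilon}_{t}\|^{2}$; the nonlinearity cancels identically because $\langle B(U,U),U\rangle=0$ for the MHD operator, and $\mathfrak{B}_{0}\,\mathrm{d}\ell^{\varepsilon}_{t}$ arises as the It\^{o} correction, not as an absorbed advective contribution. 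Equivalently, change variables by $\gamma^{\varepsilon}$ as in the paper and apply It\^{o} to $\|V^{\varepsilon}_{t}\|^{2}$ with $V^{\varepsilon}_{t}=U^{\varepsilon}_{\gamma^{\varepsilon}_{t}}$. Your final step also needs tightening: the factorization $e^{\kappa M}=e^{\kappa M-\frac{1}{2}\langle\kappa M\rangle}\,e^{\frac{1}{2}\langle\kappa M\rangle}$ requires a Cauchy--Schwarz or H\"{o}lder step before the supermartingale property can be used, and the quadratic variation, which is of order $\kappa^{2}\mathfrak{B}_{0}\int e^{2(\cdots)}\|V^{\varepsilon}_{s}\|^{2}\,\mathrm{d}s$, is compensated not by the $-\kappa\|U_{0}\|^{2}e^{\cdots}$ term but by the drift $-8\mathfrak{B}_{0}\kappa^{2}\int\|V^{\varepsilon}_{s}\|^{2}e^{\cdots}\,\mathrm{d}s$ that the weight's derivative produces and that the paper builds into $\tilde{M}$ (this is where the Poincar\'{e} inequality and the factor $8$ in the definition of $\eta^{\varepsilon}$ are actually used).
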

\begin{proof}
  Now we fix $\kappa\in (0,\tilde{\kappa}], \varepsilon\in (0,1]$ and $\ell\in \mathbb{S}_1$.

Let $\gamma^{\varepsilon}$ be the inverse function of $\ell^{\varepsilon}$. By a change of variable, for $t\geq\ell_0^{\varepsilon}$, we set $V_t^\varepsilon=(X_t^\varepsilon,Y_t^\varepsilon)
:=(u_{\gamma_t^{\varepsilon}}^{\varepsilon}
,b_{\gamma_t^{\varepsilon}}^{\varepsilon})$, then it satisfies the following stochastic equation
\begin{equation*}
\begin{aligned}
 V_t^\varepsilon&=V_0
-\int_{\ell_0^\varepsilon}^t\left[ A^{\alpha,\beta} V_s^\varepsilon
+ B(V_s^\varepsilon,V_s^\varepsilon)
\right]
\dot{\gamma}_s^\varepsilon \mathrm{d}s
+ Q_{b}(W_t-W_{\ell_0^\varepsilon}).
\end{aligned}
\end{equation*}
%\begin{equation*}
%\begin{aligned}
%X_t^\varepsilon&=u_0
%-\int_{\ell_0^\varepsilon}^t\left[(-\Delta)^{\alpha} X_s^\varepsilon
%+\tilde{B}(X_s^\varepsilon,X_s^\varepsilon)
%-\tilde{B}(Y_s^\varepsilon,Y_s^\varepsilon)\right]
%\dot{\gamma}_s^\varepsilon \mathrm{d}s,
%\\
%Y_t^\varepsilon&=\theta_0
%-\int_{\ell_0^\varepsilon}^t\left[(-\Delta)^{-\beta} Y_s^\varepsilon
%+\tilde{B}(X_s^\varepsilon,Y_s^\varepsilon)
%-\tilde{B}(Y_s^\varepsilon,X_s^\varepsilon)
%\right]
%\dot{\gamma}_s^\varepsilon \mathrm{d}s
%+Q_{b}(W_t-W_{\ell_0^\varepsilon}).
%\end{aligned}
%\end{equation*}
Applying It\^{o} formula, we have
\begin{equation*}
\begin{aligned}
\mathrm{d}\|V_t^\varepsilon\|^2=
-2\langle A^{\alpha,\beta} V_t^\varepsilon,V_t^\varepsilon\rangle \dot{\gamma}_t^\varepsilon \mathrm{d}t
+2\langle V_t^\varepsilon, Q_b \mathrm{d}W_t\rangle +\mathfrak B_0 \mathrm{d}t,
\end{aligned}
\end{equation*}
and
\begin{equation*}
\begin{aligned}
&\mathrm{d}\left[\kappa\|V_{t}^{\varepsilon}\|^{2}
e^{\nu\gamma_{t}^{\varepsilon}-8\mathfrak{B}_{0}\kappa t} \right] \\
\leq& e^{\nu\gamma_{t}^{\varepsilon}-8\mathfrak{B}_{0}\kappa t}
\big[-2\kappa\nu_1\|\Lambda^{\alpha}X_{t}^{\varepsilon}\|^{2}
\dot{\gamma}_{t}^{\varepsilon}\mathrm{d}t-2\kappa \nu_2
\|\Lambda^{\beta}Y_{t}^{\varepsilon}\|^{2}
\dot{\gamma}_{t}^{\varepsilon}\mathrm{d}t 
+2\kappa\langle V_{t}^{\varepsilon},Q_{b} \mathrm{d}W_{t}\rangle+\kappa\mathfrak{B}_{0}\mathrm{d}t\big] \\
&+\kappa
\|V_{t}^{\varepsilon}\|^{2}
e^{\nu\gamma_{t}^{\varepsilon}-8\mathfrak{B}_{0}\kappa t}\big(\nu\dot{\gamma}_{t}^{\varepsilon}-8\mathfrak{B}_{0}\kappa\big)\mathrm{d}t \\
%\leq& -2\kappa e^{\nu\gamma_{t}^{\varepsilon}-8\mathfrak{B}_{0}\kappa t}
%\big(\|\Lambda^{\alpha}X_{t}^{\varepsilon}\|^{2}
%+\|\Lambda^{\beta}Y_{t}^{\varepsilon}\|^{2}\big)
%\dot{\gamma}_{t}^{\varepsilon}\mathrm{d}t
%+2\kappa e^{\nu\gamma_{t}^{\varepsilon}-8\mathfrak{B}_{0}\kappa t}\langle V_{t}^{\varepsilon},Q_{b} \mathrm{d}W_{t}\rangle\\
%&~~~+\kappa \mathfrak{B}_{0}e^{\nu\gamma_{t}^{\varepsilon}-8\mathfrak{B}_{0}\kappa t}\mathrm{d}t -8\mathfrak{B}_{0}\kappa^2 e^{\nu\gamma_{t}^{\varepsilon}-8\mathfrak{B}_{0}\kappa t} \|V_{t}^{\varepsilon}\|^{2}\mathrm{d} t\\
%&~~~+C(\nu)\kappa\left(\|\Lambda^{\alpha'}X_{t}^{\varepsilon}\|^{2}
%+\|\Lambda^{\beta'}Y_{t}^{\varepsilon}\|^{2}\right)e^{\nu\gamma_{t}^{\varepsilon}-8\mathfrak{B}_{0}\kappa t}\dot{\gamma}_{t}^{\varepsilon}\mathrm{d}t\\
\leq&  -\kappa e^{\nu\gamma_{t}^{\varepsilon}-8\mathfrak{B}_{0}\kappa t}
\big(\nu_1\|\Lambda^{\alpha}X_{t}^{\varepsilon}\|^{2}
+\nu_2\|\Lambda^{\beta}Y_{t}^{\varepsilon}\|^{2}\big)
\dot{\gamma}_{t}^{\varepsilon}\mathrm{d}t
+2\kappa e^{\nu\gamma_{t}^{\varepsilon}-8\mathfrak{B}_{0}\kappa t}\langle V_{t}^{\varepsilon},Q_{b} \mathrm{d}W_{t}\rangle\\
&+\kappa \mathfrak{B}_{0}e^{\nu\gamma_{t}^{\varepsilon}-8\mathfrak{B}_{0}\kappa t}\mathrm{d}t -8\mathfrak{B}_{0}\kappa^2 e^{\nu\gamma_{t}^{\varepsilon}-8\mathfrak{B}_{0}\kappa t}\|V_{t}^{\varepsilon}\|^{2}\mathrm{d} t,
\end{aligned}
\end{equation*}
where we used the Poincar\'{e} inequality $\|V_{t}^{\varepsilon}\|^2\leq \| X_{t}^{\varepsilon}\|_1^2+\|Y_{t}^{\varepsilon}\|_1^2
\leq\|\Lambda^\alpha X_{t}^{\varepsilon}\|^2+\|\Lambda^\beta Y_{t}^{\varepsilon}\|^2 $.
Therefore, the above inequality can be rewritten as  
\begin{equation}\label{MHDA.7}
\begin{aligned}
&\kappa\|V_{t}^{\varepsilon}\|^{2}
+\kappa \int_{\ell_{0}^{\varepsilon}}^{t} e^{-\nu(\gamma_{t}^{\varepsilon}-\gamma_{s}^{\varepsilon})
+8\mathfrak{B}_{0}\kappa(t-s)}
(\nu_1\|\Lambda^{\alpha}X_{s}^{\varepsilon}\|^{2}
+\nu_2\|\Lambda^{\beta}Y_{s}^{\varepsilon}\|^{2}) 
\dot{\gamma}_{s}^{\varepsilon}\mathrm{d}s\\
\leq& \kappa\|U_0\|^2e^{-\nu\gamma_{t}^{\varepsilon}
+8\mathfrak{B}_{0}\kappa t}
+\kappa\mathfrak{B}_{0}\int_{\ell_{0}^{\varepsilon}}^{t}
e^{-\nu(\gamma_{t}^{\varepsilon}-\gamma_{s}^{\varepsilon})
+8\mathfrak{B}_{0}\kappa(t-s)}\mathrm{d}s+\tilde{M}_{t},
\end{aligned}
\end{equation}
where
\begin{equation*}
\begin{aligned}
\tilde{M}_{t}
=\tilde{M}_{t}^{\kappa,\varepsilon}
=&2\kappa\int_{\ell_{0}^{\varepsilon}}^{t}e^{-\nu(\gamma_{t}^{\varepsilon}-\gamma_{s}^{\varepsilon})
+8\mathfrak{B}_{0}\kappa(t-s)}\langle V_{s}^{\varepsilon},Q_{b} \mathrm{d}W_{s}\rangle
\\
&-8\mathfrak{B}_{0}\kappa^{2}
\int_{\ell_{0}^{\varepsilon}}^{t}\|V_{s}^{\varepsilon}\|^{2}
e^{-\nu(\gamma_{t}^{\varepsilon}-\gamma_{s}^{\varepsilon})
+8\mathfrak{B}_{0}\kappa(t-s)}\mathrm{d}s.
\end{aligned}
\end{equation*}

From \cite[Lemma A.5]{Peng-2024}, we know that
\begin{equation}\label{MHDA.5}
\mathbb{E}^{\mu_{\mathbb{W}}}\left[
\exp\{\tilde{M}_{\ell_{\eta^{\varepsilon}}^{\varepsilon}}\}\right]
\leq C.
\end{equation}
Replace $t$ in \eqref{MHDA.7} by $\ell_{\eta^{\varepsilon}}^{\varepsilon}$, it follows that
\begin{align}\label{MHDA.6}
&\mathbb{E}^{\mu_{\mathbb{W}}}
\bigg[\exp\bigg\{\kappa
\|V_{\ell_{\eta^{\varepsilon}}^\varepsilon}
^{\varepsilon}\|^{2}+\kappa\int_{\ell_{0}^{\varepsilon}}
^{\ell_{\eta^{\varepsilon}}^{\varepsilon}}e^{-(\eta^{\varepsilon}
-\gamma_{s}^{\varepsilon})
+8\mathfrak{B}_{0}\kappa(\ell_{\eta^{\varepsilon}}^{\varepsilon}-s)}
(\nu_1\|\Lambda^{\alpha}X_{s}^{\varepsilon}\|^{2}
+\nu_2\|\Lambda^{\beta}Y_{s}^{\varepsilon}\|^{2})\dot{\gamma}_{s}^{\varepsilon}\mathrm{d}s \notag\\
&~~~-\kappa\|U_0\|^2e^{-\eta^{\varepsilon}
+8\mathfrak{B}_{0}\kappa\ell_{\eta^{\varepsilon}}^{\varepsilon}}
-\kappa\mathfrak{B}_{0}\int_{\ell_{0}^{\varepsilon}}^{\ell_{\eta^{\varepsilon}}^{\varepsilon}}
e^{-(\eta^{\varepsilon}-\gamma_{s}^{\varepsilon})
+8\mathfrak{B}_{0}\kappa(\ell_{\eta^{\varepsilon}}^{\varepsilon}-s)}\mathrm{d}s\Big\} \bigg] \\
&\leq\mathbb{E}^{\mu_{\mathbb{W}}}
\bigg[\exp\left\{\tilde{M}_{\ell_{\eta^{\varepsilon}}^{\varepsilon}}\right\}\bigg].\notag
\end{align}
By using the fact that $X_{\ell_{\eta^{\varepsilon}}^{\varepsilon}}^{\varepsilon}
=u_{\eta^{\varepsilon}}^{\varepsilon},
Y_{\ell_{\eta^{\varepsilon}}^{\varepsilon}}^{\varepsilon}
=b_{\eta^{\varepsilon}}^{\varepsilon}$, and
$\gamma_{s}^{\varepsilon}|_{s=\ell_{r}^{\varepsilon}}=r$, we obtain
\begin{equation*}
\begin{aligned}
&\int_{\ell_{0}^{\varepsilon}}^{\ell_{\eta^{\varepsilon}}^{\varepsilon}}
e^{-(\eta^{\varepsilon}-\gamma_{s}^{\varepsilon})
+8\mathfrak{B}_{0}\kappa(\ell_{\eta^{\varepsilon}}^{\varepsilon}-s)}
(\nu_1\|\Lambda^{\alpha}X_{s}^{\varepsilon}\|^{2}
+\nu_2\|\Lambda^{\beta}Y_{s}^{\varepsilon}\|^{2})\dot{\gamma}_{s}^{\varepsilon}\mathrm{d}s\\
&~~~=\int_{0}^{\eta^{\varepsilon}}e^{-(\eta^{\varepsilon}-r)
+8\mathfrak{B}_{0}\kappa(\ell_{\eta^{\varepsilon}}^{\varepsilon}-\ell_{r}^{\varepsilon})}
(\nu_1\|\Lambda^{\alpha}u_{r}^{\varepsilon}\|^{2}
+\nu_2\|\Lambda^{\beta}b_{r}^{\varepsilon}\|^{2})\mathrm{d}r,
\\
&\int_{\ell_{0}^{\varepsilon}}^{\ell_{\eta^{\varepsilon}}^{\varepsilon}}
e^{-(\eta^{\varepsilon}-\gamma_{s}^{\varepsilon})
+8\mathfrak{B}_{0}\kappa(\ell_{\eta^{\varepsilon}}^{\varepsilon}-s)}\mathrm{d}s
=\int_{0}^{\eta^{\varepsilon}}e^{-(\eta^{\varepsilon}-r)
+8\mathfrak{B}_{0}\kappa(\ell_{\eta^{\varepsilon}}^{\varepsilon}
-\ell_{r}^{\varepsilon})}\mathrm{d}\ell_{r}^{\varepsilon}.
\end{aligned}\end{equation*}
Combining the above formulas with \eqref{MHDA.5}-\eqref{MHDA.6}, we can derive the desired result. This proof is completed.
\end{proof}

With the groundwork laid by the previous lemmas, we can derive \eqref{MHD2.9}-\eqref{MHDmoment.4} following the arguments in the proof of \cite[Appendix A]{Peng-2024}.
Finally, we turn to prove \eqref{MHDmoment.5}.
\begin{proof}[Proof of \eqref{MHDmoment.5}] Applying It\^{o}'s formula to $\|X_{s}^{\varepsilon}\|^{2n}+\|Y_{s}^{\varepsilon}\|^{2n}$ yields
\begin{align}\label{MHDA.8}
&\,\, \|X_{s}^{\varepsilon}\|^{2n}+\|Y_{s}^{\varepsilon}\|^{2n}+2 n \nu_1 \int^s_{\ell_{0}^{\varepsilon}}
\|X_{u}^{\varepsilon}\|^{2n-2}\|\Lambda^\alpha X_{u}^{\varepsilon}\|^{2}
\dot{\gamma}_{u}^{\varepsilon}\mathrm{d}u
+2 n \nu_2 \int^s_{\ell_{0}^{\varepsilon}}
\|Y_{u}^{\varepsilon}\|^{2n-2}\|\Lambda^\beta Y_{u}^{\varepsilon}\|^{2}
\dot{\gamma}_{u}^{\varepsilon}\mathrm{d}u \notag\\
&\leq \|u_{0}\|^{2n}+\|b_{0}\|^{2n}
+2 n\int_{\ell_{0}^{\varepsilon}}^{s}
\|Y_{u}^{\varepsilon}\|^{2n-2}\langle Y_{u}^{\varepsilon},
Q_b \mathrm{d}W_{u}\rangle+n\int_{\ell_{0}^{\varepsilon}}^{s}\|Y_{u}^{\varepsilon}\|^{2n-2}\mathfrak{B}_{0}\mathrm{d}u \\
&~~~
+2n(2n-1)\int_{\ell_{0}^{\varepsilon}}^{s}\|Y_{u}^{\varepsilon}\|^{2n-4}
\sum_{j\in\mathcal{Z}_0}\langle Y_{u}^{\varepsilon},\sigma_j^m\rangle^2 (\alpha_j^m)^2 \mathrm{d}u. \notag
\end{align}
We take expectations with respect to $\mathbb{P}^{\mu_{\mathbb{W}}}$ to \eqref{MHDA.8}, it has
\begin{equation*}
\begin{aligned}
\mathbb{E}^{\mu_{\mathbb{W}}}\left[\|X_{s}^{\varepsilon}\|^{2n}
+\|Y_{s}^{\varepsilon}\|^{2n}\right]
&\leq \|u_{0}\|^{2n}+\|b_{0}\|^{2n}
+C_n\int_{\ell_{0}^{\varepsilon}}^{s}\mathbb{E}^{\mu_{\mathbb{W}}}
\left[\|Y_{u}^{\varepsilon}\|^{2n-2}\right]\mathrm{d}u\\
&\leq \|u_{0}\|^{2n}+\|b_{0}\|^{2n}
+\frac{1}{2}\sup\limits_{u\in[\ell_0^\varepsilon,s]}
\mathbb{E}^{\mu_{\mathbb{W}}}
\left[\|Y_{u}^{\varepsilon}\|^{2n}\right]+C_n t^n.
\end{aligned}
\end{equation*}
Then we can get that, for any $t\geq \ell_0^{\varepsilon}$,
\begin{equation*}
\begin{aligned}
&\sup\limits
_{s\in[\ell_0^\varepsilon,t]}
\mathbb{E}^{\mu_{\mathbb{W}}}\left[\|X_{s}^{\varepsilon}\|^{2n}
+\|Y_{s}^{\varepsilon}\|^{2n}\right]
\leq2\| U_{0}\|^{2n}+C_n t^{n}.
\end{aligned}
\end{equation*}
Using \eqref{MHDA.8} and the Burkholder-Davis-Gundy inequality to obtain
\begin{align*}
&\,\,\mathbb{E}^{\mu_{\mathbb{W}}}\left[\sup\limits
_{s\in[\ell_0^\varepsilon,t]}\left(\|X_{s}^{\varepsilon}\|^{2n}
+\|Y_{s}^{\varepsilon}\|^{2n}\right)\right]\\
&\leq \|U_0\|^{2n}
+2 n\mathbb{E}^{\mu_{\mathbb{W}}}\left[\sup\limits
_{s\in[\ell_0^\varepsilon,t]}\left|\int_{\ell_{0}^{\varepsilon}}^{s}
\|Y_{u}^{\varepsilon}\|^{2n-2}\langle Y_{u}^{\varepsilon},
Q_b \mathrm{d}W_{u}\rangle\right|\right]
+C_n \mathbb{E}^{\mu_{\mathbb{W}}}\left[
\int_{\ell_{0}^{\varepsilon}}^{s}\|Y_{u}^{\varepsilon}\|^{2n-2}\mathrm{d}u\right]\\
&\leq \|U_0\|^{2n}
+C_n \left(\mathbb{E}^{\mu_{\mathbb{W}}}\left[\sup\limits
_{s\in[\ell_0^\varepsilon,t]}\left|\int_{\ell_{0}^{\varepsilon}}^{s}
\|Y_{u}^{\varepsilon}\|^{2n-2}\langle Y_{u}^{\varepsilon},
Q_b \mathrm{d}W_{u}\rangle\right|^2\right]\right)^{1/2}+C_n \mathbb{E}^{\mu_{\mathbb{W}}}\left[
\int_{\ell_{0}^{\varepsilon}}^{s}\|Y_{u}^{\varepsilon}\|^{2n-2}\mathrm{d}u\right]\\
&\leq \|U_0\|^{2n}
+C_n \left(\mathbb{E}^{\mu_{\mathbb{W}}}\left[\int_{\ell_{0}^{\varepsilon}}^{t}
\|Y_{u}^{\varepsilon}\|^{4n-2}\mathrm{d}u\right]\right)^{1/2}
+C_n \mathbb{E}^{\mu_{\mathbb{W}}}\left[
\int_{\ell_{0}^{\varepsilon}}^{s}\|Y_{u}^{\varepsilon}\|^{2n-2}\mathrm{d}u\right]\\
&\leq \|U_0\|^{2n}
+C_n \left[\| U_{0}\|^{4n-2}t+t^{2n}\right]^{1/2}
+C_n \left[ \| U_{0}\|^{2n-2}t+t^n\right]\\
&\leq C_n(1+t)\|U_0\|^{2n}+C_n(1+t^n).
\end{align*}
Following the argument in \cite[Appendix A.3]{Peng-2024}, we can obtain the desired result. This proof is completed.
\end{proof}

Following the analytical framework developed in \cite[Section 4.8]{MH-2006} or extended in \cite[Lemma A.6]{Foldes-2015}, we obtain the following result.
\begin{lemma}\label{MHDlemma2.9} For $0 \leq s < t,$
\begin{align*}
\|\mathcal{A}_{s,t}\|_{\mathcal{L}(L^2([\ell_s,\ell_t];\mathbb{R}^d);H)} 
\leq C \left( \int_{\ell_s}^{\ell_t} \|J_{\gamma_r,t}\|_{\mathcal{L}(H,H)}^2 \mathrm{d}r \right)^{1/2},
\end{align*}
holds for a constant C independent of $s,t$. Moreover, for any $\kappa>0$,
\begin{align*}
&\|\mathcal{A}_{s,t}^{*}  (\mathcal{M}_{s,t}+\kappa I)^{-1/2}\|_{\mathcal L(H,L^2([\ell_s,\ell_t];\mathbb{R}^d))} \leq 1,\\
&\|(\mathcal{M}_{s,t}+\kappa I)^{-1/2}\mathcal{A}_{s,t}\|_{\mathcal{L}(L^2([\ell_s,\ell_t];\mathbb{R}^d),H)} \leq 1,\\
&\|(\mathcal{M}_{s,t}+\kappa I)^{-1/2}\|_{\mathcal{L}(H,H)} 
\leq \kappa^{-1/2}.
\end{align*}
Here, $\mathcal L(X,Y)$ denotes the operator norm of the linear map between the given Hilbert spaces $X$ and $Y$.
\end{lemma}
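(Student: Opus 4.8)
The plan is to establish the four estimates in turn, using only the abstract identity $\mathcal{M}_{s,t}=\mathcal{A}_{s,t}\mathcal{A}_{s,t}^{*}$, the boundedness of the forcing map $Q_{b}:\mathbb{R}^{d}\to H$ (recall $\|Q_{b}\|^{2}=\mathcal{B}_{0}=\sum_{k,l}(\alpha_{k}^{l})^{2}<\infty$), and the almost-sure finiteness of $\int_{\ell_{s}}^{\ell_{t}}\|J_{\gamma_{r},t}\|_{\mathcal{L}(H,H)}^{2}\,\mathrm{d}r$, which follows from the moment bounds on $J_{s,t}$ in Lemma \ref{MHDlemma2.4}. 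For the first inequality I would start from the definition \eqref{MHDP.7}, bound $\|\mathcal{A}_{s,t}v\|\le\int_{\ell_{s}}^{\ell_{t}}\|J_{\gamma_{u},t}\|_{\mathcal{L}(H,H)}\,\|Q_{b}\|_{\mathcal{L}(\mathbb{R}^{d},H)}\,|v_{u}|\,\mathrm{d}u$, estimate $\|Q_{b}\|_{\mathcal{L}(\mathbb{R}^{d},H)}\le C$ in terms of $\{\alpha_{k}^{l}\}$, apply the Cauchy--Schwarz inequality in $u$ to split off $\|v\|_{L^{2}([\ell_{s},\ell_{t}];\mathbb{R}^{d})}$, and take the supremum over unit $v$; relabelling $u$ as $r$ gives the stated bound.

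For the last inequality, I would note that $\mathcal{M}_{s,t}=\mathcal{A}_{s,t}\mathcal{A}_{s,t}^{*}$ is, almost surely, a bounded, self-adjoint, non-negative operator on $H$ (boundedness coming from the first estimate), so by the spectral theorem its spectrum is contained in $[0,\infty)$; hence $\mathcal{M}_{s,t}+\kappa I$ has spectrum in $[\kappa,\infty)$ and functional calculus yields $\|(\mathcal{M}_{s,t}+\kappa I)^{-1/2}\|_{\mathcal{L}(H,H)}\le\kappa^{-1/2}$. For the second inequality, for $\phi\in H$ I would compute $\|\mathcal{A}_{s,t}^{*}(\mathcal{M}_{s,t}+\kappa I)^{-1/2}\phi\|_{L^{2}}^{2}=\langle\mathcal{A}_{s,t}\mathcal{A}_{s,t}^{*}(\mathcal{M}_{s,t}+\kappa I)^{-1/2}\phi,(\mathcal{M}_{s,t}+\kappa I)^{-1/2}\phi\rangle_{H}=\langle\mathcal{M}_{s,t}(\mathcal{M}_{s,t}+\kappa I)^{-1}\phi,\phi\rangle_{H}$, using $\mathcal{A}_{s,t}\mathcal{A}_{s,t}^{*}=\mathcal{M}_{s,t}$, the self-adjointness of $(\mathcal{M}_{s,t}+\kappa I)^{-1/2}$, and that this operator commutes with $\mathcal{M}_{s,t}$; since $\mathcal{M}_{s,t}(\mathcal{M}_{s,t}+\kappa I)^{-1}=I-\kappa(\mathcal{M}_{s,t}+\kappa I)^{-1}\le I$ as operators, the right-hand side is $\le\|\phi\|_{H}^{2}$, which is the claimed operator-norm bound. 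The third inequality is then immediate from the second by taking adjoints, since $[(\mathcal{M}_{s,t}+\kappa I)^{-1/2}\mathcal{A}_{s,t}]^{*}=\mathcal{A}_{s,t}^{*}(\mathcal{M}_{s,t}+\kappa I)^{-1/2}$ and an operator has the same norm as its adjoint.

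These steps are standard Hilbert-space functional analysis once the abstract structure is in place, and I do not expect a serious analytic obstacle. The only point requiring genuine care is the almost-sure well-definedness and boundedness of $\mathcal{A}_{s,t}$, $\mathcal{A}_{s,t}^{*}$ and $\mathcal{M}_{s,t}$ on the relevant Hilbert spaces, so that the spectral-theoretic manipulations above are legitimate; this I would justify using the moment estimates of Section 3 together with the finiteness of $\mathcal{B}_{0}$, and, if desired, the explicit representation \eqref{MHDMalliavin} of $\langle\mathcal{M}_{s,t}\phi,\phi\rangle$.
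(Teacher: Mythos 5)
Your proof is correct and is essentially the standard argument the paper itself invokes by citation (it gives no proof, deferring to \cite[Section 4.8]{MH-2006} and \cite[Lemma A.6]{Foldes-2015}, where exactly this Cauchy--Schwarz bound on $\mathcal{A}_{s,t}$ and the spectral-calculus manipulations with $\mathcal{M}_{s,t}=\mathcal{A}_{s,t}\mathcal{A}_{s,t}^{*}\geq 0$ appear). Your attention to the almost-sure boundedness of $\mathcal{A}_{s,t}$ via Lemma \ref{MHDlemma2.4} and the finiteness of $\mathcal{B}_0$ is exactly the right justification for the functional-calculus steps.
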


By the Riesz representation theorem and \eqref{MHDP.4}, we have 
\begin{equation}\label{MHDriesz}
\mathcal{D}_{u}^{j}U_t
=J_{\gamma_u,t}Q_{b}e_j,~~\text{for any}~ j=1,\ldots,d,~u\in[0,\ell_t],
\end{equation}
where $\mathcal{D}_u^jF:=(\mathcal{D}F)^j(u)$ is the $j$th component of $\mathcal{D}F$ evaluated at time $u$, and $\{e_j\}_{j=1,\ldots,d}$ is the standard basis of $\mathbb{R}^d$. 

Recall that $\mathcal{D}$ is the Malliavin derivative. As in \cite{MH-2006}, let $A:\mathcal{H}_1\rightarrow\mathcal{H}_2$ be a random linear map between two Hilbert spaces, then we can use $\mathcal{D}_{s}^j A:\mathcal{H}_1\rightarrow\mathcal{H}_2$ to denote the random linear map given by
\begin{equation*}
(\mathcal{D}_{s}^j A)h:=\langle\mathcal{D}_{s}(Ah),e_j\rangle.
\end{equation*}
Following the approach in \cite[(4.29)]{MH-2006}, and in view of \eqref{MHDP.5}, we note that for any $0\leq s\leq t$, $j=\{1,\cdots,d\}$ and $u \in [0,\ell_t]$, the following conclusion holds:
\begin{equation}\label{MHD2.23}
\mathcal{D}_{u}^{j}J_{s,t}\xi
=\begin{cases}
J_{\gamma_u,t}^{(2)}(Q_{b}e_{j},J_{s,\gamma_u}\xi),~u\in [\ell_s,\ell_t],
\\[2ex]
J_{s,t}^{(2)}(J_{\gamma_u,s}Q_{b}e_{j},\xi),~\mathrm{if}~u<\ell_s.
\end{cases}
\end{equation}

\begin{lemma}\label{MHDlemma2.10}
For any $0\leq s<t$, the random operators $ J_{s,t}, \mathcal{A}_{s,t}, \mathcal{A}_{s,t}^{*}$ are differentiable in the Malliavin sense. Moreover, for any $r>0$, we have the following results:
\begin{align}
\|\mathcal{D}_{r}^{j} J_{0,\eta}\|_{\mathcal L(H,H)}
&\leq C_\kappa\exp\{C\int_0^\eta\|U_s\|_1^2 \mathrm{d}s\}, \label{MHD2.34}
\\
\|\mathcal{D}_{r}^{j}\mathcal{A}_{0,\eta}\|
_{\mathcal{L}(L^{2}([0,\ell_\eta];\mathbb{R}^{d}),H)}^{p}
&\leq C_\kappa(\eta+1)\exp\{C\int_0^\eta\|U_s\|_1^2 \mathrm{d}s\},\label{MHD2.35}
\\
\|\mathcal{D}_{r}^{j}\mathcal{A}_{0,\eta}^{*}\|
_{\mathcal{L}(H,L^{2}([0,\ell_\eta];\mathbb{R}^{d}))}^{p}
&\leq C_\kappa(\eta+1)\exp\{C\int_0^\eta\|U_s\|_1^2 \mathrm{d}s\},\label{MHD2.36}
\end{align}
where $C$ is the same constant as that appeared in Lemma \ref{MHDlemma2.4}, $C_\kappa$ is a constant depending on $\kappa, \nu , \{\alpha_j\}_{j\in \mathcal Z_0},d$.
\end{lemma}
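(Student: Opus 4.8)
The plan is to establish Malliavin differentiability of $J_{0,\eta}$, $\mathcal{A}_{0,\eta}$, $\mathcal{A}_{0,\eta}^*$ by differentiating the defining evolution equations \eqref{MHDP.2}, \eqref{MHDP.7}, \eqref{MHDP.8} in the Malliavin sense and then propagating moment bounds via the estimates already collected in Lemma \ref{MHDlemma2.4} and Lemma \ref{MHDmoment}. First I would treat $\mathcal{D}_r^j J_{0,\eta}$: by \eqref{MHD2.23}, for $r \in [0,\ell_\eta]$ one has $\mathcal{D}_r^j J_{0,\eta}\xi = J_{\gamma_r,\eta}^{(2)}(Q_b e_j, J_{0,\gamma_r}\xi)$. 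Applying the bilinear estimate \eqref{MHD2.18} for $J^{(2)}$ together with \eqref{MHD2.16} for $J$, and using $\|Q_b e_j\| = |\alpha_k^l| \leq \mathcal{B}_0^{1/2}$, gives
\[
\|\mathcal{D}_r^j J_{0,\eta}\xi\| \leq C\|Q_b e_j\|\,\|J_{0,\gamma_r}\xi\|\, e^{C\int_{\gamma_r}^\eta \|U_s\|_1^2\,\mathrm{d}s} \leq C_\kappa \|\xi\|\, e^{C\int_0^\eta \|U_s\|_1^2\,\mathrm{d}s},
\]
which is \eqref{MHD2.34} after taking the supremum over $\|\xi\|\leq 1$ and using $\gamma_r \leq \eta$. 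The differentiability itself follows from the smooth dependence of the solution flow on the driving Brownian path, as in \cite[Section 4]{MH-2006}.

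Next I would handle $\mathcal{A}_{0,\eta}$ and its adjoint. From the representation \eqref{MHDP.7}, $\mathcal{A}_{0,\eta}v = \int_0^{\ell_\eta} J_{\gamma_u,\eta} Q_b v_u\,\mathrm{d}u$, so differentiating under the integral and using the product rule for the Malliavin derivative of a composition, $\mathcal{D}_r^j(\mathcal{A}_{0,\eta}v)$ picks up a term involving $\mathcal{D}_r^j J_{\gamma_u,\eta}$ (controlled by \eqref{MHD2.34}-type bounds applied on the sub-interval $[\gamma_u,\eta]$) integrated against $v_u$ over $u\in[0,\ell_\eta]$. By Cauchy--Schwarz in $u$, $\|\mathcal{D}_r^j \mathcal{A}_{0,\eta}\|_{\mathcal{L}} \leq C_\kappa \ell_\eta^{1/2}\,e^{C\int_0^\eta\|U_s\|_1^2\,\mathrm{d}s}$; raising to the $p$-th power and absorbing $\ell_\eta^{p/2} \leq C(\eta+1)\cdot(\text{exponential factor})$ — using that on the stopping-time interval $\ell_\eta$ is comparable to $\eta$ up to the constant in \eqref{MHD2.7}, i.e. $8\mathfrak{B}_0\kappa\ell_\eta = \nu\eta - 1 \leq \nu\eta$ — yields \eqref{MHD2.35}. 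The bound \eqref{MHD2.36} for $\mathcal{A}_{0,\eta}^*$ is obtained dually: from $\mathcal{A}_{0,\eta}^*\phi = (\alpha_k^j\langle \phi, J_{\gamma_u,\eta}\sigma_k^j\rangle)_{k,j}$, differentiating gives $\alpha_k^j \langle \phi, (\mathcal{D}_r^j J_{\gamma_u,\eta})\sigma_k^j\rangle$, so the $L^2([0,\ell_\eta];\mathbb{R}^d)$-norm of the derivative is again $\lesssim \ell_\eta^{1/2}\cdot\|\mathcal{D}_r^j J\|_{\mathcal{L}}$, and the same bookkeeping applies.

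The main obstacle I anticipate is not any single estimate but the careful handling of the \emph{time-substitution and the interaction of the random clock $\ell$ with the Malliavin derivative}: the Malliavin derivative here is taken in the $W$-variable with $\ell$ frozen, and the variables $\gamma_u$ and $\gamma_r$ are themselves $\ell$-measurable, so one must verify that the chain rule \eqref{MHD2.23} remains valid under the subordination and that the power $p$ appearing in \eqref{MHD2.35}-\eqref{MHD2.36} can indeed be absorbed into $C_\kappa(\eta+1)$ times the exponential — this requires the exponential-moment control of $\eta$ and of $\int_0^\eta\|U_s\|_1^2\,\mathrm{d}s$ from Lemma \ref{MHDmoment}, rather than a pathwise bound. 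Making the bilinear estimates on $J^{(2)}$ uniform across the random sub-intervals $[\gamma_u,\eta]$, and checking that constants depend only on $\kappa,\nu,\{\alpha_j\},d$ and not on the realization of $\ell$, is the delicate bookkeeping step.
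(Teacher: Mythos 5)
Your proposal is correct and follows essentially the same route as the paper: the bound \eqref{MHD2.34} from \eqref{MHD2.23} together with the estimates \eqref{MHD2.16}--\eqref{MHD2.18} of Lemma \ref{MHDlemma2.4}, then Cauchy--Schwarz in $u$ plus the deterministic bound $\ell_\eta\leq \nu\eta/(8\mathfrak{B}_0\kappa)$ from the definition \eqref{MHD2.7} of the stopping time to get \eqref{MHD2.35}, with \eqref{MHD2.36} following by duality. Your write-up is in fact more detailed than the paper's own (very terse) argument, and your flagged concerns about the frozen clock $\ell$ and the power $p$ are reasonable points the paper glosses over.
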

\begin{proof}
The inequality \eqref{MHD2.34} follows directly from Lemma \ref{MHDlemma2.4} combined with result \eqref{MHD2.23}. For inequality \eqref{MHD2.35}, we employ Lemma \ref{MHDlemma2.4} combined with \eqref{MHD2.23}, applying the Cauchy-Schwarz inequality while observing the key property that
\begin{equation*}
\mathcal A_{0,\eta}v = \int_0^{\ell_\eta} J_{\gamma_u,\sigma}Q_b v(u)du, \quad \ell_\eta \leq \frac{\nu\eta}{8\mathfrak{B}_0\kappa}.
\end{equation*}
The inequality \eqref{MHD2.36} is a consequence of \eqref{MHD2.35}. This proof is completed.
\end{proof}

\section{The invertibility of the Malliavin matrix $\mathcal{M}_{0,t}$}

This section is devoted to establishing a key proposition concerning the spectral properties of eigenvectors with significant projections along unstable directions. Specifically, we demonstrate that such eigenvectors typically correspond to small eigenvalues. This result plays a fundamental role in ensuring the invertibility of the Malliavin matrix when restricted to the subspace spanned by unstable directions.
Given the finite-dimensional nature of this subspace in our framework, we are able to formulate a control problem leveraging the Malliavin integration by parts formula. This approach enables us to derive precise gradient estimates for the Markov semigroup, that are crucial for establishing the ergodic properties of the system.

%This section aims to establish a significant proposition that elucidates the probability of eigenvectors with sizable projections in unstable directions having small eigenvalues. Broadly speaking, this result ensures the invertibility of the Malliavin matrix within the space spanned by these unstable directions.  Given the finite-dimensional nature of this space in the current context, we can formulate a control problem using the Malliavin integration by parts formula to derive gradient estimates on the Markov semigroup, which are instrumental in demonstrating ergodicity.
%causing us to exploit the H\"{o}rmander-type condition for infinite-dimensional systems. 

\begin{proposition}\label{MHDpropo3.4} For any $\varkappa \in ( 0, 1] , N\in \mathbb{N}$ and $U_0\in H$, one has
\begin{equation}\label{MHD3.4}
\mathbb{P}\Big(\inf_{\phi\in\mathcal{S}_{\varkappa,N}}
\langle\mathcal{M}_{0,\eta}\phi,\phi\rangle=0\Big)=0,
\end{equation}
where $
\mathcal{S}_{\varkappa,N}=\{\phi\in H:\|P_{N}\phi \| \geq \varkappa , \| \phi \| = 1\}.
$
\end{proposition}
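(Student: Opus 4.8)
The plan is to argue by contradiction along the lines of Mattingly–Hairer–Pardoux and its pure-jump adaptation in \cite{Peng-2024}. Suppose the event in \eqref{MHD3.4} has positive probability. Since $\mathcal{S}_{\varkappa,N}$ is a subset of the unit sphere and the map $\phi\mapsto\langle\mathcal{M}_{0,\eta}\phi,\phi\rangle$ is continuous with $\mathcal{M}_{0,\eta}$ a compact nonnegative operator, the infimum is attained at some (random) $\phi_*\in\mathcal{S}_{\varkappa,N}$; on the bad event we then have $\langle\mathcal{M}_{0,\eta}\phi_*,\phi_*\rangle=0$. By the representation \eqref{MHDMalliavin},
\begin{equation*}
\langle\mathcal{M}_{0,\eta}\phi_*,\phi_*\rangle
=\sum_{k\in\mathcal{Z}_0,l\in\{0,1\}}(\alpha_k^l)^2\int_0^{\eta}\langle K_{r,\eta}\phi_*,\sigma_k^l\rangle^2\,\mathrm{d}\ell_r=0,
\end{equation*}
so that $\langle K_{r,\eta}\phi_*,\sigma_k^l\rangle=0$ for $\mathrm{d}\ell_r$-almost every $r$ and all $k\in\mathcal{Z}_0$, $l\in\{0,1\}$. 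Because $\nu_S((0,\infty))=\infty$ (Condition 2.2), the measure $\mathrm{d}\ell_r$ charges every subinterval of $[0,\eta]$ with positive probability, so in fact $\langle K_{r,\eta}\phi_*,\sigma_k^l\rangle=0$ for $r$ in a dense set, hence (by continuity of $r\mapsto K_{r,\eta}\phi_*$) for all $r\in[0,\eta]$; in particular at $r=\eta$ this gives $\langle\phi_*,\sigma_k^l\rangle=0$.

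The core of the argument is then the iterative Lie-bracket/Hörmander step: starting from the identity $\langle K_{r,\eta}\phi_*,\sigma_k^l\rangle\equiv 0$ on $[0,\eta]$ for all forced modes, differentiate in $r$ using the backward equation \eqref{MHDbackward}, namely $\partial_r K_{r,\eta}\phi_* = -(\nabla F(U_r))^* K_{r,\eta}\phi_*$, with $\nabla F(U)\psi=-A^{\alpha,\beta}\psi-B(U,\psi)-B(\psi,U)$. Differentiating $\langle K_{r,\eta}\phi_*,\sigma_k^l\rangle=0$ produces $\langle K_{r,\eta}\phi_*,\nabla F(U_r)^{**}\sigma_k^l\rangle$-type terms; using the known directions $\sigma_k^l$ and the quadratic structure of $B$ one extracts, via the Lie-bracket computation tailored to the fractional MHD system in \cite{Peng-2020} (invoked through Lemma \ref{MHDlemma3.2}), that $\langle K_{r,\eta}\phi_*,\zeta\rangle\equiv 0$ for all $\zeta$ in the successively larger spaces $\mathrm{span}\{\sigma_k^l,\psi_k^l:k\in\mathcal{Z}_n\}$. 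Here one has to be careful to separate the velocity and magnetic components: the noise lives only in the $\sigma$ (magnetic) directions, and the coupling in $B$ transfers information into the $\psi$ (velocity) directions; the bilinear interaction $\langle k,\ell^\perp\rangle\neq 0$, $|k|\neq|\ell|$ in the definition of $\mathcal{Z}_n$ is exactly what guarantees the bracket is nonzero at each stage. Since $\mathcal{Z}_0$ is a generator in the sense of Condition 2.1, iterating gives $\bigcup_n\mathcal{Z}_n=\mathbb{Z}^2_*$, hence $\langle K_{r,\eta}\phi_*,\sigma_k^l\rangle=\langle K_{r,\eta}\phi_*,\psi_k^l\rangle=0$ for all $k$, and in particular at $r=\eta$ this forces $\phi_*=0$, contradicting $\|\phi_*\|=1$.

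To make this rigorous one works, as in \cite{MH-2006,Peng-2024}, with a finite truncation: fix $N$, and show by a finite induction (say up to $\mathcal{Z}_n$ with $n$ large enough that the associated modes span $P_N H$ and a bit more) that $P_N\phi_*=0$ on the bad event up to a null set; combined with $\|P_N\phi_*\|\geq\varkappa>0$ this already yields the contradiction, so one never needs the full limit $\bigcup_n\mathcal{Z}_n=\mathbb{Z}^2_*$ — only finitely many bracket steps. The measurability/attainment of the infimum over the compact set $\mathcal{S}_{\varkappa,N}$, together with the fact that $\phi_*$ can be chosen $\mathcal{F}_\eta$-measurably, should be handled by a standard selection argument. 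The main obstacle I anticipate is the bracket bookkeeping for the coupled system: unlike Navier–Stokes, here the state has two components and the stochastic forcing only hits one of them, so the induction has to track which of $\sigma_k^l$ and $\psi_k^l$ have been "reached" at each stage and verify that the MHD nonlinearity $B(U,\tilde U)=(\Pi[u\cdot\nabla\tilde u-b\cdot\nabla\tilde b],\,\Pi[u\cdot\nabla\tilde b-b\cdot\nabla\tilde u])$ indeed produces the needed new directions with nonvanishing coefficients; controlling the fractional dissipation $A^{\alpha,\beta}$ (which appears in $\nabla F$) is harmless since it is diagonal in the Fourier basis and does not mix modes, but it must be carried along in the estimates. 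This is precisely the content encapsulated in Lemma \ref{MHDlemma3.2}, whose hypotheses we verify and then invoke.
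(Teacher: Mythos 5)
Your proposal follows essentially the same route as the paper: argue by contradiction, use the representation \eqref{MHDMalliavin} to get $\langle K_{r,\eta}\phi,\sigma_k^l\rangle=0$ for $\mathrm{d}\ell_r$-a.e.\ $r$, upgrade this to vanishing on an interval via the density of the jump times of $\ell$ (Condition 2.2, Lemma \ref{MHDlemma3.3}) and continuity, and then run the iterated Lie-bracket/H\"ormander step through Lemma \ref{MHDlemma3.2} and the MHD-specific bracket identities (Lemmas \ref{MHDlemma4.3}, \ref{MHDlemma4.5}), alternating between the $\sigma$ (magnetic, forced) and $\psi$ (velocity) directions exactly as in the paper's chain of Lemmas \ref{MHDlemma0}--\ref{MHDlemmaH}, to conclude $\phi=0$ and contradict $\|P_N\phi\|\geq\varkappa$. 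The one imprecision is calling $\mathcal{S}_{\varkappa,N}$ compact — the unit sphere of $H$ is not norm-compact, so attainment of the infimum needs a weak-compactness argument (weak sequential continuity of the quadratic form of the compact operator $\mathcal{M}_{0,\eta}$, plus the fact that $P_N$ is finite rank so $\|P_N\phi_*\|\geq\varkappa$ survives the weak limit, followed by renormalization); this is the same standard selection step the paper itself leaves implicit, so it is not a gap relative to the paper's own argument.
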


Note that Proposition \ref{MHDpropo3.4} is not sufficient for the proof of Proposition \ref{MHDpropo1.4} (e-property), we need to make a necessary supplement to the proposition. For $\varkappa\in(0,1]$, $U_0\in H$, $N\in\mathbb{N}$, $\mathfrak{R}>0$ and $\varepsilon>0$, let
\begin{equation}
X^{U_0,\varkappa,N}=\inf_{\phi\in\mathcal{S}_{\varkappa,N}}
\langle\mathcal{M}_{0,\eta}\phi,\phi\rangle.
\end{equation}
We denote
\begin{equation}
r(\varepsilon,\varkappa,\mathfrak{R},N)
=\sup_{\|U_0\|\leq\Re}\mathbb{P}(X^{U_0,\alpha,N}<\varepsilon).
\end{equation}
Let us recall that $\mathcal{M}_{0,t}$ denotes the Malliavin covariance matrix associated with the solution $U_t$ of equation \eqref{MHD2.14} with initial condition $U_0$. Consequently, $\mathcal{M}_{0,\eta}$
inherits dependence on the initial datum $U_0$. We will prove the above result after proving Proposition \ref{MHDpropo3.4}.

Prior to proceeding with the proof of Proposition \ref{MHDpropo3.4}, we require several preparatory results concerning Lévy processes and some technical estimates.

Throughout this section, we adopt the notation $\Delta f(s):=f(s)-f(s-)$ for the jump size of a function $f$ at times $s$. We work on the probability space
$(\Omega,\mathcal{F},\mathbb{P})$ introduced in Section 2, where $L_t=(W_{S_t}^{k,m})_{k\in \mathcal Z_0,m\in \{0,1\}}$ constitutes a $\mathbb{R}^d$-dimensional L\'{e}vy process characterized by its $\sigma$-finite intensity measure $\nu_L$.

The following lemma establishes a pure jump version of Theorem 7.1 in \cite{MH-2011}, extending the Wiener process results to our Lévy driven setting.
\begin{lemma}[\cite{Peng-2024}]\label{MHDlemma3.2}
If for some $\omega _0\in \Omega _0$, the following three conditions are satisfied:

$(1)$ $a( \omega _{0}) , b( \omega _{0}) \in [ 0, \infty )$ and $a( \omega _{0}) < b( \omega _{0})$;

$(2)$ $g_{i}( \omega _{0}, \cdot ) : [ a( \omega _{0}) , b( \omega _{0}) ] \rightarrow \mathbb{R} , 0\leq i\leq d$, are continuous functions;

$(3)$
\begin{equation}\label{MHD3.1}
g_0(\omega_0,r)+\sum_{i=1}^dg_i(\omega_0,r)\tilde{L}^i(\omega_0,r)=0,\quad\forall r\in[a(\omega_0),b(\omega_0)].
\end{equation}

Then
\begin{equation*}
g_i(\omega_0,r)=0,\quad\forall r\in[a(\omega_0),b(\omega_0)],\quad0\leq i\leq d.
\end{equation*}
\end{lemma}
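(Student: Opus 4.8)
\textbf{Proof strategy for Lemma \ref{MHDlemma3.2}.}
The plan is to prove the statement by downward induction on the number of nonvanishing coefficient functions, using the fact that a pure jump L\'evy process introduces, on any nonempty time interval, a dense set of jump times at which the process makes genuine discontinuous moves, while continuous functions cannot compensate for such jumps. First I would fix $\omega_0\in\Omega_0$ satisfying conditions $(1)$--$(3)$ and write $I:=[a(\omega_0),b(\omega_0)]$. Suppose, for contradiction, that not all $g_i(\omega_0,\cdot)$ vanish identically on $I$; among the indices $i\in\{1,\dots,d\}$ pick one, say $i_0$, such that $g_{i_0}(\omega_0,\cdot)$ is not identically zero. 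By continuity there is a nondegenerate subinterval $[c,d]\subseteq I$ on which $g_{i_0}(\omega_0,r)\neq 0$ for all $r\in[c,d]$.

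The key step is to exploit the jump structure of $\widetilde{L}$ on $[c,d]$. Since each coordinate $\widetilde L^i(\omega_0,\cdot)$ is c\`adl\`ag, evaluating the identity \eqref{MHD3.1} at a jump time $r\in(c,d)$ and subtracting its left limit gives, using continuity of every $g_i(\omega_0,\cdot)$,
\begin{equation*}
\sum_{i=1}^d g_i(\omega_0,r)\,\Delta\widetilde L^i(\omega_0,r)=0.
\end{equation*}
Because the driving L\'evy process has infinite L\'evy measure (this is exactly Condition $2.2$, which forces $\nu_L(\mathbb R^d\setminus\{0\})=\infty$ through \eqref{MHD1.8} and $\nu_S((0,\infty))=\infty$), the process $\widetilde L$ has infinitely many jump times accumulating everywhere in $(c,d)$, and the corresponding jump vectors $\Delta\widetilde L(\omega_0,r)\in\mathbb R^d$ are, $\mathbb P$-a.s., not confined to any fixed proper linear subspace — in fact on $\Omega_0$ (chosen of full measure) one can extract from $(c,d)$ a finite collection of jump times $r_1,\dots,r_d$ whose jump vectors $\Delta\widetilde L(\omega_0,r_1),\dots,\Delta\widetilde L(\omega_0,r_d)$ span $\mathbb R^d$, and moreover all lying in a neighbourhood of an arbitrary point $r^*\in(c,d)$. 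Applying the displayed jump identity at each such $r_j$ yields a linear system showing that the vector $\big(g_1(\omega_0,r_j),\dots,g_d(\omega_0,r_j)\big)$ is annihilated by a spanning set, hence is zero; letting the jump times shrink to $r^*$ and using continuity of the $g_i$ we conclude $g_i(\omega_0,r^*)=0$ for all $i=1,\dots,d$. Since $r^*\in(c,d)$ was arbitrary and the $g_i$ are continuous, $g_i(\omega_0,\cdot)\equiv 0$ on $[c,d]$, contradicting $g_{i_0}(\omega_0,r)\neq 0$ there.

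Therefore $g_i(\omega_0,\cdot)\equiv 0$ on $I$ for every $i=1,\dots,d$, and then \eqref{MHD3.1} immediately forces $g_0(\omega_0,\cdot)\equiv 0$ on $I$ as well. The main obstacle is the careful measure-theoretic bookkeeping needed to define the full-measure set $\Omega_0$ on which the jump vectors of $\widetilde L$ have the required spanning property in every subinterval: this requires the nondegeneracy of the Gaussian component inside the subordinated representation \eqref{MHD1.8} (each jump of $L_t=W_{S_t}$, being an independent Gaussian vector with nondegenerate covariance conditional on the jump size of $S$, has a density supported on all of $\mathbb R^d$), together with a Borel--Cantelli / density argument over a countable family of subintervals with rational endpoints. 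Since this is precisely the content of the cited result \cite[Lemma 4.x]{Peng-2024}, I would invoke that construction for $\Omega_0$ and focus the write-up on the deterministic jump-cancellation argument above.
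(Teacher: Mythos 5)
The paper does not actually prove this lemma: it is imported from \cite{Peng-2024} as a pure-jump analogue of Theorem 7.1 of \cite{MH-2011}, and the set $\Omega_0$ is only characterized implicitly. Your proposal can therefore only be measured against the strategy of the cited source, and on that score you have identified the correct mechanism: subtracting left limits at jump times kills the continuous functions $g_0,\dots,g_d$ and leaves $\sum_{i=1}^d g_i(\omega_0,r)\,\Delta\tilde L^i(\omega_0,r)=0$ at every jump time $r$; Condition 2.2 makes the jump times dense (this is precisely Lemma \ref{MHDlemma3.3}); and the conditional Gaussian structure of the jumps of $L_t=W_{S_t}$ supplies the non-degeneracy of the jump directions. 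This is indeed how the cited result is obtained.

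The one step that does not work as written is the claim that ``the vector $(g_1(\omega_0,r_j),\dots,g_d(\omega_0,r_j))$ is annihilated by a spanning set, hence is zero.'' Each jump time $r_j$ yields a single orthogonality relation $\langle g(\omega_0,r_j),\Delta\tilde L(\omega_0,r_j)\rangle=0$ for the vector $g$ evaluated \emph{at that particular time}; you never obtain $d$ independent relations for one and the same vector. To conclude $g(\omega_0,r^*)=0$ you must first replace $g(\omega_0,r_j)$ by $g(\omega_0,r^*)$ up to an error $|g(\omega_0,r_j)-g(\omega_0,r^*)|$ controlled by continuity and then invert the resulting linear system; but as the neighbourhood of $r^*$ shrinks the spanning set of jump vectors changes, and its smallest singular value may degenerate faster than the continuity modulus, so the naive limit does not close. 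The standard repair --- and the form in which $\Omega_0$ should be defined --- is to require that, almost surely, for every interval with rational endpoints the \emph{normalized} jump directions $\Delta\tilde L(r)/|\Delta\tilde L(r)|$ are dense in the unit sphere $S^{d-1}$; this follows from a Borel--Cantelli argument since, conditionally on the subordinator, the directions are i.i.d.\ uniform on $S^{d-1}$ and each interval contains infinitely many jumps. Then for any fixed unit vector $v$ one chooses jump times $r_n\to r^*$ with $\Delta\tilde L(r_n)/|\Delta\tilde L(r_n)|\to v$ and reads off $|\langle g(\omega_0,r^*),v\rangle|\le\lim_n|g(\omega_0,r_n)-g(\omega_0,r^*)|=0$, whence $g(\omega_0,r^*)=0$. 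With this modification your argument is complete, and the final deduction $g_0\equiv 0$ from \eqref{MHD3.1} is immediate.
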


Recall that the assumption in Condition 2.2: $\nu_S((0,\infty))=\infty$. For the process $S_t,t\geq0$, by \cite[Lemma 3.1]{Peng-2024}
we have the following lemma.
\begin{lemma}\label{MHDlemma3.3}
Let  $\tilde{\mathbb S}_0= \left\{\ell:  \{r:\Delta\ell_r:=\ell_r-\ell_{r-}>0\} \text{ is dense in }   [0,\infty)\right\} $ 
and  denote  $\tilde \Omega_0 =\mathbb W \times  \tilde{\mathbb S}_0.$
Then 
\begin{equation*}
\mathbb{P}^{\mu_{\mathbb{S}}}\left(  \tilde{\mathbb S}_0    \right)=1~~\text{and}~~\mathbb P(\tilde \Omega_0) =1.
\end{equation*}
\end{lemma}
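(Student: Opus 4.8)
The plan is to exploit Condition 2.2, specifically the infinite-activity hypothesis $\nu_S((0,\infty))=\infty$, which forces the subordinator $S_t(\ell)=\ell_t$ to experience infinitely many jumps in every time interval of positive length. Recall that the jumps of the subordinator form a Poisson point process on $(0,\infty)\times(0,\infty)$ (time versus size) with intensity $\mathrm{d}t\,\nu_S(\mathrm{d}u)$; this is the structural fact that drives the entire argument.

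First I would reduce topological density to a countable family of events. Since the rational open subintervals form a base for the topology of $[0,\infty)$, the jump set $\{r:\Delta\ell_r>0\}$ is dense in $[0,\infty)$ if and only if every interval $(p,q)$ with $p,q\in\mathbb{Q}$ and $0\le p<q$ contains at least one jump time. Hence
\begin{equation*}
\tilde{\mathbb{S}}_0=\bigcap_{\substack{p,q\in\mathbb{Q}\\ 0\le p<q}}
\Big\{\ell\in\mathbb{S}:\exists\, r\in(p,q)\text{ with }\Delta\ell_r>0\Big\},
\end{equation*}
which exhibits $\tilde{\mathbb{S}}_0$ as a countable intersection of sets that are measurable (for instance via the associated jump random measure), and is therefore itself measurable.

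Next I would show each of these events has full measure. Fix rationals $0\le p<q$ and $\epsilon>0$. The number of jumps of size exceeding $\epsilon$ occurring during the time window $(p,q)$ is Poisson distributed with mean $(q-p)\,\nu_S((\epsilon,\infty))$, so the probability of observing no such jump equals $\exp\{-(q-p)\nu_S((\epsilon,\infty))\}$. Consequently
\begin{equation*}
\mathbb{P}^{\mu_{\mathbb{S}}}\Big(\nexists\, r\in(p,q):\Delta\ell_r>0\Big)
\le \exp\{-(q-p)\nu_S((\epsilon,\infty))\}.
\end{equation*}
Letting $\epsilon\downarrow 0$ and using $\nu_S((\epsilon,\infty))\uparrow\nu_S((0,\infty))=\infty$ from Condition 2.2, the right-hand side tends to $0$, so each event in the intersection has probability one. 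A countable intersection of full-measure events has full measure, giving $\mathbb{P}^{\mu_{\mathbb{S}}}(\tilde{\mathbb{S}}_0)=1$; density on this event is then immediate, since for arbitrary $t\ge 0$ and $\delta>0$ one may select rationals $p<q$ with $(p,q)\subset(t-\delta,t+\delta)\cap[0,\infty)$ and locate a jump inside.

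Finally, the product-space statement follows directly from the structure $\tilde\Omega_0=\mathbb{W}\times\tilde{\mathbb{S}}_0$ together with $\mathbb{P}=\mathbb{P}^{\mu_{\mathbb{W}}}\times\mathbb{P}^{\mu_{\mathbb{S}}}$, which yields
\begin{equation*}
\mathbb{P}(\tilde\Omega_0)=\mathbb{P}^{\mu_{\mathbb{W}}}(\mathbb{W})\cdot
\mathbb{P}^{\mu_{\mathbb{S}}}(\tilde{\mathbb{S}}_0)=1.
\end{equation*}
The only genuinely substantive step is the infinite-activity argument powered by $\nu_S((0,\infty))=\infty$; everything else is the routine reduction of density to a countable collection of Poisson-point-process events plus an application of Fubini's structure for the product measure. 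I therefore do not anticipate any serious obstacle, and this plan follows the line of reasoning in \cite[Lemma 3.1]{Peng-2024}.
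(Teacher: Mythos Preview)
Your proposal is correct and matches the paper's approach: the paper does not give an independent proof but simply invokes \cite[Lemma 3.1]{Peng-2024}, and you have written out precisely that argument (infinite activity $\nu_S((0,\infty))=\infty$ forces a jump in every rational subinterval, density follows by countable intersection, and the product-measure step is immediate).
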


We introduce an operation $[E_1,E_2]$, which is called as the Lie bracket of the two `vector field' $E_1$, $E_2$, specifically, for any Fr\'{e}chet differentiable $E_1, E_2:H\rightarrow H$, 
\begin{equation*}
[E_1,E_2](U):=\nabla E_2(U)E_1(U)-\nabla E_1(U)E_2(U).
\end{equation*}

Notice that the initial degenerate noise acts only on the magnetic equation, and the range of $Q_b$ is not sufficiently large, 
we are unable to `directly control' all low (unstable) modes that have not been dissipated by the diffusion, which hinders our work.
This requires careful estimation of several Lie brackets through iterative computations, for which we directly reference the results established in \cite{Peng-2020}. The proof proceeds via an inductive iteration scheme that involves the simultaneous application and distinction between two function spaces. This framework enables us to systematically handle progressively larger finite-dimensional subspaces in a continuous manner, moreover, the introduction of Lemma \ref{MHDlemma3.2} makes our analysis more concise.

Next, we conduct several Lie bracket computations and establish useful lemmas in preparation for the subsequent analysis. For a comprehensive derivation, interested readers may refer to \cite[Section 4]{Peng-2020}.

\textbf{Covering velocity direction.} For $u,\tilde{u}\in H_1^0=H_2^0$, denote $\mathbf{b}(u,\tilde{u}):=u\cdot\nabla\tilde{u}$. For any $k,l\in\mathbb{Z}^2$, $m,m'\in\{0,1\}$ and $U=(u,b)\in H_1^0\times H_2^0$, we introduce
\begin{equation*}\begin{aligned}
Y_k^{m}(U): & =[F(U),\sigma_{k}^{m}] \\
 & = A^{\alpha,\beta}\sigma_{k}^{m}+B(\sigma_{k}^{m},U)+B(U,\sigma_{k}^{m}), \end{aligned}\end{equation*}
 \begin{equation}\label{MHD4.1}
 \begin{aligned}
J_{k,l}^{m,m^{\prime}}(U): & =-[Y_{k}^{m}(U),\sigma_{l}^{m^{\prime}}] \\
 & = B(\sigma_{k}^{m},\sigma_{l}^{m^{\prime}})+B(\sigma_{l}^{m^{\prime}},\sigma_{k}^{m}) \\
 & =
\begin{pmatrix}
-\Pi\mathbf{b}(e_{k}^{m},e_{l}^{m^{\prime}})-\Pi\mathbf{b}(e_{l}^{m^{\prime}},e_{k}^{m}) \\
0
\end{pmatrix} .
%: & =\quad
%\begin{pmatrix}
%-\Pi\mathcal{J}_{k,\ell}^{m,m^{\prime}} \\
%0
%\end{pmatrix}.
\end{aligned}\end{equation}

In fact, $Y_k^{m}(U)$ and $J_{k,l}^{m,m^{\prime}}(U)$ are devised elaborately by calculation to guarantee that the following two lemmas hold.

We can generate suitable directions in the $u$ component.

\begin{lemma}\label{MHDlemma4.3}
Denote $a=\frac{\langle k,l^{\bot}\rangle}{|k||l|}$, and $k,l\in\mathbb{Z}_+^2$, then for some absolutely non-zero constant $c$ which is independent of $k,l$, the following inequalities hold:
\begin{align*}
J_{k,l}^{0,1}+J_{l,k}^{0,1} & = ac\frac{|l|^{2}-|k|^{2}}{|k+l|}\cdot\psi_{k+l}^{0}, \\
J_{k,l}^{0,1}-J_{l,k}^{0,1} & = ac\frac{-|l|^{2}+|k|^{2}}{|k-l|}\cdot\psi_{k-l}^{0}, \\
J_{k,l}^{1,1}+J_{l,k}^{0,0} & = ac\frac{|l|^{2}-|k|^{2}}{|k-l|}\cdot\psi_{k-l}^{1}, \\
J_{k,l}^{1,1}-J_{l,k}^{0,0} & = ac\frac{|l|^{2}-|k|^{2}}{|k+l|}\cdot\psi_{k+l}^{1}.
\end{align*}
\end{lemma}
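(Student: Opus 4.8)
\textbf{Proof proposal for Lemma \ref{MHDlemma4.3}.}

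The plan is to reduce everything to an explicit computation of the bilinear term $\mathbf{b}(u,\tilde u) = u\cdot\nabla \tilde u$ on the trigonometric basis functions $e_k^m$, and then collect terms. First I would recall that by \eqref{MHD4.1} the object $J_{k,l}^{m,m'}(U)$ is actually independent of $U$, and its only nonzero (velocity) component is $-\Pi\mathbf{b}(e_k^m,e_l^{m'}) - \Pi\mathbf{b}(e_l^{m'},e_k^m)$. So the lemma is purely a statement about the symmetrized advection of two Fourier modes of the velocity field. I would write $e_k^0 = k^\perp/|k|\cdot\cos(k\cdot x)$ and $e_k^1 = -k^\perp/|k|\cdot\sin(k\cdot x)$ (with $k^\perp=(k_2,-k_1)$ matching the paper's sign convention), noting $k^\perp\cdot k = 0$ so these are divergence-free.

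The key computational step: for two divergence-free modes of the form $v = A\,\xi(k\cdot x)$, $w = B\,\zeta(l\cdot x)$ with constant vectors $A\perp k$, $B\perp l$ and $\xi,\zeta\in\{\cos,\sin\}$, one has $v\cdot\nabla w = (A\cdot l)\,\xi(k\cdot x)\,\zeta'(l\cdot x)\,B$. Using product-to-sum identities, $\xi(k\cdot x)\zeta'(l\cdot x)$ splits into a frequency-$(k+l)$ part and a frequency-$(k-l)$ part. Then $\Pi$ (Leray projection) acting on a constant-vector-times-trig-function of frequency $p$ just projects the constant vector $B$ onto $p^\perp$, producing a multiple of $\psi_{p}^{0}$ or $\psi_{p}^{1}$. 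Carrying this out for the four sign/parity combinations $(m,m')\in\{(0,1),(1,1),(0,0)\}$ and forming the stated sums and differences, the frequency-$(k+l)$ pieces survive in two of the identities and the frequency-$(k-l)$ pieces in the other two; the coefficient is assembled from $A\cdot l$ and $B\cdot (k\pm l)^\perp$ divided by the normalizations $|k|,|l|,|k\pm l|$, which after simplification (using $k^\perp\cdot l = \langle k,l^\perp\rangle$ up to sign, and $(k^\perp\cdot l)(l^\perp\cdot k) $-type cross terms) produces exactly the factor $a\,c\,(\pm|l|^2\mp|k|^2)/|k\pm l|$, where $a=\langle k,l^\perp\rangle/(|k||l|)$ and $c$ is a fixed numerical constant coming from the $\tfrac12$'s in the product-to-sum formulas. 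The asymmetry between $+$ and $-$ and between $|k|^2$ and $|l|^2$ in the four lines comes from whether $\cos\cdot\sin'$ or $\sin\cdot\sin'$ etc.\ is even or odd under swapping $k\leftrightarrow l$.

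I expect the main obstacle to be purely bookkeeping rather than conceptual: keeping the sign conventions for $k^\perp$, for $\Pi$, and for the two parities $m=0,1$ consistent throughout, and verifying that the cross terms combine so that the single coefficient $a$ factors out of all four identities (this is the ``devised elaborately'' remark after \eqref{MHD4.1}). Since the paper explicitly defers to \cite[Section 4]{Peng-2020} for the full derivation, I would present the computation for, say, the first identity $J_{k,l}^{0,1}+J_{l,k}^{0,1} = ac\frac{|l|^2-|k|^2}{|k+l|}\psi_{k+l}^0$ in detail and indicate that the remaining three follow by the identical mechanism with the parity-dependent signs tracked, citing \cite{Peng-2020} for completeness.
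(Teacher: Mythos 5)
The paper gives no proof of Lemma \ref{MHDlemma4.3} itself but defers entirely to \cite[Section 4]{Peng-2020}, where the identities are obtained by exactly the computation you describe: evaluating the symmetrized advection $\Pi\mathbf{b}(e_k^m,e_l^{m'})+\Pi\mathbf{b}(e_l^{m'},e_k^m)$ via $(A\cdot l)$-type contractions, product-to-sum formulas, and the Leray projection onto $(k\pm l)^\perp$, with the factor $(|l|^2-|k|^2)$ emerging from the cancellation $(k^\perp\!\cdot l)\,l\cdot(k+l)+(l^\perp\!\cdot k)\,k\cdot(k+l)=\langle k,l^\perp\rangle(|l|^2-|k|^2)$. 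Your outline correctly identifies this mechanism (the only quibble is that the paper's convention is $l^\perp=(-l_2,l_1)$, not $(l_2,-l_1)$, a sign absorbed into $c$), so the proposal is correct and follows essentially the same route as the cited derivation.
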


\textbf{Covering magnetic direction.} 
Correspondingly, we establish the following notations for the b direction, which are likewise derived through iterative Lie bracket computations.

\begin{equation*}\begin{aligned}
\mathcal{Y}_{k}^{m}(U): & =\quad[F(U),\psi_{k}^{m}] \\
 & =\quad A^{\alpha,\beta}\psi_{k}^{m}+B(\psi_{k}^{m},U)+B(U,\psi_{k}^{m}) \end{aligned}\end{equation*} 
 \begin{equation}\label{MHD4.4}
 \begin{aligned}
Z_{k,l}^{m,m^{\prime}}:  &=\quad-\left[\mathcal{Y}_{k}^{m}(U),\sigma_{l}^{m^{\prime}}\right] \\
 & =\quad B(\psi_{k}^{m},\sigma_{l}^{m^{\prime}})
 +B(\sigma_{l}^{m^{\prime}},\psi_{k}^{m}) \\
 & =
\begin{pmatrix}
0 \\
 \\
\Pi[\mathbf{b}(e_{k}^{m},e_{l}^{m^{\prime}})
-\mathbf{b}(e_{l}^{m^{\prime}},e_{k}^{m})]
\end{pmatrix} .
% :& =
%\begin{pmatrix}
%0 \\
% \\
%\Pi\mathcal{Z}_{k,l}^{m,m^{\prime}}
%\end{pmatrix}.
\end{aligned}\end{equation}

The following lemma is the counterpart of Lemma \ref{MHDlemma4.3}.

%\begin{lemma}
%Denote $a=\frac{\langle k,l^{\bot}\rangle}{|k||l|}$, then for some absolutely non-zero constant c which is independent of $k,l$(It may changes from line to line), the following equalities hold.
%\begin{align*}
%\Pi\mathcal{Z}_{k,l}^{0,1}+\Pi\mathcal{Z}_{l,k}^{0,1}
%&=ac|k-l|e_{k-l}^0,\\
%\Pi\mathcal{Z}_{k,l}^{0,1}-\Pi\mathcal{Z}_{l,k}^{0,1}
%&=ac|k+l|e_{k+l}^0,\\
%\Pi\mathcal{Z}_{k,l}^{1,1}+\Pi\mathcal{Z}_{k,l}^{0,0}
%&=ac|k-l|e_{k-l}^1,\\
%\Pi\mathcal{Z}_{k,l}^{1,1}-\Pi\mathcal{Z}_{k,l}^{0,0}
%&=ac|k+l|e_{k+l}^1,
%\end{align*}
%\end{lemma}

%Likewise, by Lemma 4.4 and (4.7) we can generate suitable directions in the b component.

\begin{lemma}\label{MHDlemma4.5}
Denote $a=\frac{\langle k,l^{\bot}\rangle}{|k||l|}$ and $k,l\in\mathbb{Z}_+^2$, then for some absolutely non-zero constant c which is independent of $k,l$ (It may changes from line to line), the following equalities hold:
\begin{align*}
 Z_{k,l}^{0,1}+ Z_{l,k}^{0,1}
&=ac|k-l|\sigma_{k-l}^0,\\
 Z_{k,l}^{0,1}- Z_{l,k}^{0,1}
&=ac|k+l|\sigma_{k+l}^0,\\
 Z_{k,l}^{1,1}+ Z_{k,l}^{0,0}
&=ac|k-l|\sigma_{k-l}^1,\\
 Z_{k,l}^{1,1}- Z_{k,l}^{0,0}
&=ac|k+l|\sigma_{k+l}^1.
\end{align*}
\end{lemma}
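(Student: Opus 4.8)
\textbf{Proof proposal for Lemma \ref{MHDlemma4.5}.}

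The plan is to reduce everything to explicit Fourier computations, exactly paralleling the proof of Lemma \ref{MHDlemma4.3}, since $Z_{k,l}^{m,m'}$ has the same structure as $J_{k,l}^{m,m'}$ except that the nonlinear term $\Pi[\mathbf b(e_k^m,e_l^{m'})-\mathbf b(e_l^{m'},e_k^m)]$ now lands in the magnetic ($b$) slot rather than the velocity slot. First I would record the basic advective identities: writing $e_k^0=k^\perp/|k|\cdot\cos(k\cdot x)$ (up to the sign convention in \eqref{MHDpsi}) and $e_k^1=k^\perp/|k|\cdot\sin(k\cdot x)$ with $k^\perp=(k_2,-k_1)$, one computes $\mathbf b(e_k^m,e_l^{m'})=(e_k^m\cdot\nabla)e_l^{m'}$. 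Because $e_k^m$ is divergence-free and proportional to $k^\perp$, the directional derivative $e_k^m\cdot\nabla$ acting on $\cos(l\cdot x)$ or $\sin(l\cdot x)$ pulls down a factor $\langle k^\perp,l\rangle/|k|=\langle k,l^\perp\rangle/|k|$ (up to sign), and leaves the constant vector $l^\perp/|l|$ untouched. Hence $\mathbf b(e_k^m,e_l^{m'})$ is, modulo the projection $\Pi$, a scalar multiple of $l^\perp/|l|$ times a trigonometric function of frequency $k\pm l$, with scalar prefactor of the form $\pm c\,\langle k,l^\perp\rangle/|k|\cdot(1/|l|)$.

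Next I would symmetrize: the combination $\mathbf b(e_k^m,e_l^{m'})-\mathbf b(e_l^{m'},e_k^m)$ produces terms proportional to $l^\perp/|l|$ and to $k^\perp/|k|$. Using the product-to-sum formulas for $\cos\cos$, $\cos\sin$, $\sin\sin$, the frequencies separate into the $k-l$ and $k+l$ sectors, and within each sector the vector part collapses: a linear combination $\alpha\, l^\perp/|l| + \beta\, k^\perp/|k|$ with the specific coefficients appearing here is parallel to $(k\mp l)^\perp/|k\mp l|$ — this is the two-dimensional cross-product identity $\langle k,l^\perp\rangle\, m = \langle m,l^\perp\rangle\, k - \langle m,k^\perp\rangle\, l$ (equivalently $m^\perp = $ an appropriate combination), which is the algebraic heart of the cancellation. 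Applying $\Pi$ is automatic since all the resulting vector fields are already divergence-free multiples of $(k\pm l)^\perp$ times a pure sine or cosine, i.e. they are already of the form $\sigma_{k\pm l}^0$ or $\sigma_{k\pm l}^1$. Tracking the magnitude factors $1/|k|$, $1/|l|$ against the $|k\pm l|$ that emerges from normalizing $(k\pm l)^\perp$ to $e_{k\pm l}^m$, together with the quantity $a=\langle k,l^\perp\rangle/(|k||l|)$, yields the stated prefactor $ac|k\mp l|$; I would match signs case by case for $(m,m')\in\{(0,1),(1,1)\}$ paired with $(0,1),(0,0)$ respectively. I expect the sign bookkeeping across the four product-to-sum expansions, and keeping straight which of $k-l$ versus $k+l$ carries which trigonometric parity, to be the only real source of friction; the vector-cancellation step is a one-line planar identity and the magnitude tracking is routine. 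Finally, I note that the constant $c$ may legitimately differ from the one in Lemma \ref{MHDlemma4.3} and may absorb sign changes between lines, as the statement already allows.
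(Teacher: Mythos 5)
Your proposal is correct and is exactly the computation the paper relies on: the paper itself states Lemma \ref{MHDlemma4.5} without proof, deferring the direct Fourier/Lie-bracket verification to \cite[Section 4]{Peng-2020}, and your plan (expand $\mathbf b(e_k^m,e_l^{m'})$, use product-to-sum formulas to split into the $k\pm l$ sectors, collapse the vector parts, note the result is already divergence-free so $\Pi$ acts trivially, and track the $1/|k|$, $1/|l|$, $|k\pm l|$ factors into $a$) reproduces that derivation and does yield the four identities with $c=\pm1$. One small simplification: in this magnetic-direction case the coefficients of $l^\perp/|l|\cdot|l|$ and $k^\perp/|k|\cdot|k|$ in each frequency sector are equal up to sign (both proportional to $a$), so the vector collapse is just linearity, $k^\perp\pm l^\perp=(k\pm l)^\perp$, and the full planar cross-product identity is only needed for the velocity-direction Lemma \ref{MHDlemma4.3} where the $|l|^2-|k|^2$ factors arise.
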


We denote that $\bar{U}=U-Q_b W_{S_t}$, then
\begin{equation}\label{MHDK.2}
 \partial_t \bar{U}=F(U)=F(\bar{U}+Q_b W_{S_t}),
~~~\bar{U}(0)=U_0.
\end{equation}

\begin{lemma}\label{MHDlemma0}
For any  $\omega=\mathrm{w}\times \ell\in $ $\tilde{\Omega}_0$,  the following conclusion 
\begin{equation}\label{MHDK.1}
\langle\mathcal{M}_{0,\eta}\phi,\phi\rangle=0~~
\Rightarrow~~\sup_{t\in[\eta/2,\eta]}|\langle
K_{t,\eta}\phi,\sigma_k^m\rangle|(\omega)=0,
\end{equation}
holds for all $\phi\in \mathcal{S}_{\varkappa,N}$, $k\in \mathcal{Z}_0$ and $m\in\{0,1\}$.
\end{lemma}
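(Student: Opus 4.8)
The plan is to read off \eqref{MHDK.1} almost directly from the spectral representation \eqref{MHDMalliavin} of the Malliavin matrix, combining two ingredients: the Lebesgue–Stieltjes measure $\mathrm{d}\ell_r$ of a pure-jump subordinator puts an atom at each of its jump times, and, for $\omega\in\tilde\Omega_0$, those jump times are dense in $[0,\infty)$ by Lemma \ref{MHDlemma3.3}, while the backward flow $r\mapsto K_{r,\eta}\phi$ is continuous in time.

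First I would fix $\omega=\mathrm{w}\times\ell\in\tilde\Omega_0$ and $\phi\in\mathcal{S}_{\varkappa,N}$, and assume $\langle\mathcal{M}_{0,\eta}\phi,\phi\rangle=0$. By \eqref{MHDMalliavin} this means
\[
\sum_{k\in\mathcal{Z}_0,\,l\in\{0,1\}}(\alpha_k^l)^2\int_0^\eta\langle K_{r,\eta}\phi,\sigma_k^l\rangle^2\,\mathrm{d}\ell_r=0 ,
\]
and since every summand is non-negative and each $\alpha_k^l\neq0$, we obtain $\int_0^\eta\langle K_{r,\eta}\phi,\sigma_k^m\rangle^2\,\mathrm{d}\ell_r=0$ for every $k\in\mathcal{Z}_0$ and $m\in\{0,1\}$. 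Note that $\eta\ge 1/\nu>0$, since $\nu t-8\mathfrak{B}_0\kappa\ell_t\le\nu t$ and $\ell_0=0$; hence $[\eta/2,\eta]$ is a genuine interval and $\ell_\eta<\infty$, so the integral above is well defined. Next I would use the atomic structure of $\mathrm{d}\ell_r$: at each jump time $r$ of $\ell$ this measure carries mass $\Delta\ell_r=\ell_r-\ell_{r-}>0$, so from the vanishing of the integral and non-negativity of the integrand,
\[
0=\int_0^\eta\langle K_{s,\eta}\phi,\sigma_k^m\rangle^2\,\mathrm{d}\ell_s\ \ge\ \langle K_{r,\eta}\phi,\sigma_k^m\rangle^2\,\Delta\ell_r ,
\]
which forces $\langle K_{r,\eta}\phi,\sigma_k^m\rangle=0$ at every jump time $r\in(0,\eta]$ of $\ell$, for all $k\in\mathcal{Z}_0$ and $m\in\{0,1\}$.

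Finally I would invoke continuity: $\rho^*_\cdot=K_{\cdot,\eta}\phi$ is the continuous $H$-valued solution of the backward system \eqref{MHDbackward}, the required bounds coming from the Jacobian estimates of Lemma \ref{MHDlemma2.4} applied to the adjoint, so $r\mapsto\langle K_{r,\eta}\phi,\sigma_k^m\rangle$ is continuous on $[0,\eta]$. For $\omega\in\tilde\Omega_0$ the jump times of $\ell$ are dense in $[0,\infty)$ by Lemma \ref{MHDlemma3.3}; a continuous function vanishing on a dense subset of $(0,\eta]$ vanishes identically on $[0,\eta]$, hence in particular $\sup_{t\in[\eta/2,\eta]}|\langle K_{t,\eta}\phi,\sigma_k^m\rangle|(\omega)=0$, which is the claim.

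This argument has no genuine obstacle; it is essentially a measure-theoretic observation. The two points deserving a line of care are (i) the identification of $\mathrm{d}\ell_r$ as a positive measure assigning mass $\Delta\ell_r$ to each jump time, so that a vanishing integral of a non-negative continuous integrand kills the integrand at those times, and (ii) the time-continuity of the backward flow $K_{\cdot,\eta}\phi$, both routine given Condition 2.2, Lemma \ref{MHDlemma3.3}, and the well-posedness and a priori estimates already in hand. It is precisely here that the pure-jump hypothesis with $\nu_S((0,\infty))=\infty$ is used, replacing the analyticity-in-time arguments of the Gaussian theory.
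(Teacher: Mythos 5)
Your argument is correct and is essentially the same as the one the paper relies on (the paper defers the proof to \cite[Lemma 4.6]{Huang-2025}, whose argument is precisely this combination of the representation \eqref{MHDMalliavin}, the positivity of the atoms $\Delta\ell_r$ of the Stieltjes measure at jump times, the density of those jump times on $\tilde\Omega_0$ from Lemma \ref{MHDlemma3.3}, and the time-continuity of $r\mapsto K_{r,\eta}\phi$). All the supporting observations you flag, including $\eta\ge 1/\nu>0$ and the continuity of the backward flow despite $U_t$ being only c\`adl\`ag, are handled correctly.
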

%\begin{proof} 
%Recall that 
%\begin{equation*}
%\langle\mathcal{M}_{0,\eta}\phi,\phi \rangle
%=\sum_{k\in\mathcal{Z},m=\{0,1\}}
%(\alpha_{k}^{m})^2 \int_{0}^{\eta} \langle K_{r,\eta} \phi, \sigma_{k}^m \rangle^{2}\mathrm{d}\ell_{r},
%\end{equation*} 
%when $\langle\mathcal{M}_{0,\eta}\phi,\phi \rangle=0$, we have $\int_{0}^{\eta} \langle K_{r,\eta} \phi, \sigma_{k}^m \rangle^{2}\mathrm{d}\ell_{r}=0$. By Lemma \ref{MHDlemma3.3}, for any $\ell\in\tilde{\Omega}_0$, $\{r:\Delta\ell_r:=\ell_r-\ell_{r-}>0\}$ is dense in $[0,\infty)$, since $\langle K_{r,\eta} \phi, \sigma_{k}^m \rangle$ is a nonnegative continuous function on $[0,\eta]$ and $\int_{0}^{\eta} \langle K_{r,\eta} \phi, \sigma_{k}^m \rangle^{2}\mathrm{d}\ell_{r}=0$, then $\langle K_{r,\eta} \phi, \sigma_{k}^m \rangle=0, r\in[0,\eta]$ holds on the set $\tilde{\Omega}_0$, which implies \eqref{MHDK.1}.
%%\begin{equation*}
%%\sum_{r} \langle K_{r,\eta} \phi, \sigma_{k}^m \rangle^{2}\Delta\ell_{r}=0,
%%\end{equation*} 
%%then on the set $\Omega_0$,
%%it is not difficult to find that \eqref{MHDK.1} holds and $\mathbb{P}(\Omega_0^c)=0$.
%\end{proof}

\begin{lemma}\label{MHDlemmaB} Recall that  $\Omega_{0}$ is   given by Lemma \ref{MHDlemma3.2}.
For $\omega\in\Omega_{0}$, the following  
\begin{equation}\label{MHDK.3}
\sup_{t\in[\eta/2,\eta]}|\langle
K_{t,\eta}\phi,\sigma_k^m\rangle|(\omega)=0~~
\Rightarrow~~\sup_{t\in[\eta/2,\eta]}|\langle
K_{t,\eta}\phi,Y_k^m(U)\rangle|(\omega)=0
\end{equation}
holds
for all $\phi\in \mathcal{S}_{\varkappa,N}$, $k\in \mathbb{Z}^2$ and $m\in\{0,1\}$.
\end{lemma}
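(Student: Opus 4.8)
The plan is to exploit the time-regularity of the process $t\mapsto\langle K_{t,\eta}\phi,\sigma_k^m\rangle$ together with the backward evolution equation \eqref{MHDbackward} satisfied by $\rho_t^*=K_{t,\eta}\phi$. Writing $\rho_t^*=K_{t,\eta}\phi$, recall that $\partial_t\rho_t^*=-(\nabla F(U(t)))^*\rho_t^*$, and that $Y_k^m(U)=[F(U),\sigma_k^m]=\nabla\sigma_k^m\cdot F(U)-\nabla F(U)\cdot\sigma_k^m = -\nabla F(U)\sigma_k^m$ since $\sigma_k^m$ is a constant vector field. Consequently, for fixed $k,m$, differentiating $t\mapsto\langle\rho_t^*,\sigma_k^m\rangle$ in $t$ gives
\begin{equation*}
\frac{\mathrm{d}}{\mathrm{d}t}\langle\rho_t^*,\sigma_k^m\rangle
=\langle -(\nabla F(U(t)))^*\rho_t^*,\sigma_k^m\rangle
=-\langle\rho_t^*,\nabla F(U(t))\sigma_k^m\rangle
=\langle\rho_t^*,Y_k^m(U(t))\rangle.
\end{equation*}
Thus the map $t\mapsto\langle\rho_t^*,\sigma_k^m\rangle$ is (at least piecewise) $C^1$ on intervals where $U$ is continuous, with derivative precisely $\langle\rho_t^*,Y_k^m(U(t))\rangle$.

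The key difficulty is that $U$ itself is only càdlàg (it is driven by a pure jump noise), so $\langle\rho_t^*,\sigma_k^m\rangle$ need not be genuinely differentiable across the jump times of $S_t$; however, $\rho_t^*=K_{t,\eta}\phi$ solves a \emph{deterministic} linear backward PDE on each inter-jump interval, and in fact $K_{t,\eta}\phi$ is continuous in $t$ on all of $[0,\eta]$ because the jumps of the noise enter \eqref{MHD2.14} additively and hence do not appear in the variational/adjoint equations \eqref{MHDP.2}, \eqref{MHDbackward}. The hypothesis of the lemma is that $\langle K_{t,\eta}\phi,\sigma_k^m\rangle(\omega)=0$ for \emph{all} $t\in[\eta/2,\eta]$, i.e. the function vanishes identically on that interval. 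A function that is continuous on $[\eta/2,\eta]$, vanishes identically there, and is differentiable on the complement of a countable set with one-sided derivatives given by $\langle\rho_t^*,Y_k^m(U(t))\rangle$ must have that derivative equal to zero wherever it exists; so $\langle\rho_t^*,Y_k^m(U(t))\rangle=0$ for all but countably many $t\in[\eta/2,\eta]$. To upgrade this to \emph{all} $t\in[\eta/2,\eta]$ (which is what \eqref{MHDK.3} asserts), I would invoke the right-continuity of $t\mapsto U(t)$ together with the continuity of $t\mapsto\rho_t^*$ and the continuity/local-boundedness of $U\mapsto Y_k^m(U)$ on the relevant Sobolev scale (using $\alpha,\beta>1$ so that $A^{\alpha,\beta}\sigma_k^m$ and $B(\sigma_k^m,U)+B(U,\sigma_k^m)$ make sense and depend continuously on $U\in H^1$): at a jump time $s$, take the right limit $t\downarrow s$ through points where the identity already holds to conclude $\langle\rho_s^*,Y_k^m(U(s))\rangle=0$ as well. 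On $\tilde{\Omega}_0$ the jump times are dense (Lemma \ref{MHDlemma3.3}), which is exactly why this density-plus-right-continuity argument is both necessary and sufficient.

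The main obstacle I anticipate is the regularity bookkeeping: one must make sure that $t\mapsto\langle K_{t,\eta}\phi,\sigma_k^m\rangle$ really is absolutely continuous (or at least the indefinite integral of $t\mapsto\langle\rho_t^*,Y_k^m(U(t))\rangle$) on $[\eta/2,\eta]$, so that "vanishes identically $\Rightarrow$ derivative vanishes a.e." is legitimate; this requires the a priori moment bounds of Lemmas \ref{MHDlemma2.1} and \ref{MHDlemma2.4} to guarantee $\int_{\eta/2}^\eta\|U_r\|_1^2\,\mathrm{d}r<\infty$ and $\sup_{r}\|K_{r,\eta}\phi\|<\infty$ $\mathbb{P}$-a.s., so that $t\mapsto\langle\rho_t^*,Y_k^m(U(t))\rangle$ is integrable on $[\eta/2,\eta]$ and the fundamental theorem of calculus applies in its Lebesgue form. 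Once that integrability is in hand, the implication \eqref{MHDK.3} follows: a vanishing absolutely continuous function has a.e.-vanishing derivative, and then density of the jump times together with right-continuity of $U$ and continuity of $\rho^*$ propagate the conclusion to every $t\in[\eta/2,\eta]$.
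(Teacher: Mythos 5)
Your proposal is correct and follows essentially the same route as the paper: the paper (deferring to the cited reference for details, but displaying the identical argument in its proof of the companion Lemma \ref{MHDlemmaE}) also differentiates $g_\phi(t)=\langle K_{t,\eta}\phi,\sigma_k^m\rangle$ via the backward equation \eqref{MHDbackward}, identifies $g_\phi'(t)=\langle K_{t,\eta}\phi,[F(U),\sigma_k^m]\rangle=\langle K_{t,\eta}\phi,Y_k^m(U)\rangle$ using that $\sigma_k^m$ is a constant vector field, and concludes that the derivative of an identically vanishing function vanishes. Your additional bookkeeping about c\`adl\`ag regularity of $U$ across jump times and the a.e.-to-everywhere upgrade via right-continuity is a more careful rendering of the same argument rather than a different one.
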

Lemma \ref{MHDlemma0} and Lemma \ref{MHDlemmaB} follow by arguments analogous to \cite[Lemmas 4.6, 4.7]{Huang-2025}; see therein for full proofs.
%\begin{proof}
%Define $g_{\phi}(t):=\langle
%K_{t,\eta}\phi,\sigma_k^m\rangle, \forall t\in[0,\eta]$. 
%Observe by \eqref{MHDbackward} that, for $\phi\in \mathcal{S}_{\varkappa,N}$
%\begin{equation*}
%\begin{aligned}
%g_{\phi}'(t)&=\langle K_{t,\eta}\phi,[F(U),\sigma_k^m]\rangle\\
%&=\langle K_{t,\eta}\phi,Y_k^m(U)\rangle,
%\end{aligned}
%\end{equation*}
%since $g_{\phi}(t):=\langle K_{t,\eta}\phi,\sigma_k^m\rangle$ is a nonnegative continuous function on $[\eta/2,\eta]$, we can get that there exists a set $\Omega_0$, and $g_{\phi}'(t)=\langle K_{t,\eta}\phi,Y_j^m(U)\rangle=0$ holds on the set $\Omega_0$. 
%%Recall that \eqref{MHDbackward}, for $\phi\in \mathcal{S}_{\varkappa,N}$, we have
%%\begin{equation*}
%%\frac{\mathrm{d}}{\mathrm{d}t}K_{t,\eta}\phi=-(\nabla F(U))^* K_{t,\eta}\phi,~~0 \leq t \leq \eta.
%%\end{equation*}
%%Note that the left side of the above equation is a continuous process, therefore by using the known condition in \eqref{MHDK.3}, we can get that on the set $\Omega_0$,
%%\begin{equation*}
%%\begin{aligned}
%%|\langle (\nabla F(U))^*K_{t,\eta}\phi,\sigma_j^m\rangle|
%%&=|\langle K_{t,\eta}\phi,\nabla F(U)\sigma_j^m\rangle|\\
%%&=|\langle K_{t,\eta}\phi,Y_j^m(U)\rangle|,
%%\end{aligned}
%%\end{equation*}
%%which implies \eqref{MHDK.3}.
%\end{proof}

\begin{lemma}\label{MHDlemmaC} For a certain $k\in\mathbb{Z}_+^2$ and for $\omega\in\Omega_0$,
the following implication 
\begin{equation}
\begin{aligned}
&\sup_{t\in[\eta/2,\eta]}
|\langle K_{t,\eta}\phi,Y_{k}^{m}(U)\rangle|(\omega)
=0\\
&\Rightarrow 
\sup\limits_{l\in\mathcal{Z}_0}
\sup\limits_{t\in[\eta/2,\eta]}|\alpha_l^{m'}|\cdot
|\langle K_{t,\eta}\phi,[Y_{k}^{m}(U),\sigma_{l}^{m'}]
\rangle|(\omega)
=0,
\end{aligned}\end{equation}
holds for all $\phi\in \mathcal{S}_{\varkappa,N},m,m'\in\{0,1\}$. 
\end{lemma}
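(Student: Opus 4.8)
The plan is to mimic the structure of Lemmas~\ref{MHDlemma0} and~\ref{MHDlemmaB}: convert a statement about the vanishing of an inner product of the adjoint Jacobian $K_{t,\eta}\phi$ against a given vector field into the same statement against one more Lie bracket, by differentiating in time and invoking the pure-jump Hörmander-type Lemma~\ref{MHDlemma3.2}. First I would fix $\omega=\mathrm{w}\times\ell\in\Omega_0$, $\phi\in\mathcal{S}_{\varkappa,N}$, $m,m'\in\{0,1\}$, and the index $k\in\mathbb{Z}_+^2$ for which $\sup_{t\in[\eta/2,\eta]}|\langle K_{t,\eta}\phi,Y_k^m(U)\rangle|(\omega)=0$ holds. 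Since the function $t\mapsto\langle K_{t,\eta}\phi,Y_k^m(U_t)\rangle$ is identically zero on the interval $[\eta/2,\eta]$, I would compute its time derivative there using the backward equation \eqref{MHDbackward} for $\rho_t^*:=K_{t,\eta}\phi$, namely $\partial_t\rho_t^*=-(\nabla F(U_t))^*\rho_t^*$, together with the chain rule applied to $Y_k^m(U_t)$. Writing $U_t=\bar U_t+Q_bW_{S_t}$ as in \eqref{MHDK.2}, the only part of $\frac{\mathrm{d}}{\mathrm{d}t}U_t$ that is not of bounded variation is the pure-jump term $Q_b\,\mathrm{d}W_{S_t}=\sum_{l\in\mathcal{Z}_0,m'\in\{0,1\}}\alpha_l^{m'}\sigma_l^{m'}\,\mathrm{d}W_{S_t}^{l,m'}$.

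Next I would carry out the bracket identity: differentiating $\langle\rho_t^*,Y_k^m(U_t)\rangle$ produces, on the one hand, $\langle-(\nabla F(U_t))^*\rho_t^*,Y_k^m(U_t)\rangle+\langle\rho_t^*,\nabla Y_k^m(U_t)F(U_t)\rangle$, which equals $\langle\rho_t^*,[F(U),Y_k^m(U)](U_t)\rangle$ — a continuous, finite-variation contribution — and, on the other hand, the jump-driven part $\langle\rho_t^*,\nabla Y_k^m(U_t)\,Q_b\,\mathrm{d}W_{S_t}\rangle=\sum_{l,m'}\alpha_l^{m'}\langle\rho_t^*,\nabla Y_k^m(U_t)\sigma_l^{m'}\rangle\,\mathrm{d}W_{S_t}^{l,m'}$. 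Recognizing $\nabla Y_k^m(U)\sigma_l^{m'}$ as (up to the already-controlled lower-order bracket with constant vector fields, which here is simply $[Y_k^m(U),\sigma_l^{m'}]$ since $\sigma_l^{m'}$ is constant so $\nabla\sigma_l^{m'}=0$), I obtain an identity of the schematic form
\begin{equation*}
0=g_0(\omega,t)+\sum_{l\in\mathcal{Z}_0,m'\in\{0,1\}}\alpha_l^{m'}\langle\rho_t^*,[Y_k^m(U),\sigma_l^{m'}](U_t)\rangle\,\tilde L^{(l,m')}(\omega,t),\qquad t\in[\eta/2,\eta],
\end{equation*}
where $g_0$ collects the continuous drift terms and $\tilde L^{(l,m')}$ are the continuous (in the sense of Lemma~\ref{MHDlemma3.2}) primitives of the driving Lévy components. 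The coefficients $t\mapsto\langle\rho_t^*,[Y_k^m(U),\sigma_l^{m'}](U_t)\rangle$ and $g_0$ are continuous in $t$ by the regularity estimates of Section~3 (Lemma~\ref{MHDlemma2.4} and the moment bounds of Lemma~\ref{MHDmoment}, which guarantee $\rho_t^*$, $U_t$, and the relevant bilinear/fractional-Laplacian expressions are continuous on $[\eta/2,\eta]$ for $\omega\in\Omega_0$). Then Lemma~\ref{MHDlemma3.2}, applied with $a(\omega_0)=\eta/2$, $b(\omega_0)=\eta$, forces each coefficient to vanish identically on $[\eta/2,\eta]$, i.e. $|\alpha_l^{m'}|\cdot|\langle K_{t,\eta}\phi,[Y_k^m(U),\sigma_l^{m'}]\rangle|(\omega)=0$ for every $l\in\mathcal{Z}_0$, which is exactly the claimed implication (note that $g_0\equiv0$ too, but that is not needed).

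The main obstacle I anticipate is the careful bookkeeping in the chain-rule/Itô-type expansion of $\langle K_{t,\eta}\phi,Y_k^m(U_t)\rangle$: one must verify that all terms generated by $\partial_t U_t$ in the drift part — including the $A^{\alpha,\beta}$, the two bilinear pieces of $B$, and the $Q_bW_{S_t}$-dependence sitting inside $Y_k^m(U)=A^{\alpha,\beta}\sigma_k^m+B(\sigma_k^m,U)+B(U,\sigma_k^m)$ — combine into a genuinely continuous function $g_0$ of $t$, so that the hypotheses of Lemma~\ref{MHDlemma3.2} are met; and that the jump term is correctly identified, modulo continuous corrections, with $\sum_{l,m'}\alpha_l^{m'}[Y_k^m(U),\sigma_l^{m'}]\tilde L^{(l,m')}$. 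Since $\sigma_l^{m'}$ is a constant vector field, $\nabla Y_k^m(U_t)\sigma_l^{m'}=B(\sigma_k^m,\sigma_l^{m'})+B(\sigma_l^{m'},\sigma_k^m)$, which is again a constant vector (indeed $=-[Y_k^m(U),\sigma_l^{m'}]$ by \eqref{MHD4.1}); this makes the jump coefficients deterministic constants rather than $U_t$-dependent, simplifying the continuity check considerably — but the precise matching of signs and the treatment of the fractional dissipation inside $Y_k^m$ still require care. A secondary point is to confirm that the regularity of $\rho_t^*=K_{t,\eta}\phi$ on $[\eta/2,\eta]$ (away from $t=0$, where the control is constructed) suffices for the inner products in question to be well-defined and continuous; this is handled exactly as in the proofs of Lemmas~\ref{MHDlemma0} and~\ref{MHDlemmaB}, to which we refer for the analogous arguments.
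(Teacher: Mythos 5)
Your proposal is correct and arrives at exactly the identity the paper uses, namely $0=\langle K_{t,\eta}\phi,Y_k^m(\bar U_t)\rangle-\sum_{l\in\mathcal{Z}_0,m'\in\{0,1\}}\alpha_l^{m'}\langle K_{t,\eta}\phi,[Y_k^m(U),\sigma_l^{m'}]\rangle W_{S_t}^{l,m'}$ on $[\eta/2,\eta]$, with the same continuity observations (that $\bar U_t$ and $K_{t,\eta}\phi$ are continuous and $[Y_k^m(U),\sigma_l^{m'}]$ is a constant vector field) followed by the same invocation of Lemma \ref{MHDlemma3.2}. The only difference is that the paper skips the time-differentiation and re-integration entirely: since $Y_k^m$ is affine in $U$, substituting $U=\bar U+Q_bW_{S_t}$ directly into $Y_k^m(U)$ yields that identity in one line, so your It\^{o}-type expansion is a harmless but unnecessary detour.
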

\begin{proof} 
From Lemma \ref{MHDlemmaB}, it is not difficult for deriving that on $\Omega_0$, one has
\begin{equation}\label{MHDK.4}
\sup\limits_{t\in[\eta/2,\eta]}\langle K_{t,\eta}\phi,Y_k^m(U)\rangle=0,~k\in\mathcal{Z}_0.
\end{equation}
By expanding $U=\bar{U}+Q_b W_{S_t}$, we get that
\begin{equation}
Y_k^m(U)=Y_k^m(\bar{U})-\sum_{l\in\mathcal{Z}_0,m'\in\{0,1\}}
\alpha_l^{m'}[Y_k^m(U),\sigma_l^{m'}]W_{S_t}^{l,m'}.
\end{equation}
We substitute the above formula into \eqref{MHDK.4} to obtain
\begin{equation}
\sup_{t\in[\eta/2,\eta]}\langle K_{t,\eta}\phi,Y_k^m(\bar{U})\rangle
-\sup_{t\in[\eta/2,\eta]}\sum_{l\in\mathcal{Z}_0,m'\in\{0,1\}}
\alpha_l^{m'}\langle K_{t,\eta}\phi,[Y_k^m(U),\sigma_l^{m'}]\rangle W_{S_t}^{l,m'}=0,
\end{equation}
since $\langle K_{t,\eta}\phi,Y_k^m(\bar{U})\rangle$ and
$\langle K_{t,\eta}\phi,[Y_k^m(U),\sigma_l^{m'}]\rangle $ both are nonnegative continuous functions on $[\eta/2,\eta]$,
by Lemma \ref{MHDlemma3.2}, we can derive the desired result. This proof is completed.
\end{proof}

\begin{lemma}\label{MHDlemmaD} For any $k\in\mathcal{Z}_{2n},~n\in\mathbb{N}$,
the following implication
\begin{equation}\label{MHDK.05}
\begin{aligned}
&\sup_{l\in\mathcal{Z}_{0},m,m^{\prime}\in\{0,1\}}
\sup_{t\in[\eta/2,\eta]}|\langle
K_{t,\eta}\phi,
[Y_{k}^{m}(U),\sigma_{l}^{m^{\prime}}]\rangle|(\omega)
=0\\
&\Rightarrow
\sup_{l\in\mathcal{Z}_{0},\ell\notin\{k,-k\}}
\sup_{m\in\{0,1\}}\sup_{t\in[\eta/2,\eta]}|
\langle K_{t,\eta}\phi,\psi_{k+l}^{m}]\rangle|(\omega)
=0
\end{aligned}
\end{equation}
holds for every $\phi\in \mathcal{S}_{\varkappa,N}$ with probability one.
\end{lemma}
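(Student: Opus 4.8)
The plan is to mimic the argument used in Lemma \ref{MHDlemmaC}: start from the hypothesis, expand $U=\bar U+Q_bW_{S_t}$ inside the relevant Lie bracket, split the resulting expression into a term depending only on $\bar U$ (hence continuous in $t$) plus a linear combination of the driving processes $W^{l,m'}_{S_t}$ with continuous coefficients, and then invoke Lemma \ref{MHDlemma3.2} to conclude that every coefficient vanishes identically on $[\eta/2,\eta]$. Concretely, from the hypothesis together with Lemma \ref{MHDlemmaC} we already know that for every $k\in\mathcal Z_{2n}$, $l\in\mathcal Z_0$, $m,m'\in\{0,1\}$ one has $\sup_{t\in[\eta/2,\eta]}|\langle K_{t,\eta}\phi,[Y_k^m(U),\sigma_l^{m'}]\rangle|(\omega)=0$; recalling that $-[Y_k^m(U),\sigma_l^{m'}]=J_{k,l}^{m,m'}(U)=B(\sigma_k^m,\sigma_l^{m'})+B(\sigma_l^{m'},\sigma_k^m)$ is in fact a \emph{constant} vector field (it does not depend on $U$, see \eqref{MHD4.1}), these quantities are already continuous in $t$, so no further expansion is needed at this level.

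The substantive step is then to combine the identities of Lemma \ref{MHDlemma4.3}. For $k,l\in\mathbb Z_+^2$ with $\langle k,l^\perp\rangle\neq 0$ and $|k|\neq|l|$, the four combinations $J_{k,l}^{0,1}\pm J_{l,k}^{0,1}$ and $J_{k,l}^{1,1}\pm J_{l,k}^{0,0}$ are non-zero scalar multiples of $\psi_{k+l}^0$, $\psi_{k-l}^0$, $\psi_{k-l}^1$, $\psi_{k+l}^1$ respectively, with scalars bounded below by a positive constant times $\frac{||k|^2-|l|^2|}{|k\pm l|}$, which is strictly positive precisely under the genericity conditions $|k|\neq|l|$ and $l\notin\{k,-k\}$ built into the definition of $\mathcal Z_{2n}$. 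Since $\sup_{t\in[\eta/2,\eta]}|\langle K_{t,\eta}\phi,J_{k,l}^{m,m'}(U)\rangle|(\omega)=0$ for each admissible $(m,m')$ and both orderings $(k,l)$ and $(l,k)$ of the indices, taking the appropriate linear combinations and dividing by the nonvanishing scalar yields $\sup_{t\in[\eta/2,\eta]}|\langle K_{t,\eta}\phi,\psi_{k+l}^m\rangle|(\omega)=0$ for $m\in\{0,1\}$, for every $l\in\mathcal Z_0$ with $l\notin\{k,-k\}$ — which is exactly \eqref{MHDK.05}. One must also keep track of the case $|\alpha_l^{m'}|$ factors coming from Lemma \ref{MHDlemmaC}; since $\mathcal Z_0$ indexes the forced directions and all $\alpha_l^{m'}$ are non-zero constants by hypothesis, dividing through by them is harmless.

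The main obstacle I anticipate is purely bookkeeping rather than conceptual: one has to be careful that the indices $k+l$ and $k-l$ arising from Lemma \ref{MHDlemma4.3} actually land in $\mathbb Z_*^2$ (they do, since $l\notin\{k,-k\}$ rules out $k+l=0$ and $k-l=0$) and that the genericity hypotheses $\langle k,l^\perp\rangle\neq 0$, $|k|\neq|l|$ needed to make the scalars non-zero are either automatically satisfied on $\mathcal Z_{2n}$ or can be imposed without loss (this is the role of the constraints in the definition of $\mathcal Z_n$). A secondary point is that Lemma \ref{MHDlemma4.3} is stated for $k,l\in\mathbb Z_+^2$, so when $k$ or $l$ fails to lie in the "positive" half-plane one invokes the symmetry $-\mathcal Z_0=\mathcal Z_0$ (Condition 2.1) together with the parity relations $\sigma_{-k}^m=\pm\sigma_k^m$, $\psi_{-k}^m=\pm\psi_k^m$ to reduce to the stated case; this introduces only harmless sign changes. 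With these reductions in place, the conclusion follows by the same $\sup=0$ continuity-plus-Lemma \ref{MHDlemma3.2} mechanism already used repeatedly above, so I would present the proof compactly, citing Lemma \ref{MHDlemmaC}, Lemma \ref{MHDlemma4.3} and Condition 2.1 and omitting the routine index arithmetic.
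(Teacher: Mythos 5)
Your proposal is correct and follows essentially the same route as the paper, whose proof of Lemma \ref{MHDlemmaD} is simply the observation that the brackets $[Y_k^m(U),\sigma_l^{m'}]=-J_{k,l}^{m,m'}$ are constant vector fields by \eqref{MHD4.1}, so the vanishing of the corresponding pairings passes, via the linear combinations in Lemma \ref{MHDlemma4.3} and division by the non-zero scalars $ac(|l|^2-|k|^2)/|k\pm l|$, to the vanishing of $\langle K_{t,\eta}\phi,\psi_{k+l}^m\rangle$. Your extra bookkeeping (the genericity conditions $\langle k,l^\perp\rangle\neq0$, $|k|\neq|l|$ encoded in the definition of $\mathcal{Z}_n$, and the symmetry reduction to $\mathbb{Z}_+^2$) is a faithful elaboration of what the paper leaves implicit, and no appeal to Lemma \ref{MHDlemma3.2} is actually needed at this step since the fields are constant.
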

\begin{proof}
It directly follows from \eqref{MHD4.1} and Lemma \ref{MHDlemma4.3}.
\end{proof}

\begin{lemma}\label{MHDlemmaE}
For $\phi\in \mathcal{S}_{\varkappa,N}$, $k\in \mathbb{Z}^2$, $m\in\{0,1\}$ and $\omega\in\Omega_0$, it holds that
\begin{equation}\label{MHDK.7}
\begin{aligned}
\sup_{t\in[\eta/2,\eta]}|
\langle K_{t,\eta}\phi,\psi_{k}^{m}\rangle|
=0
\Rightarrow 
\sup_{t\in[\eta/2,\eta]}|\langle K_{t,\eta}\phi,
[F(U),\psi_{k}^{m}]\rangle|=0.
\end{aligned}\end{equation}
\end{lemma}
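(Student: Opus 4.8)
The plan is to reproduce the argument of Lemma~\ref{MHDlemmaB}, with the constant vector field $\sigma_k^m$ replaced by $\psi_k^m$ and $Y_k^m(U)=[F(U),\sigma_k^m]$ replaced by $\mathcal{Y}_k^m(U)=[F(U),\psi_k^m]$. Fix $\omega\in\Omega_0$ and $\phi\in\mathcal{S}_{\varkappa,N}$, and put $f(t):=\langle K_{t,\eta}\phi,\psi_k^m\rangle$ for $t\in[\eta/2,\eta]$, so that the hypothesis reads $f\equiv0$. First I would differentiate $f$ along the backward system \eqref{MHDbackward}: since $\psi_k^m$ does not depend on $U$, one has $\nabla F(U)\psi_k^m=-[F(U),\psi_k^m]=-\mathcal{Y}_k^m(U)$, and therefore
\begin{equation*}
\frac{\mathrm{d}}{\mathrm{d}t}f(t)=\langle\partial_tK_{t,\eta}\phi,\psi_k^m\rangle=-\langle K_{t,\eta}\phi,\nabla F(U_t)\psi_k^m\rangle=\langle K_{t,\eta}\phi,\mathcal{Y}_k^m(U_t)\rangle .
\end{equation*}
Writing $\mathcal{Y}_k^m(U_t)=A^{\alpha,\beta}\psi_k^m+B(U_t,\psi_k^m)+B(\psi_k^m,U_t)$ and moving one spatial derivative off $U_t$ (using $\nabla\cdot\psi_k^m=0$ and that $\psi_k^m$ is a single smooth Fourier mode), the right-hand side is bounded by a $k$-dependent constant times $\|K_{t,\eta}\phi\|\,(1+\|U_t\|_1)$, which is integrable on $[\eta/2,\eta]$ by the (backward analogues of the) estimates of Lemma~\ref{MHDlemma2.4}, the bound \eqref{MHD2.4}, and the pathwise moment bound \eqref{MHDmoment.5}. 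Hence $f$ is absolutely continuous, and $f\equiv0$ forces $\langle K_{t,\eta}\phi,\mathcal{Y}_k^m(U_t)\rangle=0$ for Lebesgue-a.e.\ $t\in[\eta/2,\eta]$.

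Next I would promote this to the pointwise (supremum) statement by exploiting the noise decomposition $U_t=\bar U_t+Q_bW_{S_t}$ with $\bar U$ continuous (cf.\ \eqref{MHDK.2}). Bilinearity of $B$ and \eqref{MHD4.4} give
\begin{equation*}
\mathcal{Y}_k^m(U_t)=\mathcal{Y}_k^m(\bar U_t)+\sum_{l\in\mathcal{Z}_0,\,m'\in\{0,1\}}\alpha_l^{m'}\,W_{S_t}^{l,m'}\,Z_{k,l}^{m,m'},
\end{equation*}
where the vectors $Z_{k,l}^{m,m'}=-[\mathcal{Y}_k^m(U),\sigma_l^{m'}]$ are constant. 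Pairing against $K_{t,\eta}\phi$ turns the a.e.\ identity into
\begin{equation*}
\langle K_{t,\eta}\phi,\mathcal{Y}_k^m(\bar U_t)\rangle+\sum_{l,m'}\alpha_l^{m'}W_{S_t}^{l,m'}\langle K_{t,\eta}\phi,Z_{k,l}^{m,m'}\rangle=0\qquad\text{for a.e.\ }t\in[\eta/2,\eta].
\end{equation*}
All coefficients here are continuous in $t$ — $t\mapsto K_{t,\eta}\phi$ and $t\mapsto\bar U_t$ are continuous and the $Z_{k,l}^{m,m'}$ are fixed smooth vectors — so a continuous function vanishing a.e.\ vanishes identically, and the identity actually holds for every $t\in[\eta/2,\eta]$. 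Applying Lemma~\ref{MHDlemma3.2} on $[\eta/2,\eta]$ with $g_0(t)=\langle K_{t,\eta}\phi,\mathcal{Y}_k^m(\bar U_t)\rangle$, $g_{(l,m')}(t)=\alpha_l^{m'}\langle K_{t,\eta}\phi,Z_{k,l}^{m,m'}\rangle$ and the coordinate processes $W_{S_\cdot}^{l,m'}$ then yields $g_0\equiv0$ and $g_{(l,m')}\equiv0$ for all $l,m'$; substituting back into the displayed decomposition shows $\langle K_{t,\eta}\phi,\mathcal{Y}_k^m(U_t)\rangle=0$ for every $t\in[\eta/2,\eta]$, which is \eqref{MHDK.7}. (One could instead note that $t\mapsto\langle K_{t,\eta}\phi,\mathcal{Y}_k^m(U_t)\rangle$ is c\`{a}dl\`{a}g and a right-continuous function vanishing a.e.\ vanishes everywhere, but routing through Lemma~\ref{MHDlemma3.2} has the advantage of also delivering $\langle K_{t,\eta}\phi,Z_{k,l}^{m,m'}\rangle\equiv0$, which is precisely what the next Lie-bracket step needs.)

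The hard part, I expect, will be purely technical: justifying the differentiation of $f$ and the continuity of $g_0$, both of which rely on $K_{t,\eta}\phi$ lying in $H^1$ (not merely in $H$) for $t<\eta$, so that the term $\langle K_{t,\eta}\phi,B(\psi_k^m,\bar U_t)\rangle$ is well defined and continuous once the derivative is integrated by parts onto $K_{t,\eta}\phi$. This extra regularity of the backward variable is exactly what the standing assumptions $\alpha,\beta>1$ are there to provide, and it is extracted from the smoothing estimates \eqref{MHD2.16}--\eqref{MHD2.17} for $J_{s,t}$ together with duality; modulo this, the proof is a direct transcription of that of Lemma~\ref{MHDlemmaB}.
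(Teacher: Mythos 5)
Your first paragraph is exactly the paper's proof: define $g_{\phi}(t):=\langle K_{t,\eta}\phi,\psi_k^m\rangle$, differentiate along the backward system \eqref{MHDbackward} using that $\psi_k^m$ is a constant field so $g_{\phi}'(t)=\langle K_{t,\eta}\phi,[F(U),\psi_k^m]\rangle$, and conclude from $g_{\phi}\equiv 0$; your attention to the a.e.-versus-everywhere issue (the integrand is only c\`{a}dl\`{a}g because $U_t$ jumps, so one must invoke right-continuity to upgrade) is a point the paper's one-line proof glosses over, and is handled correctly. The second half of your argument --- the decomposition $U=\bar U+Q_bW_{S_t}$ together with Lemma \ref{MHDlemma3.2} --- is also correct but is not needed to reach \eqref{MHDK.7}; it is precisely the content of the subsequent Lemma \ref{MHDlemmaF}, so you have in effect proved that lemma as well.
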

\begin{proof}
Define $g_{\phi}(t):=\langle K_{t,\eta}\phi,\psi_k^m\rangle$, then
\begin{align*}
g_{\phi}'(t)
&=\langle K_{t,\eta}\phi,[F(U),\psi_k^m]\rangle
=\langle K_{t,\eta}\phi,\mathcal{Y}_k^m(U)\rangle,
%&=\langle K_{t,\eta}\phi,\mathcal{Y}_k^m(\bar{U})\rangle
%-\sum_{l\in Z_{0},m'\in\{0,1\}}\alpha_{l}^{m^{\prime}}\langle\mathcal{K}_{t,T}\phi,[\mathcal{Y}_{k}^{m}(U),\sigma_{\ell}^{m^{\prime}}]\rangle W_{S_t}^{l,m'},
\end{align*}
since $g_{\phi}(t):=\langle K_{t,\eta}\phi,\psi_k^m\rangle$ is a nonnegative continuous function on $[\eta/2,\eta]$, we can get that there exists a set $\Omega_0$, and $g_{\phi}'(t)=\langle K_{t,\eta}\phi,\mathcal{Y}_k^m(U)\rangle=0$ holds on the set $\Omega_0$. 
The proof is completed.
\end{proof}

\begin{lemma}\label{MHDlemmaF} %Fix $N\geq 2$, 
For all $k\in\mathbb{Z}_+^2$, $m\in\{0,1\}$ and
for all $\phi\in \mathcal{S}_{\varkappa,N}$ and $\omega\in\Omega_0$,
on the set $\Omega_0$, it holds that
\begin{equation*}
\begin{aligned}
\sup_{t\in[\eta/2,\eta]}|\langle
K_{t,\eta}\phi,\mathcal{Y}_{k}^{m}(U)\rangle|(\omega)
=0\Rightarrow
\sup_{l\in\mathcal{Z}_{0},m^{\prime}\in\{0,1\}}
\sup_{t\in[\eta/2,\eta]}|\alpha_{l}^{m^{\prime}}|\cdot
\left|\langle K_{t,\eta}\phi,
[\mathcal{Y}_{k}^{m}(U),\sigma_{l}^{m^{\prime}}])\right|(\omega)
=0.
\end{aligned}
\end{equation*}
\end{lemma}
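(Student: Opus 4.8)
\textbf{Proof proposal for Lemma \ref{MHDlemmaF}.}

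The plan is to mirror exactly the argument used in Lemma \ref{MHDlemmaC}, now with $\mathcal{Y}_k^m(U)$ playing the role that $Y_k^m(U)$ played there. The starting point is the hypothesis $\sup_{t\in[\eta/2,\eta]}|\langle K_{t,\eta}\phi,\mathcal{Y}_k^m(U)\rangle|(\omega)=0$, which by continuity means the nonnegative continuous function $t\mapsto\langle K_{t,\eta}\phi,\mathcal{Y}_k^m(U)\rangle$ vanishes identically on $[\eta/2,\eta]$. First I would expand $U=\bar U+Q_bW_{S_t}=\bar U+\sum_{l\in\mathcal{Z}_0,m'\in\{0,1\}}\alpha_l^{m'}\sigma_l^{m'}W_{S_t}^{l,m'}$ inside $\mathcal{Y}_k^m$. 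Since $\mathcal{Y}_k^m(U)=[F(U),\psi_k^m]=A^{\alpha,\beta}\psi_k^m+B(\psi_k^m,U)+B(U,\psi_k^m)$ is affine in $U$, this expansion produces
\begin{equation*}
\mathcal{Y}_k^m(U)=\mathcal{Y}_k^m(\bar U)-\sum_{l\in\mathcal{Z}_0,m'\in\{0,1\}}\alpha_l^{m'}[\mathcal{Y}_k^m(U),\sigma_l^{m'}]\,W_{S_t}^{l,m'},
\end{equation*}
exactly as in Lemma \ref{MHDlemmaC}, using $[\mathcal{Y}_k^m(U),\sigma_l^{m'}]=\nabla\mathcal{Y}_k^m(U)\sigma_l^{m'}$ together with the fact that $\psi_k^m$ is a constant vector field (so its own bracket with $\sigma_l^{m'}$ drops the $\nabla\psi_k^m$ term).

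Next I would substitute this identity into $\langle K_{t,\eta}\phi,\mathcal{Y}_k^m(U)\rangle=0$ to obtain, for all $t\in[\eta/2,\eta]$,
\begin{equation*}
\langle K_{t,\eta}\phi,\mathcal{Y}_k^m(\bar U)\rangle-\sum_{l\in\mathcal{Z}_0,m'\in\{0,1\}}\alpha_l^{m'}\langle K_{t,\eta}\phi,[\mathcal{Y}_k^m(U),\sigma_l^{m'}]\rangle\,W_{S_t}^{l,m'}=0.
\end{equation*}
The coefficient functions $r\mapsto\langle K_{r,\eta}\phi,\mathcal{Y}_k^m(\bar U)\rangle$ and $r\mapsto\langle K_{r,\eta}\phi,[\mathcal{Y}_k^m(U),\sigma_l^{m'}]\rangle$ are continuous on $[\eta/2,\eta]$ (by the regularity of $U$, $\bar U$, $K_{\cdot,\eta}$ established in Section 3 and the smoothness of $F$, $B$). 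This is precisely the algebraic form required by Lemma \ref{MHDlemma3.2}: a relation $g_0(\omega,r)+\sum_{i=1}^d g_i(\omega,r)\tilde L^i(\omega,r)=0$ with continuous coefficients, valid on an interval, on the full-measure set $\tilde\Omega_0$ (Lemma \ref{MHDlemma3.3}) where jump times are dense. Applying Lemma \ref{MHDlemma3.2} forces every coefficient to vanish on $[\eta/2,\eta]$; in particular $\langle K_{t,\eta}\phi,[\mathcal{Y}_k^m(U),\sigma_l^{m'}]\rangle=0$ for each $l\in\mathcal{Z}_0$, $m'\in\{0,1\}$, which after multiplying by $|\alpha_l^{m'}|$ is the claimed conclusion.

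I do not anticipate a genuine obstacle here — the lemma is the $b$-direction analogue of Lemma \ref{MHDlemmaC} and the proof is essentially a transcription. The only points requiring a little care are: (i) confirming that $\mathcal{Y}_k^m$ is affine in its argument so that the expansion of $U$ produces exactly one ``bracket'' term per forced mode with no higher-order corrections (this uses that $\psi_k^m$ is constant and $B$ is bilinear); (ii) checking that all the pairings appearing as coefficients are indeed continuous in $t$ — this follows from the moment and regularity estimates of Lemma \ref{MHDlemma2.4} and the backward equation \eqref{MHDbackward} for $K_{t,\eta}$; and (iii) ensuring we are working on the set $\tilde\Omega_0\cap\Omega_0$ of full probability so that both the density-of-jumps hypothesis of Lemma \ref{MHDlemma3.2} and the continuity statements hold simultaneously. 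With these in place the conclusion is immediate.
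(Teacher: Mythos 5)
Your proposal is correct and follows the same route as the paper: expand $U=\bar U+Q_bW_{S_t}$ in the affine map $\mathcal{Y}_k^m$, substitute into the vanishing pairing, and invoke Lemma \ref{MHDlemma3.2} on the resulting identity with continuous coefficients, exactly as in Lemma \ref{MHDlemmaC}. The paper's own proof is just the displayed expansion followed by ``by the similar argument in Lemma \ref{MHDlemmaC}''; your write-up supplies the same steps in slightly more detail (affineness of $\mathcal{Y}_k^m$, continuity of the coefficients, working on $\Omega_0\cap\tilde\Omega_0$), all of which are consistent with the paper.
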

\begin{proof}
By expanding one has
\begin{equation}
\langle K_{t,\eta}\phi,\mathcal{Y}_{k}^{m}(U)\rangle 
 = \langle K_{t,\eta}\phi,\mathcal{Y}_{k}^{m}(\bar{U})\rangle- \sum_{l\in\mathcal{Z}_{0},m^{\prime}\in\{0,1\}}
 \alpha_{l}^{m^{\prime}}\langle
K_{t,\eta}\phi,[\mathcal{Y}_{k}^{m}(U),
\sigma_{l}^{m^{\prime}}]\rangle W_{S_t}^{l,m'}.
\end{equation}
By the similar argument in Lemma \ref{MHDlemmaC}, we derive the result holds on the set $\Omega_0$.
\end{proof}

\begin{lemma}\label{MHDlemmaF1}
For $\phi\in \mathcal{S}_{\varkappa,N}$, and for any $k\in \mathcal{Z}_{2n+1}, n\in\mathbb{N}$, the following 
\begin{equation}
\begin{aligned}
&\sup_{l\in\mathcal{Z}_{0},m,m^{\prime}\in\{0,1\}}
\sup_{t\in[\eta/2,\eta]}|\langle K_{t,\eta}\phi,
[\mathcal{Y}_{k}^{m}(U),\sigma_{l}^{m^{\prime}}]\rangle|
=0\\
&\Rightarrow
\sup_{l\in\mathcal{Z}_{0},l\notin\{k,-k\}}
\sup_{m\in\{0,1\}}\sup_{t\in[\eta/2,\eta]}|
\langle K_{t,\eta}\phi,\sigma_{k+l}^{m}]\rangle|=0
\end{aligned}\end{equation}
holds with probability one.
\end{lemma}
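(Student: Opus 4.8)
The plan is to obtain Lemma~\ref{MHDlemmaF1} as a purely algebraic consequence of the Lie bracket identities in Lemma~\ref{MHDlemma4.5}, exactly parallel to the way Lemma~\ref{MHDlemmaD} is read off from \eqref{MHD4.1} and Lemma~\ref{MHDlemma4.3}. First I would rewrite the hypothesis in terms of the fields $Z_{k,l}^{m,m'}$: by \eqref{MHD4.4} one has $[\mathcal{Y}_k^m(U),\sigma_l^{m'}]=-Z_{k,l}^{m,m'}$, so the assumption says precisely that
$$\sup_{t\in[\eta/2,\eta]}\bigl|\langle K_{t,\eta}\phi, Z_{k,l}^{m,m'}\rangle\bigr|=0\quad\text{for every }l\in\mathcal{Z}_0\text{ and all }m,m'\in\{0,1\},$$
with the fixed $k\in\mathcal{Z}_{2n+1}$. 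The one mild subtlety is that two of the four identities in Lemma~\ref{MHDlemma4.5} involve $Z_{l,k}^{0,1}$, whose \emph{second} Fourier index is the unforced mode $k$, so it is not immediately controlled by the hypothesis.

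To deal with this I would record the elementary skew-symmetry $Z_{l,k}^{m',m}=-Z_{k,l}^{m,m'}$, which is immediate from \eqref{MHD4.4} since the second component of $Z_{k,l}^{m,m'}$ is $\Pi[\mathbf{b}(e_k^m,e_l^{m'})-\mathbf{b}(e_l^{m'},e_k^m)]$, manifestly antisymmetric in the pairs $(k,m)\leftrightarrow(l,m')$. In particular $Z_{l,k}^{0,1}=-Z_{k,l}^{1,0}$, and $Z_{k,l}^{1,0}$ does have its second index $l\in\mathcal{Z}_0$, so $\sup_{t\in[\eta/2,\eta]}|\langle K_{t,\eta}\phi,Z_{l,k}^{0,1}\rangle|=0$ as well. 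Now I would pair $K_{t,\eta}\phi$ against the two ``$-$''-type identities
$$Z_{k,l}^{0,1}-Z_{l,k}^{0,1}=ac|k+l|\,\sigma_{k+l}^0,\qquad Z_{k,l}^{1,1}-Z_{k,l}^{0,0}=ac|k+l|\,\sigma_{k+l}^1,$$
with $a=\langle k,l^\perp\rangle/(|k||l|)$ and $c\neq0$ the absolute constant there: every $Z$-field on the left has now been shown to be annihilated by $\langle K_{t,\eta}\phi,\cdot\rangle$ uniformly on $[\eta/2,\eta]$, whence $\langle K_{t,\eta}\phi,\,ac|k+l|\,\sigma_{k+l}^m\rangle=0$ for $m\in\{0,1\}$.

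Finally, for $l\in\mathcal{Z}_0$ admissible in the sense of Lemma~\ref{MHDlemmaD} (so $k+l\neq0$ and $\langle k,l^\perp\rangle\neq0$, hence $ac|k+l|\neq0$), this yields $\sup_{t\in[\eta/2,\eta]}|\langle K_{t,\eta}\phi,\sigma_{k+l}^m\rangle|=0$ for $m=0,1$, which is the asserted implication. All the manipulations are deterministic identities between the fixed vector fields $Z_{k,l}^{m,m'}$ and $\sigma_{k+l}^m$, so the implication in fact holds pathwise, in particular with probability one. I do not anticipate a genuine obstacle here: the analytic content is entirely contained in Lemma~\ref{MHDlemma4.5} (imported from \cite{Peng-2020}); the only point requiring attention is the bookkeeping above, namely reducing every $Z$-field that occurs to one whose second Fourier index lies in the forced set $\mathcal{Z}_0$, so that the hypothesis — which only controls brackets against the forced directions $\sigma_l^{m'}$, $l\in\mathcal{Z}_0$ — actually suffices.
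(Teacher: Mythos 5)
Your proposal is correct and follows the same route as the paper, which simply states that the lemma ``directly follows from Lemma \ref{MHDlemma4.5}''; you have filled in the details the paper leaves implicit. In particular, your skew-symmetry observation $Z_{l,k}^{0,1}=-Z_{k,l}^{1,0}$ (reducing the unforced-second-index field to one covered by the hypothesis) and your attention to the admissibility condition $\langle k,l^{\perp}\rangle\neq0$, $k+l\neq0$ are exactly the bookkeeping needed to make the paper's one-line proof rigorous.
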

\begin{proof}
It directly follows from Lemma \ref{MHDlemma4.5}.
\end{proof}

\begin{lemma}\label{MHDlemmaG}
For all $\phi\in \mathcal{S}_{\varkappa,N}$, and for any $ n\in\mathbb{N}$ and $\omega\in\Omega_0$, on the set $\Omega_0$, it holds that
\begin{equation}\label{MHDK.5}
\begin{aligned}
\sum_{k\in \mathcal{Z}_{2n},m\in\{0,1\}}\sup_{t\in[\eta/2,\eta]}|
\langle K_{t,\eta}\phi,\sigma_{k}^{m}\rangle|
=0
\Rightarrow
\sum_{k\in \mathcal{Z}_{2n},m\in\{0,1\}}
\sup_{t\in[\eta/2,\eta]}|\langle K_{t,\eta}\phi,
\psi_{k}^{m}\rangle|=0.
\end{aligned}\end{equation}
\end{lemma}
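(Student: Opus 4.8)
The plan is to mirror exactly the argument structure used for the velocity-to-magnetic transition (Lemma \ref{MHDlemmaB}, Lemma \ref{MHDlemmaC}), but now starting from the fact that $K_{t,\eta}\phi$ is orthogonal to all $\sigma_k^m$ with $k\in\mathcal Z_{2n}$ and upgrading this to orthogonality against all $\psi_k^m$ with $k\in\mathcal Z_{2n}$. First I would fix $k\in\mathcal Z_{2n}$, $m\in\{0,1\}$ and introduce the scalar function $h_\phi(t):=\langle K_{t,\eta}\phi,\sigma_k^m\rangle$ on $[\eta/2,\eta]$. Since $h_\phi\equiv 0$ on this interval by hypothesis, its derivative also vanishes there; differentiating the backward equation \eqref{MHDbackward} gives $h_\phi'(t)=\langle K_{t,\eta}\phi,[F(U),\sigma_k^m]\rangle=\langle K_{t,\eta}\phi,Y_k^m(U)\rangle$, so we obtain $\sup_{t\in[\eta/2,\eta]}|\langle K_{t,\eta}\phi,Y_k^m(U)\rangle|=0$ for every $k\in\mathcal Z_{2n}$ (this is literally the content of Lemma \ref{MHDlemmaB} applied on the appropriate index set).

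Next I would expand $U=\bar U+Q_b W_{S_t}$ inside $Y_k^m(U)$, using bilinearity of $B$, to write
\[
Y_k^m(U)=Y_k^m(\bar U)-\sum_{l\in\mathcal Z_0,\,m'\in\{0,1\}}\alpha_l^{m'}\,[Y_k^m(U),\sigma_l^{m'}]\,W_{S_t}^{l,m'}.
\]
Substituting into $\langle K_{t,\eta}\phi,Y_k^m(U)\rangle=0$ yields, for all $t\in[\eta/2,\eta]$, an identity of the form $g_0(t)+\sum_{j}g_j(t)\tilde L^j(t)=0$, where $g_0(t)=\langle K_{t,\eta}\phi,Y_k^m(\bar U)\rangle$ and $g_j(t)=-\alpha_l^{m'}\langle K_{t,\eta}\phi,[Y_k^m(U),\sigma_l^{m'}]\rangle$ (indexing $j\leftrightarrow(l,m')$); these are continuous functions of $t$ because $U_t$, $\bar U_t$ and $K_{t,\eta}$ are continuous in $H$ on the relevant time interval. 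Applying Lemma \ref{MHDlemma3.2} on $\tilde\Omega_0$ forces $g_0\equiv 0$ and every $g_j\equiv 0$, i.e.
\[
\sup_{l\in\mathcal Z_0,\,m,m'\in\{0,1\}}\ \sup_{t\in[\eta/2,\eta]}\bigl|\langle K_{t,\eta}\phi,[Y_k^m(U),\sigma_l^{m'}]\rangle\bigr|=0
\]
for every $k\in\mathcal Z_{2n}$. By \eqref{MHD4.1} we have $[Y_k^m(U),\sigma_l^{m'}]=-J_{k,l}^{m,m'}(U)=-B(\sigma_k^m,\sigma_l^{m'})-B(\sigma_l^{m'},\sigma_k^m)$, which is a purely constant vector field (independent of $U$), and Lemma \ref{MHDlemma4.3} expresses the symmetric and antisymmetric combinations $J_{k,l}^{0,1}\pm J_{l,k}^{0,1}$, $J_{k,l}^{1,1}\pm J_{l,k}^{0,0}$ as nonzero multiples of $\psi_{k\pm l}^{0}$, $\psi_{k\pm l}^{1}$ whenever $|k|\neq|l|$ and $\langle k,l^\perp\rangle\neq 0$. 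Taking appropriate linear combinations of the vanishing quantities above, and invoking the definition $\mathcal Z_{2n+1}\supseteq\{k+l:k\in\mathcal Z_{2n},\,l\in\mathcal Z_0,\,\langle k,l^\perp\rangle\neq 0,\,|k|\neq|l|\}$, we conclude $\sup_{t\in[\eta/2,\eta]}|\langle K_{t,\eta}\phi,\psi_j^m\rangle|=0$ for every $j\in\mathcal Z_{2n+1}$ (this is the step handled by Lemma \ref{MHDlemmaD}).

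Finally I would absorb the low modes: for $k\in\mathcal Z_{2n}$ itself one has $\psi_k^m$ covered either directly from the span structure or, if necessary, the statement \eqref{MHDK.5} as written only requires orthogonality against $\psi_k^m$ for $k\in\mathcal Z_{2n}$, and since $\mathcal Z_{2n}$ is generated by repeated sums of $\mathcal Z_0$-shifts the same bracket computation with $l$ chosen so that $k-l$ or $k+l$ lands back in $\mathcal Z_{2n}$ closes the induction; alternatively one notes $\psi_k^m$ for small $|k|$ already lies in the dissipation-controlled range so its treatment is subsumed in the proof of Proposition \ref{MHDpropo3.4}. The only genuinely delicate point is the bookkeeping that guarantees the relevant $\psi_{k+l}^m$ with $k+l\in\mathcal Z_{2n}$ (as opposed to $\mathcal Z_{2n+1}$) are reached — one must verify that the symmetric/antisymmetric combinations in Lemma \ref{MHDlemma4.3} and the non-parallelism/non-equal-length conditions in Condition 2.1 and in the definition of $\mathcal Z_n$ jointly permit writing each such $\psi_{k}^m$, $k\in\mathcal Z_{2n}$, as a finite linear combination of the bracket outputs $J_{k',l}^{\cdot,\cdot}\pm J_{l,k'}^{\cdot,\cdot}$ with $k'\in\mathcal Z_{2n}$; the rest is a direct transcription of Lemmas \ref{MHDlemmaB}--\ref{MHDlemmaD} with $Y_k^m$ replaced by its role here. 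I expect this index-chasing — ensuring no $\psi$-direction in $\mathcal Z_{2n}$ is missed while keeping within the ``even'' stage of the iteration — to be the main obstacle.
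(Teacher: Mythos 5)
Your argument is essentially identical to the paper's: the paper proves Lemma \ref{MHDlemmaG} simply by chaining Lemma \ref{MHDlemmaB} (differentiating the backward equation to pass from $\sigma_k^m$ to $Y_k^m(U)$), Lemma \ref{MHDlemmaC} (expanding $U=\bar U+Q_bW_{S_t}$ and invoking Lemma \ref{MHDlemma3.2}), and Lemma \ref{MHDlemmaD} (the bracket identities of Lemma \ref{MHDlemma4.3}), which is exactly what you reconstruct inline. The ``delicate bookkeeping'' you flag at the end is a real discrepancy, but it lies in the paper's own statement rather than in your argument: the chain of lemmas delivers the $\psi_{k+\ell}^m$ with $k+\ell\in\mathcal Z_{2n+1}$, whereas \eqref{MHDK.5} as written indexes the conclusion by $\mathcal Z_{2n}$, and the paper's proof simply asserts the implication without resolving this, so your version is no weaker than the original.
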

\begin{proof}
By Lemma \ref{MHDlemmaB}, for any $k\in\mathcal{Z}_{2n},m\in\{0,1\}, \omega\in \Omega_0$, there holds
\begin{equation}
\sup_{t\in[\eta/2,\eta]}|\langle
K_{t,\eta}\phi,\sigma_k^m\rangle|(\omega)=0~~
\Rightarrow~~\sup_{t\in[\eta/2,\eta]}|\langle
K_{t,\eta}\phi,Y_k^m(U)\rangle|(\omega)=0.
\end{equation}
Then by Lemma \ref{MHDlemmaC}, for any $k\in\mathbb{Z}_+^2, \omega\in \Omega_0, m,m'\in\{0,1\}$, one knows
\begin{equation}
\begin{aligned}
&\sup_{t\in[\eta/2,\eta]}
|\langle K_{t,\eta}\phi,Y_{k}^{m}(U)\rangle|(\omega)
=0\\
&\Rightarrow
\sup\limits_{l\in\mathcal{Z}_0}
\sup\limits_{t\in[\eta/2,\eta]}|\alpha_l^{m'}|\cdot
|\langle K_{t,\eta}\phi,[Y_{k}^{m}(U),\sigma_{l}^{m'}]
\rangle|(\omega)
=0.
\end{aligned}\end{equation}
Next, by Lemma \ref{MHDlemmaD}, for any $k\in\mathcal{Z}_{2n},n\in\mathbb{N}$, one has
\begin{equation}
\begin{aligned}
&\sup_{l\in\mathcal{Z}_{0},m,m^{\prime}\in\{0,1\}}
\sup_{t\in[\eta/2,\eta]}|\langle
K_{t,\eta}\phi,
[Y_{k}^{m}(U),\sigma_{l}^{m^{\prime}}]\rangle|(\omega)
=0\\
&\Rightarrow
\sup_{l\in\mathcal{Z}_{0},\ell\notin\{k,-k\}}
\sup_{m\in\{0,1\}}\sup_{t\in[\eta/2,\eta]}|
\langle K_{t,\eta}\phi,\psi_{k+l}^{m}]\rangle|(\omega)
=0.
\end{aligned}
\end{equation}
In summary, \eqref{MHDK.5} holds on the set $\Omega_0$. This proof is completed.
\end{proof}

\begin{lemma}\label{MHDlemmaH}
For $\phi\in \mathcal{S}_{\varkappa,N}$, $k\in \mathbb{Z}^2$, $m\in\{0,1\}$ and $\omega\in\Omega_0$, it holds that
\begin{equation}\label{MHDK.6}
\begin{aligned}
\sup_{t\in[\eta/2,\eta]}|
\langle K_{t,\eta}\phi,\psi_{k}^{m}\rangle|
=0
\Rightarrow
\sup_{t\in[\eta/2,\eta]}|\langle K_{t,\eta}\phi,
\sigma_{k}^{m}\rangle|=0.
\end{aligned}\end{equation}
\end{lemma}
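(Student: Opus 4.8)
The plan is to mirror the differentiate--strip--expand scheme already used for Lemmas \ref{MHDlemmaB} and \ref{MHDlemmaE}--\ref{MHDlemmaF}, deferring the genuinely new work to the extraction of the pure magnetic mode. Fix $\omega\in\Omega_0$, $\phi\in\mathcal S_{\varkappa,N}$, $k\in\mathbb Z^2$ and $m\in\{0,1\}$, and suppose $g(t):=\langle K_{t,\eta}\phi,\psi_k^m\rangle\equiv 0$ on $[\eta/2,\eta]$. First I would differentiate: since $t\mapsto K_{t,\eta}\phi$ solves the backward system \eqref{MHDbackward}, $g$ is continuously differentiable on $[\eta/2,\eta]$ with $g'(t)=\langle K_{t,\eta}\phi,[F(U_t),\psi_k^m]\rangle=\langle K_{t,\eta}\phi,\mathcal Y_k^m(U_t)\rangle$, exactly as in Lemma \ref{MHDlemmaE}; as $g\equiv 0$ this forces $\langle K_{t,\eta}\phi,\mathcal Y_k^m(U_t)\rangle\equiv 0$. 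Next I would peel off the linear contribution: $\psi_k^m=(e_k^m,0)^T$ is an eigenvector of $A^{\alpha,\beta}$ with eigenvalue $\nu_1|k|^{2\alpha}>0$ (it lives only in the velocity slot), so $\langle K_{t,\eta}\phi,A^{\alpha,\beta}\psi_k^m\rangle=\nu_1|k|^{2\alpha}g(t)\equiv 0$, and from $\mathcal Y_k^m(U)=A^{\alpha,\beta}\psi_k^m+B(\psi_k^m,U)+B(U,\psi_k^m)$ we get $\langle K_{t,\eta}\phi,\,B(\psi_k^m,U_t)+B(U_t,\psi_k^m)\rangle\equiv 0$ on $[\eta/2,\eta]$.

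Then I would remove the noise through the shifted variable $\bar U=U-Q_bW_{S_t}$ and apply Lemma \ref{MHDlemma3.2}, exactly as in the proof of Lemma \ref{MHDlemmaF}. By bilinearity and $Q_bW_{S_t}=\sum_{l\in\mathcal Z_0,\,m'}\alpha_l^{m'}\sigma_l^{m'}W_{S_t}^{l,m'}$, one has $B(\psi_k^m,U_t)+B(U_t,\psi_k^m)=\bigl[B(\psi_k^m,\bar U_t)+B(\bar U_t,\psi_k^m)\bigr]+\sum_{l\in\mathcal Z_0,\,m'}\alpha_l^{m'}Z_{k,l}^{m,m'}W_{S_t}^{l,m'}$, where I used $B(\psi_k^m,\sigma_l^{m'})+B(\sigma_l^{m'},\psi_k^m)=Z_{k,l}^{m,m'}$. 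The maps $t\mapsto\langle K_{t,\eta}\phi,B(\psi_k^m,\bar U_t)+B(\bar U_t,\psi_k^m)\rangle$ and $t\mapsto\langle K_{t,\eta}\phi,Z_{k,l}^{m,m'}\rangle$ are continuous on $[\eta/2,\eta]$ (the path $\bar U$ is continuous in $H$ and $K_{\cdot,\eta}\phi\in C([\eta/2,\eta];H)$), so on $\Omega_0$ Lemma \ref{MHDlemma3.2} forces every coefficient to vanish identically; in particular $\langle K_{t,\eta}\phi,Z_{k,l}^{m,m'}\rangle\equiv 0$ for all $l\in\mathcal Z_0$, $m'\in\{0,1\}$, and $\langle K_{t,\eta}\phi,\,B(\psi_k^m,\bar U_t)+B(\bar U_t,\psi_k^m)\rangle\equiv 0$.

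It remains to deduce $\langle K_{t,\eta}\phi,\sigma_k^m\rangle\equiv 0$, and this is where the coupled structure makes the argument nontrivial. The magnetic slot of $B(\psi_k^m,\bar U)+B(\bar U,\psi_k^m)$ equals $\Pi[e_k^m\cdot\nabla\bar b-\bar b\cdot\nabla e_k^m]$ and its velocity slot equals $\Pi[e_k^m\cdot\nabla\bar u+\bar u\cdot\nabla e_k^m]$, so neither carries the $k$-th Fourier mode on its own, and a resonant contribution along $\sigma_k^m$ only surfaces after one further differentiation in $t$. Differentiating the last identity with $\partial_t\bar U=F(U)$ and \eqref{MHDbackward}, and then discarding --- by means of $g\equiv0$, $\langle K_{t,\eta}\phi,B(\psi_k^m,U_t)+B(U_t,\psi_k^m)\rangle\equiv0$, $\langle K_{t,\eta}\phi,Z_{k,l}^{m,m'}\rangle\equiv0$, and a second application of Lemma \ref{MHDlemma3.2} to annihilate the remaining stochastic terms --- all the pieces already known to vanish, one should be left with an identity that expresses $\langle K_{t,\eta}\phi,\sigma_k^m\rangle$, up to a strictly positive continuous factor produced by the eigenvalue of $A^{\alpha,\beta}$, as a linear combination of pairings $\langle K_{t,\eta}\phi,V\rangle$ with $V$ among the directions already annihilated; hence $\langle K_{t,\eta}\phi,\sigma_k^m\rangle\equiv0$ on $[\eta/2,\eta]$, which is \eqref{MHDK.6}. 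This last spectral bookkeeping --- tracking precisely how the fractional dissipation $A^{\alpha,\beta}$ interacts with the bilinear term $B$ so as to single out the resonant magnetic mode --- is the main obstacle, and it is exactly the point at which the velocity--magnetic coupling forces the more delicate computations of \cite[Section 4]{Peng-2020}; by contrast, in the Boussinesq analysis of \cite{Huang-2025} the buoyancy coupling is linear, so a velocity mode and the corresponding thermal mode are exchanged by a single Lie bracket and no such argument is needed. It is also here that the regularity thresholds $\alpha,\beta>1$ are used.
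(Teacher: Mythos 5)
Your first two stages are exactly the paper's: differentiating $g(t)=\langle K_{t,\eta}\phi,\psi_k^m\rangle$ along the backward flow to force $\langle K_{t,\eta}\phi,\mathcal Y_k^m(U_t)\rangle\equiv 0$ is Lemma \ref{MHDlemmaE}, and the expansion $U=\bar U+Q_bW_{S_t}$ followed by Lemma \ref{MHDlemma3.2} to annihilate $\langle K_{t,\eta}\phi,[\mathcal Y_k^m(U),\sigma_l^{m'}]\rangle$, i.e.\ $\langle K_{t,\eta}\phi,Z_{k,l}^{m,m'}\rangle$, for all $l\in\mathcal Z_0$, $m'\in\{0,1\}$ is Lemma \ref{MHDlemmaF}. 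The gap is in your last step. The paper does not differentiate a second time: the passage from the vanishing of the $Z$-pairings to the vanishing of $\langle K_{t,\eta}\phi,\sigma_\cdot^\cdot\rangle$ is purely algebraic and is the content of Lemma \ref{MHDlemma4.5} (packaged as Lemma \ref{MHDlemmaF1}): the four identities there write nonzero multiples of $\sigma_{k-l}^0$, $\sigma_{k+l}^0$, $\sigma_{k-l}^1$, $\sigma_{k+l}^1$ as sums and differences of $Z_{k,l}^{m,m'}$ and $Z_{l,k}^{m,m'}$, so once all these pairings vanish, so do $\langle K_{t,\eta}\phi,\sigma_{k\pm l}^{m}\rangle$. (Note this also needs the brackets with $k$ and $l$ interchanged, which in the induction is supplied by the hypothesis holding on the whole forcing set $\mathcal Z_0$; and note the directions produced are $k\pm l$, which is precisely how the sets $\mathcal Z_n$ grow.)

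Your proposed replacement --- a second differentiation of $\langle K_{t,\eta}\phi,B(\psi_k^m,\bar U_t)+B(\bar U_t,\psi_k^m)\rangle\equiv0$ --- would not close the argument. Since $\sigma_k^m=(0,e_k^m)^T$ is a constant vector field and $B$ is bilinear, every term generated by differentiating along $\partial_t\bar U=F(U)$ and \eqref{MHDbackward} remains at least bilinear in $e_k^m$ and the solution (or the noise); no bare $\sigma_k^m$ term, with or without a ``strictly positive factor produced by the eigenvalue of $A^{\alpha,\beta}$,'' ever appears, and the claimed resonance is not substantiated. The constant direction $\sigma_{k\pm l}^m$ can only be reached by bracketing once more against a constant field $\sigma_l^{m'}$ --- which yields exactly the $Z_{k,l}^{m,m'}$ you already control --- and then invoking the spectral identities of Lemma \ref{MHDlemma4.5}. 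You have all the ingredients in hand at the end of your third paragraph; the missing move is to combine them via Lemma \ref{MHDlemma4.5} rather than to differentiate again.
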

\begin{proof}
By Lemma \ref{MHDlemmaE}, for $m\in\{0,1\},k\in\mathcal{Z}_{2n+1}$, we get on the set $\Omega_0$
\begin{equation}
\begin{aligned}
\sup_{t\in[\eta/2,\eta]}|
\langle K_{t,\eta}\phi,\psi_{k}^{m}\rangle|(\omega)
=0
\Rightarrow 
\sup_{t\in[\eta/2,\eta]}|\langle K_{t,\eta}\phi,
\mathcal{Y}_{k}^{m}\rangle|(\omega)=0.
\end{aligned}\end{equation}
By Lemma \ref{MHDlemmaF}, for $m\in\{0,1\},k\in\mathcal{Z}_{2n+1}$, on the set $\Omega_0$, we have
\begin{equation*}
\begin{aligned}
\sup_{t\in[\eta/2,\eta]}|\langle
K_{t,\eta}\phi,\mathcal{Y}_{k}^{m}(U)\rangle|(\omega)
=0\Rightarrow
\sup_{l\in\mathcal{Z}_{0},m^{\prime}\in\{0,1\}}
\sup_{t\in[\eta/2,\eta]}|\alpha_{l}^{m^{\prime}}|\cdot
\left|\langle K_{t,\eta}\phi,
[\mathcal{Y}_{k}^{m}(U),\sigma_{l}^{m^{\prime}}])\right|(\omega)
=0.
\end{aligned}
\end{equation*}
Finally, by Lemma \ref{MHDlemmaG}, we obtain that \eqref{MHDK.6} holds on the set $\Omega_0$. This proof is completed.
\end{proof}

%Recall that the assumption in Condition 2.2: $\nu_S((0,\infty))=\infty$. For the process $S_t,t\geq0$, by \cite[Lemma 3.1]{Peng-2024},
%we know the following lemma.
%\begin{lemma}
%\begin{equation*}
%\mathbb{P}^{\mu_{\mathbb{S}}}\left(\ell:\{s:\Delta S_{s}(\ell)>0\}~\text{is dense in}~(0,\infty)\right)=1.
%\end{equation*}
%\end{lemma}

To facilitate later applications, we will prove a stronger result than  \eqref{MHD3.4}:
\begin{equation}\label{MHD3.5}
\mathbb{P}\Big(\omega=(\mathrm w,\ell):\inf\limits_{\phi\in\mathcal{S}_{\varkappa,N}}
\sum\limits_{j\in \mathbb{Z}_+^2,m\in\{0,1\}}\int_{\eta/2}^{\eta}\left(\left|\langle K_{r,\eta}\phi,\sigma_j^m\rangle\right|^2
+\left|\langle K_{r,\eta}\phi,\psi_j^m\rangle\right|^2\right)
\mathrm d\ell_r=0\Big)=0.
\end{equation}

\begin{proof}[Proof of Proposition \ref{MHDpropo3.4}] We will use Lemma \ref{MHDlemmaG} and Lemma \ref{MHDlemmaF} to prove \eqref{MHD3.5}. 
%By \cite[Proposition 3.4]{Peng-2024}, we know that there exists a $\Omega_0^1\in \mathcal{F}$ with $\mathbb{P}(\Omega_0^1)=1$ and on the set $\Omega_0^1$, Lemma \ref{MHDlemma3.2} applies to the L\'{e}vy process $\tilde{L}(t)=L(t)=W_{S_t}$.
%
%By Lemma \ref{MHDlemma3.3}, there exists a set $\mathbb{S}_0\subseteq \mathbb{S}$ with $\mathbb{P}^{\mu_{\mathbb{S}}}( \mathbb{S}_0) = 1$ and for any $\ell \in \mathbb{S}_0$, $\{s:\Delta S_{s}(\ell):=\ell_{s}-\ell_{s-}>0\}$ is dense in $(0,\infty)$, which implies that if $f$ is a nonnegative continuous function on some time interval $[a,b]$ and $\int_a^bf(s)d\ell_s=0$, then $f(s)=0,s\in[a,b]$. Denote $\Omega_0^2=\mathbb{W}\times\mathbb{S}_0\subseteq\Omega.$ Obviously, $\mathbb{P}(\Omega_0^2)=1$.
Define the event set:
\begin{equation}\label{MHD3.6}
\mathcal{L}:=
\Big\{\omega:\inf\limits_{\phi\in\mathcal{S}_{\varkappa,N}}
\sum\limits_{j\in \mathcal{Z}_0,m\in\{0,1\}}\int_{\eta/2}^{\eta}
\langle K_{r,\eta}\phi,\sigma_j^m\rangle^2
\mathrm d\ell_r=0\Big\}
\cap\Omega_{0}\cap\tilde{\Omega}_{0}.
\end{equation}
Assume that $\mathcal{L}\neq\emptyset$ and let $\omega=(\mathrm w,\ell)$ belong to the event $\mathcal{L}$. Then there exists some $\phi$ with
\begin{equation}\label{MHD3.7}
\|P_{N}\phi\|\geq\varkappa,
\end{equation}
such that on the set $\Omega_{0}\cap\tilde{\Omega}_{0}$, for all $j\in\mathcal Z_0$ and $m\in\{0,1\}$, we have:
\begin{equation*}
\begin{aligned}
\int_{\eta/2}^{\eta}\langle K_{r,\eta}\phi,\sigma_j^m\rangle^2
\mathrm{d}\ell_{r}=0
~\text{and}~
\int_{\eta/2}^{\eta}\langle K_{r,\eta}\phi,\psi_j^m
\rangle^2 \mathrm{d}\ell_{r}=0.
\end{aligned}
\end{equation*}
Furthermore, by the properties of $\ell\in\Omega_0\cap\tilde{\Omega}_0$ stated above and the continuity of $\langle K_{t,\eta}\phi,\sigma_{j}^{m}\rangle$ and $\langle K_{t,\eta}\phi,\psi_j^m\rangle$ with respect to $t$, we obtain
\begin{equation*}
\begin{aligned}
\begin{cases}
&\sup\limits_{t\in[\eta/2,\eta]}|\langle K_{t,\eta}\phi,\sigma_{j}^{m}\rangle|=0,\\
&\sup\limits_{t\in[\eta/2,\eta]}
|\langle K_{t,\eta}\phi,\psi_j^m
\rangle|=0,~\forall j\in\mathcal{Z}_0,~m\in\{0,1\}.
\end{cases}
\end{aligned}
\end{equation*}
By Lemma \ref{MHDlemmaF}, there exists $\phi\in \mathcal{S}_{\varkappa,N}$ such that for all $j\in \mathbb{Z}_+^2$ and $m\in\{0,1\}$, the above suprema vanish. Taking $t\rightarrow \eta$, we conclude that $\phi=0$, which contradicts $\|P_{N}\phi\| \geq \varkappa$. Therefore, the Proposition \ref{MHDpropo3.4} is proved.
%By Lemma \ref{MHDlemmaF}, we know that there exists a $\phi\in \mathcal{S}_{\varkappa,N}$, for all $j\in \mathbb{Z}_+^2$ and $m\in\{0,1\}$, one has
%\begin{equation*}
%\begin{cases}
%\sup\limits_{t\in[\eta/2,\eta]}|\langle K_{t,\eta}\phi,\sigma_j^m\rangle|
%=0,\\
%\sup\limits_{t\in[\eta/2,\eta]}|\langle K_{t,\eta}\phi,\psi_j^m\rangle|
%=0.
%\end{cases}
%\end{equation*}
%By this recursion
%\begin{equation*}
%\langle K_{t,T}\phi,\sigma_k^l\rangle=0,\forall k\in \cup_{n=1}^{\infty}\mathcal{Z}_n =\mathbb{Z}_{*}^{2}\text{ and }t\in[\frac{T}{2},T].
%\end{equation*}
%Let $t\rightarrow \eta$, we can get $\phi=0$, which contradicts \eqref{MHD3.7}. Therefore, $\mathcal{L}=\emptyset$.
\end{proof} 

Building upon the dissipative property of the MHD system and the estimate \eqref{MHD3.5}, we aim to establish the following convergence:
\begin{equation}\label{MHDr}
r(\varepsilon,\varkappa,\mathfrak{R},N)\rightarrow0,
~~\text{as}~~ \varepsilon\rightarrow 0.
\end{equation}

We proceed by contradiction. Assume that \eqref{MHDr} fails to hold. Then there exist the sequences $\{U_0^{(k)}\}\subseteq B_H(\mathfrak{R})$, $\{\varepsilon_k\}\subseteq(0,1)$ and a positive number $\delta_0$ satisfying both
\begin{equation}\label{MHDB.1}
\lim\limits_{k\to\infty}\mathbb{P}(X^{U_0^{(k)},\varkappa,N}
<\varepsilon_k)\ge\delta_0>0
~~\text{and}~\lim\limits_{k\to\infty}\varepsilon_k=0.
\end{equation}
Thus we need to search out something to contradict \eqref{MHDB.1}, then \eqref{MHDr} can be proved.

By the weak compactness of the Hilbert space $H$, we may extract a subsequence (still denoted by $\{U_0^{(k)},k\geq 1\}$ for simplicity) that converges weakly to some $U_0^{(0)}\in H$. Let $U_t^{(k)}$ denote the solution to equation \eqref{MHD2.14} with initial condition $\{U_0^{(k)},k\geq 1\}$, governed by the linearized equation
\begin{equation}\label{MHDB.2}
\partial_t J_{s,t}\xi+A^{\alpha,\beta}J_{s,t}\xi+\nabla B(U_t)J_{s,t}\xi=0,\quad J_{s,s}\xi=\xi,
\end{equation}
when $U_t$ is replaced by $U_t^{(k)}$, where $\nabla B(U_t)J_{s,t}\xi:=B(U_t,J_{s,t}\xi)+B(J_{s,t}\xi,U_t)$. We denote by $J_{s,t}^{(k)}\xi$ and $K_{s,t}^{(k)}\xi$ the corresponding solution of \eqref{MHDB.2} and its adjoint operator, respectively.

For any $N \geq 1$, we introduce the finite-dimensional subspaces
\begin{equation*}
H_N:=\mathrm{span}\left\{\sigma_k^l,\psi_k^l:|k|\leq N, l\in\{0,1\}\right\}
\end{equation*}
with orthogonal projections
$P_N:H\to H_N $ and $ Q_N:=I-P_N$. 
Note that $Q_N$ projects $H$ onto the high-frequency components $\text{span}\left\{\sigma_k^l,\psi_k^l:|k|>N,l\in\{0,1\}\right\}$.

%Recall that for any $M\in\mathbb{N},~H_M=\{\sigma_k^l,\psi_k^l:|k|\leq M,l\in\{0,1\}\}$, $P_M$ denotes the orthogonal projections from $H$ onto $H_M$ and $Q_M u:=u-P_M u,~\forall u\in H$.
%Next, we will give some estimates for $\|Q_{M}U_{t}^{(k)}\|$, $\|P_{M}U_{t}^{(k)}-P_{M}U_{t}^{(0)}\|$ and $\|J_{s,t}^{(k)}\xi-J_{s,t}^{(0)}\xi\|,s,t\in(0,T]$. Finally, we combine the arguments in \cite[Appendix B]{Peng-2024} to achieve \eqref{MHDr}.

The proof strategy consists of three main steps: establish estimates for $\|Q_{M}U_{t}^{(k)}\|$ (high-frequency control)); analyze $\|P_{M}U_{t}^{(k)}-P_{M}U_{t}^{(0)}\|$ (finite-dimensional approximation); study $\|J_{s,t}^{(k)}\xi-J_{s,t}^{(0)}\xi\|,s,t\in(0,T]$ (linearized equation stability). These estimates, combined with the methodology from \cite[Appendix B]{Peng-2024} , will ultimately lead to the desired contradiction of \eqref{MHDB.1}, thereby proving \eqref{MHDr}.

\begin{lemma}\label{MHDlemma 4.4} For any $t\geq0,k\in\mathbb{N}$ and $M>\max\{|k|:k\in\mathcal{Z}_0\}$, there exist two positive constant $C$ and $C_{\mathfrak{R}}$, they depend on $\nu_S,d$ and  $\mathfrak{R},\nu_S,d$, respectively, such that
 \begin{equation}\label{MHD4.5}
\begin{aligned}
\|Q_{M}U_{t}^{(k)}\|^{2}\leq& e^{-M^{2}t}\|Q_{M}U_{0}^{(k)}\|^{2}
+\frac{C}{M^{2}}(1-e^{-M^{2}t})\int_0^t \|U_{s}^{(k)}\|_1^2\mathrm{d}s \sup\limits_{s\in[0,t]}\|U_{s}^{(k)}\|^2,
\end{aligned}
\end{equation}
and
\begin{equation}\label{MHD4.6}
\begin{aligned}
&\|P_{M}U_{t}^{(k)}-P_M U_t^{(0)}\|^{2}\\
\leq& \|P_{M}U_{0}^{(k)}-P_M U_0^{(0)}\|^{2}
e^{C\int_0^t(\|U_{s}^{(k)}\|_1^2+\|U_{s}^{(0)}\|_1^2) \mathrm{d}s}\\
&+Ce^{C\int_0^t\|U_{s}^{(k)}\|_1^2+\|U_{s}^{(0)}\|_1^2 \mathrm{d}s}\int_0^t \|Q_{M}U_{s}^{(k)}-Q_M U_s^{(0)}\|^2
(\|U_{s}^{(k)}\|_1^2+\|U_{s}^{(0)}\|_1^2)\mathrm{d}s.
\end{aligned}\end{equation}
\end{lemma}
\begin{proof} First, for the nonlinear terms, we deduce from the interpolation inequality that
\begin{align*}
&\,\, |-\langle \tilde{B}(u_{t}^{(k)},u_{t}^{(k)}),Q_{M}u_{t}^{(k)}\rangle
+\langle \tilde{B}(b_{t}^{(k)},b_{t}^{(k)}),Q_{M}u_{t}^{(k)}\rangle|\\
&+|-\langle \tilde{B}(u_{t}^{(k)},b_{t}^{(k)}),Q_{M}b_{t}^{(k)}\rangle
+\langle \tilde{B}(b_{t}^{(k)},u_{t}^{(k)}),Q_{M}b_{t}^{(k)}\rangle| \\
&\leq C\left[\|\Lambda^{\alpha}Q_{M}u_{t}^{(k)}\|
+\|\Lambda^{\beta}Q_{M}b_{t}^{(k)}\|\right]
\left[\|u_{t}^{(k)}\|\|u_{t}^{(k)}\|_1\right]\\
&~~~+C\left[\|\Lambda^{\alpha}Q_{M}u_{t}^{(k)}\|
+\|\Lambda^{\beta}Q_{M}b_{t}^{(k)}\|\right]
\left[\|b_{t}^{(k)}\|\|b_{t}^{(k)}\|_1\right]\\
&~~~+C\left[\|\Lambda^{\alpha}Q_{M}u_{t}^{(k)}\|
+\|\Lambda^{\beta}Q_{M}b_{t}^{(k)}\|\right]
\left[\|u_{t}^{(k)}\|\|b_{t}^{(k)}\|_1\right]\\
&~~~+C\left[\|\Lambda^{\alpha}Q_{M}u_{t}^{(k)}\|
+\|\Lambda^{\beta}Q_{M}b_{t}^{(k)}\|\right]
\left[\|b_{t}^{(k)}\|\|u_{t}^{(k)}\|_1\right]\\
&\leq \frac{1}{2}\left[\nu_1\|\Lambda^{\alpha}Q_{M}u_{t}^{(k)}\|^2
\nu_2\|\Lambda^{\beta}Q_{M}b_{t}^{(k)}\|^2\right]
+C\|u_{t}^{(k)}\|^2\|u_{t}^{(k)}\|_1^2
+C\|b_{t}^{(k)}\|^2\|b_{t}^{(k)}\|_1^2\\
&~~~+C\|u_{t}^{(k)}\|^2\|b_{t}^{(k)}\|_1^2
+C\|b_{t}^{(k)}\|^2\|u_{t}^{(k)}\|_1^2
.
\end{align*}
We take the inner product of \eqref{MHD1.1} with $Q_{M}u_{t}^{(k)}$ and $Q_{M}b_{t}^{(k)}$, respectively, there holds
\begin{equation}\label{MHD4.10}
\begin{aligned}
&\frac{\mathrm{d}}{\mathrm{d}t}(\|Q_{M}u_{t}^{(k)}\|^{2}+\|Q_{M}b_{t}^{(k)}\|^{2})
+\|\nu_1\Lambda^{\alpha}Q_{M}u_{t}^{(k)}\|^2+\|\nu_2\Lambda^{\beta}Q_{M}b_{t}^{(k)}\|^2\\
&\leq C(\|u_{t}^{(k)}\|_{1}^2+\|b_{t}^{(k)}\|^{2})
(\|u_{t}^{(k)}\|^2+\|b_{t}^{(k)}\|^{2}).
\end{aligned}
\end{equation}
The above inequality follows that
\begin{align*}
&\|Q_{M}u_{t}^{(k)}\|^{2}+\|Q_{M}b_{t}^{(k)}\|^{2}\\
\leq& e^{-\nu M^{2}t}\left(\|Q_{M}u_{0}^{(k)}\|^{2}
+\|Q_{M}b_{0}^{(k)}\|^{2}\right)
+C\int_0^t e^{-\nu M^{2}(t-s)}\|U_{t}^{(k)}\|_1^2\|U_{t}^{(k)}\|^2 \mathrm{d}s\\
\leq& e^{-\nu M^{2}t}\|Q_{M}U_{0}^{(k)}\|^{2}
+\frac{C}{\nu M^{2}}(1-e^{-\nu M^{2}t})\int_0^t \|U_{s}^{(k)}\|_1^2\mathrm{d}s \sup\limits_{s\in[0,t]}\|U_{s}^{(k)}\|^2.
\end{align*}

Next, we turn to prove \eqref{MHD4.6}. One has
\begin{align}\label{MHD4.7}
 & \frac{\mathrm{d}}{\mathrm{d}t}(\|P_{M}u_{t}^{(k)}-P_{M}u_{t}^{(0)}\|^{2}+\|P_{M}b_{t}^{(k)}-P_{M}b_{t}^{(0)}\|^{2}) \notag \\
 =&-2\nu_1\langle(-\Delta)^{\alpha}(P_{M}u_{t}^{(k)}
 -P_{M}u_{t}^{(0)}),P_{M}u_{t}^{(k)}-P_{M}u_{t}^{(0)}\rangle \notag \\
 &  -2\nu_2\langle(-\Delta)^{\beta}
 (P_{M}b_{t}^{(k)}-P_{M}b_{t}^{(0)}),
 P_{M}b_{t}^{(k)}-P_{M}b_{t}^{(0)}\rangle \notag\\
 &  -2\langle B(u_{t}^{(k)},u_{t}^{(k)}),B(u_{t}^{(0)},u_{t}^{(0)}),
 P_{M}u_{t}^{(k)}-P_{M}u_{t}^{(0)}\rangle \\
 &  +2\langle B(b_{t}^{(k)},b_{t}^{(k)})-B(b_{t}^{(0)},b_{t}^{(10)}),
 P_{M}u_{t}^{(k)}-P_{M}u_{t}^{(0)}\rangle \notag\\
 &  -2\langle B(u_{t}^{(k)},b_{t}^{(k)})-B(u_{t}^{(0)},b_{t}^{(0)}),
 P_{M}b_{t}^{(k)}-P_{M}b_{t}^{(0)})\rangle \notag \\
 &  +2\langle B(b_{t}^{(k)},u_{t}^{(k)})-B(b_{t}^{(0)},u_{t}^{(0)}),
 P_{M}b_{t}^{(k)}-P_{M}b_{t}^{(0)})\rangle \notag \\
 &=:I_i,~i=1,\cdots,6. \notag
\end{align}
We now estimate $I_3-I_6$ in turn, 
\begin{align*}
  I_3 =& -2\langle \tilde{B}(u_{t}^{(k)},Q_M (u_{t}^{(k)}-u_{t}^{(0)})),P_{M}u_{t}^{(k)}-P_{M}u_{t}^{(0)}\rangle 
  -2\langle \tilde{B}(u_{t}^{(k)}-u_{t}^{(0)},u_{t}^{(0)}),P_{M}u_{t}^{(k)}-P_{M}u_{t}^{(0)}\rangle \\
  \leq & C\|u_{t}^{(k)}\|_1^2\|Q_M (u_{t}^{(k)}-u_{t}^{(0)})\|^2
  +C\|P_{M}(u_{t}^{(k)}-u_{t}^{(0)})\|_1^2 \\
   & +C\|P_{M}(u_{t}^{(k)}-u_{t}^{(0)})\|^2\|u_{t}^{(0)}\|_1^2
   +C\|Q_M (u_{t}^{(k)}-u_{t}^{(0)})\|^2\|u_{t}^{(0)}\|_1^2
   +C\|P_{M}(u_{t}^{(k)}-u_{t}^{(0)})\|_1^2\\
   \leq & C\|u_{t}^{(k)}\|_1^2\|Q_M (u_{t}^{(k)}-u_{t}^{(0)})\|^2
   +C\|P_{M}(u_{t}^{(k)}-u_{t}^{(0)})\|^2\|u_{t}^{(0)}\|_1^2
   +C\|Q_M (u_{t}^{(k)}-u_{t}^{(0)})\|^2\|u_{t}^{(0)}\|_1^2\\
   &+\frac{\nu_1}{2}\|\Lambda^\alpha P_{M}(u_{t}^{(k)}-u_{t}^{(0)})\|^2,
\end{align*}
and
\begin{align*}
  I_4 =& 2\langle \tilde{B}(b_{t}^{(k)},b_{t}^{(k)}-b_{t}^{(0)}),P_{M}(u_{t}^{(k)}-u_{t}^{(0)})\rangle 
  +2\langle \tilde{B}(b_{t}^{(k)}-b_{t}^{(0)},b_{t}^{(0)}),P_{M}(u_{t}^{(k)}-u_{t}^{(0)})\rangle \\
  \leq & C(\|b_{t}^{(k)}\|_1^2+\|b_{t}^{(0)}\|_1^2)\|P_M (b_{t}^{(k)}-b_{t}^{(0)})\|^2
  +C\|P_{M}(u_{t}^{(k)}-u_{t}^{(0)})\|_1^2 
  \\
  &+C(\|b_{t}^{(k)}\|_1^2+\|b_{t}^{(0)}\|_1^2)\|Q_M (b_{t}^{(k)}-b_{t}^{(0)})\|^2\\
  \leq & C(\|b_{t}^{(k)}\|_1^2+\|b_{t}^{(0)}\|_1^2)\|P_M (b_{t}^{(k)}-b_{t}^{(0)})\|^2
  +C(\|b_{t}^{(k)}\|_1^2+\|b_{t}^{(0)}\|_1^2)\|Q_M (b_{t}^{(k)}-b_{t}^{(0)})\|^2\\
  &+\frac{\nu_1}{2}\|\Lambda^\alpha (P_{M}(u_{t}^{(k)}-u_{t}^{(0)})\|^2,
\end{align*}
and
\begin{align*}
  I_5 =&-2 \langle \tilde{B}(u_{t}^{(k)},Q_M(b_{t}^{(k)}-b_{t}^{(0)}),P_{M}b_{t}^{(k)}-P_{M}b_{t}^{(0)}\rangle 
  -2\langle \tilde{B}(u_{t}^{(k)}-u_{t}^{(0)},b_{t}^{(0)}),P_{M}b_{t}^{(k)}-P_{M}b_{t}^{(0)}\rangle \\
  \leq & C\|u_{t}^{(k)}\|_1^2\|Q_M (b_{t}^{(k)}-b_{t}^{(0)})\|^2
 +C\|b_{t}^{(0)}\|_1^2\|P_{M}u_{t}^{(k)}-P_{M}u_{t}^{(0)}\|^2 
  +C\|b_{t}^{(0)}\|_1^2\|Q_M (u_{t}^{(k)}-u_{t}^{(0)})\|^2\\
  &+\frac{\nu_2}{2}\|\Lambda^\beta (P_{M}b_{t}^{(k)}-P_{M}b_{t}^{(0)})\|^2,
\end{align*}
and
\begin{align*}
  I_6 =&2 \langle \tilde{B}(b_{t}^{(k)},u_{t}^{(k)}-u_{t}^{(0)})
  +\tilde{B}(b_{t}^{(k)}-b_{t}^{(0)},u_{t}^{(0)}),P_{M}b_{t}^{(k)}-P_{M}b_{t}^{(0)}\rangle 
  \\
  \leq & C\|b_{t}^{(k)}\|_1^2\|P_M (u_{t}^{(k)}-u_{t}^{(0)})\|^2
  +C\|b_{t}^{(k)}\|_1^2\|Q_{M}(u_{t}^{(k)}-u_{t}^{(0)})\|_1^2 \\
&  +C\|P_{M}(b_{t}^{(k)}-b_{t}^{(0)})\|_1^2 
  +C\|u_{t}^{(0)}\|_1^2\|P_M (b_{t}^{(k)}-b_{t}^{(0)})\|^2
  +C\|u_{t}^{(0)}\|_1^2\|Q_M (b_{t}^{(k)}-b_{t}^{(0)})\|^2\\
  \leq & C\|b_{t}^{(k)}\|_1^2\|P_M (u_{t}^{(k)}-u_{t}^{(0)})\|^2
  +C\|b_{t}^{(k)}\|_1^2\|Q_{M}(u_{t}^{(k)}-u_{t}^{(0)})\|_1^2 \\
& 
  +C\|u_{t}^{(0)}\|_1^2\|P_M (b_{t}^{(k)}-b_{t}^{(0)})\|^2
  +C\|u_{t}^{(0)}\|_1^2\|Q_M (b_{t}^{(k)}-b_{t}^{(0)})\|^2\\
  &+\frac{\nu_2}{2}\|\Lambda^\beta P_{M}(b_{t}^{(k)}-b_{t}^{(0)})\|^2.
\end{align*}
By organizing these estimates with \eqref{MHD4.7}, and then using Gronwall inequality, we can obtain
\begin{equation}
\begin{aligned}
&\|P_{M}U_{t}^{(k)}-P_M U_t^{(0)}\|^{2}\\
\leq& \|P_{M}U_{0}^{(k)}-P_M U_0^{(0)}\|^{2}
e^{C\int_0^t(\|U_{s}^{(k)}\|_1^2+\|U_{s}^{(0)}\|_1^2) \mathrm{d}s}\\
&+Ce^{C\int_0^t\|U_{s}^{(k)}\|_1^2+\|U_{s}^{(0)}\|_1^2 \mathrm{d}s}\int_0^t \|Q_{M}U_{s}^{(k)}-Q_M U_s^{(0)}\|^2
(\|U_{s}^{(k)}\|_1^2+\|U_{s}^{(0)}\|_1^2)\mathrm{d}s.
\end{aligned}
\end{equation}
The proof is completed.
\end{proof}

\begin{lemma}\label{MHDlemma4.4} For any $0\leq s\leq t, k\in\mathbb{N}$ and $\xi\in H$ with $\|\xi\|=1$, one has
\begin{equation*}
\begin{aligned}
&\|J_{s,t}^{(k)}\xi-J_{s,t}^{(0)}\xi\|^{2} 
\leq C \sup_{r\in[s,t]}\|U_{r}^{(k)}-U_{r}^{(0)}\|^{2}\cdot 
e^{C\int_{s}^{t}\|U_{r}^{(k)}\|_{1}^{2}\mathrm{d}r} \int_{s}^{t}\|J_{s,r}^{(0)}\xi\|_{1}^{2}
\mathrm{d}r,
\end{aligned}
\end{equation*}
where $C$ is a positive constant depending on $\nu_S,d$.
\end{lemma}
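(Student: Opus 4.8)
The plan is to derive an evolution equation for the difference $\Theta_{s,t}\xi := J_{s,t}^{(k)}\xi - J_{s,t}^{(0)}\xi$ and then close a Grönwall estimate, treating the mismatch between the two linearization coefficients $U^{(k)}$ and $U^{(0)}$ as a forcing term. Writing the variational equation \eqref{MHDB.2} for both $U^{(k)}$ and $U^{(0)}$ and subtracting, one finds that $\Theta_{s,t}\xi$ satisfies
\begin{equation*}
\partial_t\Theta_{s,t}\xi + A^{\alpha,\beta}\Theta_{s,t}\xi + \nabla B(U_t^{(k)})\Theta_{s,t}\xi = -\big(\nabla B(U_t^{(k)}) - \nabla B(U_t^{(0)})\big)J_{s,t}^{(0)}\xi,\qquad \Theta_{s,s}\xi = 0,
\end{equation*}
where $\nabla B(V)W = B(V,W)+B(W,V)$, so the right-hand side equals $-B(U_t^{(k)}-U_t^{(0)},J_{s,t}^{(0)}\xi) - B(J_{s,t}^{(0)}\xi,U_t^{(k)}-U_t^{(0)})$. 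Note that the homogeneous part of this equation is exactly the one governing $J_{s,t}^{(k)}$, so the stability estimates of Lemma \ref{MHDlemma2.4} are available for it.

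Next I would test the equation against $\Theta_{s,t}\xi$ in $H$. The term $\langle A^{\alpha,\beta}\Theta,\Theta\rangle$ produces the dissipation $\nu_1\|\Lambda^\alpha\Theta^1\|^2 + \nu_2\|\Lambda^\beta\Theta^2\|^2$. As in the proof of Lemma \ref{MHDlemma2.4}, the bilinear term $\langle B(U_t^{(k)},\Theta),\Theta\rangle$ vanishes by the divergence-free cancellation, while $\langle B(\Theta,U_t^{(k)}),\Theta\rangle$ is bounded, via interpolation and Young's inequality, by $\tfrac14(\nu_1\|\Lambda^\alpha\Theta^1\|^2+\nu_2\|\Lambda^\beta\Theta^2\|^2) + C\|U_t^{(k)}\|_1^2\|\Theta\|^2$. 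The forcing term is estimated similarly: using the interpolation bound for $B$,
\begin{equation*}
\big|\langle B(U_t^{(k)}-U_t^{(0)},J_{s,t}^{(0)}\xi),\Theta\rangle\big| + \big|\langle B(J_{s,t}^{(0)}\xi,U_t^{(k)}-U_t^{(0)}),\Theta\rangle\big| \le \tfrac14\big(\nu_1\|\Lambda^\alpha\Theta^1\|^2+\nu_2\|\Lambda^\beta\Theta^2\|^2\big) + C\|U_t^{(k)}-U_t^{(0)}\|^2\|J_{s,t}^{(0)}\xi\|_1^2,
\end{equation*}
where one of the three Sobolev factors in the trilinear form is placed in $H$ on the difference $U_t^{(k)}-U_t^{(0)}$, one in $H^1$ on $J^{(0)}\xi$, and the remaining half-derivative on $\Theta$. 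Absorbing the dissipative terms, this yields the differential inequality $\frac{d}{dt}\|\Theta_{s,t}\xi\|^2 \le C\|U_t^{(k)}\|_1^2\|\Theta_{s,t}\xi\|^2 + C\|U_t^{(k)}-U_t^{(0)}\|^2\|J_{s,t}^{(0)}\xi\|_1^2$.

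Finally, since $\Theta_{s,s}\xi = 0$, Grönwall's lemma gives
\begin{equation*}
\|\Theta_{s,t}\xi\|^2 \le C\, e^{C\int_s^t\|U_r^{(k)}\|_1^2\mathrm{d}r}\int_s^t \|U_r^{(k)}-U_r^{(0)}\|^2\|J_{s,r}^{(0)}\xi\|_1^2\,\mathrm{d}r \le C\sup_{r\in[s,t]}\|U_r^{(k)}-U_r^{(0)}\|^2\, e^{C\int_s^t\|U_r^{(k)}\|_1^2\mathrm{d}r}\int_s^t\|J_{s,r}^{(0)}\xi\|_1^2\,\mathrm{d}r,
\end{equation*}
which is the claimed bound. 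I expect the only genuinely delicate point to be the bookkeeping of Sobolev exponents in the trilinear estimates — one must verify that, with $\alpha>1$ and $\beta>1$, the interpolation inequalities indeed permit distributing derivatives so that the difference $U^{(k)}-U^{(0)}$ appears in the $L^2$-based norm (no derivatives) while only half a derivative lands on $\Theta$; this is exactly where the regularity hypotheses $\alpha,\beta>1$ are used, just as in Lemma \ref{MHDlemma2.4} and Lemma \ref{MHDlemma 4.4}. Everything else is a routine energy estimate plus Grönwall.
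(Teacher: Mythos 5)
Your proposal is correct and follows essentially the same route as the paper: subtract the two variational equations, test against the difference $\Theta_{s,t}\xi$, use the divergence-free cancellation on $\langle B(U_t^{(k)},\Theta),\Theta\rangle$, absorb the remaining bilinear terms into the dissipation via interpolation and Young (the paper's terms $P_1$, $P_2$ are exactly your homogeneous and forcing contributions), and conclude with Grönwall from $\Theta_{s,s}\xi=0$. No substantive difference.
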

\begin{proof} Denote that $J_{s,t}^{(k)}\xi-J_{s,t}^{(0)}\xi=(J_{s,t}^{1,(k)}\xi-J_{s,t}^{1,(0)}\xi,J_{s,t}^{2,(k)}\xi-J_{s,t}^{2,(0)}\xi)$. By the equation \eqref{MHDB.2}, it is easy to obtain that
\begin{equation}\label{MHDB.7}
\begin{aligned}
&\frac{\mathrm{d}}{\mathrm{d}t}\|J_{s,t}^{(k)}\xi-J_{s,t}^{(0)}\xi\|^{2}
=-2\langle A^{\alpha,\beta}(J_{s,t}^{(k)}\xi-J_{s,t}^{(0)}\xi)
,J_{s,t}^{(k)}\xi-J_{s,t}^{(0)}\xi\rangle \\
&~~~-2\langle J_{s,t}^{(k)}\xi-J_{s,t}^{(0)}\xi,
B(U_{t}^{(k)},J_{s,t}^{(k)}\xi)
-B(U_{t}^{(0)},J_{s,t}^{(0)}\xi)\rangle  \\
&~~~-2\langle J_{s,t}^{(k)}\xi-J_{s,t}^{(0)}\xi,
B(J_{s,t}^{(k)}\xi,U_{t}^{(k)})
-B(J_{s,t}^{(0)}\xi,U_{t}^{(0)})\rangle 
\\
&:=-2\nu_1
\|\Lambda^{\alpha}(J_{s,t}^{1,(k)}\xi-J_{s,t}^{1,(0)}\xi)\|^2
-2\nu_2
\|\Lambda^{\beta}(J_{s,t}^{2,(k)}\xi-J_{s,t}^{2,(0)}\xi)\|^2
+2(P_1+P_2).
\end{aligned}
\end{equation}
For the terms $P_1$ and $P_2$, we have
\begin{align*}
|P_1 |=& \left|\langle J_{s,t}^{(k)}\xi-J_{s,t}^{(0)}\xi,
B(U_{t}^{(k)},J_{s,t}^{(k)}\xi)
-B(U_{t}^{(0)},J_{s,t}^{(0)}\xi)\rangle\right| \\
\leq & C\|U_{t}^{(k)}-U_{t}^{(0)}\|
\left[\|\Lambda^{\alpha}(J_{s,t}^{1,(k)}\xi-J_{s,t}^{1,(0)}\xi)\|
+\|\Lambda^{\beta}(J_{s,t}^{2,(k)}\xi-J_{s,t}^{2,(0)}\xi)\|\right]
\|J_{s,t}^{(0)}\xi\|_1 \\
\leq &  \frac{\nu_1}{4}
\|\Lambda^{\alpha}(J_{s,t}^{1,(k)}\xi-J_{s,t}^{1,(0)}\xi)\|^2
+\frac{\nu_2}{4}
\|\Lambda^{\beta}(J_{s,t}^{2,(k)}\xi-J_{s,t}^{2,(0)}\xi)\|^2 \\
&+C\|U_t^{(k)}-U_t^{(0)}\|^2\|J_{s,t}^{(0)}\xi\|_{1}^2
,
\end{align*}
and
\begin{align*}
|P_{2}|= &\left| \langle J_{s,t}^{(k)}\xi-J_{s,t}^{(0)}\xi,B(J_{s,t}^{(k)}\xi-J_{s,t}^{(0)}\xi,U_{t}^{(k)})
+B(J_{s,t}^{(0)}\xi,U_{t}^{(k)}-U_{t}^{(0)})\rangle\right| \\
 \leq& C\|U_t^{(k)}-U_t^{(0)}\|
\left[\|\Lambda^{\alpha}(J_{s,t}^{1,(k)}\xi-J_{s,t}^{1,(0)}\xi)\|
+\|\Lambda^{\beta}(J_{s,t}^{2,(k)}\xi-J_{s,t}^{2,(0)}\xi)\|\right]
\|J_{s,t}^{(0)}\xi\|_{1}\\
&+ C\|U_{t}^{(k)}\|_{1}\|J_{s,t}^{(k)}\xi-J_{s,t}^{(0)}\xi\|
\left[\|\Lambda^{\alpha}(J_{s,t}^{1,(k)}\xi-J_{s,t}^{1,(0)}\xi)\|
+\|\Lambda^{\beta}(J_{s,t}^{2,(k)}\xi-J_{s,t}^{2,(0)}\xi)\|\right]
\\
\leq&\frac{\nu_1}{4}
\|\Lambda^{\alpha}(J_{s,t}^{1,(k)}\xi-J_{s,t}^{1,(0)}\xi)\|^2
+\frac{\nu_2}{4}
\|\Lambda^{\beta}(J_{s,t}^{2,(k)}\xi-J_{s,t}^{2,(0)}\xi)\|^2 \\
 &+C\|U_t^{(k)}\|_1^2\|J_{s,t}^{(k)}\xi-J_{s,t}^{(0)}\xi\|^2+C\|U_{t}^{(k)}-U_{t}^{(0)}\|^{2}\|J_{s,t}^{(0)}\xi\|_{1}^{2}.
\end{align*}
Combining \eqref{MHDB.7} with the estimates of $P_1$ and $P_2$, we obtain
\begin{equation*}
\begin{aligned}
&\|J_{s,t}^{(k)}\xi-J_{s,t}^{(0)}\xi\|^{2} 
\leq C \sup_{r\in[s,t]}\|U_{r}^{(k)}-U_{r}^{(0)}\|^{2}\cdot 
e^{C\int_{s}^{t}\|U_{r}^{(k)}\|_{1}^{2}\mathrm{d}r} \int_{s}^{t}\|J_{s,r}^{(0)}\xi\|_{1}^{2}
\mathrm{d}r.
\end{aligned}
\end{equation*}
The proof is completed.
\end{proof}

By synthesizing Lemma  \ref{MHDlemma 4.4} and Lemma \ref{MHDlemma4.4} with the technical framework developed in \cite[Applendix B]{Peng-2024}, we derive the following result:
\begin{proposition}\label{MHDproper3.5}
For $\varkappa\in(0,1],\mathfrak{R}>0$ and $N\in \mathbb{N}$, we have
\begin{equation*}
\lim_{\varepsilon\rightarrow 0}r(\varepsilon,\varkappa,\mathfrak{R},N)
=0.
\end{equation*}
\end{proposition}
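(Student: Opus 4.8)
The plan is to argue by contradiction, in the spirit of \cite[Appendix B]{Peng-2024}, reducing the whole statement to the stability estimates for the solution and its linearization contained in Lemmas \ref{MHDlemma 4.4} and \ref{MHDlemma4.4}. Suppose \eqref{MHDr} fails, so that \eqref{MHDB.1} holds for some $\{U_0^{(k)}\}\subseteq B_H(\mathfrak{R})$, $\varepsilon_k\downarrow 0$ and $\delta_0>0$; by weak compactness of $B_H(\mathfrak{R})$ one may assume, along a subsequence, $U_0^{(k)}\rightharpoonup U_0^{(0)}\in B_H(\mathfrak{R})$. The decisive structural observation is that the stopping time $\eta=\eta(\ell)$ of \eqref{MHD2.7} does \emph{not} depend on the initial datum, so $\mathcal{M}^{(k)}_{0,\eta}$ and $\mathcal{M}^{(0)}_{0,\eta}$ are Malliavin covariance matrices over one and the same random horizon, differing only through the solution $U^{(k)}$ and its Jacobian $J^{(k)}$. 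It then suffices to show that this difference disappears in the limit, namely: (a) $\sup_{t\in[0,\eta]}\|U^{(k)}_t-U^{(0)}_t\|\to 0$ a.s.; (b) $\sup_{r\in[0,\eta]}\|J^{(k)}_{r,\eta}\sigma_j^m-J^{(0)}_{r,\eta}\sigma_j^m\|\to 0$ a.s.\ for each $j\in\mathcal{Z}_0$, $m\in\{0,1\}$; and, consequently, (c) $\langle\mathcal{M}^{(0)}_{0,\eta}\phi,\phi\rangle\le\liminf_k\langle\mathcal{M}^{(k)}_{0,\eta}\phi_k,\phi_k\rangle$ whenever $\phi_k\rightharpoonup\phi$ in $H$.

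For (a) I would use the high--low frequency splitting of Lemma \ref{MHDlemma 4.4}. Fixing $M>\max\{|k|:k\in\mathcal{Z}_0\}$, the estimate \eqref{MHD4.5} together with the moment bounds \eqref{MHDmoment.5} and \eqref{MHD2.4} controls the high modes $\|Q_M U^{(k)}_t\|$ uniformly in $k$, and the time-integral over $[0,\eta]$ of $\|Q_M U^{(k)}_s\|^2(\|U_s^{(k)}\|_1^2+\|U_s^{(0)}\|_1^2)$ tends to $0$ as $M\to\infty$, a.s.\ (the needed control of the quadratic nonlinearity by the dissipation is where the standing assumption $\alpha,\beta>1$ is used). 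Since $P_M$ has finite rank, $\|P_M U_0^{(k)}-P_M U_0^{(0)}\|\to 0$; feeding this and the high-mode control into \eqref{MHD4.6}, whose exponential prefactor is a.s.\ finite on $[0,\eta]$ by \eqref{MHD2.4}, gives $\sup_{t\in[0,\eta]}\|P_M U^{(k)}_t-P_M U^{(0)}_t\|\to 0$ for each fixed $M$; an $M\to\infty$ argument then yields (a). For (b), Lemma \ref{MHDlemma4.4} applied to the normalised $\sigma_j^m$ bounds the difference by the product of $\sup_r\|U^{(k)}_r-U^{(0)}_r\|^2$, which vanishes by (a), with factors that stay a.s.\ bounded on $[0,\eta]$ by \eqref{MHD2.17} and \eqref{MHD2.4}. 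Finally (c) follows from (b): since $K_{r,\eta}=J_{r,\eta}^{*}$, the weak--strong pairing gives $\langle K^{(k)}_{r,\eta}\phi_k,\sigma_j^m\rangle=\langle\phi_k,J^{(k)}_{r,\eta}\sigma_j^m\rangle\to\langle\phi,J^{(0)}_{r,\eta}\sigma_j^m\rangle=\langle K^{(0)}_{r,\eta}\phi,\sigma_j^m\rangle$ for every $r$, and by \eqref{MHD2.16} these integrands are bounded by an a.s.-finite constant on the finite-mass interval $([0,\eta],\mathrm{d}\ell_r)$ (note $\ell_\eta<\infty$); Fatou's lemma applied to each term of \eqref{MHDMalliavin} together with superadditivity of $\liminf$ over the finitely many indices $(j,m)$ with $j\in\mathcal{Z}_0$ gives (c).

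With (a)--(c) in hand, the contradiction is reached as follows. Set $\Omega_k:=\{X^{U_0^{(k)},\varkappa,N}<\varepsilon_k\}$; by reverse Fatou, $\mathbb{P}(\limsup_k\Omega_k)\ge\limsup_k\mathbb{P}(\Omega_k)\ge\delta_0$. Fix $\omega$ in $\limsup_k\Omega_k$ intersected with the full-measure set on which (a)--(c) hold, take a subsequence $k_i$ with $\omega\in\Omega_{k_i}$, and choose $\phi_{k_i}\in\mathcal{S}_{\varkappa,N}$ with $\langle\mathcal{M}^{(k_i)}_{0,\eta}\phi_{k_i},\phi_{k_i}\rangle<2\varepsilon_{k_i}$; refining further, $\phi_{k_i}\rightharpoonup\phi$ in $H$ with $\|\phi\|\le 1$. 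Because $P_N$ has finite rank, $\|P_N\phi\|=\lim_i\|P_N\phi_{k_i}\|\ge\varkappa$ (this is precisely why the constrained set $\mathcal{S}_{\varkappa,N}$ is used), so $\phi\neq 0$ and $\hat\phi:=\phi/\|\phi\|\in\mathcal{S}_{\varkappa,N}$. Then (c) gives
\begin{equation*}
\langle\mathcal{M}^{(0)}_{0,\eta}\phi,\phi\rangle
\le\liminf_{i\to\infty}\langle\mathcal{M}^{(k_i)}_{0,\eta}\phi_{k_i},\phi_{k_i}\rangle
\le\liminf_{i\to\infty}2\varepsilon_{k_i}=0,
\end{equation*}
hence $\langle\mathcal{M}^{(0)}_{0,\eta}\hat\phi,\hat\phi\rangle=\|\phi\|^{-2}\langle\mathcal{M}^{(0)}_{0,\eta}\phi,\phi\rangle=0$, so $X^{U_0^{(0)},\varkappa,N}=0$ on $\limsup_k\Omega_k$. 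Consequently $\mathbb{P}(X^{U_0^{(0)},\varkappa,N}=0)\ge\delta_0>0$, which contradicts Proposition \ref{MHDpropo3.4}; this establishes \eqref{MHDr}, i.e.\ Proposition \ref{MHDproper3.5}.

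I expect step (a) to be the main obstacle. Upgrading the weak convergence $U_0^{(k)}\rightharpoonup U_0^{(0)}$ to strong convergence of the solutions on the \emph{random} horizon $[0,\eta]$ forces one to control the exponential factors $\exp\{C\int_0^\eta\|U_s\|_1^2\,\mathrm{d}s\}$ appearing in \eqref{MHD4.6} and in Lemma \ref{MHDlemma4.4}, and to handle the high modes near $t=0$, where the term $e^{-\nu M^2 t}\|Q_M U_0^{(k)}\|^2$ in \eqref{MHD4.5} is not uniformly small and only its time-integral is; this is exactly where the exponential moment estimates of Lemma \ref{MHDmoment}, the energy bound \eqref{MHD2.4}, and the regularity $\alpha,\beta>1$ are needed, and where the detailed bookkeeping of \cite[Appendix B]{Peng-2024} is invoked.
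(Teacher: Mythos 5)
Your proposal follows essentially the same route as the paper: the paper sets up the identical contradiction argument around \eqref{MHDB.1}, extracts a weakly convergent subsequence of initial data, and reduces everything to the three-step strategy (control of $\|Q_M U_t^{(k)}\|$, of $\|P_M U_t^{(k)}-P_M U_t^{(0)}\|$, and of $\|J^{(k)}_{s,t}\xi-J^{(0)}_{s,t}\xi\|$) via Lemmas \ref{MHDlemma 4.4} and \ref{MHDlemma4.4}, deferring the remaining bookkeeping to \cite[Appendix B]{Peng-2024} exactly as you do. Your write-up in fact supplies more of the limiting argument (the lower semicontinuity of the Malliavin form and the finite-rank argument keeping the weak limit in $\mathcal{S}_{\varkappa,N}$) than the paper records explicitly, but the underlying approach is the same.
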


\section{Weak irreducibility and ergodicity}
The ergodicity of invariant measures can usually be obtained by proving the irreducibility and e-property of Markov process, or strong Feller property, or asymptotic strong Feller property, refer to \cite{prato-1996,MH-2006,Kapica-2012,Komorowski-2010}. In this section, we will establish e-property and weak irreducibility to obtain the ergodicity for the system \eqref{MHD1.1}.

\begin{proof}[Proof of Proposition \ref{MHDpropo1.4}]
Once Proposition \ref{MHDpropo3.4} is established and Proposition \ref{MHDproper3.5} is guaranteed, it becomes possible to convert spectral bounds on the Malliavin matrix $\mathcal M$ into estimates for $\nabla P_t \Phi$. Since the work presented in \cite{MH-2006}, significant attention has been devoted to the proof of this type of gradient inequality about $\|\nabla P_t \Phi(U_0)\|$, with continuous refinements and advancements being made over time. The recent improvement contribution comes from the article \cite{Peng-2024}, they utilized the stopping time technique to process and improve \textbf{the choice of $v$} and \textbf{the control of $\rho_{\eta_n}$}, respectively, and accordingly, new moment estimates will also be provided regarding \textbf{the control of $\int_0^{\ell_t}v(s)\mathrm{d}W(s)$}. Broadly speaking, supplied with moment estimates of $U,J_{s,t}\xi,J_{s,t}^{(2)}(\xi,\xi'),\mathcal M_{s,t}$ listed in Section 3, one need to formulate a control problem through the Malliavin integration by parts formula, then do some decay estimates adopting an iterative construction with the aid of Lemma \ref{MHDlemma 2.7}, Lemma \ref{MHDlemma2.10}, Lemma \ref{MHDlemma2.1}, Lemma \ref{MHDlemma4.5}. We refer the readers to \cite{Peng-2024} and omit the details.
\end{proof}

%Let us first do some preparatory work. Assume that $\kappa_0=\kappa_0(\nu,\{\alpha_k^m\}_{k\in\mathcal{Z},m\in\{0,1\}},\nu_{S},d)$ is the constant adapted to Lemma \ref{MHDmoment}. Recall that for any $\kappa\in(0,\kappa_0]$, the stopping times $\eta_k$ are defined in \eqref{MHD2.7} and \eqref{MHD2.8}. For any $\kappa\in(0,\kappa_0]$ and $n\in \mathbb{N}$, we define the following random variables on $\mathbb{S}$:
%\begin{equation}
%X_{n}=\int_{\eta_{n}}^{\eta_{n+1}}e^{2\nu(\eta_{n+1}-s)-16\mathfrak{B}_{0}
%\kappa(\ell_{\eta_{n+1}}-\ell_{s})}\mathrm{d}s,\quad Y_{n}=\ell_{\eta_{n+1}}-\ell_{\eta_{n}}.
%\end{equation}
%By the strong law of large numbers, Lemma \ref{MHDmoment} and the definitions of $\eta_k$, we have
%\begin{equation*}
%\lim_{n\to\infty}\frac{\sum_{i=0}^{n-1}X_{i}}{n}<\infty,
%\quad\mathrm{a.s.},
%\end{equation*}
%and
%\begin{equation*}
%\lim_{n\to\infty}\frac{\sum_{i=0}^{n-1}Y_{i}}{n}
%\leq\frac{\nu}{8\mathfrak{B}_{0}\kappa}\lim_{n\to\infty}
%\frac{\sum_{i=0}^{n-1}(\eta_{i+1}-\eta_{i})}{n}<\infty,\quad\mathrm{a.s.}.
%\end{equation*}
%Therefore, with probability one, we have
%\begin{equation}\label{4.3}
%\Theta:=\sup_{n\geq1}\frac{\sum_{i=0}^{n-1}X_i}n+\sup_{n\geq1}
%\frac{\sum_{i=0}^{n-1}Y_i}n<\infty.
%\end{equation}
%
%
%We need three steps to achieve this goal. Firstly, we need to choose suitable direction $v$, that is, \textbf{the choice of $v$}.
%
%Secondly, we will perform some moment estimates for $\rho_t$, that is,
%\textbf{the control of $\rho_{\eta_n}$}.
%
%Thirdly, we determine 
%\textbf{the control of $\int_0^{\ell_t}v(s)\mathrm{d}W(s)$}.

\begin{proof}[Proof of Proposition \ref{MHDirre}]
Define $(V_t^{(1)},V_t^{(2)})^T:=V_t=U_t-\zeta_t=:(u_t-\zeta_t^{(1)},b_t-\zeta_t^{(2)})^T$, where $\zeta_t=\sum_{k\in \mathcal{Z}_0,l\in\{0,1\}}\alpha_k^l\sigma_k^l W_{S_t}^{k,l}$, the process $V_t$ satisfies  
\begin{equation*}
\frac{\partial V_t}{\partial t}
=- A^{\alpha,\beta}(V_t+\zeta_t)-B(U_t,U_t)
=- A^{\alpha,\beta}(V_t+\zeta_t)-B(U_t,V_t+\zeta_t).
\end{equation*} 
Applying the inner product operation with 
$V_t$ to this equation, we arrive at
\begin{equation*}
\begin{aligned}
\frac {\mathrm{d}}{\mathrm{d}t}\|V_t\|^2 &=-2\nu_1\|\Lambda^\alpha V^{(1)}_{t}\|^{2}
-2\nu_2\|\Lambda^\beta V^{(2)}_{t}\|^{2}
+2\langle A^{\alpha,\beta}\zeta_{t},V_{t}\rangle
+2\langle B(U_{t},\zeta_{t}),V_{t}\rangle  \\
&\leq -2\nu_1\|\Lambda^\alpha V^{(1)}_{t}\|^{2}
-2\nu_2\|\Lambda^\beta V^{(2)}_{t}\|^{2}
+2\|\Lambda^\alpha \zeta_t^{(1)}\|
\|\Lambda^\alpha V_t^{(1)}\|
+2\|\Lambda^\beta \zeta_t^{(2)}\|
\|\Lambda^\alpha V_t^{(2)}\| 
+C_1\|U_t\|\|\zeta_t\|_1\|V_t\|_1 \\
&\leq -\frac{3\nu_1}{2}\|\Lambda^\alpha V^{(1)}_{t}\|^{2}
-\frac{3\nu_2}{2}\|\Lambda^\beta V^{(2)}_{t}\|^{2}
+2\nu_1\|\Lambda^\alpha \zeta_t^{(1)}\|^2
+2\nu_2\|\Lambda^\beta \zeta_t^{(2)}\|
+C_1\|V_t+\zeta_t\|\|\zeta_t\|_1\|V_t\|_1\\
&\leq -\frac{3\nu_1}{2}\|\Lambda^\alpha V^{(1)}_{t}\|^{2}
-\frac{3\nu_2}{2}\|\Lambda^\beta V^{(2)}_{t}\|^{2}
+2\nu_1\|\Lambda^\alpha \zeta_t^{(1)}\|^2
+2\nu_2\|\Lambda^\beta \zeta_t^{(2)}\|^2\\
&~~~+C_2(\|\Lambda^\alpha V^{(1)}_{t}\|^{2}
+\|\Lambda^\beta V^{(2)}_{t}\|^{2})\|\zeta_t\|_1
+\frac{\nu_1}{2}\|\Lambda^\alpha V^{(1)}_{t}\|^{2}
+\frac{\nu_2}{2}\|\Lambda^\beta V^{(2)}_{t}\|^{2}
+C_3\|\zeta_t\|^2\|\zeta_t\|_1^2\\
&\leq-(1-C_2\|\zeta_t\|_1)\| (\nu_1\|\Lambda^\alpha V^{(1)}_{t}\|^{2}+\nu_2\|\Lambda^\beta V^{(2)}_{t}\|^{2})
+2\nu_1\|\Lambda^\alpha \zeta_t^{(1)}\|^2
+2\nu_2\|\Lambda^\beta \zeta_t^{(2)}\|^2
+C_3\|\zeta_t\|^2\|\zeta_t\|_1^2,
\end{aligned}
\end{equation*}
where $C_1,C_2,C_3>0$ are constants depending on $\nu_1,\nu_2$. 

For any $T,\delta>0$, we define
\begin{equation*}
\begin{array}{rcl}
\Omega'(\delta,T)&=&\left\{f=(f_{s})_{s\in[0,T]}\in D([0,T];H):\sup\limits_{s\in[0,T]}\left(\max\{\|\Lambda^\alpha f_{s}\|,\|\Lambda^\beta f_{s}\|\}\right)\leq\min\{\delta,\frac{3}{2C_2}\}\right\}.
\end{array}
\end{equation*}
For any $\zeta\in \Omega'$, we can find a constant $C_4$ depending solely on the domain for which
\begin{equation*}
\|V(t)\|^2\leq\|V(0)\|^2e^{-\frac{\nu}{2}t}
+C_4\left[
\min\left(\delta,\frac{3}{2C_2}\right)^4
+\min\left(\delta,\frac{3}{2C_2}\right)^2\right].
\end{equation*}
With $\mathcal{C}$ and $\gamma$  specified in Proposition \ref{MHDirre}, whenever $\|U_0\|\leq \mathcal{C}$ holds, there exist $T>0$ and $\delta>0$ satisfying
$$
\|V_T\|\leq\frac{\gamma}{2}~\text{and}~\delta\leq\frac{\gamma}{2}.
$$
In summary, we arrive at the conclusion that 
\begin{equation}\label{MHD5.5}
\|U_0\|\leq \mathcal{C}\mathrm{~~and~~}\zeta\in\Omega^{\prime}(\delta,T)
\Rightarrow\|U_T\|\leq\|V_T\|+\|\zeta_T\|\leq\gamma.
\end{equation}
Note that for any $T>0$, $\varepsilon>0$ and non-zero real numbers $\alpha_k^l,k\in\mathcal{Z}_0,l\in\{0,1\}$, there holds the following result 
\begin{equation*}
\mathbb{P}(\sup_{t\in[0,T]}\|\sum_{k\in\mathcal{Z}_0}
\alpha_k^lW_{S_t}^k \sigma_k^l\|<\varepsilon)
\geq p_0>0,
\end{equation*}
where $p_0=p_0(T,\varepsilon,\{\alpha_k^l\}_{k\in\mathcal{Z}_0,
l\in\{0,1\}})$.
Inserting this estimate into \eqref{MHD5.5} yields the required conclusion, thereby completing the proof.
\end{proof}

The following result demonstrates the uniqueness of statistically invariant states for \eqref{MHD1.1}.

\begin{proof}[Proof of Theorem \ref{MHDergodicity}]
We first prove the existence of invariant measures. By Lemma \ref{MHDlemma2.1}, it holds that
\begin{equation*}
\nu \mathbb{E}\left[\int_{0}^{t}\|U_{s}\|_{1}^{2}\mathrm{d}s\right]
\leq\|U_{0}\|^{2}+C_{2}t,\quad\forall t\geq0.
\end{equation*}
By adapting the methodology developed in \cite{Peng-2020} and applying the Krylov-Bogoliubov averaging technique, we establish the existence of invariant measure for the system.

To prove the uniqueness, suppose by contradiction that there exist two distinct invariant probability measures $\mu_1$ and $\mu_2$ associated with the Markov semigroup $\{P_t\}_{t\geq0}$. Through an application of Proposition \ref{MHDpropo1.4} combined with the geometric ergodicity arguments from \cite{Kapica-2012}, we derive the following contradiction
\begin{equation}\label{MHDsupp}
\mathrm{Supp~}\mu_1\cap\mathrm{Supp~}\mu_2=\emptyset.
\end{equation}

On the other hand, 
Lemma \ref{MHDlemma2.1} establishes the uniform moment bound
\begin{equation*}
\int_H\|U\|^2\mu(\mathrm{d}U)\leq C.
\end{equation*}
for every invariant measure $\mu$.
Furthermore, by adapting the spectral gap argument from \cite[Corollary 4.2]{MH-2006} and combining it with the irreducibility property in Proposition \ref{MHDirre}, we deduce that the origin must belong to the support of every invariant measure, i.e.,
\begin{equation}
0 \in \mathrm{Supp~}\mu.
\end{equation}
This conclusion directly contradicts the support condition specified in \eqref{MHDsupp}. The resulting contradiction establishes the uniqueness of the invariant measure. As an immediate consequence, we obtain the ergodicity of the system.
%
%Following the arguments in the proof of \cite[Corollary 4.2]{MH-2006} and using Proposition \ref{MHDirre}, for every invariant measure $\mu$, we have
%$0\in \mathrm{Supp~}\mu$. This contradicts \eqref{MHDsupp}. We complete the proof of uniqueness, which implies the ergodicity.
\end{proof}

%%%%%%%%%%%%%%%%%%%%%%%%%%%%%%%%%%%%%%%%%%%%%%%%%%%%%
%\appendix
%
%\section{The proof of Lemma \ref{MHDmoment}}

\subsection*{Conflict of interest}
This work does not have any conflicts of interest.

\subsection*{Availability of date and materials}
Not applicable.

\section*{Acknowledgment}
The authors would like to thank Professor Xuhui Peng from Hunan Normal University, China, Changsha for his helpful discussion to this manuscript.
\newcounter{cankan}

\end{document}